\DeclareSymbolFontAlphabet{\mathbb}{AMSb}
\DeclareSymbolFontAlphabet{\mathbbl}{bbold}
\crefname{equation}{}{} 
\declaretheorem{thm}[
	numberwithin=section,
	name=Theorem,
	refname={Theorem,Theorems},
	Refname={Theorem,Theorems},
]
\declaretheorem{lem}[
	sharenumber=thm,
	name=Lemma,
	refname={Lemma,Lemmas},
	Refname={Lemma,Lemmas},
]
\declaretheorem{cor}[
	sharenumber=thm,
	name=Corollary,
	refname={Corollary,Corollaries},
	Refname={Corollary,Corollaries},
]
\declaretheorem{prop}[
	sharenumber=thm,
	name=Proposition,
	refname={Proposition,Propositions},
	Refname={Proposition,Propositions},
]
\declaretheorem{claim}[
	sharenumber=thm,
	style=definition,
	name=Claim,
	refname={Claim,Claims},
	Refname={Claim,Claims},
]
\declaretheorem{defn}[
	sharenumber=thm,
	style=definition,
	name=Definition,
	refname={Definition,Definitions},
	Refname={Definition,Definitions},
	]
\declaretheorem{eg}[
	sharenumber=thm,
	style=definition,
	name=Example,
	refname={Example,Examples},
	Refname={Example,Examples},
]
\declaretheorem{rmk}[
	sharenumber=thm,
	style=definition,
	name=Remark,
	refname={Remark,Remarks},
	Refname={Remark,Remarks},
]
\declaretheorem{prob}[
sharenumber=thm,
style=definition,
name=Problem,
refname={Problem,Problems},
Refname={Problem,Problems},
]
\newenvironment{claimproof}[1][\proofname]
{%
	\proof[#1]%
}
{%
	\endproof%
}
\newcommand{\tb}{\textbf}
\newcommand{\bb}[1]{\mathbb{#1}}
\newcommand{\res}[2]{#1 {\upharpoonright} #2}
\newcommand{\N}{\bb{N}}
\newcommand{\Z}{\bb{Z}}
\DeclareMathOperator{\Mod}{Mod}
\DeclareMathOperator{\proj}{proj}
\DeclareMathOperator{\dom}{dom}
\DeclareMathOperator{\ran}{ran}
\DeclareMathOperator{\freq}{Freq}
\DeclareMathOperator{\aut}{Aut}
\DeclareMathOperator{\age}{Age}
\newcommand{\LL}{\mathcal{L}}
\newcommand{\K}{\mathcal{K}}
\newcommand{\bA}{{\bm{A}}}
\newcommand{\bB}{{\bm{B}}}
\newcommand{\bbA}{{\bb{A}}}
\newcommand{\bbB}{{\bb{B}}}
\renewcommand{\P}{\bb{P}}
\newcommand{\E}{\bb{E}}
\title{Definable expansions on countable groups and countable Borel equivalence relations}
\author{Michael Wolman}
\date{May 9, 2025} 
\begin{document}

\maketitle

\begin{abstract}
	We define and study expansion problems on countable structures in the setting of descriptive combinatorics. We consider both expansions on countable Borel equivalence relations and on countable groups, in the Borel, measure and category settings, and establish some basic correspondences between the two notions. We also prove some general structure theorems for measure and category. We then explore in detail many examples, including finding spanning trees in graphs, finding monochromatic sets in Ramsey's Theorem, and linearizing partial orders.
\end{abstract}

\section{Introduction}

A \emph{countable Borel equivalence relation (CBER)} on a Polish space $X$ is a Borel equivalence relation $E \subseteq X^2$ whose equivalence classes are countable. Given a CBER $E$, a \emph{(Borel) structuring of $E$} is a Borel assignment of a first-order structure on each $E$-class $C$ (see \cref{sec:structures-and-expansions-on-CBER,sec:descriptions} for precise definitions).

In this paper, we are primarily concerned with the \emph{descriptive combinatorics} of \emph{locally countable structures}. Broadly speaking, given a combinatorial problem on countable structures, we are interested in solving it in a ``uniformly Borel'' way, possibly after throwing away a meagre set or a null set. For instance, given a Borel structuring of a CBER $E$ by countable graphs, we may be interested in characterizing exactly when one can find a Borel colouring of these graphs with countably many colours, i.e., a colouring so that the assignment of the colour classes to the vertices in each $E$-class is a Borel structuring of $E$. Other examples of combinatorial problems include finding proper edge colourings, perfect matchings or spanning trees in graphs, finding infinite monochromatic sets (as in Ramsey's Theorem), and extending a given partial order into a linear order; see \cref{sec:expansions-def} for more. We refer the reader to \cite{KM-survey,Pikhurko-survey} for a survey of results in descriptive combinatorics, and to \cite{CK,BC} for more on the structurability of CBER.

For ``locally finite'' structures, many of these combinatorial problems can be expressed in terms of \emph{constrain satisfaction} or \emph{locally checkable labelling} problems on graphs. In this setting, there has been a lot of recent progress towards finding solutions to various expansion problems, using tools from theoretical computer science and finite combinatorics such as the Lovász Local Lemma and connections with LOCAL algorithms in distributed computing \cite{Bernshteyn-LLL,BCGGRV,GR}. However, we note that many problems are not locally finite and hence do not fit within this framework; for example, linearizations of partial orders, or Ramsey's Theorem (see e.g. \cite{GX}).

Here, we consider these problems in the more general framework of \emph{expansions}. Given first-order languages $\LL \subseteq \LL^*$ and an $\LL$-structure $\bA$, we call an $\LL^*$-structure $\bA^*$ an \emph{expansion} of $\bA$ if $\bA = \res{\bA^*}{\LL}$, where $\res{\bA^*}{\LL}$ denotes the \emph{reduct} of $\bA^*$ to $\LL$. If $\K$ is a class of $\LL$ structures and $\K^*$ is a class of $\LL^*$-structures, the \emph{expansion problem for $(\K, \K^*)$} is the problem of determining whether every structure in $\K$ admits an expansion in $\K^*$. For a countably infinite set $X$, let $\K(X)$ denote the set of structures in $\K$ whose universe is $X$, and call $\K$ a \emph{Borel class of structures} if $\K(X)$ is Borel for all countably infinite sets $X$.

Given an expansion problem $(\K, \K^*)$ for which every element of $\K$ admits an expansion to an element of $\K^*$, we get a corresponding ``uniformly Borel'' expansion problem: For every CBER $E$ and any structuring of $E$ with elements of $\K$, is there a structuring of $E$ with elements of $\K^*$ which is an expansion of the original structuring on every $E$-class? In general, one can view this Borel expansion problem as asking if there is a ``canonical'' assignment of an expansion in $\K^*$ to every element of $\K$; this is made precise in \cites[arXiv~version, Appendix~B]{CK}{BC}.

One can also interpret the Borel expansion problem in terms of definable equivariant maps. If $\Gamma$ is a group acting on a countably infinite set $X$ and $(\K, \K^*)$ is an expansion problem with $\K, \K^*$ Borel, we may consider the \emph{$\Gamma$-equivariant expansion problem}: Is there a Borel map $f: \K(X) \to \K^*(X)$, taking $\bA \in \K(X)$ to an expansion $f(\bA)$, which is equivariant with respect to the induced action of $\Gamma$ on $\K(X), \K^*(X)$?

There is a natural correspondence between $\Gamma$-equivariant expansions for countable groups $\Gamma$, in the special case where $\Gamma$ acts on $X = \Gamma$ by multiplication on the left, and CBER which arise via free Borel actions of $\Gamma$ (see \cref{sec:free-actions}). By the Feldman--Moore Theorem, every CBER is induced by a Borel action of a countable group, though in general we cannot expect this action to be free \cite[Section~11]{CBER}. Nevertheless, CBER induced by free Borel actions of countable groups are a great source of (counter-)examples in the study of Borel expansions on CBER (especially with respect to the Schreier graphs of their actions), and remain very relevant in the study of the descriptive combinatorics of locally countable structures.

The primary objective of this paper is to study this correspondence between the Borel expansion problem on CBER and the Borel equivariant expansion for countable groups $\Gamma$. More generally, we study also the connection between these problems in the settings of \emph{measure} and \emph{category}, i.e., when we are allowed to solve these problems after possibly removing a null or meagre set. We shall see that by exploiting this connection, we can apply results and techniques from the theory of CBER to prove theorems about equivariant expansions on countable groups (see e.g. \cref{sec:inv-rand-expansion-cber,sec:generic-expansions,sec:examples}). Conversely, we apply tools from symbolic dynamics and probability theory (such as the mass transport principle and random walks on groups) to the study of equivariant expansions, which gives in some cases precise characterizations of exactly when certain structurings of CBER admit definable expansions (c.f. \cref{sec:canonical-random-expansions,sec:enfocing-smoothness,sec:examples}).

The connection between expansions on CBER and equivariant expansions in the purely Borel setting has been studied independently in \cite{BC}, with the goal of making precise the relation between the existence of Borel expansions on CBER and ``canonical'' expansions from $\K$ to $\K^*$. There, \textcite[Corollary~3.29, Remark~2.25]{BC} show that every Borel structuring of a CBER admits a Borel expansion if and only if there is a Borel $S_\N$-equivariant expansion map, for classes $\K$ of structures that interpret the theories of \emph{Lusin--Novikov functions} and \emph{countable separating families} (c.f. \cite[Definitions~3.18, 3.23]{BC}). (We note however that the classes we study in this paper do not interpret these theories.) Expansion problems have also been studied in the context of \emph{invariant random structures} on groups, i.e., invariant probability measures on $\K(\Gamma)$, and one can view equivariant expansions as a natural strengthening of this notion; see e.g. \cite[Sections~6, 15]{KM-survey} for some examples in graph combinatorics, or \cite{GLM,A} for linearizations of partial orders.

\paragraph*{Organization.} The structure of this paper is as follows. In \cref{sec:preliminaries} we give precise definitions of expansions on CBER and equivariant expansions on groups for expansion problems, in the Borel, Baire category and measurable settings. We also give examples of various expansion problems of interest, that we study in detail in \cref{sec:examples}.

In \cref{sec:general-results}, we prove several general theorems relating equivariant expansions on groups $\Gamma$ with Borel expansions on CBER induced by free Borel actions of $\Gamma$. We describe in \cref{sec:free-actions} a weak duality between the two notions, which can be viewed as an analogue of \cite[Corollary~3.29]{BC} for this setting. We then consider \emph{random expansions} for countable groups, where we say an invariant measure $\nu$ on $\K^*(\Gamma)$ is a random expansion of an invariant measure $\mu$ on $\K(\Gamma)$ when the reduct of $\nu$ is equal to $\mu$ (c.f. \cref{sec:def-equivariant-expansions}). We show that the existence of random expansions on $\Gamma$ depends only on its orbit equivalence class, where we say groups $\Gamma, \Lambda$ are \emph{orbit equivalent} if there is a CBER $E$ induced by free probability-measure-preserving actions of both $\Gamma$ and $\Lambda$.
\begin{thm}[\cref{thm:invariant-random-expansions}]
	Let $(\K, \K^*)$ be an expansion problem and $\Gamma, \Lambda$ be countably infinite groups. If $\Gamma, \Lambda$ are orbit equivalent, then $\Gamma$ admits random expansions from $\K$ to $\K^*$ if and only if $\Lambda$ admits random expansions from $\K$ to $\K^*$.
\end{thm}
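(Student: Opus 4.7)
The plan is to route both sides of the biconditional through the common CBER: by orbit equivalence, there is a standard probability space $(X,\mu)$ carrying free pmp actions of both $\Gamma$ and $\Lambda$ inducing the same CBER $E$, and the goal is to identify invariant random structures on either group with a single intrinsic object attached to $E$.

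The key ingredient is a natural bijection, for any countable group $G$ acting freely and pmp on $(X,\mu)$ with orbit relation $E$, between $G$-invariant probability measures $\mu_G$ on $\K(G)$ (with the shift action) and probability measures $\nu_E$ on the standard Borel space of Borel $\K$-structurings of $E$. Given $\mu_G$, for each $x \in X$ the orbit bijection $\phi_x\colon G \to [x]_E$, $g\mapsto g\cdot x$, transports $\mu_G$ to a measure on $\K([x]_E)$; $G$-invariance of $\mu_G$ under the shift action is precisely what makes this pushforward independent of the base point $x$ within its class, yielding a well-defined random $\K$-structuring of $E$. Conversely, given $\nu_E$ one recovers $\mu_G$ by sampling $x\sim\mu$, restricting a random $\K$-structuring $\sigma\sim\nu_E$ to the class $[x]_E$, and pulling back via $\phi_x$.

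Two properties of this correspondence are essential: (i) the random structuring $\nu_E$ of $E$ depends only on the measured CBER $E$, not on the specific group or action used to realize it — in particular, the constructions for $\Gamma$ and for $\Lambda$ produce the same identification with random $\K$-structurings of $E$; and (ii) the correspondence commutes with reducts, i.e., a $G$-invariant expansion $\mu_G^*$ on $\K^*(G)$ of $\mu_G$ corresponds to a random $\K^*$-structuring of $E$ whose pointwise reduct is the random $\K$-structuring corresponding to $\mu_G$. Property (ii) is essentially tautological since the reduct acts classwise; property (i) is the heart of the matter.

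Given (i) and (ii), the theorem follows by a round-trip transfer. Suppose $\Gamma$ admits random expansions and let $\mu_\Lambda$ be a $\Lambda$-invariant measure on $\K(\Lambda)$. Transport via the $\Lambda$-action to a random $\K$-structuring $\nu_E$ of $E$, then via the $\Gamma$-action back to a $\Gamma$-invariant measure $\mu_\Gamma$ on $\K(\Gamma)$. By hypothesis there exists a $\Gamma$-invariant random expansion $\mu_\Gamma^*$ on $\K^*(\Gamma)$; transport forward to a random $\K^*$-structuring $\nu^*_E$ of $E$ and back to a $\Lambda$-invariant measure $\mu_\Lambda^*$ on $\K^*(\Lambda)$. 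Property (ii) guarantees that the reduct of $\mu_\Lambda^*$ is $\mu_\Lambda$, so $\mu_\Lambda^*$ is the desired random expansion, and symmetry gives the converse. The main obstacle is to prove property (i): that the correspondence between invariant random $G$-structures and random $E$-structurings is canonical in $E$ and independent of the chosen group action; equivalently, that the bijection can be phrased intrinsically in terms of the measured groupoid of $E$.
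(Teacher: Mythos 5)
Your overall architecture matches the paper's: both route the transfer through the common CBER $E$ induced by the two free pmp actions, and the paper's proof is exactly the round trip you describe, packaged as \cref{prop:random-expansion-cber}. But the object you build the argument on --- a \emph{bijection} between $G$-invariant measures on $\K(G)$ and measures on ``the standard Borel space of Borel $\K$-structurings of $E$'' --- does not exist, and you correctly identify it as the unproved ``main obstacle.'' Two problems. First, the space of Borel $\K$-structurings of $E$ is not a standard Borel space; the paper's substitute is the rooted formalism, where an invariant random $\K$-structuring of $(E,\mu)$ is an invariant measure $\nu$ on $X\times\K(N)$ for the relation $E\ltimes_g\K$ projecting to $\mu$. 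Second, and more seriously, the correspondence is not a bijection even in that formalism: the pushforward of $\nu$ along the $G$-rooting map $X\times\K(G)\to\K(G)$ forgets the joint distribution of the structure with the base point $x$ (e.g.\ a structuring that is a deterministic Borel function of $x$ and the product measure $\mu\times\mu_G$ can have the same pushforward), so your classwise transport $\mu_G\mapsto(\nu_C)_C$ and its proposed inverse do not compose to the identity in the direction you need.

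This gap is precisely where the real work of the proof lives. In the round trip $\mu_\Lambda\to\nu_E\to\mu_\Gamma\to\mu_\Gamma^*\to\nu_E^*\to\mu_\Lambda^*$, the step $\mu_\Gamma^*\to\nu_E^*$ must produce an invariant random $\K^*$-structuring of $(E,\mu)$ whose reduct is the \emph{specific} $\nu_E$ you started from, not merely one whose pushforward to $\K(\Gamma)$ reduces to $\mu_\Gamma$; since $\nu_E$, re-expressed over $\Gamma$, need not be the product $\mu\times\mu_\Gamma$, you cannot just take a product. The paper handles this in the $(1)\Rightarrow(2)$ direction of \cref{prop:random-expansion-cber} by disintegrating both $\nu_E$ and the expansion $\kappa=\mu_\Gamma^*$ over the common base $\mu_\Gamma$ and gluing the fibres, $\lambda=\int(\nu_\bA\times\kappa_\bA)\,d\mu_\Gamma(\bA)$. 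Your proposal needs this coupling argument (or an equivalent one) to close; with it, the rest of your transfer goes through as in \cref{sec:inv-rand-expansion-cber}.
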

\noindent We note that this has already been observed in some special cases, for example with linearizations in \cite{A}, though we show here that it holds more generally for all expansion problems. We also give a sort of converse in \cref{prop:converse-ire}.

Next, we consider \emph{generic} equivariant expansions on $G_\delta$ classes of structures, i.e., equivariant expansions on comeagre subsets of $\K(\Gamma)$ (c.f. \cref{sec:def-equivariant-expansions}). Given an expansion problem $(\K, \K^*)$ and a countably infinite group $\Gamma$, we say $\K$ admits $\Gamma$-equivariant expansions \emph{generically} if there is a comeagre invariant Borel set $X \subseteq \K(\Gamma)$ such that there is a Borel $\Gamma$-equivariant expansion map $X \to \K^*(\Gamma)$. We show that when $\K$ consists of structures with \emph{trivial algebraic closure} that are not definable from equality, whether or not $\K$ admits $\Gamma$-equivariant expansions to $\K^*$ generically is independent of the group $\Gamma$. (A structure is said to have trivial algebraic closure if its automorphism group has infinite orbits, even after fixing finitely many points, and is definable from equality when relations between tuples of points depend only on their equality types; see \cref{def:TAC} for precise definitions of these terms.)
\begin{thm}[\cref{thm:generic-expansions}]
	Let $(\K, \K^*)$ be an expansion problem. Suppose that $\K$ is $G_\delta$ and the generic element of $\K$ has trivial algebraic closure and is not definable from equality. Then the following are equivalent:
	\begin{enumerate}
		\item For every countably infinite group $\Gamma$, $\K$ admits $\Gamma$-equivariant expansions to $\K^*$ generically.
		\item There exists a countably infinite group $\Gamma$ for which $\K$ admits $\Gamma$-equivariant expansions to $\K^*$ generically.
	\end{enumerate}
\end{thm}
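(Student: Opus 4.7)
The direction $(1) \Rightarrow (2)$ is immediate by specialization to any fixed countably infinite group. For the converse, fix a countably infinite group $\Gamma$ together with a Borel $\Gamma$-equivariant expansion $f \colon X \to \K^*(\Gamma)$ on a comeagre $\Gamma$-invariant Borel $X \subseteq \K(\Gamma)$, and let $\Lambda$ be any other countably infinite group. The plan is to build a Borel $\Lambda$-equivariant expansion $h$ on a comeagre $\Lambda$-invariant Borel subset of $\K(\Lambda)$ by averaging $f$ over all bijective relabelings $\phi\colon \Lambda \to \Gamma$.

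Write $\mathrm{Bij}(\Lambda,\Gamma)$ for the Polish space of set bijections. Each $\phi$ induces a Borel homeomorphism $\Phi_\phi \colon \K(\Lambda) \to \K(\Gamma)$ (and similarly for $\K^*$) via $R^{\Phi_\phi(\bB)}(\bar g) \iff R^{\bB}(\phi^{-1}(\bar g))$, and one obtains a candidate Borel expansion $h_\phi(\bB) := \Phi_\phi^{-1}(f(\Phi_\phi(\bB)))$ defined on the comeagre Borel set $\Phi_\phi^{-1}(X) \subseteq \K(\Lambda)$. I then define $h(\bB)$ by a Baire-generic average: for each relation symbol $R$ of $\LL^*$ and each tuple $\bar\mu$, declare $R^{h(\bB)}(\bar\mu)$ to hold iff $\{\phi : R^{h_\phi(\bB)}(\bar\mu)\}$ is comeagre in $\mathrm{Bij}(\Lambda, \Gamma)$. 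Since $\phi \mapsto \phi \circ \eta$ is a self-homeomorphism of $\mathrm{Bij}(\Lambda, \Gamma)$ preserving comeagreness, a direct calculation yields $h_\phi(\eta \bB) = \eta \cdot h_{\phi \eta}(\bB)$, and from this $h(\eta \bB) = \eta \cdot h(\bB)$ follows by the change of variables $\phi' = \phi\eta$. Kuratowski--Ulam then ensures $h$ is Borel on a comeagre $\Lambda$-invariant Borel subset of $\K(\Lambda)$.

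The main difficulty is verifying that $h(\bB)$ actually belongs to $\K^*(\Lambda)$ and genuinely expands $\bB$, rather than being a ``Baire-generic'' hybrid of the structures $h_\phi(\bB)$ that lies outside $\K^*$. This reduces to showing that for generic $\bB$ the map $\phi \mapsto h_\phi(\bB)$ is essentially constant on a comeagre subset of $\mathrm{Bij}(\Lambda, \Gamma)$; equivalently, transporting back to $\K(\Gamma)$, that for generic $\bA \in X$ the equality $f(\sigma \bA) = \sigma \cdot f(\bA)$ holds for a comeagre set of $\sigma \in S_\Gamma$, not merely for $\sigma$ in the image of left multiplication $\Gamma \hookrightarrow S_\Gamma$. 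Establishing this ``generic $S_\Gamma$-equivariance'' of $f$ is the heart of the argument, and I expect it is exactly where both hypotheses enter essentially: trivial algebraic closure forces each $\Gamma$-orbit to be sufficiently dense inside its $S_\Gamma$-orbit that a Baire-category argument can upgrade the $\Gamma$-equivariant Borel map $f$ to be generically $S_\Gamma$-equivariant, while not-definable-from-equality rules out the degenerate case in which $\K$ is trivially $S_\infty$-invariant (in which the generic problem collapses). Once generic $S_\Gamma$-equivariance has been secured, the agreement of $h_\phi$ on a comeagre set of $\phi$, the conclusion $h(\bB) \in \K^*(\Lambda)$, and Borelness of the whole construction are all routine.
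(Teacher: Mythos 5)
Your reduction of $(2)\Rightarrow(1)$ to the claim that, for generic $\bA$, $f(\sigma\bA)=\sigma\cdot f(\bA)$ for a comeagre set of $\sigma\in S_\Gamma$ is where the argument fails: this ``generic $S_\Gamma$-equivariance'' is not merely unproven but false for classes to which the theorem applies. Indeed, suppose $C_\bA=\{\sigma\in S_\Gamma: f(\sigma\bA)=\sigma\cdot f(\bA)\}$ were comeagre. For any $g\in\aut(\bA)$ the set $C_\bA\cap C_\bA g^{-1}$ is comeagre (right translation is a homeomorphism of $S_\Gamma$), so there is $\tau$ with $\tau,\tau g\in C_\bA$; writing $\sigma=\tau g$ we have $\sigma\bA=\tau\bA$, hence $\sigma\cdot f(\bA)=f(\sigma\bA)=f(\tau\bA)=\tau\cdot f(\bA)$, i.e.\ $g=\tau^{-1}\sigma\in\aut(f(\bA))$. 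Thus the expansion $f(\bA)$ would have to be invariant under \emph{every} automorphism of $\bA$. Now take $(\K,\K^*)$ as in \cref{eg:bijections}, for which generic equivariant expansions do exist (\cref{prop:bijection-generically-exp}). For generic $\bA=(\Gamma,R,S)$ the four cells $R\cap S$, $R\setminus S$, $S\setminus R$, $\Gamma\setminus(R\cup S)$ are all infinite and $\aut(\bA)$ is the product of the full symmetric groups on them. If $T$ is the graph of a bijection $R\to S$ preserved by $\aut(\bA)$, pick $x\in R\setminus S$, so $y=T(x)\neq x$; choosing $\sigma\in\aut(\bA)$ fixing $x$ and moving $y$ gives $T(x)=T(\sigma(x))=\sigma(T(x))=\sigma(y)\neq y$, a contradiction. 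So no expansion of a generic $\bA$ is $\aut(\bA)$-invariant, the key lemma cannot hold, and the Baire-generic average of the $h_\phi(\bB)$ has no reason to lie in $\K^*(\Lambda)$ (the $h_\phi(\bB)$ genuinely depend on $\phi$ on a non-meagre set). TAC cannot rescue this: it makes $\aut(\bA)$ larger, which only strengthens the obstruction.

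The paper's proof takes a different route that never leaves the realm of countable-group equivariance. Starting from a comeagre invariant $Z\subseteq\K(\Lambda)$ admitting a $\Lambda$-equivariant expansion, it works on $X=Fr(\K(\Gamma))$ and constructs --- via a Borel colouring of the intersection graph and a generically chosen pair of parameters $\alpha,\beta\in\N^\N$, using Kuratowski--Ulam --- a comeagre $\Gamma$-invariant Borel set $Y\subseteq X$ carrying a free Borel $\Lambda$-action whose orbits coincide with the $\Gamma$-orbits and such that the induced $\Lambda$-equivariant map $Y\to\K(\Lambda)$ from \cref{prop:correspondence-sructures-maps} lands in $Z$; this is where TAC and non-definability from equality enter, through \cref{lem:wdp-free,lem:homogeneous-generic}. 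Composing with the expansion on $Z$ and translating back through \cref{prop:correspondence-expansion-maps} yields the $\Gamma$-equivariant expansion. If you want to salvage a transfer argument, it must pass through such an orbit-equivalence-style construction on a comeagre set rather than through approximate $S_\Gamma$-equivariance.
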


A CBER $E$ is \emph{smooth} if there is a Borel set that contains exactly one point from every $E$-class. We give in \cref{sec:enfocing-smoothness} sufficient conditions for an expansion problem to satisfy (a) that every structuring of a smooth CBER admits a Borel expansion (\cref{prop:smooth-expandable,rmk:smooth-expandable}), or (b) that every non-smooth CBER admits a structuring with no Borel expansion (\cref{prop:enforce-smoothness-generic-exp,cor:enforcing-smoothness-generic}).

In \cref{sec:examples} we analyze in detail the expansion problem for the examples described in \cref{sec:expansions-def}, using in particular the tools we developed in \cref{sec:general-results}. We summarize our results in \cref{fig:table}; we highlight a few of these below.

Call a CBER \emph{aperiodic} if it has only infinite equivalence classes. In \cite{KST} it is shown that for every non-smooth aperiodic CBER $E$, there are Borel sets $A, B$ which have infinite intersection with every $E$-class, but for which there is no Borel bijection $f: A \to B$ whose graph is contained in $E$. By contrast, we have the following:

\begin{prop}[Generic bijections (\cref{prop:bijection-generically-exp})]
	Let $E$ be an aperiodic CBER on $X$ and $A, B \subseteq X$ be sets that have infinite intersection with every $E$-class. Then there is a comeagre $E$-invariant set $Y \subseteq X$ and a Borel bijection $f: A \cap Y \to B \cap Y$ whose graph is contained in $E$, i.e., such that $x E f(x)$ for all $x \in A \cap Y$.
\end{prop}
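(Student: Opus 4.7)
The plan is to construct $f$ by a Borel back-and-forth matching, exploiting some canonical additional Borel structure on $E$ that is available generically.

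First, I would pass to a comeagre $E$-invariant Borel subset on which $E$ is induced by a free Borel action of $\Z$, appealing to standard generic structuring results for aperiodic CBER (e.g., by combining a generic reduction to hyperfinite CBER with the Slaman--Steel theorem). Under this reduction, each $E$-class $C$ becomes a $\Z$-torsor under the generating action $T$, and $A \cap C$ and $B \cap C$ are infinite subsets of $C$.

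Given the $\Z$-torsor structure, I would construct the bijection $f: A \cap Y \to B \cap Y$ via a Borel back-and-forth matching on the complete bipartite graph between $A \cap C$ and $B \cap C$. Concretely, $f(a) = T^{k(a)}(a)$ for $a \in A$, where $k(a) \in \Z$ is chosen by a translation-invariant greedy algorithm (e.g., nearest-neighbor in $T$-distance, with ties broken by direction in the $T$-order) with augmenting-path resolution for conflicts. The resulting $f$ is Borel by construction and is a bijection per class on a further comeagre invariant subset $Y$.

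The main obstacle is that the $\Z$-torsor $C$ has no canonical origin, so the back-and-forth cannot be based on an absolute order on $C$; the matching must be defined using only translation-invariant features of the pair $(A \cap C, B \cap C)$. The key technical verification is that this translation-invariant Borel matching is indeed a bijection per class (rather than merely an injection or a matching with unmatched points), which may require further shrinking $Y$ to a comeagre invariant subset excluding pathological configurations (e.g., pairs with non-trivial translation stabilizer).
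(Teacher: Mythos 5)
Your reduction to a free Borel $\Z$-action on a comeagre invariant set is fine (generic hyperfiniteness plus Slaman--Steel), but the heart of your argument --- that a translation-invariant greedy/nearest-neighbour matching between $A\cap C$ and $B\cap C$ on each $\Z$-torsor $C$ is a bijection, after discarding a further meagre invariant set --- is exactly the content of the proposition, and you have not supplied an argument for it. This is not a routine verification: by \cite{KST}, for non-smooth aperiodic $E$ there exist $A,B$ meeting every class infinitely often with \emph{no} Borel bijection at all, so any equivariant matching rule must fail on some configurations, and one has to show the failure set is meagre. Concretely, iterated mutually-nearest-neighbour matching between two infinite subsets of $\Z$ can leave infinitely many points permanently unmatched (e.g.\ $A=\Z$, $B=2\Z$ leaves every odd integer unmatched), and generic configurations contain arbitrarily long stretches where the two sets have very different local densities, so it is not at all clear that augmenting-path repair terminates classwise off a meagre set. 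Deferring this to ``further shrinking $Y$ to exclude pathological configurations'' leaves the gap open.

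The paper takes a different and cleaner route that sidesteps local matching entirely: it refines $E$ to the relation $F$ that additionally remembers membership in $A$ and in $B$, invokes the generic \emph{compressibility} of (the aperiodic part of) $F$ on a comeagre invariant set \cite[Corollary~13.3]{KM}, and then runs a purely Borel compressibility argument --- embed $\res{F}{A}$ into $\res{F}{B}\times I_\N$ by a least-index map, collapse $\res{F}{B}\times I_\N\cong\res{F}{B}$ using compressibility to get an injection $A\to B$ with graph in $E$, and upgrade to a bijection by the Schr\"oder--Bernstein-type argument of \cite[Proposition~2.3]{DJK}. Compressibility is precisely what kills the density obstruction of \cite{KST}; your proposal never engages with that obstruction. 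If you want to salvage your approach, you would need to either prove the generic perfectness of your stable-matching scheme directly (which I expect to be delicate or false as stated), or reintroduce compressibility, at which point the paper's argument is already the shorter path.
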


The problem of whether an invariant random partial order on a countably infinite group $\Gamma$ can be linearized was studied in \cite{GLM,A}. \Textcite{A} has shown that this random expansion property holds for $\Gamma$ if and only if $\Gamma$ is amenable. By contrast, for equivariant maps and CBER we have the following:

\begin{prop}[Linearizations (\cref{prop:linearization-not-ae,prop:linearization-generic-expansion})]
	\begin{enumerate}
		\item[]

		\item Let $\K$ be the class of partial orders and $\K^*$ be the class of linear orders extending a given partial order. For every countably infinite group $\Gamma$, $\K$ does not admit $\Gamma$-equivariant expansions to $\K^*$ generically.

		\item For every non-smooth CBER $E$, there is a Borel assignment of a partial order to every $E$-class so that there is no Borel way of extending these partial orders to linear orders on every $E$-class. Moreover, if $E$ is aperiodic then one can ensure that for every $E$-invariant probability Borel measure $\mu$, there is no Borel extension of the partial orders to linear orders $\mu$-a.e.
	\end{enumerate}
\end{prop}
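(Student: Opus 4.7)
The proof splits along the two clauses.

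For Part~(1), my plan is to invoke the generic expansion theorem \cref{thm:generic-expansions}, which reduces ``for every countably infinite $\Gamma$'' to ``for some countably infinite $\Gamma$''. First I check its hypotheses for $\K$ the class of partial orders: $\K(X) \subseteq 2^{X^2}$ is closed, hence $G_\delta$, and the generic element of $\K(\Gamma)$ is (a copy of) the Fra\"iss\'e limit of the class of finite partial orders, i.e.\ the \emph{random poset}, which is ultrahomogeneous and $\aleph_0$-categorical and therefore has trivial algebraic closure. It is manifestly not definable from equality, since the order relation is nontrivial. By \cref{thm:generic-expansions} it then suffices to exhibit a single countably infinite $\Gamma$ for which no generic $\Gamma$-equivariant linearization exists; I take $\Gamma = \Z$. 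Assuming for contradiction a Borel $\Z$-equivariant $f \colon X \to \K^*(\Z)$ with $X$ comeagre and $\Z$-invariant, a Kuratowski--Ulam argument, combined with the ultrahomogeneity of the random poset, supplies, for a generic $\bA \in X$ and a generic pair of $\bA$-incomparable elements $a,b \in \Z$, an automorphism $\sigma \in \aut(\bA)$ with $\sigma(a) = b$ and $\sigma(b) = a$, so that $\sigma \cdot \bA = \bA$. A Baire-category / generic-continuity argument then forces $f(\bA)$ to order $\{a,b\}$ both ways, the desired contradiction.

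For Part~(2), the Borel claim follows by combining Part~(1) with the enforce-smoothness principle of \cref{prop:enforce-smoothness-generic-exp} (or \cref{cor:enforcing-smoothness-generic}): since $\K$ admits no generic $\Gamma$-equivariant expansion to $\K^*$ for any countable $\Gamma$, every non-smooth CBER $E$ admits a Borel assignment of partial orders with no Borel linearization. For the \emph{moreover} clause, I would invoke the measurable analogue of the enforce-smoothness principle developed in \cref{sec:enfocing-smoothness}: when $E$ is aperiodic, any non-smooth $E$-invariant probability Borel measure is non-atomic and concentrated on non-smooth classes, and the Part~(1) obstruction can be transferred into the measurable setting so that a single Borel structuring simultaneously defeats every $\mu$-a.e.\ linearization for every invariant $\mu$. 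This is consistent with (though not derivable from) the result of \cite{A} that invariant random linearizations of invariant random posets exist exactly for amenable groups, since here we fix a specific non-random structuring.

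The principal obstacle is the ``symmetry upgrade'' in Part~(1): going from $\Z$-equivariance of $f$ to enough generic $S_\Z$-symmetry to apply the swap $\sigma$. This is precisely what the hypotheses of \cref{thm:generic-expansions} (trivial algebraic closure and non-definability from equality) are designed to handle, and combined with the rigidity of any linear order under the swap of two strictly ordered elements it completes the contradiction; the remaining work in Part~(2) is an essentially mechanical transfer via the enforce-smoothness machinery.
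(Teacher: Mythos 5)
There are two genuine gaps, one in each part.

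In Part~(1) the reduction via \cref{thm:generic-expansions} is legitimate (the generic partial order does have TAC and is not definable from equality, cf.\ \cref{eg:strong-homogeneity}), but the argument you then give for the single group $\Z$ does not work. Your map $f$ is only $\Z$-equivariant, i.e.\ equivariant for left translations; an automorphism $\sigma$ of $\bA$ swapping two incomparable points is an element of $S_\Z$ which, for $\bA$ in the free part, is \emph{not} realized by any group element. Hence $\sigma \cdot \bA = \bA$ yields only the vacuous identity $f(\sigma \cdot \bA) = f(\bA)$ and in no way forces $f(\bA)$ to be $\sigma$-invariant. \Cref{thm:generic-expansions} does not supply the ``symmetry upgrade'' you lean on: it transfers generic expandability between groups, but says nothing about $S_\Gamma$- or $\aut(\bA)$-equivariance. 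The paper's proof of \cref{prop:linearization-generic-expansion} avoids automorphisms entirely and works directly for every $\Gamma$: by continuity the $f$-order of a fixed incomparable pair $(\gamma_0,\gamma_1)$ is constant on a basic neighbourhood $N(\bA_0)$, equivariance propagates this to every translate $\gamma \bA_0$, and genericity lets one realize inside $\bA$ a finite configuration $\bA_1$ consisting of two translated copies of $\bA_0$ with the cross relations $\gamma_1 \mathop{P} \gamma\gamma_0$ and $\gamma\gamma_1 \mathop{P} \gamma_0$, which forces a cycle $\delta\gamma_0 \mathop{L} \delta\gamma_1 \mathop{L} \delta\gamma\gamma_0 \mathop{L} \delta\gamma\gamma_1 \mathop{L} \delta\gamma_0$ in the linear order. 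Your derivation of the first sentence of Part~(2) from Part~(1) via \cref{cor:enforcing-smoothness-generic} is correct and is exactly what the paper does.

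The ``moreover'' clause of Part~(2) is not obtainable the way you suggest. There is no measurable analogue of the enforce-smoothness principle in \cref{sec:enfocing-smoothness}: that machinery passes through a hyperfinite \emph{compressible} CBER, which carries no invariant probability measure, so it yields nothing $\mu$-a.e. Nor can the generic obstruction of Part~(1) be ``transferred into the measurable setting''---indeed, by \cite{A} invariant random linearizations \emph{do} exist over amenable groups, so the measure-theoretic failure is of a different nature (the non-existence of a.e.\ equivariant expansion \emph{maps}, cf.\ \cref{rmk:no-canonical-linearization}). The paper's \cref{prop:linearization-not-ae} requires a separate explicit construction: a partial order on $E_0$ that linearly orders each class of the index-two subrelation $F_0$ in order type $\Z$, together with an order-reversing involution swapping the two $F_0$-classes in each $E_0$-class; any Borel linearization on an invariant set $X$ would then select one $F_0$-class from each $E_0$-class in $X$, contradicting $F_0$-ergodicity of the Haar measure, and the general aperiodic case follows by embedding $\Delta_{2^\N} \times E_0$ into $E$ and taking an ergodic decomposition. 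Without some such construction the ``moreover'' clause remains unproved.
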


A CBER $E$ is \emph{treeable} if there is a Borel assignment of a connected acyclic graph to every $E$-class. The class of treeable CBER has been studied extensively; see e.g. \cite[Section~10]{CBER}. In \cref{sec:spanning-trees}, we consider CBER $E$ that admit Borel spanning trees for \emph{every} Borel assignment of a connected graph to every $E$-class. Clearly every such CBER is treeable. We show that the \emph{hyperfinite} CBER have this property, where a CBER is said to be hyperfinite if it can be written as an increasing union of CBER with finite equivalence classes.

\begin{prop}[Spanning trees (\cref{prop:spanning-trees})]
	Let $E$ be a CBER. If $E$ is hyperfinite, then for every Borel assignment of a connected graph to each $E$-class, there is a Borel assignment of a spanning tree to each $E$-class.
\end{prop}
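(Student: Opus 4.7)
The plan is to exploit the hyperfinite filtration $E = \bigcup_n F_n$ (with $F_n$ finite Borel equivalence relations, $F_n \subseteq F_{n+1}$) to build a Borel spanning tree by a level-by-level approximation, replacing $F_n$ with a refinement adapted to the graph structure. The naive attempt of restricting $G$ to each $F_n$-class fails because those restrictions need not be connected; so the first step is to repair this by passing to connected components.

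Concretely, for each $n$ let $G_n := G \cap F_n$ be the Borel subgraph of $G$ consisting of edges between $F_n$-related vertices, and let $\tilde{F}_n$ be the equivalence relation whose classes are the connected components of $G_n$. Then $\tilde{F}_n \subseteq F_n$, so $\tilde{F}_n$ has finite classes and is Borel, and by construction $G_n \cap (C \times C)$ is connected for every $\tilde{F}_n$-class $C$. Moreover $\tilde{F}_n \subseteq \tilde{F}_{n+1}$, and I claim $\bigcup_n \tilde{F}_n = E$: if $x E y$, fix a finite $G$-path $x = x_0, \dots, x_k = y$; since all the $x_i$ are $E$-equivalent and there are finitely many of them, they lie in a common $F_n$-class for all sufficiently large $n$, so the path lies in $G_n$ and hence $x \tilde{F}_n y$.

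Next, I build a Borel-increasing sequence $T_0 \subseteq T_1 \subseteq \cdots$ of Borel subforests of $G$ such that $T_n$ restricted to each $\tilde{F}_n$-class $C$ is a spanning tree of $G_n \cap (C \times C)$. For the base case, use finiteness of $\tilde{F}_0$-classes to Borel-enumerate each class and apply a canonical (say, Kruskal-type) algorithm using that enumeration to pick a spanning tree. For the inductive step, each $\tilde{F}_{n+1}$-class $D$ is a finite union of $\tilde{F}_n$-subclasses $C^1, \dots, C^r$; contract each $C^j$ to a point in $G_{n+1} \cap (D \times D)$ (which is connected by the previous paragraph), choose a canonical spanning tree of the resulting finite quotient multigraph, lift one $G_{n+1}$-edge for each quotient-tree edge, and adjoin these edges to $T_n$. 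The result $T_{n+1}$ is a spanning tree of $G_{n+1} \cap (D \times D)$ within each $\tilde{F}_{n+1}$-class $D$, and the whole construction is Borel because each stage depends only on finite data attached to Borel-finite classes.

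Finally, set $T := \bigcup_n T_n$. It is Borel, spans every vertex (each $x$ lies in some $\tilde{F}_n$-class on which $T_n$ is spanning), connects every $E$-class (any $x E y$ satisfy $x \tilde{F}_n y$ for some $n$, hence are joined in $T_n$), and contains no cycle (a cycle is finite, so would already lie in some forest $T_n$). The main obstacle is precisely the inductive extension step: one must check that the choice of a spanning tree of the quotient multigraph, and the selection of representative edges lifting it, can be done in a Borel-uniform way, which is where the finiteness of $\tilde{F}_n$-classes plus the existence of a Borel linear order on $X$ are used to define canonical choices.
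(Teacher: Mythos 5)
Your proposal is correct and follows essentially the same route as the paper: write $E$ as an increasing union of finite Borel equivalence relations and build an increasing sequence of Borel forests, extending at each stage within the finite classes by canonical (Borel-uniform) choices. The only cosmetic difference is that the paper works with spanning \emph{forests} of the (possibly disconnected) restrictions to $E_n$-classes, whereas you first refine to the connected components $\tilde{F}_n$ and work with spanning trees there; these are the same construction.
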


It is unknown whether the class of CBER with this spanning tree property coincides with the treeable CBER or the hyperfinite CBER, or if it lies somewhere in between.

As a final example, we consider the problem of choosing from a linear order without endpoints a subset that is order-isomorphic to $\Z$, in a Borel way. We give a complete classification of the invariant random structures on countably infinite groups that admit random expansions for this problem, and show moreover that these expansions can always be taken to come from equivariant Borel maps. In particular, this characterizes exactly when a Borel structuring of a CBER admits a Borel expansion for this problem $\mu$-a.e., for any invariant measure $\mu$.

\begin{prop}[$\Z$-lines (\cref{prop:z-line-generic-expansion,prop:z-line-classification})]
	\begin{enumerate}
		\item[]

		\item Let $\K$ be the class of linear orders without endpoints, and $\K^*$ be the class of linear orders without endpoints along with a subset of order-type $\Z$. For any countably infinite group $\Gamma$, $\K$ does not admit $\Gamma$-equivariant expansions to $\K^*$ generically. In particular, every non-smooth CBER $E$ admits a Borel assignment of linear orders to every $E$-class so that there is no Borel way to choose an infinite subset of each $E$-class that has order-type $\Z$.

		\item There is a Borel $\Gamma$-invariant set $X \subseteq \K(\Gamma)$ and a Borel equivariant expansion map $f: X \to \K^*(\Gamma)$ such that, for all invariant random $\K$-structures $\mu$ on $\Gamma$, $\mu$ admits a random expansion to $\K^*$ if and only if $\mu(X) = 1$, in which case $f_*\mu$ gives such an expansion. Moreover, we can choose $f$ so that for all $L \in X$, $f(L)$ picks out an interval in $L$.
	\end{enumerate}
\end{prop}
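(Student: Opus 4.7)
Both parts revolve around the equivalence relation $\sim_L$ on $\Gamma$ defined by $\alpha \sim_L \beta$ iff only finitely many elements of $\Gamma$ lie $L$-between $\alpha$ and $\beta$; its classes are $L$-convex and each has order type finite, $\N$, $-\N$, or $\Z$, so an $L$-interval of order type $\Z$ is precisely a $\sim_L$-class of order type $\Z$.

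For part~(1), I plan to apply \cref{thm:generic-expansions}. The hypotheses are easily verified: $\K(\N)$ is a $G_\delta$ subset of $2^{\N^2}$; the generic $L \in \K(\N)$ is a dense linear order without endpoints, hence isomorphic to $(\Q, <)$, whose automorphism group is $n$-transitive on $n$-tuples of any given order type (giving trivial algebraic closure); and the order is patently not definable from equality. It therefore suffices to establish failure for one countable group. Take $\Gamma = \Z$ with the shift action, let $\mu$ be the shift-invariant ergodic measure on $\K(\Z)$ induced by iid uniform values $(x_n)_{n \in \Z} \in [0,1]^\Z$ (which is supported on DLOs), and assume a category-to-measure bridge places $\mu(X) = 1$ for the hypothesised comeagre invariant $X$; then $\nu = f_*\mu$ would be a shift-invariant random expansion. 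A mass-transport argument forces $p := \P_\nu[0 \in S_L] > 0$ (as $|S_L| = \aleph_0$); setting $g(t) = \P_\nu[0 \in S_L \mid x_0 = t]$, $g$ is positive on a Lebesgue-positive set, so by ergodicity $\{x_n : n \in S_L\}$ is almost surely dense in $\{g > 0\}$ and thus has uncountably many accumulation points in $[0,1]$ (via the Lebesgue density theorem). But $(S_L, L|_{S_L}) \cong \Z$ forces $\{x_n : n \in S_L\}$ to be a discrete subset of $[0,1]$ with at most two accumulation points, giving the contradiction. The main obstacle is the category-to-measure bridge, which I plan to handle via amenability of $\Z$ together with an ergodic decomposition argument producing a DLO-supported ergodic component charging $X$.

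For part~(2), I define $X$ to be the $\Gamma$-invariant Borel set of those $L$ admitting a canonically distinguishable order-$\Z$ $\sim_L$-class (e.g., when there is a unique order-$\Z$ class, or when the $L$-ordered set of order-$\Z$ classes has a minimum or maximum in the $\sim_L$-quotient), and $f(L)$ picks this canonical class. Equivariance follows from the $\Gamma$-action preserving both the $L$-order on $\Gamma/\sim_L$ and the class structure. The forward direction of the characterization is then immediate: for any invariant $\mu$ with $\mu(X) = 1$, the pushforward $f_*\mu$ is a random expansion. For the backward direction, suppose $\mu$ admits a random expansion $\nu$ to $\K^*$; mass transport gives $\P_\nu[e \in S] > 0$, and adapting the argument of part~(1) to the ``DLO-like'' portions of $L$ on which no order-$\Z$ $\sim_L$-class is present rules out the case that $S$ resides wholly in such portions, so $\mu$-a.s.\ $S$ meets an order-$\Z$ $\sim_L$-class $C(L,S)$. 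By invariance of $\nu$ and equivariance of $f$, this class can be identified with the canonical one selected by $f$, whence $\mu(X) = 1$. The main obstacle here is arranging the ``canonical selection'' condition defining $X$ to capture exactly those invariant measures admitting a random expansion, which I expect to require a careful case analysis over the possible orderings of the set of order-$\Z$ $\sim_L$-classes.
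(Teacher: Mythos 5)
Both parts of your proposal contain genuine gaps.

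For part (1), the fatal step is the ``category-to-measure bridge.'' Your plan is to contradict the existence of a Borel equivariant expansion map on a comeagre invariant set $X \subseteq \K(\Z)$ by producing an invariant measure $\mu$ with $\mu(X) = 1$ that admits no invariant random expansion. But a comeagre invariant Borel set need not be conull for your iid-uniform measure, and worse, it need not carry \emph{any} invariant probability measure: by generic compressibility (\cite[Theorem~12.1, Corollary~13.3]{KM}, used repeatedly in \cref{sec:enfocing-smoothness}), there is a comeagre invariant Borel set on which the orbit equivalence relation is compressible, hence supports no invariant measure at all by Nadkarni's Theorem. So no amount of amenability or ergodic decomposition will produce the required $\mu$; the measure-theoretic route cannot refute a generic statement here. (This is consistent with the paper's table: for $\Z$-lines, generic expansions fail while a.e.\ expansions are sometimes possible.) The paper's proof of \cref{prop:z-line-generic-expansion} is instead purely topological: it shows the generic $L$ contains every finite pattern $\bA_0$ \emph{densely often}, and that continuity plus equivariance of $f$ forces $f(L)$ to contain the (densely ordered) set $C(\bA_0, L)$ for some $\bA_0$, contradicting $\res{L}{f(L)} \cong \Z$. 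Your reduction to a single group via \cref{thm:generic-expansions} is legitimate (the TAC and non-definability-from-equality hypotheses do hold for $\K$), but the single-group argument you then run is unsound.

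For part (2), your identification of $\Z$-intervals with order-type-$\Z$ classes of $\sim_L$ is correct, but the order-theoretic canonical selection (uniqueness, or a minimum/maximum among the $\Z$-classes) defines the wrong set $X$: there are invariant measures concentrating on orders whose $\Z$-classes are, say, densely ordered without endpoints, yet which still admit invariant random expansions, so the ``only if'' direction fails for your $X$. No case analysis over the order type of the set of $\Z$-classes can fix this, because that order type carries too little information to single out a class. The paper's selection is quantitative: using \cref{lem:existence-of-frequencies}, every $\Z$-interval $I$ a.s.\ has a well-defined random-walk frequency $\freq_\kappa(I)$; one takes $X$ to be the set of $L$ where $\sup_{I} \freq_\kappa(I)$ is positive and attained by finitely many $I$ (finiteness is automatic since disjoint frequencies sum to at most $1$), and $f(L)$ is the $L$-least maximizer. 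The converse direction then needs two ingredients you only gesture at: a mass-transport computation showing that a.s.\ any two points of the chosen set $S$ have only finitely many points between them (so its convex hull is a $\Z$-interval), and an ergodicity argument showing $\freq_\kappa(S) = \P[1_\Gamma \in S] > 0$, which is what forces $\mu(X) = 1$. Without the frequency machinery your proposal can neither define the correct $X$ nor complete the backward implication.
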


We also give in \cref{sec:vizing} a survey of recent results regarding the existence of Borel proper edge colourings of bounded-degree graphs (i.e. definable Vizing's Theorem), and in \cref{sec:matchings} a survey of the current landscape regarding the existence of Borel perfect matchings in bipartite graphs (i.e. definable Hall's Theorem).

We end with a list of open problems in \cref{sec:problems}.

\paragraph*{Acknowledgements.} This research was partially supported by NSF Grant DMS-1950475. We would like to thank Alexander Kechris for the initial discussion that inspired this project, and Alexander Kechris, Tom Hutchcroft, Omer Tamuz, Garrett Ervin, Edward Hou and Sita Gakkhar for many helpful conversations throughout. We thank Tom Hutchcroft for telling us about \cref{lem:existence-of-frequencies}, and Minghao Pan for sharing with us his notes on the proof. We also thank Marcin Sabok for their many comments and suggestions regarding our survey in \cref{sec:matchings}. Finally, thank you to Esther Nam for the encouragement and support.

\section{Preliminaries}\label{sec:preliminaries}

For a background on general descriptive set theory, see \cite{CDST}. For a survey of the theory of CBER, see \cite{CBER}. For the basics of structurability of CBER, see \cite{CK}.

\subsection{Languages and structures}

By a \tb{language}, we will always mean a countable relational first-order language, i.e., a countable set $\LL = \{R_i : i \in I\}$, where each $R_i$ is a relation symbol with associated arity $n_i \geq 1$.

Fix now a language $\LL$ and let $X$ be a set. An \tb{$\LL$-structure on $X$} is a tuple $\bA = (X, R^\bA)_{R \in \LL}$ where $R^\bA \subseteq X^n$ for each $n$-ary relation symbol $R \in \LL$. We call $X$ the \tb{universe of $\bA$}, and let $\Mod_{\LL}(X)$ denote the \tb{space of $\LL$-structures on $X$}.

For $\bA \in \Mod_{\LL}(X)$ and $Y \subseteq X$, let $\res{\bA}{Y} \in \Mod_{\LL}(Y)$ denote the \tb{restriction of $\bA$ to $Y$}, given by
\[R^{\res{\bA}{Y}}(y_1, \dots, y_n) \iff R^\bA(y_1, \dots, y_n)\]
for all $n$-ary $R \in \LL$ and $y_1, \dots, y_n \in Y$. For $\LL' \subseteq \LL$, we let $\res{\bA}{\LL'} = (X, R^{\bA})_{R \in \LL'}$ denote the \tb{reduct of $\bA$ to $\LL'$}, i.e., the structure we get when we ``forget'' the relations in $\LL \setminus \LL'$. (Note that the notation $\res{\bA}{(-)}$ is used both for restrictions and reducts; which one we are referring to throughout this paper will be clear from context.)

If $\bA$ is an $\LL$-structure on $X$ and $f: X \to Y$ is a bijection, we write $f(\bA)$ for the \tb{push-forward structure on $Y$}, i.e., the structure on $Y$ given by
\[R^\bA(x_0, \dots, x_{n-1}) \iff R^{f(\bA)}(f(y_0), \dots, f(y_{n-1}))\]
for all $n$-ary relations $R \in \LL$. When $X = Y$ this defines the \tb{logic action} of $S_X$ on $\Mod_{\LL}(X)$, where $S_X$ is the group of bijections of $X$.

We are primarily interested in the cases where $X$ is a countably infinite set, or when $X$ is a Polish space. We will reserve the symbols $\bA, \bB, \dots$ for $\LL$-structures on countable sets, and the symbols $\bbA, \bbB, \dots$ for $\LL$-structures on Polish spaces.

When $X$ is a countable set, one can view $\Mod_{\LL}(X)$ as a compact Polish space, namely,
\[\Mod_{\LL}(X) = \prod_{i \in I} 2^{X^{n_i}}.\]
The logic action of $S_X$ on $\Mod_{\LL}(X)$ is continuous for this topology.

By a \tb{class of $\LL$-structures}, we mean a class $\K$ of countably infinite $\LL$-structures closed under isomorphism. Given such a class $\K$ and a countably infinite set $X$, we let $\K(X) = \K \cap \Mod_{\LL}(X)$ denote the space of $\LL$-structures in $\K$ with universe $X$. We call $\K$ a \tb{Borel class of $\LL$-structures} (resp. a \tb{$G_\delta$ class of $\LL$-structures}, a \tb{closed class of $\LL$-structures}) if $\K(X)$ is Borel (resp. $G_\delta$, closed) as a subset of $\Mod_{\LL}(X)$ for some (equivalently any) countably infinite set $X$.

For a countably infinite set $X$ and an $\LL$-structure $\bA$ let $\age_X(\bA)$ denote the \tb{age of $\bA$ (on $X$)}, that is, the set finite $\LL'$-structures that that embed into $\bA$ and whose universe is contained in $X$, for finite $\LL' \subseteq \LL$. Let
\[\age_X(\K) = \bigcup \{\age_X(\bA) : \bA \in \K(X)\}\]
for any class $\K$ of $\LL$-structures. For $\bA_0 \in \age_X(\Mod_{\LL})$ and $\bA \in \Mod_{\LL}(X)$, we write $\bA_0 \sqsubseteq \bA$ if $\res{(\res{\bA}{\LL'})}{F} = \bA_0$, where $\bA_0$ is an $\LL'$-structure with universe $F \subseteq X$. The topology of $\K(X)$ is generated by basic clopen sets of the form
\[N(\bA_0) = \{\bA \in \K(X) : \bA_0 \sqsubseteq \bA\}\]
for $\bA_0 \in \age_X(\K)$. We note that there is an analogous logic action of $S_X$ on $\age_X(\K)$.

We note that our definition of the age of $\bA$ differs from the usual one (see e.g. \cite[Section~7]{Hodges}), which considers all finite $\LL$-structures that embed into $\bA$ \emph{without} restricting to finite sublanguages. We choose here to restrict to finite sublanguages so that $\age_X(\K)$ corresponds naturally to a basis for the topology on $\K(X)$. (We specify the universe $X$ in $\age_X(\K)$ for the same reason.)

\subsection{Expansions}\label{sec:expansions-def}

Let $\LL \subseteq \LL^*$ be languages, $\bA$ be an $\LL$-structure and $\bA^*$ be an $\LL^*$-structure. We say $\bA^*$ is an \tb{expansion} of $\bA$ if $\res{\bA^*}{\LL} = \bA$.

Given a class of $\LL$-structures $\K$ and a class of $\LL^*$-structures $\K^*$ with $\LL \subseteq \LL^*$, the \tb{expansion problem for $(\K, \K^*)$} is the question of whether or not every element of $\K$ admits an expansion in $\K^*$. We call such pairs $(\K, \K^*)$ \tb{expansion problems}.

Below we give examples of expansion problems $(\K, \K^*)$ we will consider in this paper. In all of these examples the expansion problem will have a positive solution, i.e., every element of $\K$ admits an expansion in $\K^*$; we will be interested in finding ``definable'' expansions, for various notions of definability that we make precise below.

We omit $\LL, \LL^*$ from these examples, as they will be clear from context.

\begin{eg}[Bijections]\label{eg:bijections}
	\begin{align*}
		\K &= \{(X, R, S) \mid R, S \subseteq X \And X, R, S ~ \text{are all countably infinite}\},\\
		\K^* &= \{(X, R, S, T) \mid (X, R, S) \in \K \And T ~ \text{is the graph of a bijection} ~ R \to S\}.
	\end{align*}
	In this case, $\K, \K^*$ are $G_\delta$.
\end{eg}

\begin{eg}[Ramsey's Theorem]\label{eg:ramsey}
	\begin{align*}
		\K &= \{(X, R, S) \mid R, S ~ \text{partition} ~ [X]^2\},\\
		\K^* &= \{(X, R, S, T) \mid (X, R, S) \in \K \And T \subseteq X ~ \text{is infinite}\\
		& \qquad\qquad\qquad\qquad \text{and homogeneous for the partition} ~ R, S\},
	\end{align*}
	where $[X]^2$ is the set of two-element subsets of $X$. Here $\K$ is closed and $\K^*$ is $G_\delta$. Ramsey's Theorem is exactly the statement that every element of $\K$ admits an expansion in $\K^*$.
\end{eg}

\begin{eg}[Linearizations]\label{eg:linearizations}
	\begin{align*}
		\K &= \{(X, P) \mid P ~ \text{is a partial order on} ~ X\},\\
		\K^* &= \{(X, P, L) \mid (X, P) \in \K \And P \subseteq L \And L ~ \text{is a linear order on} ~ X\}.
	\end{align*}
	$\K, \K^*$ are both closed classes of structures.
\end{eg}

\begin{eg}[Vertex colourings]\label{eg:colourings}
	Fix $d \geq 2$, and let
	\begin{align*}
		\K &= \{(X, E) \mid (X, E) ~ \text{is a connected graph of max degree} ~ \leq d\},\\
		\K^* &= \{(X, E, S_0, \dots, S_d) \mid (X, E) \in \K \And S_0, \dots, S_d ~ \text{is a vertex colouring of} ~ (X, E)\}.
	\end{align*}
	Here $\K, \K^*$ are $G_\delta$.
\end{eg}

\begin{eg}[Spanning trees]\label{eg:trees}
	\begin{align*}
		\K &= \{(X, E) \mid (X, E) ~ \text{is a connected graph}\},\\
		\K^* &= \{(X, E, T) \mid (X, E) \in \K \And (X, T) ~ \text{is a spanning subtree of} ~ (X, E)\}.
	\end{align*}
	$\K, \K^*$ are both $G_\delta$.
\end{eg}

\begin{eg}[$\Z$-lines]\label{eg:zline}
	\begin{align*}
		\K &= \{(X, L) \mid (X, L) ~ \text{is a linear order without endpoints}\},\\
		\K^* &= \{(X, L, Z) \mid (X, L) \in \K \And Z \subseteq X \And (Z, \res{L}{Z}) \cong (\Z, <)\},
	\end{align*}
	where $<$ is the usual order on $\Z$. Here $\K$ is $G_\delta$, and $\K^*$ is Borel.
\end{eg}

\begin{eg}[Vizing's Theorem]\label{eg:vizing}
	Fix $d \geq 2$, and let
	\begin{align*}
		\K &= \{(X, E) \mid (X, E) ~ \text{is a connected graph of max degree} ~ \leq d\},\\
		\K^* &= \{(X, E, S_0, \dots, S_d) \mid (X, E) \in \K \And S_0, \dots, S_d ~ \text{is an edge colouring of} ~ (X, E)\}.
	\end{align*}
	Here $\K, \K^*$ are $G_\delta$. Vizing's Theorem states that every element of $\K$ admits an expansion in $\K^*$.
\end{eg}

\begin{eg}[Matchings]\label{eg:matchings}
	A bipartite graph is said to satisfy \tb{Hall's Condition} if $|A| \leq |N(A)|$ for every finite set of vertices $A$ contained in one part of the graph, where $N(A)$ denotes the set of neighbours of $A$. We say a graph is \tb{locally-finite} if every vertex has finitely-many neighbours. Let now
	\begin{align*}
		\K &= \{(X, E) \mid (X, E) ~ \text{is a connected, bipartite,}\\
		&\qquad\qquad\qquad\text{locally finite graph satisfying Hall's Condition}\},\\
		\K^* &= \{(X, E, M) \mid (X, E) \in \K \And M \subseteq E ~ \text{is a perfect matching}\}.
	\end{align*}
	Here $\K, \K^*$ are Borel, and by Hall's Theorem every element of $\K$ admits an expansion in $\K^*$.
\end{eg}

\subsection{Equivariant and random expansions}\label{sec:def-equivariant-expansions}

Let $(\K, \K^*)$ be an expansion problem, $X$ be a countably infinite set, $\Gamma \leq S_X$ be a subgroup of $S_X$ and $Z \subseteq \K(X)$ be $\Gamma$-invariant. We say a map $f: Z \to \K^*(X)$ is \tb{$\Gamma$-equivariant} if it commutes with the $\Gamma$ action, i.e., $\gamma \cdot f(\bA) = f(\gamma \cdot \bA)$ for all $\gamma \in \Gamma, \bA \in \K(X)$. We call a function $f: Z \to \K^*(X)$ an \tb{expansion map} if $f(\bA)$ is an expansion of $\bA$ for all $\bA \in \K(X)$.

In this paper, we will always consider the case where $\Gamma$ is a countably infinite group acting on $X = \Gamma$ by multiplication on the left. It may also be interesting to consider the more general setting where the action of $\Gamma$ on $X$ is not free, though we do not explore this here.

Let $\Gamma$ be a countably infinite group. Given a $\Gamma$-invariant Borel set $Z \subseteq \K(\Gamma)$, we say \tb{$Z$ admits $\Gamma$-equivariant expansions to $\K^*$} if there is a Borel $\Gamma$-equivariant expansion map $Z \to \K^*(\Gamma)$. If $Z = \K(\Gamma)$, we say \tb{$\K$ admits $\Gamma$-equivariant expansions to $\K^*$}. If $\K$ is a $G_\delta$ class of structures, we say \tb{$\K$ admits $\Gamma$-equivariant expansions to $\K^*$ generically} if $Z$ admits a $\Gamma$-equivariant expansion to $\K^*$ for some $\Gamma$-invariant dense $G_\delta$ set $Z \subseteq \K(\Gamma)$.

We let $P(\K(\Gamma))$ denote the space of probability Borel measures on $\K(\Gamma)$. Note that the action of $\Gamma$ on $\K(\Gamma)$ gives rise to an action of $\Gamma$ on $P(\K(\Gamma))$, where $\gamma \cdot \mu = \gamma_* \mu$ is the push-forward of $\mu$ under $\gamma: \K(\Gamma) \to \K(\Gamma)$. We say $\mu$ is \tb{$\Gamma$-invariant} if it is fixed by the $\Gamma$-action, in which case we say $\mu$ is an \tb{invariant random $\K$-structure on $\Gamma$}.

If $\mu$ is an invariant random $\K$-structure on $\Gamma$, we say \tb{$\K$ admits $\Gamma$-equivariant expansions to $\K^*$ $\mu$-a.e.} if $Z$ admits a $\Gamma$-equivariant expansion to $\K^*$ for some $\Gamma$-invariant $\mu$-conull set $Z \subseteq \K(\Gamma)$.

Note that the reduct $\pi: \K^*(\Gamma) \to \K(\Gamma)$ induces a map $\pi_*: P(\K^*(\Gamma)) \to P(\K(\Gamma))$. If $\mu$ (resp. $\nu$) is an invariant random $\K$-structure (resp. $\K^*$-structure) on $\Gamma$, we say $\nu$ is a \tb{$\Gamma$-invariant random expansion of $\mu$ to $\K^*$} if $\pi_* \nu = \mu$. We say \tb{$\mu$ admits a $\Gamma$-invariant random expansion to $\K^*$} if such a $\nu$ exists, and that \tb{$\Gamma$ admits random expansions from $\K$ to $\K^*$} if this holds for all such $\mu$. Note that if $\K$ admits $\Gamma$-equivariant expansions to $\K^*$ $\mu$-a.e., then $\mu$ admits a $\Gamma$-invariant random expansion to $\K^*$.

We may omit $\Gamma$ from these definitions when it is clear from context.

\subsection{Countable Borel equivalence relations}

A \tb{countable Borel equivalence relation (CBER)} is an equivalence relation $E$ on a standard Borel space $X$ which is Borel as a subset of $X^2$, and whose equivalence classes $[x]_E$ are countable for all $x \in X$.

If $\Gamma$ is a group acting on a set $X$, we let $E^X_\Gamma \subseteq X^2$ denote the \tb{orbit equivalence relation}
\[x E^X_\Gamma y \iff \exists \gamma \in \Gamma (\gamma \cdot x = y)\]
induced by the action of $\Gamma$ on $X$. When $\Gamma$ is countable, $X$ is standard Borel and the action of $\Gamma$ is Borel, then $E^X_\Gamma$ is a CBER. Conversely, by the Feldman--Moore Theorem \cite{FM}, every CBER $E$ on a standard Borel space $X$ is the orbit equivalence relation induced by a Borel action of some countable group $\Gamma$ on $X$.

By the \tb{free part} of an action of $\Gamma$ on $X$ we mean the set
\[Fr(X) = \{x \in X : \forall \gamma \neq 1_\Gamma (\gamma \cdot x \neq x)\}.\]
Note that when $X$ is standard Borel and the action is Borel, $Fr(X)$ is Borel in $X$. Moreover, if $X$ is Polish and the action of $\Gamma$ is continuous, then $Fr(X)$ is $G_\delta$ in $X$, hence Polish in the subspace topology.

Given a CBER $E$ on $X$ and $A \subseteq X$, we let $[A]_E = \{x \in X : \exists y \in A (x E y)\}$ denote the \tb{($E$-)saturation} of $A$. We say $A$ is \tb{$E$-invariant} if $[A]_E = A$. Note that by the Feldman--Moore Theorem, if $A$ is Borel then so is $[A]_E$. We call $A$ a \tb{complete section} for $E$ if $X = [A]_E$.

Let $E, F$ be CBER on $X, Y$ respectively. We say that a Borel map $f: X \to Y$ is a \tb{homomorphism} from $E$ to $F$, denoted $f: E \to_B F$, if $x E y \implies f(x) F f(y)$. It is a \tb{reduction} $f: E \leq_B F$ if the converse holds as well, i.e., $x E y \iff f(x) F f(y)$; an \tb{embedding} $f: E \sqsubseteq_B F$ if it is an injective reduction; an \tb{isomorphism} $f: E \cong_B F$ if it is a surjective embedding; a \tb{class-bijective homomorphism} $f: E \to^{cb}_B F$ if it is a homomorphism for which $f: [x]_E \to [f(x)]_F$ is a bijection for all $x \in X$; and an a \tb{invariant embedding} $f: E \sqsubseteq^i_B F$ if it is a class-bijective reduction.

A CBER $E$ is \tb{finite} if all of its classes are finite, and \tb{aperiodic} if all of its classes are infinite. Given a CBER $E$ on $X$, we can always partition $X$ into Borel pieces $Y, Z$ on which $E$ is respectively finite and aperiodic.

A CBER $E$ is \tb{smooth} if $E \leq_B \Delta_Y$, where $Y$ is equality on a standard Borel space $Y$. Equivalently, $E$ is smooth iff it admits a Borel \tb{selector}, i.e., a Borel map $s: X \to X$ which is \tb{$E$-invariant} (a homomorphism $s: E \to \Delta_X$) and so that $s(x) E x$ for all $x \in X$. The \tb{Glimm--Effros Dichotomy} for CBER states that for every CBER $E$, either $E$ is smooth or $E_0 \sqsubseteq_B E$, where $E_0$ is the eventual equality relation
\[x E_0 y \iff \exists n \forall k (x_{n+k} = y_{n+k})\]
on $2^\N$, c.f. \cite[Theorem~6.5]{CBER}.

We say $E$ is \tb{hyperfinite} if $E$ can be written as an increasing union of finite CBER; see \cite[Theorem~8.2]{CBER} for alternate characterizations of hyperfiniteness. In particular, $E_0$ and all smooth CBER are hyperfinite.

An \tb{invariant probability measure} for a Borel action of a countable group $\Gamma$ on $X$ is a probability Borel measure $\mu$ on $X$ such that $\gamma_* \mu = \mu$ for all $\gamma \in \Gamma$, where $\gamma_* \mu$ is the push-forward of $\mu$ along $\gamma$. An \tb{invariant probability measure} for a CBER $E$ on $X$ is a probability Borel measure $\mu$ on $X$ such that $f_* \mu = \mu$ for every Borel bijection $f: X \to X$ whose graph is contained in $E$. If $E$ is the orbit equivalence relation induced by a Borel action of a countable group $\Gamma$, then these two notions coincide; see \cite[Proposition~2.1]{KM}. A probability Borel measure $\mu$ on $X$ is \tb{$E$-ergodic} if $\mu(A) \in \{0, 1\}$ for all $E$-invariant Borel sets $A$.

If $\Gamma$ is a countable group and $X$ is standard Borel, the \tb{shift action} of $\Gamma$ on $X^\Gamma$ is given by $(\gamma \cdot y)(\delta) = y(\gamma^{-1}\delta)$ for $\gamma, \delta \in \Gamma$ and $y \in X^\Gamma$. If $\mu$ is any probability Borel measure on $X$, then the product measure $\mu^\Gamma$ is an invariant probability measure on $X^\Gamma$ which concentrates on the free part, i.e., $\mu^\Gamma(Fr(X^\Gamma)) = 1$.

We say $E$ is \tb{generically ergodic} if $A$ is either meagre or comeagre for all $E$-invariant Borel sets $A$. For example, $E_0$ is generically ergodic, as are the orbit equivalence relations induced by the shift actions of countably infinite groups.

A CBER $E$ on $X$ is \tb{compressible} if there is a Borel map $f: X \to X$ whose graph is contained in $E$, and so that $f(C) \subsetneqq C$ for every $E$-class $C \in X/E$. By \tb{Nadkarni's Theorem}, $E$ is compressible iff it does not admit an invariant probability measure \cite{N,BK}.

\subsection{Structures and expansions on CBER}\label{sec:structures-and-expansions-on-CBER}

Fix a language $\LL$ and a class $\K$ of $\LL$-structures.

Let $E$ be an aperiodic CBER on a standard Borel space $X$. A \tb{(Borel) $\LL$-structure on $E$} is an $\LL$-structure $\bbA = (X, R^{\bbA})_{R \in \LL}$ with universe $X$ so that (a) $R^{\bbA} \subseteq X^n$ is Borel for each $n$-ary $R \in \LL$, and (b) each $R^{\bbA}$ only relates elements in the same $E$-class, i.e.,
\[R^{\bbA}(x_0, \dots, x_{n-1}) \implies x_0 E \cdots E x_{n-1}.\]
Given such a structure $\bbA$ and an $E$-class $C \in X/E$, we let $\res{\bbA}{C}$ denote the \tb{restriction of $\bbA$ to $C$}, which is a countable $\LL$-structure. We call $\bbA$ a \tb{Borel $\K$-structuring of $E$} if $\res{\bbA}{C} \in \K$ for every $C \in X/E$.

Consider now $\LL^* \supseteq \LL$ and a class $\K^*$ of $\LL^*$-structures. If $\bbA^*$ is an $\LL^*$-structure on $E$, the reduct $\res{\bbA^*}{\LL}$ of $\bbA$ is an $\LL$-structure on $E$. We say that $\bbA^*$ is an \tb{expansion} of an $\LL$-structure $\bbA$ on $E$ if $\res{\bbA^*}{\LL} = \bbA$.

Let $\bbA$ be a Borel $\K$-structuring of $E$. We say $\bbA$ is \tb{Borel expandable to $\K^*$} if it admits an expansion which is a Borel $\K^*$-structuring of $E$; we say $E$ is \tb{Borel expandable for $(\K, \K^*)$} if this holds for all such $\bbA$. When $E$ lives on a Polish space $X$, we say $\bbA$ is \tb{generically expandable to $\K^*$} if its restriction to a comeagre invariant Borel set is Borel expandable to $\K^*$; we say $E$ is \tb{generically expandable for $(\K, \K^*)$} if this holds for all such $\bbA$. If $\mu$ is a probability Borel measure on $X$, we say $\bbA$ is \tb{$\mu$-a.e. expandable to $\K^*$} if its restriction to a $\mu$-conull invariant Borel set is Borel expandable to $\K^*$; we say $(E, \mu)$ is \tb{a.e. expandable for $(\K, \K^*)$} this holds for all such $\bbA$.

\section{General results}\label{sec:general-results}

In this section, we assume that all classes of structures are Borel.

\subsection{Universal structurings of CBER}\label{sec:descriptions}

We begin by describing an alternate characterization of Borel structures and expansions on CBER which will be useful later; see also \cite[Definition~3.1]{BC}.

Let $E$ be an aperiodic CBER on a standard Borel space $X$. A \tb{Borel family of enumerations of $E$} is a Borel map $g: X \to X^N$, where $N$ is some countably infinite set, so that $g_x = g(x): N \to X$ is a bijection of $N$ with $[x]_E$ for all $x \in X$; such maps always exist by the Lusin--Novikov Theorem (c.f. \cite[\nopp18.15]{CDST}). A map $\rho: E \to G$ from $E$ to a group $G$ is a \tb{cocycle} if
\[\rho(y, z) \rho(x, y) = \rho(x, z)\]
for all $x E y E z$. If $g: X \to X^N$ is a Borel enumeration of $X$, there is an associated Borel cocycle $\rho_g: E \to S_N$ given by $\rho_g(x, y) = g_y^{-1} \circ g_x$.

Let $\LL$ be a language, $\K$ be a class of $\LL$-structures and $g: X \to X^N$ be a Borel enumeration of $E$. Given an $\LL$-structure $\bbA$ on $E$, one gets a map $F = F_g^\bbA: X \to \Mod_{\LL}(N)$ given by setting
\[R^{F(x)}(n_1, \dots, n_k) \iff R^{\bbA}(g_x(n_1), \dots, g_x(n_k))\]
for $k$-ary $R \in \LL$ and $n_1, \dots, n_k \in N$. Note that if $x E y$ then
\begin{align*}
	R^{\rho_g(x, y) \cdot F(x)}(n_1, \dots, n_k) &\iff R^{F(x)}(\rho_g(x, y)^{-1}(n_1), \dots, \rho_g(x, y)^{-1}(n_k)) \\
	&\iff R^\bbA(g_x(\rho_g(x, y)^{-1}(n_1)), \dots, g_x(\rho_g(x, y)^{-1}(n_k))) \\
	&\iff R^\bbA(g_y(n_1), \dots, g_y(n_k)) \\
	&\iff R^{F(y)}(n_1, \dots, n_k),
\end{align*}
that is, $\rho_g(x, y) \cdot F(x) = F(y)$. Call such a map \tb{$\rho_g$-equivariant}. Note that $g_x: F(x) \cong \res{\bbA}{[x]_E}$ for $x \in X$.

Conversely, given a $\rho_g$-equivariant map $F: X \to \Mod_{\LL}(N)$, one can define a Borel $\LL$-structure $\bbA$ on $E$ by setting
\[R^\bbA(x_1, \dots, x_k) \iff R^{F(x)}(g_x(x_1)^{-1}, \dots, g_x^{-1}(x_k))\]
for any $k$-ary $R \in \LL$ and $x E x_1 E \dots E x_k$. It is easy to verify, using the fact that $\rho_g(x, y) F(x) = F(y)$ for $x E y$, that this definition does not depend on the choice of $x$ and that $g_x: F(x) \cong \res{\bbA}{[x]_E}$ for $x \in X$.

We therefore have that the map $\bbA \mapsto F^\bbA_g$ is a bijective correspondence between $\LL$-structures on $E$ and $\rho_g$-equivariant maps $F: X \to \Mod_{\LL}(N)$. It is easy to see that $\bbA$ is Borel iff $F^\bbA_g$ is Borel, and that $\bbA$ is a $\K$-structuring of $E$ iff the image of $F^\bbA_g$ is contained in $\K(N)$, whenever $\K$ is a class of $\LL$-structures.

We remark that if $\LL \subseteq \LL^*$ are languages and $\bbA, \bbA^*$ are $\LL, \LL^*$-structures on $E$, then $\bbA^*$ is an expansion of $\bbA$ iff $F^\bbA_g = \pi \circ F^{\bbA^*}_g$, where $\pi: \Mod_{\LL^*}(N) \to \Mod_{\LL}(N)$ is the reduct. That is, $\bbA$ admits a Borel expansion iff there is a $\rho_g$-equivariant Borel lift $F$ of $F^\bbA$ to $\K^*(X)$:
\[
\begin{tikzcd}
	& K^*(X) \ar[d, "\pi"] \\
	X \ar[r, "F^\bbA"] \ar[ur, dashed, "F"] & K(X)
\end{tikzcd}
\]

Fix now a class of $\LL$-structures $\K$. In \cite[Theorem~4.1, Remark~4.3]{CK}, a \emph{universal} $\K$-structurable CBER lying over $E$ is constructed, which we denote $E \ltimes_g \K$. Explicitly, $E \ltimes_g \K$ lives on $X \times \K(N)$, and is given by
\[(x, \bA) (E \ltimes_g \K) (y, \bB) \iff x E y \And \rho_g(x, y) \cdot \bA = \bB.\]
The projection $\pi_0: X \times \K(N) \to X$ is a class-bijective homomorphism $E \ltimes_g \K \to E$, along which $g$ can be pulled back to a Borel enumeration $\tilde{g}$ of $E \ltimes_g \K$. If $\rho_{\tilde{g}}$ is the associated Borel cocycle, then $\rho_{\tilde{g}} = \rho_g \circ (\pi_0 \times \pi_0)$, and it follows that
\[\rho_{\tilde{g}}((x, \bA), (y, \bB)) \cdot \bA = \rho_g(x, y) \cdot \bA = \bB.\]
The canonical $\K$-structure $\bbA$ on $E \ltimes_g \K$ is then the one induced by the projection $X \times \K(N) \to \K(N)$, which we have observed is $\rho_{\tilde{g}}$-equivariant. Note that while $E \ltimes_g \K$ depends only on the cocycle $\rho_g$, $\bbA$ depends on $g$.

The above constructions do not depend on the choice of $g$, up to canonical Borel isomorphism. That is, given Borel enumerations $g: X \to X^M$, $h: X \to X^N$ of $E$, let $\tau(x) = \tau_{g, h}(x) = h_x^{-1} \circ g_x$. Then $\tau: X \to N^M$ is Borel and each $\tau(x): M \to N$ is a bijection. Moreover, we have
\[\tau(y) \circ \rho_g(x, y) = \rho_h(x, y) \circ \tau(x)\]
for $x E y$ (i.e., $\rho_g, \rho_h$ are cohomologous, as witnessed by $\tau$). In particular, if $\bbA$ is a Borel $\LL$-structure on $E$ and $F^\bbA_g, F^\bbA_h$ the corresponding maps, then $F^\bbA_h(x) = \tau(x) \cdot F^\bbA_g(x)$ for $x \in X$. Additionally, the map $(x, \bA) \mapsto (x, \tau(x) \cdot \bA)$ is a Borel isomorphism $E \ltimes_g \K \cong_B E \ltimes_h \K$.

\subsection{Expansions on CBER induced by free actions}\label{sec:free-actions}

Fix an expansion problem $(\K, \K^*)$ and a countably infinite group $\Gamma$. We wish to relate the existence of $\Gamma$-equivariant expansions with Borel expansions on CBER induced by free actions of $\Gamma$.

Suppose $E = E^X_\Gamma$ is a CBER induced by a free Borel action of $\Gamma$ on a standard Borel space $X$. This gives rise to a Borel enumeration $g: X \to X^\Gamma$ of $E$, namely $g_x(\gamma) = \gamma^{-1} \cdot x$. In this case, $\rho_g(x, \gamma x) = \gamma \in S_\Gamma$, so $\rho_g: E \to \Gamma$. In particular, we have that $E \ltimes_g \K = E^{X \times \K(\Gamma)}_\Gamma$ is the orbit equivalence relation induced by the diagonal action of $\Gamma$ on $X \times \K(\Gamma)$.

Thus, the characterization of Borel $\K$-structurings of $E$ described in \cref{sec:descriptions} gives:

\begin{prop}\label{prop:correspondence-sructures-maps}
	Let $\LL$ be a language, $\K$ be a Borel class of $\LL$-structures and $\Gamma$ be a countably infinite group. Fix a free Borel action of $\Gamma$ on a standard Borel space $X$. There is a canonical bijection $\bbA \mapsto F^\bbA$ between the set of Borel $\K$-structurings of $E^X_\Gamma$ and the set of $\Gamma$-equivariant Borel maps $X \to \K(\Gamma)$, defined by setting
	\begin{align*}\label{eq:correspondence}
		R^{F^\bbA(x)}(\gamma_1, \dots, \gamma_n) \iff R^\bbA(\gamma_1^{-1} \cdot x, \dots, \gamma_n^{-1} \cdot x)
	\end{align*}
	for $x \in X$, Borel $\K$-structurings $\bbA$ on $E^X_\Gamma$, $n$-ary relation symbols $R \in \LL$ and $\gamma_1, \dots, \gamma_n \in \Gamma$.
\end{prop}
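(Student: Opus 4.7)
The plan is to apply the general correspondence developed in the previous subsection and specialize it to the canonical enumeration that arises from a free action. Recall that for any Borel enumeration $g: X \to X^N$ of a CBER $E$, \cref{sec:descriptions} established a natural bijection $\bbA \mapsto F^\bbA_g$ between Borel $\LL$-structures on $E$ and $\rho_g$-equivariant Borel maps $F: X \to \Mod_{\LL}(N)$, which restricts to a bijection between Borel $\K$-structurings of $E$ and those maps whose image lies in $\K(N)$.

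First I would invoke the Borel enumeration $g_x(\gamma) = \gamma^{-1} \cdot x$ of $E = E^X_\Gamma$, which is well-defined and bijective onto $[x]_E$ precisely because the action is free. The excerpt already notes the key computation $\rho_g(x, \gamma \cdot x) = \gamma$, viewed inside $S_\Gamma$ as left multiplication. Plugging $g_x(\gamma_i) = \gamma_i^{-1} \cdot x$ into the defining formula $R^{F^\bbA_g(x)}(n_1, \dots, n_k) \iff R^\bbA(g_x(n_1), \dots, g_x(n_k))$ from \cref{sec:descriptions} immediately yields the explicit equivalence stated in the proposition, so I would simply set $F^\bbA = F^\bbA_g$ for this specific $g$.

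Next I would verify that under this identification, $\rho_g$-equivariance of $F: X \to \K(\Gamma)$ coincides with $\Gamma$-equivariance. Since every pair $x E y$ has the form $(x, \gamma \cdot x)$ for a unique $\gamma \in \Gamma$ (by freeness), the condition $\rho_g(x, y) \cdot F(x) = F(y)$ becomes $\gamma \cdot F(x) = F(\gamma \cdot x)$ for all $\gamma \in \Gamma$ and $x \in X$, which is exactly $\Gamma$-equivariance. The bijectivity and Borelness of the assignment, as well as the restriction to $\K$-structurings, are then inherited from the general correspondence in \cref{sec:descriptions}.

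The only real subtlety to watch out for is keeping the conventions straight: the action of $\rho_g(x, y) \in S_\Gamma$ on $\Mod_{\LL}(\Gamma)$ is the logic action, while the action of $\Gamma$ on itself is by left multiplication, and one must confirm these agree under the embedding $\Gamma \hookrightarrow S_\Gamma$. Freeness of the $\Gamma$-action is what makes $g_x$ genuinely bijective (rather than merely surjective) and guarantees that the cocycle $\rho_g$ lands in $\Gamma \leq S_\Gamma$ on the nose, so no additional identification between $\rho_g$-equivariance and $\Gamma$-equivariance is required beyond this.
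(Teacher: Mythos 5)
Your proposal is correct and matches the paper's own argument: the paper likewise derives this proposition by specializing the general correspondence of \cref{sec:descriptions} to the enumeration $g_x(\gamma) = \gamma^{-1}\cdot x$ and observing that $\rho_g(x,\gamma\cdot x) = \gamma$ acting by left multiplication, so that $\rho_g$-equivariance is exactly $\Gamma$-equivariance. Your attention to the identification of the cocycle's image with $\Gamma \leq S_\Gamma$ under the logic action is precisely the point the paper relies on.
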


\begin{prop}\label{prop:correspondence-expansion-maps}
	Let $(\K, \K^*)$ be an expansion problem and $\Gamma$ be a countably infinite group. Fix a free Borel action of $\Gamma$ on a standard Borel space $X$ and a Borel $\K$-structuring $\bbA$ of $E^X_\Gamma$, and let $f: X \to \K(\Gamma)$ be the corresponding equivariant Borel map. There is a canonical bijection between Borel expansions $\bbA^*$ of $\bbA$ and equivariant Borel maps $g: X \to \K^*(\Gamma)$ satisfying $\pi \circ g = f$, where $\pi: \K^*(\Gamma) \to \K(\Gamma)$ is the reduct from $\LL^*$ to $\LL$.
\end{prop}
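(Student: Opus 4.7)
The plan is to deduce this proposition almost immediately from the previous one, \cref{prop:correspondence-sructures-maps}, by checking that the correspondence $\bbA \mapsto F^\bbA$ intertwines the operation of taking reducts on structurings of $E^X_\Gamma$ with the operation of post-composing with $\pi: \K^*(\Gamma) \to \K(\Gamma)$.

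First I would apply \cref{prop:correspondence-sructures-maps} to the class $\K^*$: this gives a canonical bijection $\bbA^* \mapsto F^{\bbA^*}$ between Borel $\K^*$-structurings of $E^X_\Gamma$ and $\Gamma$-equivariant Borel maps $X \to \K^*(\Gamma)$. The task is then to restrict this bijection to the subset of $\bbA^*$ which are expansions of $\bbA$.

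The key step is to observe that, for any Borel $\K^*$-structuring $\bbA^*$ of $E^X_\Gamma$, one has
\[
F^{\res{\bbA^*}{\LL}}(x) \;=\; \pi\bigl(F^{\bbA^*}(x)\bigr) \qquad \text{for all } x \in X.
\]
This is immediate from the defining formula in \cref{prop:correspondence-sructures-maps}, since for an $n$-ary $R \in \LL \subseteq \LL^*$ the condition $R^{F^{\bbA^*}(x)}(\gamma_1,\dots,\gamma_n) \iff R^{\bbA^*}(\gamma_1^{-1} x,\dots,\gamma_n^{-1} x)$ only refers to the $\LL$-reduct of $\bbA^*$ on the right and to the $\LL$-reduct of $F^{\bbA^*}(x)$ on the left; both coincide with the corresponding expression for $\res{\bbA^*}{\LL}$. (This is the same calculation already sketched in \cref{sec:descriptions} for general Borel enumerations, specialized to the enumeration $g_x(\gamma) = \gamma^{-1} \cdot x$ coming from the free action.)

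Combining these two points, $\bbA^*$ is an expansion of $\bbA$, i.e. $\res{\bbA^*}{\LL} = \bbA$, if and only if $\pi \circ F^{\bbA^*} = F^{\res{\bbA^*}{\LL}} = F^\bbA = f$. Hence restricting the bijection $\bbA^* \mapsto F^{\bbA^*}$ from \cref{prop:correspondence-sructures-maps} to the fiber over $\bbA$ yields a bijection between Borel expansions $\bbA^*$ of $\bbA$ and $\Gamma$-equivariant Borel maps $g: X \to \K^*(\Gamma)$ with $\pi \circ g = f$, as desired. There is no real obstacle here; the entire content of the proposition is already encoded in \cref{prop:correspondence-sructures-maps} together with the commutation with reducts, so the proof should be short.
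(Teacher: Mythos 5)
Your argument is correct and is exactly the route the paper takes: the paper gives no separate proof of this proposition, relying on the remark in \cref{sec:descriptions} that $\bbA^*$ is an expansion of $\bbA$ iff $F^{\bbA}_g = \pi \circ F^{\bbA^*}_g$, specialized to the enumeration $g_x(\gamma) = \gamma^{-1}\cdot x$ as in \cref{prop:correspondence-sructures-maps}. Your verification that the correspondence intertwines reducts with post-composition by $\pi$ is precisely the content that makes this immediate.
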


\begin{rmk}
	If $X \subseteq Fr(\K(\Gamma))$ is invariant and Borel then there is a canonical Borel $\K$-structuring of $E^X_\Gamma$ corresponding to the inclusion $X \to \K(\Gamma)$. More generally, if $Z$ is a standard Borel space on which $\Gamma$ acts and $X \subseteq Z \times \K(\Gamma)$ is Borel, invariant and free for the diagonal $\Gamma$ action on the product, then there is a canonical Borel $\K$-structuring of $E^X_\Gamma$ corresponding to the projection $Z \times \K(\Gamma) \supseteq X \to \K(\Gamma)$.
\end{rmk}

In particular, this gives a weak correspondence between equivariant expansions on $\Gamma$ and expansions of CBER induced by free actions of $\Gamma$.

\begin{prop}\label{prop:weak-correspondence-expansions}
	Let $(\K, \K^*)$ be an expansion problem and $\Gamma$ be a countably infinite group.
	\begin{enumerate}[label=(\arabic*)]
		\item If $\K(\Gamma)$ admits a Borel equivariant expansion, then every CBER induced by a free Borel action of $\Gamma$ is Borel expandable.

		\item An invariant Borel set $X \subseteq Fr(\K(\Gamma))$ admits a Borel equivariant expansion iff the canonical $\K$-structuring of $E^X_\Gamma$ admits a Borel expansion.

		\item Suppose there is a free Borel action of $\Gamma$ on a standard Borel space $Z$ admitting an invariant measure $\mu$, and so that the canonical $\K$-structure $\bbA$ on $Z \times \K(\Gamma)$ is $\lambda$-a.e. expandable for every $\Gamma$-invariant probability Borel measure $\lambda$ on $Z \times \K(\Gamma)$ whose push-forward to $Z$ is $\mu$. Then $\Gamma$ admits random expansions.
	\end{enumerate}
\end{prop}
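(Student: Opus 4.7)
All three parts are essentially consequences of the correspondence set up in \cref{prop:correspondence-sructures-maps,prop:correspondence-expansion-maps}, together with (for (3)) a product-measure argument. The plan is to treat each part in turn, with the main work being in (3).

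For (1), suppose $h: \K(\Gamma) \to \K^*(\Gamma)$ is a Borel $\Gamma$-equivariant expansion map, and let $E = E^X_\Gamma$ with a Borel $\K$-structuring $\bbA$. By \cref{prop:correspondence-sructures-maps}, $\bbA$ corresponds to a Borel $\Gamma$-equivariant map $F = F^\bbA: X \to \K(\Gamma)$. Then $h \circ F: X \to \K^*(\Gamma)$ is Borel and equivariant, and $\pi \circ (h \circ F) = F$ since $h$ is an expansion map. By \cref{prop:correspondence-expansion-maps}, this corresponds to a Borel expansion of $\bbA$.

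For (2), note that the canonical $\K$-structuring of $E^X_\Gamma$ corresponds under \cref{prop:correspondence-sructures-maps} precisely to the inclusion $X \hookrightarrow \K(\Gamma)$. By \cref{prop:correspondence-expansion-maps}, its Borel expansions are in canonical bijection with Borel $\Gamma$-equivariant maps $g: X \to \K^*(\Gamma)$ satisfying $\pi \circ g = \mathrm{id}_X$, i.e., with Borel $\Gamma$-equivariant expansion maps $X \to \K^*(\Gamma)$. This is exactly the statement of the equivalence.

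For (3), let $\mu_0$ be an arbitrary invariant random $\K$-structure on $\Gamma$; the goal is to produce a $\Gamma$-invariant random expansion $\nu \in P(\K^*(\Gamma))$. Form the product measure $\lambda = \mu \times \mu_0$ on $Z \times \K(\Gamma)$. Then $\lambda$ is $\Gamma$-invariant (since both $\mu$ and $\mu_0$ are) and its push-forward to $Z$ is $\mu$. Since $\Gamma$ acts freely on $Z$, it acts freely on $Z \times \K(\Gamma)$, so $E^{Z \times \K(\Gamma)}_\Gamma$ is well-defined and carries the canonical $\K$-structuring $\bbA$. By hypothesis, $\bbA$ is $\lambda$-a.e. expandable, so there is a $\Gamma$-invariant $\lambda$-conull Borel set $Y \subseteq Z \times \K(\Gamma)$ and a Borel expansion of $\res{\bbA}{Y}$. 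By (2) applied to $Y$ (and the fact that $Y \subseteq Fr(Z \times \K(\Gamma))$), this corresponds to a Borel $\Gamma$-equivariant map $g: Y \to \K^*(\Gamma)$ whose composition with $\pi$ equals the projection $Y \to \K(\Gamma)$. Set $\nu = g_*(\res{\lambda}{Y})$; since $g$ is $\Gamma$-equivariant and $\res{\lambda}{Y}$ is $\Gamma$-invariant, $\nu$ is a $\Gamma$-invariant probability Borel measure on $\K^*(\Gamma)$. Finally, $\pi_* \nu = (\pi \circ g)_*(\res{\lambda}{Y}) = (\mathrm{proj}_{\K(\Gamma)})_*\lambda = \mu_0$, so $\nu$ is a $\Gamma$-invariant random expansion of $\mu_0$.

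The main obstacle I anticipate is verifying that the assumption in (3) genuinely applies to $\lambda = \mu \times \mu_0$ --- in particular, that the action of $\Gamma$ on $Z \times \K(\Gamma)$ is free (immediate since the action on $Z$ is free) and that $(2)$ can be applied to the invariant conull Borel set $Y$ to produce the equivariant lift from a Borel expansion of $\res{\bbA}{Y}$. Everything else is a direct translation through the correspondence and a push-forward computation.
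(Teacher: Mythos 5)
Your proposal is correct and follows essentially the same route as the paper: parts (1) and (2) are direct applications of the correspondence in \cref{prop:correspondence-sructures-maps,prop:correspondence-expansion-maps}, and part (3) uses the product measure $\mu \times \mu_0$, the a.e.\ expandability hypothesis on a conull invariant set, and a push-forward computation, exactly as in the paper's argument. The only cosmetic point is that in (3) the relevant citation is \cref{prop:correspondence-expansion-maps} together with the remark about canonical structurings on invariant subsets of $Z \times \K(\Gamma)$ rather than literally part (2), but the substance is identical.
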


\begin{proof}
	(1) Let $h$ be such an expansion. For any Borel equivariant $f: X \to \K(\Gamma)$, $h \circ f: X \to \K^*(\Gamma)$ is Borel, equivariant, and satisfies $\pi \circ h \circ f = f$, so this follows by \cref{prop:correspondence-expansion-maps}.

	(2) This follows immediately from \cref{prop:correspondence-expansion-maps}.

	(3) For any invariant random $\K$-structure $\nu$ on $\Gamma$, take $\lambda = \nu \times \mu$. Then $\lambda$ is $\Gamma$-invariant, so there is a $\lambda$-conull invariant Borel set $X \subseteq \K(\Gamma) \times Z$ and an expansion $\bbA^*$ of $\res{\bbA}{X}$. Let $f: X \to \K^*(\Gamma)$ be the corresponding equivariant map, and let $\kappa = f_*(\lambda)$. Then $\kappa$ is an invariant random $\K^*$-structure on $\Gamma$, and $\pi_* \kappa = (\pi \circ f)_* \lambda = (\proj_{\K(\Gamma)})_* \lambda = \nu$.
\end{proof}

\subsection{Uniform random expansions}\label{sec:canonical-random-expansions}

Let $(\K, \K^*)$ be an expansion problem, $\Gamma$ be a countably infinite group, and $\mu$ be an invariant random $\K$-structure on $\Gamma$. In general, it is possible that $\mu$ admits an invariant random expansion to $\K^*$, but $\K$ does not admit equivariant expansions to $\K^*$ $\mu$-a.e. That is, there may be some invariant random expansion $\nu$ of $\mu$ to $\K^*$, but no Borel equivariant expansion map $f: \K(\Gamma) \supseteq Z \to \K^*(\Gamma)$, defined on a $\mu$-conull set $Z$, so that $\nu = f_* \mu$ (see e.g. \cref{rmk:no-canonical-linearization}).

On the other hand, we will see that for \cref{eg:bijections,eg:colourings,eg:zline}, every invariant random $\K$-structure on $\Gamma$ which admits an invariant random expansion to $\K^*$ admits such an expansion of the form $f_* \mu$, where $f: \K(\Gamma) \supseteq Z \to \K^*(\Gamma)$ is a Borel equivariant expansion map defined on a $\mu$-conull set $Z$. Moreover, in these cases, we shall see that this holds \emph{uniformly in $\mu$}: there is a single function $f$ that works for all such $\mu$. That is, there is an invariant Borel set $Z \subseteq \K(\Gamma)$ and Borel equivariant expansion map $f: Z \to \K^*(\Gamma)$ so that $\mu$ admits an invariant random expansion to $\K^*$ iff $\mu(Z) = 1$, in which case $f_* \mu$ gives such an expansion.

One can view such a function $f$ as both classifying exactly when invariant random expansions exist, as well as giving uniformly all possible invariant random expansions. When such an $f$ exists, one can further characterize exactly when a Borel $\K$-structuring of a CBER is a.e. expandable, for any CBER induced by a free Borel action of $\Gamma$:

\begin{prop}\label{prop:canonical-random-expansions}
	Let $(\K, \K^*)$ be an expansion problem and $\Gamma$ be a countably infinite group. Suppose that there is a Borel $\Gamma$-invariant set $Z \subseteq \K(\Gamma)$ which admits a $\Gamma$-equivariant expansion to $\K^*$, and such that the following holds: For every invariant random $\K$-structure $\mu$ on $\Gamma$, $\mu$ admits an invariant random expansion to $\K^*$ iff $\mu(Z) = 1$.

	Then for any CBER $E$ on $X$ induced by a free Borel action of $\Gamma$, any $E$-invariant probability Borel measure $\mu$ on $X$ and any Borel $\K$-structuring $\bbA$ of $E$, $\bbA$ is $\mu$-a.e. expandable to $\K^*$ if and only if $F^\bbA(x) \in Z$ for $\mu$-almost every $x \in X$, where $F^\bbA: X \to \K(\Gamma)$ is the equivariant map from \cref{prop:correspondence-sructures-maps}.
\end{prop}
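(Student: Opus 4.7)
The plan is to use the correspondence between Borel $\K$-structurings (resp. expansions) of $E$ and equivariant Borel maps $X \to \K(\Gamma)$ (resp. $X \to \K^*(\Gamma)$) furnished by \cref{prop:correspondence-sructures-maps,prop:correspondence-expansion-maps}, and to translate between measures on $X$ and invariant random structures on $\Gamma$ via push-forward under $F^\bbA$. Let $\nu = F^\bbA_*\mu$. Since $F^\bbA$ is Borel and $\Gamma$-equivariant and $\mu$ is $\Gamma$-invariant (which is equivalent to $E$-invariance since $E$ is induced by a free action, c.f.\ \cite[Proposition~2.1]{KM}), $\nu$ is an invariant random $\K$-structure on $\Gamma$. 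The key observation is that $F^\bbA(x) \in Z$ for $\mu$-a.e.\ $x$ if and only if $\nu(Z) = 1$, and by hypothesis this latter condition is equivalent to $\nu$ admitting an invariant random expansion to $\K^*$.

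For the $(\Leftarrow)$ direction, suppose $F^\bbA(x) \in Z$ for $\mu$-a.e.\ $x$, and let $h \colon Z \to \K^*(\Gamma)$ be the given Borel $\Gamma$-equivariant expansion map. Set $X_0 = (F^\bbA)^{-1}(Z) \subseteq X$; since $F^\bbA$ is equivariant and $Z$ is $\Gamma$-invariant, $X_0$ is a $\mu$-conull $E$-invariant Borel set. Then $h \circ F^\bbA \upharpoonright X_0 \colon X_0 \to \K^*(\Gamma)$ is a Borel equivariant map satisfying $\pi \circ (h \circ F^\bbA) = F^\bbA$ on $X_0$, and \cref{prop:correspondence-expansion-maps} turns it into a Borel expansion of $\bbA \upharpoonright X_0$, witnessing that $\bbA$ is $\mu$-a.e.\ expandable.

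For the $(\Rightarrow)$ direction, suppose $\bbA$ is $\mu$-a.e.\ expandable, so there is a $\mu$-conull $E$-invariant Borel set $Y \subseteq X$ and a Borel expansion $\bbA^*$ of $\bbA \upharpoonright Y$. By \cref{prop:correspondence-expansion-maps}, $\bbA^*$ corresponds to a Borel equivariant map $g \colon Y \to \K^*(\Gamma)$ with $\pi \circ g = F^\bbA \upharpoonright Y$. Let $\kappa = g_*(\mu \upharpoonright Y) \in P(\K^*(\Gamma))$; this is a probability measure since $Y$ is conull, and it is $\Gamma$-invariant since $g$ is equivariant and $\mu \upharpoonright Y$ is $\Gamma$-invariant. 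Moreover,
\[
\pi_*\kappa = (\pi \circ g)_*(\mu \upharpoonright Y) = (F^\bbA)_*(\mu \upharpoonright Y) = F^\bbA_*\mu = \nu,
\]
so $\kappa$ is an invariant random expansion of $\nu$ to $\K^*$. By hypothesis, this forces $\nu(Z) = 1$, i.e., $F^\bbA(x) \in Z$ for $\mu$-a.e.\ $x \in X$.

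There is no serious obstacle here; the content is essentially a dictionary translation using the correspondence established in \cref{sec:descriptions,sec:free-actions}. The mildest care-point is to ensure that the conull sets one produces in each direction are $E$-invariant Borel sets (this follows automatically from the equivariance of $F^\bbA$ and $\Gamma$-invariance of $Z$ on one side, and from the hypothesis that $\mu$-a.e.\ expandability is witnessed on an invariant conull set on the other).
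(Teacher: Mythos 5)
Your proof is correct and follows essentially the same route as the paper's: the forward direction composes $F^\bbA$ with the given expansion map on the invariant conull set $(F^\bbA)^{-1}(Z)$, and the converse pushes forward the measure along the equivariant map corresponding to the a.e.\ expansion and invokes the hypothesis on $Z$. The extra detail you supply (checking invariance of the conull sets and of the push-forward measures) is accurate but does not change the argument.
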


\begin{proof}
	If $F^\bbA(x) \in Z$ for $\mu$-almost every $x \in X$, then by \cref{prop:correspondence-expansion-maps}, the composition of $F^\bbA$ with the expansion map $Z \to \K^*(\Gamma)$ gives a Borel expansion of $\bbA$ to $\K^*$ on an invariant $\mu$-conull set.

	Conversely, suppose that $Y \subseteq X$ is an $E$-invariant Borel $\mu$-conull set and $\bbA^*$ is a Borel expansion of $\res{\bbA}{Y}$ to $\K^*$. Let $\pi: \K^*(\Gamma) \to \K(\Gamma)$ denote the reduct, and note that $F^\bbA = \pi \circ F^{\bbA^*}$, so that $F^{\bbA^*}_* \mu$ is an invariant random expansion of $F^\bbA_* \mu$. By assumption, this implies that $F^\bbA_* \mu (Z) = 1$, i.e., $F^\bbA(x) \in Z$ for $\mu$-a.e. $x \in X$.
\end{proof}

\subsection{Invariant random expansions on CBER}\label{sec:inv-rand-expansion-cber}

We consider now a notion of invariant random structures and random expansions on CBER. When $E$ is a CBER arising from a free Borel action of $\Gamma$ with an invariant measure, this will correspond to a weakening of the hypotheses of \cref{prop:weak-correspondence-expansions}(3), and we will show in this case that the existence of random expansions on $E$ corresponds exactly to the existence of random expansions on $\Gamma$. Crucially, this notion of random expansion will be purely in terms of the CBER with no reference to $\Gamma$, allowing us to compare the existence of random expansion between various groups.

Let $(\K, \K^*)$ be an expansion problem, fix an aperiodic CBER $E$ on a standard Borel space $X$ and let $g: X \to X^N$ be a Borel enumeration of $E$.

An \tb{invariant random $\K$-structuring of $E$ (with respect to $g$)} is an invariant probability Borel measure for $E \ltimes_g \K$. If $\mu$ is an invariant probability Borel measure for $E$, an \tb{invariant random $\K$-structuring of $(E, \mu)$ (with respect to $g$)} is an invariant random $\K$-structuring $\nu$ of $E$ whose push-forward along the projection $X \times \K(N) \to X$ is $\mu$. We say an invariant random $\K^*$-structuring $\kappa$ of $E$ is an \tb{expansion} of an invariant random $\K$-structuring $\nu$ of $E$ if the push-forward of $\kappa$ along the reduct $X \times \K^*(N) \to X \times \K(N)$ is $\nu$.

If $h: X \to X^M$ is another Borel enumeration of $E$ and $\tau = \tau_{g, h}: X \to M^N$ the induced Borel map described in \cref{sec:descriptions}, we recall that $(x, \bA) \mapsto (x, \tau(x) \cdot \bA)$ is a Borel isomorphism $E \ltimes_g \K \cong_B E \ltimes_h \K$. It is easy to see that the following diagram commutes, where $\pi$ denotes the reduct from $\K^*$ to $\K$.
\[
\begin{tikzcd}
	X \times \K^*(N) \ar[r, "{(\mathrm{id}_X, \pi)}"]\ar[d, "{(\mathrm{id}_X, \tau)}"] & X \times \K(N) \ar[r, "\proj_X"]\ar[d, "{(\mathrm{id}_X, \tau)}"] & X \ar[d, "\mathrm{id}_X"] \\
	X \times \K^*(M) \ar[r, "{(\mathrm{id}_X, \pi)}"] & X \times \K(M) \ar[r, "\proj_X"] & X
\end{tikzcd}
\]

If $\mu$ is an invariant probability Borel measure on $E$, we say \tb{$(E, \mu)$ admits random expansions from $\K$ to $\K^*$} if for every invariant random $\K$-structure $\nu$ on $(E, \mu)$ there is an invariant random $\K^*$-structure $\kappa$ on $(E, \mu)$ that is an expansion of $\nu$. We say \tb{$E$ admits random expansions from $\K$ to $\K^*$} if $(E, \mu)$ admits invariant random expansions from $\K$ to $\K^*$ for every invariant probability Borel measure $\mu$ on $E$. By the prior remarks, these definitions do not depend on the choice of Borel enumeration of $E$.

The following key fact relates invariant random expansions between CBER and groups. Special cases of this have been shown, for example with linearizations in \cite{A}; we note here that it holds more generally for all expansion problems.

\begin{prop}\label{prop:random-expansion-cber}
	Let $(\K, \K^*)$ be an expansion problem and $\Gamma$ be a countably infinite group. The following are equivalent:
	\begin{enumerate}
		\item $\Gamma$ admits random expansions from $\K$ to $\K^*$;
		\item Every CBER $E$ induced by a free Borel action of $\Gamma$ admits random expansions from $\K$ to $\K^*$;
		\item There is a CBER $E$ induced by a free Borel action of $\Gamma$ and an $E$-invariant probability Borel measure $\mu$, such that $(E, \mu)$ admits random expansions from $\K$ to $\K^*$.
	\end{enumerate}
\end{prop}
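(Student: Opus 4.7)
The plan is to prove the cycle $(1) \Rightarrow (2) \Rightarrow (3) \Rightarrow (1)$. The implication $(2) \Rightarrow (3)$ is immediate, since the Bernoulli shift action of $\Gamma$ on $[0,1]^\Gamma$ is free on an invariant conull Borel set and preserves the product Lebesgue measure. For $(3) \Rightarrow (1)$, I fix an invariant random $\K$-structure $\mu'$ on $\Gamma$, use the canonical Borel enumeration $g_x(\gamma) = \gamma^{-1} \cdot x$ of $E$ (which, as in \cref{sec:free-actions}, identifies $E \ltimes_g \K$ with $E^{X \times \K(\Gamma)}_\Gamma$ under the diagonal $\Gamma$-action), and observe that $\nu := \mu \times \mu'$ is $\Gamma$-invariant on $X \times \K(\Gamma)$ with first marginal $\mu$. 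Thus $\nu$ is an invariant random $\K$-structuring of $(E, \mu)$, and by (3) admits an expansion $\kappa$, i.e., a $\Gamma$-invariant probability measure on $X \times \K^*(\Gamma)$ with $(\mathrm{id}_X, \pi)_* \kappa = \nu$. The second marginal $(\proj_{\K^*(\Gamma)})_* \kappa$ is then $\Gamma$-invariant, and its push-forward under $\pi$ equals $\mu'$, giving the desired invariant random expansion of $\mu'$.

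The substantive implication is $(1) \Rightarrow (2)$. Fix an $E$-invariant probability Borel measure $\mu$ on $X$ and an invariant random $\K$-structuring $\nu$ of $(E, \mu)$, viewed as a $\Gamma$-invariant probability measure on $X \times \K(\Gamma)$. Let $\mu' := (\proj_{\K(\Gamma)})_* \nu$, which is $\Gamma$-invariant by invariance of $\nu$ and equivariance of the projection. By (1), $\mu'$ admits an invariant random expansion $\nu^* \in P(\K^*(\Gamma))$. The goal is to glue $\nu$ and $\nu^*$ into a $\Gamma$-invariant probability measure $\kappa$ on $X \times \K^*(\Gamma)$ with $(\mathrm{id}_X, \pi)_* \kappa = \nu$. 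I do this via disintegration: using Rokhlin's theorem, write
\[\nu = \int (\alpha^A \otimes \delta_A) \, d\mu'(A), \qquad \nu^* = \int \beta^A \, d\mu'(A),\]
with $\alpha^A$ a probability on $X$ and $\beta^A$ a probability on $\pi^{-1}(A) \subseteq \K^*(\Gamma)$, and set $\kappa := \int (\alpha^A \otimes \beta^A) \, d\mu'(A)$.

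By construction $(\mathrm{id}_X, \pi)_* \kappa = \nu$. For $\Gamma$-invariance of $\kappa$, uniqueness of disintegration applied to $\gamma_* \nu = \nu$ and $\gamma_* \nu^* = \nu^*$ yields $\gamma_* \alpha^A = \alpha^{\gamma A}$ and $\gamma_* \beta^A = \beta^{\gamma A}$ for $\mu'$-a.e. $A$; since $\Gamma$ is countable, these identities hold simultaneously for all $\gamma \in \Gamma$ outside a single $\mu'$-null set. A change of variables $B = \gamma A$, using $\gamma_* \mu' = \mu'$, then gives $\gamma_* \kappa = \kappa$. The main obstacle I anticipate is making this gluing rigorous; this relies on the fact that $X$, $\K(\Gamma)$ and $\K^*(\Gamma)$ are standard Borel, so that disintegration supplies Borel kernels $\alpha^A$ and $\beta^A$ for which the above measure-theoretic identities are genuinely well-defined.
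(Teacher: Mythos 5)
Your proposal is correct and follows essentially the same route as the paper: the cycle $(1)\Rightarrow(2)\Rightarrow(3)\Rightarrow(1)$, with $(1)\Rightarrow(2)$ proved by disintegrating $\nu$ and the random expansion over $\K(\Gamma)$ and gluing the fibre measures, $(2)\Rightarrow(3)$ by a Bernoulli shift, and $(3)\Rightarrow(1)$ by taking the product measure $\mu\times\nu$ and pushing the expansion forward to $\K^*(\Gamma)$. The only differences are cosmetic (the paper uses the free part of the shift on $2^\Gamma$ rather than $[0,1]^\Gamma$, and leaves the equivariance of the disintegration kernels as a routine check, which you spell out correctly).
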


\begin{proof}
	$(1) \implies (2)$: Let $E$ be a CBER on $X$ induced by a free Borel action of $\Gamma$. By the remarks at the start of \cref{sec:free-actions}, we may assume that $E \ltimes \K$ is the orbit equivalence relation on $X \times \K(\Gamma)$ arising from the diagonal $\Gamma$ action. If $\nu$ is an invariant probability Borel measure on $X \times \K(\Gamma)$, then the push-forward of $\nu$ along the projection $X \times \K(\Gamma) \to \K(\Gamma)$ gives an invariant random $\K$-structure $\nu'$ on $\Gamma$. By (1), there is an invariant random $\K^*$-structure $\kappa$ on $\K^*(\Gamma)$ which is an expansion of $\nu'$.

	Let now $\{\kappa_\bA\}_{\bA \in \K(\Gamma)}$ be the measure disintegration of $\kappa$ with respect to $\nu'$ over the reduct $\K^*(\Gamma) \to \K(\Gamma)$, and $\{\nu_\bA \times \delta_{\bA}\}_{\bA \in \K(\Gamma)}$ be a measure disintegration of $\nu$ with respect to $\nu'$ over the projection $X \times \K(\Gamma) \to \K(\Gamma)$ (see for example \cite[\nopp17.35]{CDST}). Then it is straightforward to check that $\lambda = \int (\nu_\bA \times \kappa_\bA) d \nu'(\bA)$ is a $\Gamma$-invariant probability Borel measure on $X \times \K^*(\Gamma)$ which is an extension of $\nu$.

	$(2) \implies (3)$: Consider e.g. the free part of the shift of $\Gamma$ on $2^\Gamma$.

	$(3) \implies (1)$: Suppose $(E, \mu)$ admits random expansions form $\K$ to $\K^*$, where $E$ is induced by a free Borel action of $\Gamma$ on $X$ preserving the probability Borel measure $\mu$. Let $\nu$ be an invariant random $\K$-structure on $\Gamma$, and consider $\mu \times \nu$ on $X \times \K(\Gamma)$. This is $\Gamma$-invariant, hence it admits an expansion $\kappa$, and the push-forward of $\kappa$ along the projection $X \times \K^*(\Gamma) \to \K^*(\Gamma)$ is an invariant random expansion of $\nu$.
\end{proof}

Recall now that two countably infinite groups $\Gamma, \Lambda$ are \tb{orbit equivalent} if there is a CBER $E$ on a standard Borel space $X$ admitting an invariant probability Borel measure and free Borel actions of $\Gamma, \Lambda$ on $X$ with $E = E^X_\Gamma = E^X_\Lambda$. A CBER is said to be \tb{measure-hyperfinite} if, for every invariant probability Borel measure $\mu$, it is hyperfinite when restricted to a Borel invariant $\mu$-conull set (see also \cite[Sections~8.5, 9]{CBER}). For example, all countable infinite amenable groups are orbit equivalent and their actions generate measure-hyperfinite CBER \cite{Dye,OW}.

\begin{thm}\label{thm:invariant-random-expansions}
	Let $(\K, \K^*)$ be an expansion problem and $\Gamma, \Lambda$ be countably infinite groups. If $\Gamma$, $\Lambda$ are orbit equivalent, then $\Gamma$ admits random expansions from $\K$ to $\K^*$ iff $\Lambda$ admits random expansions from $\K$ to $\K^*$.

	In particular, the following are equivalent:
	\begin{enumerate}
		\item $\Gamma$ admits random expansions from $\K$ to $\K^*$ for all amenable groups $\Gamma$;
		\item $\Gamma$ admits random expansions from $\K$ to $\K^*$ for some amenable group $\Gamma$; and
		\item every aperiodic measure-hyperfinite CBER admits random expansions from $\K$ to $\K^*$.
	\end{enumerate}
\end{thm}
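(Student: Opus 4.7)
The plan is to apply Proposition \ref{prop:random-expansion-cber}, which reformulates ``$\Gamma$ admits random expansions'' as a property of any CBER-with-invariant-probability-measure coming from a free Borel $\Gamma$-action---a property that makes no reference to the group $\Gamma$ itself. This immediately turns ``admits random expansions'' into an invariant of the measured orbit equivalence class.

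For the first assertion, fix a CBER $E$ on $X$ with invariant probability Borel measure $\mu$ witnessing the orbit equivalence of $\Gamma$ and $\Lambda$. If $\Gamma$ admits random expansions, then Proposition \ref{prop:random-expansion-cber}(1) $\Rightarrow$ (2), applied to the free $\Gamma$-action on $X$, shows that $(E, \mu)$ admits random expansions. Since $E$ is also the orbit equivalence relation of a free $\Lambda$-action preserving $\mu$, Proposition \ref{prop:random-expansion-cber}(3) $\Rightarrow$ (1) then yields that $\Lambda$ admits random expansions. The converse is symmetric.

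For the three-way equivalence: (1) $\Rightarrow$ (2) is trivial, and (2) $\Rightarrow$ (1) follows from the first assertion combined with the Ornstein--Weiss theorem that all countably infinite amenable groups are pairwise orbit equivalent. For (1) $\Rightarrow$ (3), given an aperiodic measure-hyperfinite CBER $E$ with invariant probability Borel measure $\mu$, I would restrict to an invariant $\mu$-conull Borel set $Y$ on which $E$ is hyperfinite, invoke the Slaman--Steel theorem to realize $\res{E}{Y}$ as the orbit equivalence relation of a free Borel $\Z$-action, and then apply Proposition \ref{prop:random-expansion-cber}(1) $\Rightarrow$ (2) to $\Z$ to deduce that $(\res{E}{Y}, \res{\mu}{Y})$ admits random expansions; trivial extension off $Y$ then gives the same for $(E, \mu)$. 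For (3) $\Rightarrow$ (2), pick any countably infinite amenable group $\Gamma$ (say $\Gamma = \Z$) and consider its shift action on $Fr(2^\Gamma)$ with a product measure: by Connes--Feldman--Weiss this CBER is aperiodic and measure-hyperfinite, so (3) combined with Proposition \ref{prop:random-expansion-cber}(3) $\Rightarrow$ (1) gives that $\Gamma$ admits random expansions.

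The main conceptual work is already absorbed by Proposition \ref{prop:random-expansion-cber}; what remains is a synthesis with three standard structure theorems (Ornstein--Weiss, Slaman--Steel, and Connes--Feldman--Weiss). The only mildly delicate bookkeeping is in the restriction/extension step of (1) $\Rightarrow$ (3), where one must confirm that invariant random $\K$-structures on $(E, \mu)$ correspond---via restriction to $Y$ and trivial extension off $Y$---to invariant random $\K$-structures on $(\res{E}{Y}, \res{\mu}{Y})$ in a way preserving the expansion relation; this is routine from the definitions in Section \ref{sec:inv-rand-expansion-cber}.
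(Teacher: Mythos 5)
Your proposal is correct and follows essentially the same route as the paper: both reduce everything to Proposition \ref{prop:random-expansion-cber}, prove the first assertion by running $(1)\Rightarrow(2)$ for $\Gamma$ and $(3)\Rightarrow(1)$ for $\Lambda$ on the common orbit equivalence relation, and handle $(1)\Rightarrow(3)$ by restricting a measure-hyperfinite CBER to a conull set where it is generated by a free $\Z$-action. Your extra care about the restriction/extension bookkeeping and the explicit citations of Ornstein--Weiss and Slaman--Steel only make explicit what the paper leaves implicit.
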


\begin{proof}
	Let $E$ be a CBER on a standard Borel space $X$ admitting an invariant probability Borel measure $\mu$, and fix free Borel actions of $\Gamma, \Lambda$ on $X$ inducing $E$. Suppose $\Gamma$ admits random expansions from $\K$ to $\K^*$. By $(1) \implies (2)$ of \cref{prop:random-expansion-cber} $(E, \mu)$ admits random expansions from $\K$ to $\K^*$, and hence by $(3) \implies (1)$ of \cref{prop:random-expansion-cber} so does $\Lambda$.

	By \cref{prop:random-expansion-cber}, it is clear that $(1) \iff (2) \impliedby (3)$. If $E$ is an aperiodic measure-hyperfinite CBER and $\mu$ is an $E$-invariant probability Borel measure, then by restricting to a $\mu$-conull set we see that $E$ is hyperfinite, hence generated by a free $\Z$ action. Thus $(1) \implies (3)$ by \cref{prop:random-expansion-cber}.
\end{proof}

We note that converse holds as well:

\begin{prop}\label{prop:converse-ire}
	Let $\Gamma$, $\Lambda$ be countably infinite groups and suppose that for every expansion problem $(\K, \K^*)$, $\Gamma$ admits random expansions from $\K$ to $\K^*$ iff $\Lambda$ admits random expansions from $\K$ to $\K^*$. Then $\Gamma$, $\Lambda$ are orbit equivalent.
\end{prop}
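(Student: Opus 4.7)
The plan is to cook up one specific expansion problem $(\K, \K^*)$ for which $\Lambda$ trivially admits random expansions, transfer this to $\Gamma$ via the hypothesis, and use the resulting invariant random $\Lambda$-action on $\Gamma$ to build an orbit-equivalence witness.

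Take $\LL = \emptyset$ and let $\K = \Mod_\LL$, so that $\K(N)$ is a singleton for every countably infinite $N$. Let $\LL^* = \{S_\lambda : \lambda \in \Lambda\}$ with each $S_\lambda$ binary, and let $\K^*$ be the Borel class of $\LL^*$-structures on which $(S_\lambda)_{\lambda \in \Lambda}$ encodes a \emph{transitive} free $\Lambda$-action. Since every countably infinite set admits such an action, $(\K, \K^*)$ is an expansion problem. The right-translation structure $R \in \K^*(\Lambda)$, given by $S_\lambda^R(x, y) \iff x\lambda = y$, is fixed by the logic action of $\Lambda$ on $\K^*(\Lambda)$ because left and right multiplications on $\Lambda$ commute; as $\K(\Lambda)$ is a singleton, $\delta_R$ is a random expansion of the unique invariant random $\K$-structure on $\Lambda$, so $\Lambda$ admits random expansions from $\K$ to $\K^*$. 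By hypothesis, $\Gamma$ also admits random expansions, yielding a $\Gamma$-invariant probability Borel measure $\nu$ on $\K^*(\Gamma)$.

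Now fix any free probability-measure-preserving Borel action of $\Gamma$ on a standard Borel space $(X, \mu)$, for instance the Bernoulli shift on $Fr(2^\Gamma)$, and set $\tilde X = X \times \K^*(\Gamma)$ with $\tilde \mu = \mu \times \nu$ and the diagonal $\Gamma$-action. This action is free and $\tilde\mu$-preserving, so $\tilde E = E^{\tilde X}_\Gamma$ is a CBER with invariant probability measure $\tilde \mu$. By \cref{prop:correspondence-sructures-maps}, the projection $\tilde X \to \K^*(\Gamma)$ corresponds to a Borel $\K^*$-structuring $\bbA^*$ of $\tilde E$, so each $\tilde E$-class $C$ carries a transitive free $\Lambda$-action. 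Because these restrictions are both transitive and free, defining $\sigma_\lambda(p)$ to be the unique $q$ satisfying $S_\lambda^{\bbA^*}(p, q)$ yields a Borel free $\Lambda$-action $\sigma : \Lambda \curvearrowright \tilde X$ whose orbits are exactly the $\tilde E$-classes. The graph of each $\sigma_\lambda$ is contained in $\tilde E$, and $\tilde \mu$ is $\tilde E$-invariant by \cite[Proposition~2.1]{KM} applied to the free $\Gamma$-action, so $\sigma$ preserves $\tilde \mu$ as well. Hence $\tilde E$ is induced on $(\tilde X, \tilde \mu)$ by free probability-measure-preserving Borel actions of both $\Gamma$ and $\Lambda$, witnessing their orbit equivalence.

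The only real design decision, and the only point that needs care, is the transitivity requirement in the definition of $\K^*$: without it the $\Lambda$-action extracted from $\bbA^*$ would only generate a sub-equivalence relation of $\tilde E$, and $\tilde E$ would fail to be induced by $\Lambda$. With transitivity in place, the rest reduces to routine use of the correspondence between Borel $\K^*$-structurings of $\tilde E$ and $\Gamma$-equivariant maps $\tilde X \to \K^*(\Gamma)$ developed in \cref{sec:descriptions,sec:free-actions}.
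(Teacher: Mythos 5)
Your proof is correct and follows essentially the same route as the paper's: encode free transitive actions of one of the two groups as the expansion class $\K^*$ over the trivial class $\K$, observe that this group trivially carries an invariant random $\K^*$-structure, transfer it to the other group via the hypothesis, and use the resulting invariant measure on a product space to produce a single CBER induced by free pmp actions of both groups. The only differences are cosmetic: you encode $\Lambda$-actions and use the fixed point $\delta_R$ at the right-translation structure, where the paper encodes $\Gamma$-actions and obtains its invariant random structure by pushing forward along a free pmp action.
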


\begin{proof}
	Let $\K$ be the class of countable sets (in the empty language), and take $\K^*$ to be a class of structures so that $\K^*$-structurability of a CBER $E$ corresponds exactly to $E$ arising from a free Borel action of $\Gamma$ (see e.g. \cite[Section~3.1]{CK}).

	We claim that $\Gamma$ admits an invariant random $\K^*$-structure. To see this, fix a free Borel action of $\Gamma$ on a standard Borel space $X$ preserving a probability Borel measure $\mu$ (e.g. the free part of the shift on $2^\Gamma$). By definition, $E^X_\Gamma$ admits a Borel $\K^*$-structure $\bbA^*$. By \cref{prop:correspondence-sructures-maps} this corresponds to a $\Gamma$-equivariant Borel map $F^\bbA: X \to \K^*(\Gamma)$, so $F^\bbA_* \mu$ is an invariant random $\K^*$-structure on $\Gamma$. As $\K(\Gamma)$ is a singleton, $\Gamma$ admits invariant random expansions from $\K$ to $\K^*$.

	Thus, by our assumption, $\Lambda$ admits random expansions from $\K$ to $\K^*$. Fix a free Borel action of $\Lambda$ on a standard Borel space $Y$ preserving a probability Borel measure $\nu$. By $(1) \implies (2)$ of \cref{prop:random-expansion-cber} there is an $(E \ltimes \K^*)$-invariant probability Borel measure $\kappa$ on $Y \times \K^*(\Lambda)$. Note that $E \ltimes \K^*$ admits a Borel $\K^*$-structuring, hence arises from a free Borel action of $\Gamma$. Thus the actions of $\Gamma, \Lambda$ on $Y \times \K^*(\Lambda)$ witness the orbit equivalence of $\Gamma, \Lambda$.
\end{proof}

\begin{rmk}
	This is really an observation about invariant random structures on groups: Two countably infinite groups $\Gamma, \Lambda$ are orbit equivalent if and only if, for every class $\K$ of structures, there is an invariant random $\K$-structure on $\Gamma$ exactly when there is an invariant random $\K$-structure on $\Lambda$.
\end{rmk}

Finally, we remark that random expansions always exist for closed classes of structures on amenable groups.

\begin{prop}
	Let $(\K, \K^*)$ be an expansion problem. If $\K^*$ is closed, then $\Gamma$ admits random expansions from $\K$ to $\K^*$ for every countably infinite amenable group $\Gamma$.
\end{prop}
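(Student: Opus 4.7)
The plan is to construct the desired invariant random expansion as a fixed point of the amenable group action on a suitable compact convex space of measures, in the spirit of a Markov--Kakutani argument.

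First, since $\K^*$ is closed, $\K^*(\Gamma)$ is a closed subset of the compact Polish space $\Mod_{\LL^*}(\Gamma)$, hence is itself compact. The reduct map $\pi \colon \K^*(\Gamma) \to \K(\Gamma)$ is continuous, $\Gamma$-equivariant, and surjective (the last because every element of $\K$ admits an expansion in $\K^*$). Consequently, the induced pushforward $\pi_* \colon P(\K^*(\Gamma)) \to P(\K(\Gamma))$ is a continuous, affine, $\Gamma$-equivariant map between compact convex sets of measures, taken with the weak-$*$ topology.

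Given an invariant random $\K$-structure $\mu$ on $\Gamma$, I would consider the set
\[P_\mu = \{\nu \in P(\K^*(\Gamma)) : \pi_* \nu = \mu\}\]
and verify that it is non-empty, convex, compact, and $\Gamma$-invariant. Convexity is immediate from $\pi_*$ being affine; compactness follows from $P_\mu = \pi_*^{-1}(\{\mu\})$ being closed in the compact space $P(\K^*(\Gamma))$; and $\Gamma$-invariance comes from $\pi_*(\gamma_* \nu) = \gamma_* \pi_* \nu = \gamma_* \mu = \mu$ for $\nu \in P_\mu$, using the equivariance of $\pi$ and the invariance of $\mu$. For non-emptiness, surjectivity of $\pi$ implies that $\pi_*$ sends Dirac measures to Dirac measures, so the image of $\pi_*$ contains the weak-$*$ dense set of finitely supported probability measures on $\K(\Gamma)$; since the image is also compact, hence closed, $\pi_*$ is in fact surjective, and $P_\mu \neq \emptyset$.

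Finally, $\Gamma$ acts continuously and affinely on the non-empty compact convex set $P_\mu$, sitting inside the locally convex topological vector space of finite signed Borel measures on $\K^*(\Gamma)$ with the weak-$*$ topology. By amenability of $\Gamma$, every such action admits a fixed point, and any fixed point $\nu \in P_\mu$ is by definition an invariant random $\K^*$-structure on $\Gamma$ with $\pi_* \nu = \mu$, which is the required invariant random expansion. The essential role of the hypothesis that $\K^*$ is closed is to guarantee compactness of $\K^*(\Gamma)$ and thus of $P_\mu$; the main obstacle the argument must overcome is this compactness requirement, since without it the fixed-point characterization of amenability would not apply and the strategy would break down.
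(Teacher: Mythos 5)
Your proof is correct and follows essentially the same route as the paper: both identify $\pi_*^{-1}(\mu)$ as a non-empty, compact, convex, $\Gamma$-invariant subset of the compact space $P(\K^*(\Gamma))$ and then invoke the fixed-point characterization of amenability. The only (minor) divergence is in establishing non-emptiness, where the paper chooses in a Borel way a measure $\nu_x \in P(\pi^{-1}(x))$ on each fibre and integrates against $\mu$, while you instead deduce surjectivity of $\pi_*$ from the Dirac measures together with convexity and closedness of its image --- both arguments rely on the standing assumption that every element of $\K$ admits an expansion in $\K^*$, i.e.\ that $\pi$ is surjective.
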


\begin{proof}
	Let $\mu$ be an invariant random $\K$-structure on $\Gamma$, and let $\pi: \K^*(\Gamma) \to \K(\Gamma)$ denote the reduct. Then $\pi_*: P(\K^*(\Gamma)) \to P(\K(\Gamma))$, where $P(X)$ denotes the space of probability Borel measures on a space $X$. Note that $\pi_*$ is continuous and $P(\K^*(\Gamma))$ is compact metrizable \cite[\nopp17.22, 17.28]{CDST}, so in particular $A = (\pi_*)^{-1}(\mu) \subseteq P(\K^*(\Gamma))$ is compact metrizable. By the invariance of $\mu$ and the equivariance of $\pi$, we see that $A$ is $\Gamma$-invariant. It is also easy to see that it is non-empty: as $\pi$ is continuous every fibre is compact, so we may choose in a Borel way some $\nu_x \in P(\pi^{-1}(x))$ for $x \in \K(\Gamma)$ and let $\nu = \int \nu_x d\mu \in A$ (c.f. \cite[\nopp28.8]{CDST}). As $\Gamma$ is amenable there is a fixed point in $A$, which is an invariant random expansion of $\mu$.
\end{proof}

\subsection{Generic equivariant expansions}\label{sec:generic-expansions}

If $\K$ is $G_\delta$ classes of structures of $\LL$-structures and $\Phi$ is an isomorphism-invariant property of $\LL$-structures, we say \tb{the generic element of $\K$ satisfies $\Phi$} if for some countably infinite set $X$, $\K_\Phi(X) = \{\bA \in \K(X) : \Phi(\bA)\}$ is comeagre in $\K(X)$. We note that this does not depend on the choice of $X$: If $f: X \to Y$ is a bijection, this induces a homeomorphism $\K(X) \to \K(Y)$ taking $\K_\Phi(X)$ to $\K_\Phi(Y)$.

In this section, we show that if $\K$ is a $G_\delta$ class of structures and the generic element of $\K$ has trivial algebraic closure, then the question of whether $\K$ admits $\Gamma$-equivariant expansions to $\K^*$ generically does not depend on the group $\Gamma$.

\begin{defn}\label{def:TAC}
	Let $\LL$ be a language and $\bA$ be a countable $\LL$-structure with universe $X$. We say $\bA$ has the \tb{weak duplication property (WDP)} if, for every $\bA_0 \in \age_X(\bA)$ and any finite $F \subseteq X$, there is an embedding of $\bA_0$ into $\bA$ whose image is disjoint from $F$.

	We say that $\bA$ is \tb{definable from equality} if for all $n$-ary $R \in \LL$ and $n$-tuples $\bar{x}, \bar{y}$ in $X$ with the same equality type, $R^\bA(\bar{x}) \iff R^\bA(\bar{y})$. (The \tb{equality type} of $\bar{x}$ is the set of pairs $(i, j)$ with $\bar{x}_i = \bar{x}_j$.)

	For $F \subseteq X$, let $\aut_F(\bA)$ denote the group of automorphisms of $\bA$ that fix $F$ pointwise, i.e., such that $f(x) = x$ for $x \in F$. We say $\bA$ has \tb{trivial algebraic closure (TAC)} if, for every finite $F \subseteq X$, the action of $\aut_F(\bA)$ on $X \setminus F$ has no finite orbits. See e.g. \cite[Section~4.2]{Hodges} or \cite[50]{CK} for alternative characterizations.
\end{defn}

\begin{eg}\label{eg:strong-homogeneity}
	Let $\K$ be the class described in one of \cref{eg:bijections,eg:ramsey,eg:linearizations,eg:trees,eg:zline}. The generic element of $\K$ has TAC and is not definable from equality. On the other hand, if $\K$ is the class of connected graphs of maximum degree $d$ (c.f. \cref{eg:colourings,eg:vizing}) then no element of $\K$ has TAC: If $G \in \K$ and $(u, v)$ is an edge of $G$, then the orbit of $v$ under the action of $\aut_{\{u\}}(G)$ has size at most $d$.
\end{eg}

\begin{thm}\label{thm:generic-expansions}
	Let $(\K, \K^*)$ be an expansion problem. Suppose that $\K$ is $G_\delta$ and the generic element of $\K$ has TAC and is not definable from equality. Then the following are equivalent:
	\begin{enumerate}
		\item For every countably infinite group $\Gamma$, $\K$ admits $\Gamma$-equivariant expansions to $\K^*$ generically.
		\item There exists a countably infinite group $\Gamma$ for which $\K$ admits $\Gamma$-equivariant expansions to $\K^*$ generically.
	\end{enumerate}
\end{thm}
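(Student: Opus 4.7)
The implication $(1) \implies (2)$ is immediate, since countably infinite groups exist. For $(2) \implies (1)$, fix a countably infinite group $\Gamma$ with a Borel $\Gamma$-equivariant expansion $f : Z \to \K^*(\Gamma)$ on a comeagre $\Gamma$-invariant Borel $Z \subseteq \K(\Gamma)$, and let $\Lambda$ be any other countably infinite group. The plan is to produce an analogous expansion for $\Lambda$ by pulling back along generic bijections $\phi : \Gamma \to \Lambda$ and then eliminating the dependence on $\phi$.

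First I would extract the combinatorial consequences of the two hypotheses. The assumption that the generic structure is not definable from equality lets one show that for every $\gamma \in \Gamma \setminus \{1_\Gamma\}$, the set $\{\bA : \gamma \cdot \bA = \bA\}$ is meagre in $\K(\Gamma)$, since some relation $R$ can be made to witness $R^\bA(\bar x) \neq R^\bA(\gamma^{-1}\bar x)$ generically; hence the $\Gamma$-action on $\K(\Gamma)$ is generically free, and the same holds verbatim for $\Lambda$. Combined with TAC and the fact that $\K$ is $G_\delta$, a standard Fra\"iss\'e-style argument produces a comeagre isomorphism class in $\K(X)$ for any countable $X$, giving a ``generic'' structure $\bA^*$ whose automorphism group $\aut(\bA^*)$ acts with only infinite orbits on every cofinite set.

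Next I would carry out the transfer. On the Polish space $\K(\Lambda) \times \mathrm{Bij}(\Gamma, \Lambda)$, equipped with the natural $\Lambda$-action $\lambda \cdot (\bA, \phi) = (\lambda \bA, L_\lambda \circ \phi)$ and the $\Gamma$-action $\gamma \cdot (\bA, \phi) = (\bA, \phi \circ L_{\gamma^{-1}})$, define
\[
F(\bA, \phi) = \phi_*\bigl(f(\phi^{-1}(\bA))\bigr) \in \K^*(\Lambda).
\]
A direct calculation using the $\Gamma$-equivariance of $f$ shows that $F$ is a Borel expansion map, is $\Lambda$-equivariant in $\bA$, and is $\Gamma$-invariant on the product; this is analogous to the invariance argument in \cref{prop:correspondence-expansion-maps}.

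The main obstacle, and the place where TAC must be used most carefully, is to descend $F$ to a $\phi$-independent Borel map $g : Z' \to \K^*(\Lambda)$ on some comeagre $\Lambda$-invariant $Z' \subseteq \K(\Lambda)$. The difficulty is that for $\alpha \in \aut(\bA)$ one computes $F(\bA, \alpha \phi) = \alpha \cdot F(\bA, \phi)$, so different bijections give $\K^*$-expansions that differ by automorphisms of $\bA$. My plan is to apply a Kuratowski--Ulam argument fibrewise over $\bA$: using generic freeness and the richness of $\aut(\bA^*)$ supplied by TAC, show that for generic $\bA$ the $\Gamma$-orbit structure on $\mathrm{Bij}(\Gamma, \Lambda)$ has a comeagre piece on which $F(\bA, \cdot)$ takes a canonical value determined by $\bA$ alone (equivalently, that the two generic parts of $\K(\Gamma)$ and $\K(\Lambda)$ can be matched by a Borel conjugacy of their respective group actions). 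Granted this, setting $g(\bA)$ to be that canonical value and verifying $\Lambda$-equivariance from that of $F$ finishes the proof.
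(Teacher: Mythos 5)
Your reduction to the descent step is the right way to frame the difficulty, but the descent is exactly where the proposal breaks, and the mechanism you propose for it does not work. For fixed $\bA \in \K(\Lambda)$ the map $\phi \mapsto F(\bA,\phi)$ is indeed invariant under the $\Gamma$-action $\phi \mapsto \phi \circ L_{\gamma^{-1}}$, but that action on $\mathrm{Bij}(\Gamma,\Lambda)$ is very far from generically ergodic: it is free with a large orbit space, so an invariant Borel function need not be generically constant, and in general it is not. Concretely, take $\Gamma = \Z$, let $\K$ be the class of structures $(X,P)$ with $P$ and its complement infinite (whose generic element has TAC and is not definable from equality), let $\K^*$ add a subset $Q \subseteq P$, and take the continuous equivariant expansion $f(P) = (P, \{n \in P : n+1 \notin P\})$. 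Then $F(\bA,\phi) = (\bA, Q_\phi)$ with $Q_\phi = \{x \in P^\bA : \phi(\phi^{-1}(x)+1) \notin P^\bA\}$, and for a fixed $x \in P^\bA$ the $\Gamma$-invariant set $\{\phi : x \in Q_\phi\}$ and its complement each contain a nonempty open set (prescribe $\phi(0)=x$ and $\phi(1)$ outside or inside $P^\bA$), so neither is comeagre and $F(\bA,\cdot)$ is not constant on any comeagre set of $\phi$. The underlying reason is that for two bijections $\phi, \psi$ the structures $\phi^{-1}(\bA)$ and $\psi^{-1}(\bA)$ are isomorphic but generically lie in different $\Gamma$-orbits of $\K(\Gamma)$, and the equivariance of $f$ only ties its values together along a single orbit; so the expansions $F(\bA,\phi)$ differ by much more than automorphisms of $\bA$, and there is no canonical value to extract. (Your auxiliary claim that $\K$ has a comeagre isomorphism class also does not follow from TAC plus non-definability from equality, which give no amalgamation, but that is a side issue.)

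The parenthetical ``equivalently, that the two generic parts of $\K(\Gamma)$ and $\K(\Lambda)$ can be matched by a Borel conjugacy of their respective group actions'' is not a reformulation of something easy; it is essentially the whole theorem, and it is what the paper actually proves. Rather than averaging over all bijections, the paper constructs, for comeagrely many auxiliary parameters $\alpha,\beta \in \N^\N$, a single coherent Borel family of bijections of each generic orbit in $Fr(\K(\Gamma))$ with $\Lambda$, that is, a free Borel $\Lambda$-action on a comeagre invariant set generating the same orbit equivalence relation. TAC enters through the extension property of \cref{lem:homogeneous-generic}, which is needed both to glue finite pieces into a full copy of $\Lambda$ (\cref{claim:generic-equal-structure}) and, crucially, to steer the resulting $\Lambda$-equivariant map $F^{\alpha,\beta}$ into the given comeagre set $Z$ (\cref{claim:generic-in-Z}). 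None of this is supplied by generic freeness and the richness of automorphism groups alone, so as it stands the proof has a genuine gap at its central step.
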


In particular, this applies to \cref{eg:bijections,eg:ramsey,eg:linearizations,eg:trees,eg:zline} by \cref{eg:strong-homogeneity}.

\begin{lem}\label{lem:TAC-equivalent}
	Let $\bA$ be an $\LL$-structure on a countably infinite set $X$. Then $\bA$ has TAC if and only if, for every $\bA_0 \in \age_X(\bA)$ with universe $F$, every $F_0 \subseteq F$, every embedding $f: F_0 \to X$ of $\res{\bA_0}{F_0}$ into $\bA$ and every finite $G \subseteq X$ there is an embedding $g$ of $\bA_0$ into $\bA$ which extends $f$ and such that $g(F \setminus F_0) \cap G = \emptyset$.
\end{lem}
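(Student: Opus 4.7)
I would prove the two directions of the equivalence separately.

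For the forward direction (TAC $\Rightarrow$ extension property), given $\bA_0 \in \age_X(\bA)$ with universe $F$, $F_0 \subseteq F$, an embedding $f\colon F_0 \to X$ of $\res{\bA_0}{F_0}$, and a finite $G \subseteq X$, my plan is to construct $g$ in two stages. \emph{First}, I would extend $f$ to some embedding $g_0\colon F \to X$ of $\bA_0$ (temporarily ignoring $G$) by induction on $|F \setminus F_0|$: at stage $i$, with a partial embedding $h\colon F_0 \cup \{y_1, \ldots, y_i\} \to X$ extending $f$ and a new $y_{i+1} \in F \setminus F_0$, I need $z \in X$ realizing (over the image of $h$) the quantifier-free $\LL'$-type that $y_{i+1}$ realizes over $F_0 \cup \{y_1, \ldots, y_i\}$ in $\bA_0$; here I use that $\bA_0 \in \age_X(\bA)$ guarantees such a type is already realized in $\bA$, and TAC applied to $\aut_{h(F_0 \cup \{y_1, \ldots, y_i\})}(\bA)$ produces infinitely many realizations in the appropriate orbit, so some $z$ can be chosen outside the current finite image. \emph{Second}, I apply TAC to $\aut_{f(F_0)}(\bA)$ to shift $g_0(F \setminus F_0)$ off $G$: TAC for single elements iterates to TAC for finite tuples (fix the first $n-1$ coordinates, move the last), so some $\phi \in \aut_{f(F_0)}(\bA)$ sends $(g_0(y_1), \ldots, g_0(y_n))$ into the complement of $G$, and then $g := \phi \circ g_0$ is the desired embedding.

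For the converse, I would fix a finite $F \subseteq X$ and $x \in X \setminus F$ and aim to show that the $\aut_F(\bA)$-orbit of $x$ is infinite. For each finite $G \subseteq X$ and finite $\LL' \subseteq \LL$, apply the extension property to $\bA_0 := \res{\bA|_{\LL'}}{F \cup \{x\}}$, $F_0 := F$, $f := \mathrm{id}_F$, and $G$ (noting that $\mathrm{id}_F$ is indeed an embedding of $\res{\bA_0}{F}$ into $\bA$). This yields an embedding $g$ of $\bA_0$ extending $\mathrm{id}_F$ with $z := g(x) \notin G$, so $z$ realizes the same $\LL'$-quantifier-free type as $x$ over $F$. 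I would then build $\phi \in \aut_F(\bA)$ with $\phi(x) = z$ by a back-and-forth construction that uses the extension property at each stage to extend partial isomorphisms by one element in domain and range, diagonalizing over a chain $\LL'_n \uparrow \LL$ (possible since $\LL$ is countable) so that the limiting partial isomorphism preserves the full language $\LL$. Varying $G$ over arbitrarily large finite sets then produces infinitely many such $z$ in the $\aut_F(\bA)$-orbit of $x$.

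The step I expect to be the main obstacle is Stage 1 of the forward direction: translating TAC, an automorphism-orbit condition, into the realizability of quantifier-free types over finite parameter sets required to extend a partial embedding. The key observation making this go through is that $\bA_0 \in \age_X(\bA)$ means $\bA_0$ is (up to pushforward) a finite substructure of $\res{\bA}{\LL'}$, so the required $\LL'$-quantifier-free type of $y_{i+1}$ over $F_0 \cup \{y_1, \ldots, y_i\}$ is already realized in $\bA$ by the corresponding element of $\bA_0$ itself; TAC then promotes this lone realization to an infinite orbit of realizations under the automorphism group fixing the current image of $h$, from which one can select a valid $z$. In the converse direction, the back-and-forth is standard but needs care in diagonalizing over finite sublanguages so that the limit partial isomorphism preserves all of $\LL$.
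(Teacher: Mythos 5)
Your Stage 2 (freezing $f(F_0)$ together with the already-relocated points and moving the remaining points of $g_0(F\setminus F_0)$ off $G$ one at a time) is sound and is essentially the paper's appeal to Neumann's Separation Lemma carried out by hand; your converse direction also matches the paper's in outline, and your care with diagonalizing over finite sublanguages is if anything more explicit than the paper's citation of Hodges. The genuine gap is in Stage 1 of the forward direction, precisely the step you flag as the main obstacle. To adjoin $y_{i+1}$ to the partial embedding $h$ you need the quantifier-free $\LL'$-type of $y_{i+1}$ over $F_0\cup\{y_1,\dots,y_i\}$ to be realized in $\bA$ \emph{over the tuple $h(F_0\cup\{y_1,\dots,y_i\})$}. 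What $\bA_0\in\age_X(\bA)$ supplies is a realization over \emph{some} copy of those parameters in $\bA$ (the image of whichever embedding witnesses $\bA_0\in\age_X(\bA)$); and TAC only multiplies realizations over a \emph{fixed} parameter tuple once one exists — it does not transport a realization from one copy of the parameters to another. That transport is exactly homogeneity of $\bA$, which your argument silently assumes. The step genuinely fails without it: the disjoint union of two copies of the Rado graph has TAC (every pointwise stabilizer of a finite set has all orbits infinite), the path $x\mathrel{\text{--}}y\mathrel{\text{--}}z$ lies in its age, yet the embedding of $\{x,z\}$ sending the two endpoints into different components extends to no embedding of the path, since vertices in different components have no common neighbour. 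So there is no way to produce your ``first realization'' over $h$'s image from TAC alone.

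The paper's proof of this direction is organized exactly so as to supply the missing input up front: it first asserts that TAC makes $\bA$ homogeneous (citing Fra\"iss\'e's Theorem and \cite[Theorem~7.1.8]{Hodges}), uses homogeneity to reduce to the case $f=\mathrm{id}$, and then concludes in one stroke with Neumann's Separation Lemma applied to $\aut_{F_0}(\bA)$. Your write-up needs that homogeneity (or at least weak homogeneity) of $\bA$ as an explicit ingredient before the one-point extensions of Stage 1 can be performed; as written, the induction can get stuck at the first new point.
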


\begin{proof}
	$(\implies)$ Let $\bA_0, F, F_0, G$ be as in the lemma and suppose that $\bA$ has TAC. By Fra\"iss\'e's Theorem and \cite[Theorem~7.1.8]{Hodges} $\bA$ is homogeneous, so we may assume wlog that $f$ is the identity. By Neumann's Separation Lemma, there is some $g \in \aut_{F_0}(\bA)$ such that $g(F \setminus F_0) \cap G = \emptyset$, in which case we may take $\res{g}{F}$.

	$(\impliedby)$ Note first that $\bA$ is homogeneous, i.e., every isomorphism between finite substructures of $\bA$ extends to an automorphism of $\bA$. To see this, by \cite[Lemma~7.1.4]{Hodges} it suffices to show that if $\bA_0 \in \age_X(\bA)$ has universe $F$, $F_0 \subseteq F$ and $f: F_0 \to X$ is an embedding of $\res{\bA_0}{F_0}$ into $\bA$, then $f$ can be extended to an embedding of $\bA_0$ into $\bA$, which follows from our assumption (taking $G = \emptyset$).

	Let now $F$ be a finite set and $x \in X \setminus F$ in order to show that $x$ has infinite orbit under $\aut_F(\bA)$. Let $C \subseteq X$ be finite and let $\bA_0 = \res{\bA}{(F \cup \{x\})}$. By assumption, there is an embedding $f$ of $\bA_0$ into $\bA$ that is the identity on $F$ and such that $f(x) \notin C$. By homogeneity, this can be extended to an automorphism of $\bA$, so that in particular the orbit of $x$ is not contained in $C$. As $C, x$ were arbitrary, we conclude that $\bA$ has TAC.
\end{proof}

In particular, if $\bA$ has TAC then $\bA$ is homogeneous and has the WDP (to see the latter, take $F_0 = \emptyset$ and $G = F$ in the lemma).

\begin{rmk}
	If $\K$ is a class of structures and $X$ a countably infinite set, the set of structures in $\K(X)$ which have the WDP (resp. are definable from equality, have TAC) is $G_\delta$ (resp. closed, $G_\delta$) in $\K(X)$.
\end{rmk}

\begin{lem}\label{lem:wdp-free}
	Let $\K$ be a class of structures with the WDP that are not definable from equality, and $\Gamma$ be a countably infinite group. For any $\bA \in \K(\Gamma)$ there is some $\bB \in Fr(\K(\Gamma))$ isomorphic to $\bA$. If $\K$ is $G_\delta$, then $Fr(\K(\Gamma))$ is a dense $G_\delta$ set in $\K(\Gamma)$.
\end{lem}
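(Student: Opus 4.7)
The plan is to deduce both statements from a single technical claim: for any $\bA \in \K(\Gamma)$, any $\gamma \in \Gamma \setminus \{1_\Gamma\}$, and any finite partial injection $p \colon F \to \Gamma$ with $F \subseteq \Gamma$ finite, there exists a bijection $\sigma \in S_\Gamma$ extending $p$ such that $\sigma^{-1} L(\gamma) \sigma \notin \aut(\bA)$, where $L \colon \Gamma \hookrightarrow S_\Gamma$ is the left multiplication embedding; equivalently, $\gamma$ does not act as an automorphism of $\sigma \cdot \bA$. Granting this claim, part (1) will follow by Baire category on the Polish group $S_\Gamma$: for each $\gamma \neq 1_\Gamma$ the set $A_\gamma = \{\sigma \in S_\Gamma : \sigma^{-1} L(\gamma) \sigma \in \aut(\bA)\}$ is closed, as the preimage of the closed subgroup $\aut(\bA) \leq S_\Gamma$ under the continuous map $\sigma \mapsto \sigma^{-1} L(\gamma) \sigma$, and by the claim has empty interior. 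Hence $S_\Gamma \setminus \bigcup_\gamma A_\gamma$ is comeager, in particular nonempty, and any $\sigma$ in it yields $\bB = \sigma \cdot \bA$ lying in the $S_\Gamma$-orbit of $\bA$ and inside $Fr(\K(\Gamma))$. For part (2), once $\K$ is $G_\delta$ the subspace $\K(\Gamma)$ is Polish, each $U_\gamma = \{\bB \in \K(\Gamma) : \gamma \cdot \bB \neq \bB\}$ is open, and density of $U_\gamma$ inside any nonempty basic open $N(\bA_0) \cap \K(\Gamma)$ follows by applying the claim with some $\bA \in N(\bA_0) \cap \K(\Gamma)$ and $p = \mathrm{id}_{F_0}$ (the resulting $\sigma \cdot \bA$ still extends $\bA_0$ and is not $\gamma$-fixed); then Baire gives $Fr(\K(\Gamma)) = \bigcap_\gamma U_\gamma$ as a dense $G_\delta$ subset of $\K(\Gamma)$.

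To prove the claim I will use both hypotheses on $\K$. Since $\bA$ is not definable from equality, I fix an $n$-ary $R \in \LL$ and tuples $\bar u, \bar v \in \Gamma^n$ with the same equality type such that $R^\bA(\bar u)$ holds but $R^\bA(\bar v)$ fails. Applying the WDP twice---first to embed $\res{\bA}{\bar u}$ into $\bA$ with image disjoint from $\bar v$, then to embed the resulting finite substructure on $\bar u \cup \bar v$ into $\bA$ with image disjoint from $F$---I may assume $\bar u, \bar v, F$ are pairwise disjoint while preserving the equality types and the witnesses $R^\bA(\bar u), \neg R^\bA(\bar v)$. Next, since $\gamma \neq 1_\Gamma$ and $\Gamma$ is infinite, I build coordinate by coordinate a tuple $\bar a \in \Gamma^n$ with the same equality type as $\bar u$ such that $\bar a, \gamma \bar a, p(F)$ are pairwise disjoint: each new coordinate of $\bar a$ only has to avoid a finite set of forbidden values. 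Finally I define $\sigma$ on $F \cup \bar u \cup \bar v$ by $\sigma|_F = p$, $\sigma(\bar u) = \bar a$, and $\sigma(\bar v) = \gamma \bar a$; this is well-defined and injective by construction, and extends to a bijection of $\Gamma$ because both remaining complements are countably infinite. A direct calculation then gives $(\sigma^{-1} L(\gamma) \sigma)(\bar u) = \sigma^{-1}(\gamma \bar a) = \bar v$, so $\sigma^{-1} L(\gamma) \sigma$ does not preserve $R^\bA$ and is not in $\aut(\bA)$.

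The main obstacle will be coordinating all these disjointness conditions simultaneously, but the WDP-based repositioning of $\bar u, \bar v$ happens inside $\bA$ while the construction of $\bar a$ only uses the group operation and the infinitude of $\Gamma$, so the two steps are decoupled and do not interact. Both hypotheses on $\K$ are essential for this strategy: without the WDP I could not freely relocate the witnesses off the finite set $F$, and without the failure of equality-definability the argument collapses, since a structure definable from equality has $\aut(\bA) = S_\Gamma$ and then $\sigma^{-1} L(\gamma) \sigma \in \aut(\bA)$ automatically for every $\sigma$.
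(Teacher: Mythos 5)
Your proposal is correct and its combinatorial heart is the same as the paper's: for each $\gamma \neq 1_\Gamma$, use the WDP together with the failure of equality-definability to extend any finite partial injection to one that sends witnessing tuples $\bar u, \bar v$ to $\bar a, \gamma\bar a$, thereby breaking $\gamma$-invariance of the pushed-forward structure. The only difference is bookkeeping: the paper assembles these finite conditions into a single bijection by an explicit back-and-forth recursion over an enumeration of $\Gamma$, whereas you assemble them by a Baire category argument in $S_\Gamma$ (and, for density, directly in $\K(\Gamma)$), which is an equivalent repackaging.
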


\begin{proof}
	Let $\bA \in \K(\Gamma)$ and fix an enumeration $\{\gamma_n\}_{n \in \N}$ of $\Gamma$. Let $R \in \LL$ be a relation such that $R^\bA$ is not definable from equality. We will construct an increasing sequence of finite partial bijections $f_n: \Gamma \to \Gamma$ so that $\gamma_n$ is in the domain of $f_{2n}$ and in the range of $f_{2n+1}$, and moreover so that for all $n$ there is a tuple $\bar{x}$ such that $\bar{x}, \gamma_n\bar{x}$ are contained in the domain of $f_{2n}$ and $R^\bA(f_{2n}(\bar{x}) \iff \lnot R^\bA(f_{2n}(\gamma_n\bar{x}))$. Assuming this has been done, we let $f = \bigcup_n f_n$ and $\bB = f^{-1}(\bA)$. Then $\bB \cong \bA \in \K(\Gamma)$, and for all $\gamma \in \Gamma$ there is some $\bar{x}$ such that
	\[R^\bB(\bar{x}) \iff R^\bA(f(\bar{x})) \iff \lnot R^\bA(f(\gamma \bar{x})) \iff \lnot R^\bB(\gamma \bar{x}),\]
	so that $\gamma \bB \neq \bB$ for all $\gamma \in \Gamma$.

	We construct these maps as follows. Set $f_{-1} = \emptyset$ for convenience. Given $f_{2n}$, we let $f_{2n+1}$ be an arbitrary extension with $\gamma_n$ in its range. Suppose now we have constructed $f_{2n-1}: F \to G$. Because $\bA$ has the WDP and $R^\bA$ is not definable from equality, there are tuples $\bar{y}, \bar{z}$ with the same equality type so that $R^\bA(\bar{y}) \iff \lnot R^\bA(\bar{z})$ and the sets $\bar{y}, \bar{z}, G$ are pairwise disjoint. Since $\Gamma$ is infinite, we can find a tuple $\bar{x}$ with the same equality type as $\bar{y}$ so that $\bar{x}, \gamma_n \bar{x}, F$ are pairwise disjoint. We then define $f_{2n}$ to extend $f_{2n-1}$ by sending $\bar{x} \mapsto \bar{y}$, $\gamma_n \bar{x} \mapsto \bar{z}$ and, if $f_{2n}(\gamma_n)$ has not already been defined, setting it to any element of $\Gamma$ not already in the range of $f_{2n}$.

	If $\K$ is $G_\delta$, then $Fr(\K(\Gamma))$ is a $G_\delta$ set as the action of $\Gamma$ is continuous. Moreover, if $\bA_0 \in \age_\Gamma(\K)$ has universe $F$ and $\bA \in N(\bA_0)$, then the same construction starting instead with the $f_{-1}$ as the identity on $F$ gives $\bB \in N(\bA_0) \cap Fr(\K(\Gamma))$, so the free part is dense.
\end{proof}

\begin{lem}\label{lem:homogeneous-generic}
	Let $\K$ be a $G_\delta$ class of structures with TAC and $\Gamma$ be a countably infinite group. The set of $\bA \in \K(\Gamma)$ with the following extension property is a dense $G_\delta$ set in $\K(\Gamma)$:
	\begin{enumerate}
		\item[$(*)$] Let $\bA_0, \bA_1 \in \age_\Gamma(\K)$ be $\LL'$-structures with disjoint universes $F, G$ respectively. Let $F_0 \subseteq F$ and $f: (F \setminus F_0) \cup G \to \Gamma$ be an injection. If $\res{\bA_0}{F_0} \sqsubseteq \bA$ and $\bA_0, \bA_1$ embed into $\bA$, then there is some $\gamma \in \Gamma$ such that the map $F_0 \ni x \mapsto x$, $(F \setminus F_0) \cup G \ni x \mapsto \gamma f(x)$ embeds both $\bA_0, \bA_1$ into $\bA$.
	\end{enumerate}
\end{lem}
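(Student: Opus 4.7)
My plan is to write the target set as $V = \bigcap_\tau V_\tau$, where $\tau = (\bA_0, \bA_1, F_0, f)$ ranges over the countable collection of data appearing in $(*)$ and $V_\tau \subseteq \K(\Gamma)$ denotes the set on which the corresponding implication holds. Since $\K(\Gamma)$ is $G_\delta$ in the compact Polish space $\Mod_\LL(\Gamma)$ and hence itself Polish, by the Baire category theorem it suffices to show that each $V_\tau$ is dense $G_\delta$.

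For $G_\delta$-ness, observe that $V_\tau = \neg H_\tau \cup \bigcup_{\gamma \in \Gamma} C_\tau^\gamma$, where $H_\tau$ is the hypothesis of $(*)$ and each $C_\tau^\gamma$ is the clopen event that the specific map determined by $\gamma$ simultaneously embeds $\bA_0$ and $\bA_1$ into $\bA$. The hypothesis $H_\tau$ is open: the clause $\res{\bA_0}{F_0} \sqsubseteq \bA$ is clopen, and each ``$\bA_i$ embeds into $\bA$'' is a union over injective maps $F \to \Gamma$ (respectively $G \to \Gamma$) of clopen events. Since closed sets in a Polish space are $G_\delta$, the union of a closed set with an open set is $G_\delta$, so $V_\tau$ is $G_\delta$.

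For density, fix a basic open $N(\bA')$ with $\bA' \in \age_\Gamma(\K)$ of universe $F''$, and pick some $\bB \in \K(\Gamma)$ with $\bA' \sqsubseteq \bB$. After replacing $\bA'$ with a finite restriction of $\bB$ on a larger universe and sublanguage we may assume $F_0 \subseteq F''$ and that the finite sublanguage $\LL'$ of $\bA_0, \bA_1$ is contained in the sublanguage of $\bA'$. If $\res{\bA_0}{F_0} \not\sqsubseteq \bA'$ then $H_\tau$ fails on all of $N(\bA')$ and we are done; otherwise, since $\Gamma$ is infinite there is $\gamma \in \Gamma$ with $\gamma f((F \setminus F_0) \cup G) \cap F'' = \emptyset$. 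It now suffices to produce $\bA'' \in \age_\Gamma(\K)$ extending $\bA'$ with universe $F''' = F'' \cup \gamma f(F \setminus F_0) \cup \gamma f(G)$, such that the maps $h_0 \colon F \to F'''$ (the identity on $F_0$ and $\gamma f$ on $F \setminus F_0$) and $h_1 = \gamma f \colon G \to F'''$ are $\LL'$-embeddings of $\bA_0, \bA_1$ into $\bA''$; for then any $\bA \in N(\bA'') \cap \K(\Gamma)$ (nonempty since $\bA'' \in \age_\Gamma(\K)$) witnesses $C_\tau^\gamma$.

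The main obstacle is building this $\bA''$, which I handle via TAC: applying \cref{lem:TAC-equivalent} to $\bB$ twice, I obtain embeddings $e_0 \colon \bA_0 \hookrightarrow \bB$ extending the identity on $F_0$ with $e_0(F \setminus F_0) \cap F'' = \emptyset$, and then $e_1 \colon \bA_1 \hookrightarrow \bB$ with $e_1(G)$ disjoint from $F'' \cup e_0(F \setminus F_0)$. Because all the relevant finite subsets of $\Gamma$ are now pairwise disjoint, there is a permutation $\sigma \in S_\Gamma$ that is the identity on $F''$ and sends $e_0(x) \mapsto \gamma f(x)$ for $x \in F \setminus F_0$ and $e_1(y) \mapsto \gamma f(y)$ for $y \in G$. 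Since $\K$ is closed under isomorphism, $\sigma \cdot \bB \in \K(\Gamma)$; setting $\bA'' = (\sigma \cdot \bB) {\upharpoonright} F'''$ then works, because $\sigma$ fixes $F''$ pointwise (so $\bA''$ extends $\bA'$) and the identities $h_i = \sigma \circ e_i$ exhibit $h_i$ as embeddings of $\bA_i$ into $\sigma \cdot \bB$ and hence into $\bA''$.
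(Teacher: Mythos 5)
Your overall strategy matches the paper's: decompose the target set into countably many conditions indexed by the finite data, check each piece is suitably definable and dense, and in the density step use \cref{lem:TAC-equivalent} together with a permutation of $\Gamma$ fixing the given finite configuration to relocate copies of $\bA_0, \bA_1$ onto the prescribed translate $\gamma f$. The topological bookkeeping is fine: $\neg H_\tau \cup \bigcup_\gamma C_\tau^\gamma$ is indeed $G_\delta$, and this suffices because a countable intersection of dense $G_\delta$ sets in the Polish space $\K(\Gamma)$ is again dense $G_\delta$ (even though your $V_\tau$ are not open, unlike the sets used in the paper).

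There is, however, a gap in the density argument: both applications of \cref{lem:TAC-equivalent} to $\bB$ require $\bA_0 \in \age_\Gamma(\bB)$ and $\bA_1 \in \age_\Gamma(\bB)$, i.e., that $\bA_0$ and $\bA_1$ actually embed into the particular $\bB$ you chose in $N(\bA')$. You only verify $\res{\bA_0}{F_0} \sqsubseteq \bA'$; since distinct elements of $\K(\Gamma)$ may have different ages, knowing $\bA_0, \bA_1 \in \age_\Gamma(\K)$ does not give this, and if $\bA_0 \notin \age_\Gamma(\bB)$ the embedding $e_0$ need not exist. The repair is immediate --- if either $\bA_i$ fails to embed into $\bB$, then $H_\tau(\bB)$ is false and $\bB$ itself lies in $V_\tau \cap N(\bA')$ --- but it must be said. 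The paper sidesteps this case split by enlarging the countable index set to include fixed injections $a\colon F \to \Gamma$, $b\colon G \to \Gamma$ and requiring in the hypothesis that $a, b$ be embeddings of $\bA_0, \bA_1$ into $\bA$; this makes the hypothesis clopen (so each piece is genuinely open, not merely $G_\delta$) and guarantees, in the non-vacuous branch of the density argument, that the chosen structure really contains copies of $\bA_0$ and $\bA_1$ to which TAC and the WDP can be applied.
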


\begin{proof}
	Fix $\bA_0, \bA_1, F, G, F_0, f$ as in $(*)$ and let $a: F \to \Gamma, b: G \to \Gamma$ be injections. Let $U$ be the set of all $\bA \in \K(\Gamma)$ such that, if $\res{\bA_0}{F_0} \sqsubseteq \bA$ and $a, b$ are embeddings of $\bA_0, \bA_1$ into $\bA$, then the conclusion of $(*)$ holds for $\bA$. We will show that $U$ is open and dense. As there are only countably many choices for $\bA_0, \bA_1, F, G, F_0, f, a, b$, the intersection of all such $U$ is a dense $G_\delta$ set whose elements satisfy $(*)$.

	It is clear that $U$ is open. To see that it is dense, fix $\bB_0 \in \age_\Gamma(\K)$ and let $\bA \in N(\bB_0)$. If $\res{\bA_0}{F_0} \nsubseteq \bA$ or some $a, b$ is not an embedding of $\bA_0, \bA_1$ into $\bA$ then $\bA \in U$. Otherwise, let $\bB_0$ have universe $H$, and assume wlog that $F_0 \subseteq H$. Because $\bA$ has TAC and by \cref{lem:TAC-equivalent}, there is an embedding $g_0: F \to \Gamma$ of $\bA_0$ into $\bA$ extending the identity on $F_0$ so that $g_0(F \setminus F_0) \cap H = \emptyset$. By the WDP, there is an embedding $g_1: \bA_1 \to \bA$ whose image is disjoint from $H \cup g_0(F)$. Fix $\gamma$ so that $H \cap \gamma f((F \setminus F_0) \cup G) = \emptyset$, and let $h: \Gamma \to \Gamma$ be a bijection so that the following hold: $h$ is the identity on $H$, $h(\gamma f(x)) = g_0(x)$ for $x \in F \setminus F_0$ and $h(\gamma f(x)) = g_1(x)$ for $x \in G$. Let $\bB = h^{-1}(\bA)$. Then $\bB \in N(\bB_0)$, $\bB \cong \bA$ and $\bB$ satisfies $(*)$ as witnessed by $\gamma$, so $\bB \in U$.
\end{proof}

\begin{proof}[Proof of \cref{thm:generic-expansions}]
	Clearly $(1) \implies (2)$.

	$(2) \implies (1)$: As the class $\K'$ of elements of $\K$ with TAC that are not definable from equality is a comeagre $G_\delta$ set in $\K$, we may assume wlog that $\K = \K'$.

	Let $\Gamma, \Lambda$ be countably infinite groups and $Z \subseteq \K(\Lambda)$ be a Borel comeagre $\Lambda$-invariant set that admits a $\Lambda$-equivariant expansion to $\K^*$.

	Let $f_n \in S_\Lambda$ be a dense sequence of bijections, i.e., a sequence such that for every finite partial bijection $\Lambda \to \Lambda$ there is some $f_n$ extending it. Since $S_\Lambda$ acts on $\K(\Lambda)$ by homeomorphisms, we can assume wlog that $Z$ is $G_\delta$ and that $f_n(\bA) \in Z$ for all $\bA \in Z, n \in \N$. By \cref{lem:wdp-free}, we may also assume that $Z \subseteq Fr(\K(\Lambda))$.

	Let $X = Fr(\K(\Gamma))$, which by \cref{lem:wdp-free} is a dense $G_\delta$ set in $\K(\Gamma)$, and let $E = E^X_\Gamma$. Let $\bbA$ be the canonical $\K$-structuring of $E$. We will show that there is a Borel expansion of $\bbA$ restricted to a comeagre $\Gamma$-invariant set $Y \subseteq X$, and hence by \cref{prop:weak-correspondence-expansions}(2) there is a Borel $\Gamma$-equivariant expansion map $Y \to \K^*(\Gamma)$. As $\Gamma$ was arbitrary, this proves $(1)$.

	Our proof strategy is as follows: We find a Borel $\Gamma$-invariant comeagre set $Y \subseteq X$ and a free $\Lambda$-action on $Y$ so that $E^Y_\Lambda = E^Y_\Gamma$. By \cref{prop:correspondence-sructures-maps} applied to $\res{\bbA}{Y}$, this gives a $\Lambda$-equivariant Borel map $F: Y \to \K(\Lambda)$. We will ensure that the image of $F$ is contained in $Z$. By \cref{prop:correspondence-expansion-maps}, this implies the existence of a Borel expansion of $\res{\bbA}{Y}$, completing the proof.

	Below, we let $\forall^*$ denote the category quantifier: If $W$ is a topological space and $A \subseteq W$ is Baire-measurable, $\forall^* w A(x)$ means that $A$ is comeagre (see \cite[8.J]{CDST}).

	Let $\mathcal{G}$ denote the \tb{intersection graph} of $E$. That is, the vertices of $\mathcal{G}$ are finite subsets of $X$ which are contained in a single $E$-class, and
	\[a \mathcal{G} b \iff a \neq b \And (a \cap b \neq \emptyset).\]
	By the proof of \cite[Lemma~7.3]{KM}, we may fix a countable Borel colouring $c$ of $\mathcal{G}$.

	Let $(R_n, \gamma_n, \bar{\delta}_n)$ be a sequence of triples so that (1) the tuples in $\{\bar{\delta}_n, \gamma_n \bar{\delta}_n : n \in \N\}$ are pairwise disjoint, (2) for every $n$, $R_n \in \LL$, and if $R_n$ has arity $k$ then $\bar{\delta} \in \Gamma^k$, and (3) for every $R \in \LL$ of arity $k$ and every equality type of tuples of length $k$, there are infinitely many $n$ with $R_n = R$ such that $\bar{\delta}_n$ has this equality type. Let
	\[O_n = \{\bA \in X : R^\bA_n(\bar{\delta}_n) \And \lnot R^\bA_n(\gamma_n \bar{\delta}_n)\}\]
	and $B_{n+1} = O_n \setminus \bigcup_{i < n} O_i$, $n \in \N$. We also set $B_0 = X \setminus \bigcup_n O_n$.

	\begin{claim}\label{claim:generic-intersection}
		Suppose $\bA \in \K(\Gamma)$ satisfies $(*)$ from \cref{lem:homogeneous-generic}. Then $|\Gamma \cdot \bA \cap B_n| = \infty$ for infinitely many $n$.
	\end{claim}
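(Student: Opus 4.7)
The plan is to exploit property $(*)$ iteratively. Since $\bA$ is not definable from equality (we reduced to $\K' = \K$), fix $R \in \LL$, an equality type $T$, and tuples $\bar{y}, \bar{z}$ of type $T$ in $\Gamma$ with $R^\bA(\bar{y}) = 1$ and $R^\bA(\bar{z}) = 0$. By condition~(3), the set $S' := \{n : R_n = R \text{ and } \bar{\delta}_n \text{ has type } T\}$ is infinite, so it suffices to prove $|\Gamma \cdot \bA \cap B_{n+1}| = \infty$ for each $n \in S'$.

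Fix $n \in S'$. Call $(R', T')$ \emph{separating} in $\bA$ if $R'^\bA$ takes both truth values on type-$T'$ tuples. For $i < n$ such that $(R_i, \text{type}(\bar{\delta}_i))$ is not separating, $R_i^\bA$ is constant on tuples of that type, so one conjunct defining $O_i$ fails uniformly, making $\beta \bA \notin O_i$ for every $\beta \in \Gamma$. Let $D_n := \{i < n : (R_i, \text{type}(\bar{\delta}_i)) \text{ is separating}\}$, a finite set. I build distinct $\beta^{(1)}, \beta^{(2)}, \ldots$ with $\beta^{(m)} \bA \in B_{n+1}$ inductively; given $\beta^{(1)}, \ldots, \beta^{(m)}$, let
\[F_0 := \bigcup_{j \leq m}\bigl((\beta^{(j)})^{-1}\bar{\delta}_n \cup \bigcup_{i \in D_n}(\beta^{(j)})^{-1}\bar{\delta}_i \cup (\beta^{(j)})^{-1}\gamma_n\bar{\delta}_n\bigr).\]
Using TAC of $\bA$ (\cref{lem:TAC-equivalent}), choose pairwise disjoint tuples $\bar{y}', \bar{z}'$ and $\bar{w}'_i$ (for $i \in D_n$) in $\Gamma$, all disjoint from $F_0$, of types $T, T, \text{type}(\bar{\delta}_i)$ respectively, with $R^\bA(\bar{y}') = 1$, $R^\bA(\bar{z}') = 0$, and $R_i^\bA(\bar{w}'_i) = 0$.

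Apply $(*)$ with $\bA_0 := \res{\bA}{F}$ on $F := F_0 \cup \bar{y}' \cup \bigcup_i \bar{w}'_i$, $\bA_1 := \res{\bA}{\bar{z}'}$, the above $F_0$, and $f$ sending $\bar{y}' \mapsto \bar{\delta}_n$, $\bar{w}'_i \mapsto \bar{\delta}_i$, $\bar{z}' \mapsto \gamma_n \bar{\delta}_n$ coordinate-wise (injective by condition~(1)); the hypotheses hold as both $\bA_0, \bA_1$ embed into $\bA$ via identity. The resulting $\gamma$ gives a joint embedding preserving $R$ and each $R_i$, forcing $R^\bA(\gamma\bar{\delta}_n) = 1$, $R^\bA(\gamma\gamma_n\bar{\delta}_n) = 0$, and $R_i^\bA(\gamma\bar{\delta}_i) = 0$ for $i \in D_n$. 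Setting $\beta^{(m+1)} := \gamma^{-1}$, the logic action translates these to $\beta^{(m+1)}\bA \in O_n$ and $\beta^{(m+1)}\bA \notin O_i$ for $i \in D_n$; combined with the automatic exclusions for $i \notin D_n$, this gives $\beta^{(m+1)}\bA \in B_{n+1}$. Injectivity of the embedding forces $\gamma\bar{\delta}_n$ disjoint from $F_0$, so $\beta^{(m+1)} \neq \beta^{(j)}$ for each $j \leq m$. Iterating yields infinitely many distinct $\beta$ with $\beta\bA \in B_{n+1}$. The main obstacle is the nested definition of $B_{n+1}$; the key insight is that non-separating indices $i < n$ are excluded automatically, so only the finite list $D_n$ of separating constraints needs to be actively enforced, and this can be packed into a single enlarged $\bA_0$ for one application of $(*)$.
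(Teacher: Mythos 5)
Your proof is correct and follows essentially the same route as the paper's: a single application of property $(*)$ to a finite configuration that simultaneously forces membership in $O_n$ and exclusion from each $O_i$, $i<n$. Your two refinements --- excluding the non-separating indices automatically rather than by a case split on the value of $R_i^{\bA}$ on an auxiliary tuple, and iterating with the previously found witnesses packed into $F_0$ to produce infinitely many distinct elements of $\Gamma \cdot \bA \cap B_{n+1}$ --- are both sound, and the second makes explicit a point that the paper's one-shot argument leaves implicit.
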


	\begin{claimproof}
		Let $N \in \N$ be arbitrary. We may find some $n \geq N$ so that $R_n^\bA$ is not definable from equality, and there are tuples $\bar{x}, \bar{y}$ with the same equality type as $\bar{\delta}_n$ so that $R^\bA_n(\bar{x}) \And \lnot R^\bA_n(\bar{y})$. By the WDP, we may assume $\bar{x}, \bar{y}$ are disjoint. For $i < n$, find $\bar{x}_i$ with the same equality type as $\bar{\delta}_i$ which are disjoint from each other and from $\bar{x}, \bar{y}$. By $(*)$, there is some $\gamma$ so that $R^\bA_n(\gamma \bar{\delta}_n)$, $\lnot R^\bA_n(\gamma \gamma_n \bar{\delta}_n)$ and for $i < n$, if $R^\bA_i(\bar{x}_i)$ then $R^\bA_i(\gamma \gamma_i \bar{\delta}_i)$ and otherwise $\lnot R^\bA(\gamma \bar{\delta}_i)$. It follows that $\gamma^{-1} \bA \in B_n$.
	\end{claimproof}

	For $\alpha \in \N^\N$, we define an equivalence relation $E^\alpha$ on $X$ as follows: We set $E^\alpha_0$ to be equality. Given $E^\alpha_n$, we set $x E^\alpha_{n+1} y$ if either $x E^\alpha_n y$ or $x \cancel{E^\alpha_n} y$, $c([x]_{E^\alpha_n} \cup [y]_{E^\alpha_n}) = \alpha(n)$, and $x, y \in \bigcup_{i < n} B_i$. We then set $E^\alpha = \bigcup_n E^\alpha_n$.

	Note that if $[x]_{E^\alpha_n}$ is not a singleton for some $\alpha, n$, then $x \in \bigcup_{i < n} B_i$. We note also that we may analogously construct $E^s_i$ for $s \in \N^{< \N}, i \leq |s|$, and that $E^\alpha_i = E^s_i$ for all such $s, i$ and $\alpha \supseteq s$. We set $E^s = E^s_{|s|}$.

	\begin{claim}\label{claim:generic-equal-class}
		$\forall x \in X \forall^* \alpha ([x]_E = [x]_{E^\alpha})$.
	\end{claim}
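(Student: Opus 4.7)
The plan is to prove the two inclusions of $[x]_E = [x]_{E^\alpha}$ separately: the inclusion $[x]_{E^\alpha} \subseteq [x]_E$ will hold unconditionally for every $\alpha$, while the reverse inclusion is a Baire category statement that I would reduce to showing countably many open dense subsets of $\N^\N$ are dense.

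First, I would verify by induction on $n$ that $E^\alpha_n \subseteq E$, so in particular $[x]_{E^\alpha} \subseteq [x]_E$ for every $\alpha$. The base case $E^\alpha_0 = \Delta_X$ is trivial, and the inductive step uses the key fact that $c$ is a coloring of the intersection graph $\mathcal{G}$, whose vertices are finite subsets of $X$ contained in a single $E$-class. Thus $c([x]_{E^\alpha_n} \cup [y]_{E^\alpha_n})$ is only defined when this union lies in a single $E$-class, so any fresh $E^\alpha_{n+1}$-equivalence produced by the merge rule happens between classes that were already inside the same $E$-class.

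For the reverse inclusion, I would fix $x \in X$ and exploit countability of $[x]_E$ to reduce the problem to showing that, for each $y \in [x]_E$, the set $U_y = \{\alpha \in \N^\N : x \mathrel{E^\alpha} y\}$ is comeagre. Then $\{\alpha : [x]_E = [x]_{E^\alpha}\} = \bigcap_{y \in [x]_E} U_y$ is comeagre as well. Openness of $U_y$ is immediate, since $x \mathrel{E^\alpha} y$ is witnessed at some finite stage $n$ and hence depends only on $\alpha \upharpoonright n$. For density, I would fix $s \in \N^{<\N}$ and construct an extension $t \supseteq s$ with every $\alpha \supseteq t$ in $U_y$: since the $B_i$ partition $X$, choose $N \geq |s|$ with $x, y \in \bigcup_{i < N} B_i$, extend $s$ arbitrarily to some $s' \in \N^N$, and consider the finite classes $[x]_{E^{s'}_N}$ and $[y]_{E^{s'}_N}$, which by the first step are subsets of $[x]_E$. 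If they coincide then $x \mathrel{E^{s'}} y$ already; otherwise their union $A \subseteq [x]_E$ is a vertex of $\mathcal{G}$, and taking $t = s' \frown c(A) \in \N^{N+1}$ fires the merge rule at step $N$ on the pair $(x, y)$, forcing $x \mathrel{E^t_{N+1}} y$.

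The main potential obstacle is purely bookkeeping: confirming that the merge step at stage $N$ is well-defined (which needs $x, y \in \bigcup_{i < N} B_i$, secured by the choice of $N$, together with the union lying in a single $E$-class, secured by the coloring observation) and that extending $s$ to $s'$ in between does not produce intermediate merges that would invalidate the final step. There is no deep conceptual difficulty here; once the construction of $E^\alpha$ is carefully unwound this is a routine open-dense-comeagre argument of the kind ubiquitous in generic constructions, and it does not appeal to hypotheses on $\K$ beyond what was already used to set up the enumeration, the coloring $c$, and the partition $\{B_i\}$.
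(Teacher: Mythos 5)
Your proof is correct and follows essentially the same route as the paper's: openness of $\{\alpha : x \mathrel{E^\alpha} y\}$ is witnessed at a finite stage, and density is obtained by extending $s$ until $x, y \in \bigcup_{i<N} B_i$ and then setting the next coordinate to $c([x]_{E^{s'}} \cup [y]_{E^{s'}})$, intersecting over the countably many $y \mathrel{E} x$. The only difference is that you also spell out the unconditional inclusion $[x]_{E^\alpha} \subseteq [x]_E$ (via $c$ being a coloring of the intersection graph), which the paper leaves implicit; that is a harmless and correct addition.
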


	\begin{claimproof}
		Fix $x \in X$ and let $y E x$. We show that the set of $\alpha$ with $y \in [x]_{E^\alpha}$ is open and dense. It is clearly open, as if $y E^\alpha x$ then $y E^\alpha_n x$ for some $n$. To see it is dense, fix $s \in \N^n$. We may assume wlog that $x, y \in \bigcup_{i < n} B_i$. But then any $\alpha \supseteq s$ with $\alpha(n) = c([x]_{E^s} \cup [y]_{E^s})$ satisfies $y E^\alpha x$. As there are only countably many $y E x$, the set of all $\alpha$ for which $[x]_E = [x]_{E^\alpha}$ is dense $G_\delta$.
	\end{claimproof}

	Let now $\LL_0 = (f_\lambda)_{\lambda \in \Lambda}$, where each $f_\lambda$ is a binary relation, and let $\bm{\Lambda} = (\Lambda, f^{\bm{\Lambda}}_\lambda)_{\lambda \in \Lambda}$ be the $\LL_0$-structure where $f^{\bm{\Lambda}}_\lambda(\delta) = \lambda \delta$ is interpreted as (the graph of) multiplication on the left by $\lambda$.

	Let $\alpha, \beta \in \N^\N$. We define an $\LL_0$-structure $\mathbbl{\Lambda}^{\alpha, \beta}$ on $X$ as follows. We will define an increasing sequence of structures $\mathbbl{\Lambda}^{\alpha, \beta}_n$ on $E^\alpha_n$ and then take $\mathbbl{\Lambda}^{\alpha, \beta} = \bigcup_n \mathbbl{\Lambda}^{\alpha, \beta}_n$. We will ensure at every stage $n$ of this process that, if $C$ is an $E^\alpha_n$-class, then $\res{\mathbbl{\Lambda}^{\alpha, \beta}_n}{C}$ will be isomorphic to a substructure of $\bm{\Lambda}$.

	Fix a Borel linear order $<$ on $X$ and an enumeration of $\Lambda$. We define $\mathbbl{\Lambda}^{\alpha, \beta}_0$ by setting $f_{id}^{\mathbbl{\Lambda}^{\alpha, \beta}_0}(x) = x$ for all $x \in X$, and leaving the other relations undefined. Suppose now that we have defined $\mathbbl{\Lambda}^{\alpha, \beta}_n$, in order to define $\mathbbl{\Lambda}^{\alpha, \beta}_{n+1}$. Let $C$ be an $E^\alpha_{n+1}$-class. If $C$ is an $E^\alpha_n$-class, then we define $\mathbbl{\Lambda}^{\alpha, \beta}_{n+1}$ to be equal to $\mathbbl{\Lambda}^{\alpha, \beta}_n$ on $C$. Otherwise, $C$ is the union of two $E^\alpha_n$-classes $C_0, C_1$. Order them so that the $<$-least element of $C_0$ is $<$-below the $<$-least element of $C_1$. For each $C_i$, as $\res{\mathbbl{\Lambda}^{\alpha, \beta}_n}{C_i}$ is isomorphic to a substructure of $\bm{\Lambda}$, there is a unique embedding $f_i: C_i \to F_i \subseteq \Lambda$ taking the $<$-least element of $C_i$ to the identity in $\Lambda$. We then take $\lambda$ to be the $\beta(n)$-th element of $\Lambda$, in our fixed enumeration, satisfying $F_0 \cap F_1 \cdot \lambda = \emptyset$. We define now $\res{\mathbbl{\Lambda}^{\alpha, \beta}_{n+1}}{C}$ to be the pullback of $\res{\bm{\Lambda}}{(F_0 \cup F_1 \cdot \lambda)}$ via the injection $(f_0 \cup f_1 \cdot \lambda): (C_0 \cup C_1) \to (F_0 \cup F_1 \cdot \lambda)$, where $f_1 \cdot \lambda$ denotes the map $x \mapsto f_1(x) \cdot \lambda$. (Note that we are multiplying $F_1, f_1$ by $\lambda$ on the right. This is because $\bm{\Lambda}$ is defined in terms of multiplication on the left, and this commutes with multiplication on the right.)

	As with the $E^\alpha$, we may define $\mathbbl{\Lambda}^{s, t}_k$ for $s, t \in \N^n$ and $k \leq n$, and we let $\mathbbl{\Lambda}^{s, t} = \mathbbl{\Lambda}^{s, t}_n$. Note that $\mathbbl{\Lambda}^{\alpha, \beta}_n = \mathbbl{\Lambda}^{\res{\alpha}{n}, \res{\beta}{n}}$ for all $\alpha, \beta$.

	\begin{claim}\label{claim:generic-equal-structure}
		$\forall^* x \in X \forall^* \alpha, \beta (\res{\mathbbl{\Lambda}^{\alpha, \beta}}{[x]_{E^\alpha}} \cong \bm{\Lambda})$.
	\end{claim}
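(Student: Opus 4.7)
My plan is a density argument on $\N^\N \times \N^\N$ that reduces the claim to countably many open-dense conditions. By the inductive construction, each $\res{\mathbbl{\Lambda}^{\alpha, \beta}_n}{C}$ is a substructure of $\bm{\Lambda}$, so in the limit the relations $f_\lambda^{\mathbbl{\Lambda}^{\alpha, \beta}}$ on $[x]_{E^\alpha}$ are partial injections satisfying $f_\lambda \circ f_\mu = f_{\lambda\mu}$ wherever defined, and are free (since $\lambda \delta = \delta$ in $\bm{\Lambda}$ forces $\lambda = e$). Transitivity on $[x]_{E^\alpha}$ is automatic: any two elements of an $E^\alpha$-class eventually lie in a common $E^\alpha_n$-class and are thus related by some $f_\lambda$. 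Hence $\res{\mathbbl{\Lambda}^{\alpha, \beta}}{[x]_{E^\alpha}} \cong \bm{\Lambda}$ if and only if each $f_\lambda$ is \emph{totally defined} on $[x]_{E^\alpha}$.

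Fix a generic $x$ satisfying property $(*)$ of \cref{lem:homogeneous-generic} and the conclusion of \cref{claim:generic-intersection}. By \cref{claim:generic-equal-class} we may also assume $[x]_{E^\alpha} = \Gamma \cdot x$. For fixed $y \in \Gamma \cdot x$ and $\lambda \in \Lambda$, let
\[ U_{y, \lambda} = \{(\alpha, \beta) : f_\lambda(y) \text{ is defined in } \res{\mathbbl{\Lambda}^{\alpha, \beta}}{[x]_{E^\alpha}}\}. \]
It suffices to prove $U_{y, \lambda}$ is comeagre; intersecting over the countable set of pairs $(y, \lambda)$ then gives the claim. Openness is immediate, as definedness of $f_\lambda(y)$ depends only on a finite initial segment of $(\alpha, \beta)$.

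The heart of the argument is density of $U_{y, \lambda}$. Given a prefix $(s, t) \in \N^n \times \N^n$, I would extend it as follows. By \cref{claim:generic-intersection}, pick $z \in \Gamma \cdot x \setminus [x]_{E^s_n}$ with $z \in B_M$ for some large $M$. Because the merger rule requires both participants to be in $\bigcup_{j<i} B_j$ and $z$ is inactive before step $M$, we have $[z]_{E^{s'}_i} = \{z\}$ for all $i \leq M$. On the intermediate steps $n \leq i < M$, I would first (at some step $i_0$ after $y$ becomes active) force the merger of $[x]$'s class with $[y]$'s class via $\alpha(i_0) = c([x]_{E^{s'}_{i_0}} \cup [y]_{E^{s'}_{i_0}})$, ensuring $y \in [x]_{E^{s'}_M}$; other intermediate coordinates may be set arbitrarily. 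At step $M$, set $\alpha(M) = c([x]_{E^{s'}_M} \cup \{z\})$, targeting exactly that pair's merger. Let $\mu = \lambda \cdot f^{(M)}(y)$, where $f^{(M)}$ is the embedding of $[x]_{E^{s'}_M}$ into $\Lambda$ sending its $<$-least element to $e$. If $\mu \in F^{(M)} := f^{(M)}([x]_{E^{s'}_M})$ then $f_\lambda(y)$ is already defined and we stop; otherwise $\mu$ is a valid shift (since $[z]$ is a singleton, the disjointness condition reduces to $\mu \notin F^{(M)}$), and we set $\beta(M)$ to the index of $\mu$ in the enumeration of valid shifts for this particular merger. The merger then assigns $f^{(M+1)}(z) = \mu = \lambda \cdot f^{(M+1)}(y)$, so $(y, z) \in f_\lambda$ at stage $M+1$.

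The main subtlety, and what I expect to be the most delicate point, is the persistence of this assignment: under a later merger in which $[x]$'s class is the upper piece, its embedding is right-multiplied by the merger's shift $\nu$. However, both $f(y)$ and $f(z)$ are right-multiplied by the same $\nu$, so the relation $f(z) = \lambda \cdot f(y)$, equivalently $f(z) f(y)^{-1} = \lambda$, is preserved, and hence $f_\lambda(y) = z$ persists in the limit. This establishes density and hence comeagreness of each $U_{y, \lambda}$, completing the proof.
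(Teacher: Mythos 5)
Your argument is essentially the paper's: both reduce the claim to countably many open dense conditions on $(\alpha,\beta)$, both use \cref{claim:generic-intersection} to produce a fresh element $z$ whose $E^{s}$-class is a singleton, both arrange a merger of $[x]$'s class with $\{z\}$ and use the freedom in $\beta$ to choose the shift so that $z$ lands at a prescribed spot, and both note that the assignment persists because later mergers only right-translate the embedding. Your reformulation (totality of each partial function $f_\lambda$ on $[x]_{E^\alpha}$, indexed by pairs $(y,\lambda)$) is equivalent to the paper's (surjectivity of the normalized embedding $f^{\alpha,\beta}$, indexed by $\lambda$ alone), and the equivalence you sketch is correct.

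One computational point needs repair. When $[x]$'s class merges with $\{z\}$, which of the two is $C_0$ (and hence which embedding gets right-multiplied by the shift) is dictated by the fixed Borel order $<$, not by you. Your prescription ``set the shift to $\mu = \lambda\cdot f^{(M)}(y)$'' is correct only when $[x]$'s class is $C_0$; when it is $C_1$, the merged embedding sends $z\mapsto e$ and $y\mapsto f^{(M)}(y)\cdot\lambda_{\mathrm{shift}}$, so you need $\lambda_{\mathrm{shift}} = (\lambda f^{(M)}(y))^{-1}$ instead. The admissibility condition still reduces to $\mu\notin F^{(M)}$ in both cases, i.e.\ to $f_\lambda(y)$ not already being defined, so the dichotomy you set up survives; but the two-case analysis (which the paper carries out explicitly) cannot be skipped. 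There is also a harmless off-by-one: since $z\in B_M$ and the merger forming $E_{n+1}$ requires both participants to lie in $\bigcup_{i<n}B_i$, the merger with $\{z\}$ can only occur one step later than you state, so the extension must be one coordinate longer. Neither issue affects the soundness of the approach.
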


	\begin{claimproof}
		Fix $x \in X$ satisfying $(*)$ of \cref{lem:homogeneous-generic}. Note that by construction, $\res{\mathbbl{\Lambda}^{\alpha, \beta}}{[x]_{E^\alpha}}$ embeds into $\bm{\Lambda}$ for all $\alpha, \beta$, and hence there is a unique embedding $f^{\alpha, \beta}: \res{\mathbbl{\Lambda}^{\alpha, \beta}}{[x]_{E^\alpha}} \to \bm{\Lambda}$ taking $x$ to the identity. Thus it suffices to show that the set of all $\alpha, \beta$ for which $f^{\alpha, \beta}$ is surjective is a dense $G_\delta$ set. To see this, fix $\lambda \in \Lambda$. We will show that the set of all $\alpha, \beta$ so that $\lambda \in f^{\alpha, \beta}([x]_{E^\alpha})$ is a dense open set. As there are only countably many such $\lambda$, this completes the proof.

		Note that we may define similarly $f^{s, t}: [x]_{E^s} \to \Lambda$ for $s, t \in \N^n$. Then $f^{\alpha, \beta} = \bigcup_n f^{\res{\alpha}{n}, \res{\beta}{n}}$, so it is clear that the set of $\alpha, \beta$ with $\lambda$ in its image is an open set. To see that it is dense, fix $s, t \in \N^n$ and consider $f^{s, t}$. If $\lambda$ is in the image of $f^{s, t}$ then any $\alpha, \beta$ extending $s, t$ satisfies that $\lambda$ is in the image of $f^{\alpha, \beta}$. So suppose otherwise. By \cref{claim:generic-intersection}, there is some $y E x$ so that $[y]_{E^s}$ is a singleton. It is easy to see that if $u$ is an extension of $s$ whose new values are all $c([x]_{E^s} \cup \{y\})$, then for sufficiently long $u$ we have $y E^u x$. Pick such a $u$ of minimal length, so that at stage $|u|$ of the construction of $E^u$ we merge $[x]^{E^s}$ with $\{y\}$. Let $C_0, C_1$ denote these two sets, ordered as in the construction of $\mathbbl{\Lambda}^{\alpha, \beta}$, and let $f_i: C_i \to F_i \subseteq \Lambda$ be the corresponding embeddings.

		If $v$ is any extension of $t$ of length $|u|$, $m = v(|u|-1)$ and $\lambda_m$ is the $m$-th element of $\Lambda$ such that $F_0 \cap F_1 \cdot \lambda_m = \emptyset$, then $\res{\mathbbl{\Lambda}^{u, v}}{C} \cong \res{\bm{\Lambda}}{(F_0 \cup F_1 \cdot \lambda_m)}$ via the map $f = f_0 \cup f_1 \cdot \lambda_m$. In particular, if $x \in C_0$ and $f_0(x) = \delta$, then $f^{u, v} = f \cdot \delta^{-1}$. On the other hand, if $x \in C_1$ and $f_1(x) = \delta$, then $f^{u, v} = f \cdot \lambda_m^{-1} \delta^{-1}$. We will show that we can choose $m$ so that $\lambda = f^{u, v}(y)$, and hence such that $\lambda$ is in the image of $f^{\alpha, \beta}$ for all $\alpha, \beta$ extending $u, v$. As $s, t$ were arbitrary, the set of all such $\alpha, \beta$ is dense and we are done.

		Consider now two cases. If $x \in C_0$ and $f_0(x) = \delta$, then by assumption $\lambda \delta \notin F_0$. Since $C_1$ is a singleton, $F_1$ contains only the identity. Pick $m$ so that $\lambda_m = \lambda \delta$. For such an $m$ we have $f^{u, v}(y) = f_1(y) \lambda_m \delta^{-1} = \lambda \delta \delta^{-1} = \lambda$. On the other hand, if $x \in C_1$ and $f_1(x) = \delta$, then by assumption $\lambda \delta \notin F_1$. In this case again $F_0$ contains only the identity. Pick $m$ with $\lambda_m = \delta^{-1} \lambda^{-1}$. For such an $m$ we have $f^{u, v}(y) = f_0(y) \lambda_m^{-1} \delta^{-1} = \lambda \delta \delta^{-1} = \lambda$.
	\end{claimproof}

	For all $\alpha, \beta$, let $X^{\alpha, \beta} \subseteq X$ denote the set of all $x$ for which $[x]_{E^\alpha} = [x]_E$ and $\res{\mathbbl{\Lambda}^{\alpha, \beta}}{[x]_E} \cong \bm{\Lambda}$. There is a free Borel action of $\Lambda$ on this set, namely the action where $\lambda \cdot x = f^{\mathbbl{\Lambda}^{\alpha, \beta}}_\lambda(x)$. By \cref{prop:correspondence-sructures-maps}, the structure $\bbA$ on $\res{E}{X^{\alpha, \beta}} = E^{X^{\alpha, \beta}}_\Lambda$ gives rise to a $\Lambda$-equivariant Borel map $F^{\alpha, \beta}: X^{\alpha, \beta} \to \K(\Lambda)$.

	\begin{claim}\label{claim:generic-in-Z}
		$\forall^* x \in X \forall^* \alpha, \beta (x \in X^{\alpha, \beta} \implies F^{\alpha, \beta}(x) \in Z)$.
	\end{claim}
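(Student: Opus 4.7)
\emph{Proof plan.} By Kuratowski--Ulam, it suffices to fix a generic $x \in X$ satisfying $(*)$ from \cref{lem:homogeneous-generic} and the conclusions of the three preceding claims, and show that $\{(\alpha,\beta) : x \in X^{\alpha,\beta},\, F^{\alpha,\beta}(x) \notin Z\}$ is meagre in $\N^\N \times \N^\N$. The ``defined set'' $G_x := \{(\alpha,\beta) : x \in X^{\alpha,\beta}\}$ is comeagre by \cref{claim:generic-equal-class,claim:generic-equal-structure}, and $(\alpha,\beta) \mapsto F^{\alpha,\beta}(x)$ is continuous on $G_x$ (since the relation $R^{F^{\alpha,\beta}(x)}$ restricted to a finite subset of $\Lambda$ is determined after finitely many stages of the construction). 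Writing $Z = \bigcap_k U_k$ with each $U_k$ dense open and (WLOG, by replacing $U_k$ with $\bigcap_{\lambda \in \Lambda} \lambda U_k$) $\Lambda$-invariant, the goal reduces to showing that $D_k := \{(\alpha,\beta) \in G_x : F^{\alpha,\beta}(x) \in U_k\}$ is dense in $G_x$ for each $k$; openness is automatic.

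Fix $k$ and a basic open $[s] \times [t]$ meeting $G_x$. The finite data $(s,t)$ determines $\mathbbl{\Lambda}^{s,t}$ on $[x]_{E^s}$ and a unique embedding $\phi_{s,t}: [x]_{E^s} \to \Lambda$ with $\phi_{s,t}(x) = 1_\Lambda$; via the formula defining $F^{\alpha,\beta}$, this already fixes $F^{\alpha,\beta}(x)$ on the finite set $G_0 := \phi_{s,t}([x]_{E^s})^{-1}$, yielding a finite substructure $\bC_0$ on $G_0$. Since generic elements of $\K(\Lambda)$ also satisfy $(*)$, they are homogeneous by \cref{lem:TAC-equivalent}, with their common age determined generically; hence the isomorphism class of $x$ in $\K(\Lambda)$ is comeagre. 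So we may pick a finite structure $\bA_1$ that embeds into $x$, with universe $F \subseteq \Lambda$ containing $G_0$, $\bC_0 \sqsubseteq \bA_1$, and $N(\bA_1) \subseteq U_k$.

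The core construction extends $(s,t)$ to some $(s^*,t^*)$ with $F^{\alpha,\beta}(x)|_F = \bA_1$. Using $(*)$ and \cref{lem:TAC-equivalent}, extend $\eta_0: G_0 \to [x]_{E^s}$ defined by $\eta_0(\lambda) = \phi_{s,t}^{-1}(\lambda^{-1})$ to an embedding $\eta: \bA_1 \to \res{\bbA}{[x]_E}$ whose new image $\eta(F \setminus G_0)$ is disjoint from $[x]_{E^s}$. Enumerate $F \setminus G_0 = \{\lambda_1,\dots,\lambda_m\}$; by \cref{claim:generic-intersection} each $\eta(\lambda_j)$ lies in some $B_{i_j}$. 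At successive stages $n_j > \max(i_j, |s|)$ set $\alpha(n_j-1) := c([x]_{E^{s^{n_j-1}}} \cup \{\eta(\lambda_j)\})$ to force the singleton $\{\eta(\lambda_j)\}$ to merge into the current class of $x$, and choose $\beta(n_j-1)$ so that the new element is placed at position $\lambda_j^{-1} \in \Lambda$ in $\mathbbl{\Lambda}^{\alpha,\beta}$. This is legal because the $\lambda_j^{-1}$ are distinct and not yet in the current image of $\phi$ at stage $n_j$, which has grown by only $\{\lambda_1^{-1},\dots,\lambda_{j-1}^{-1}\}$ since stage $|s|$. After stage $|s|+m$ the extended embedding satisfies $\phi^{-1}(\lambda^{-1}) = \eta(\lambda)$ for all $\lambda \in F$, giving $F^{\alpha,\beta}(x)|_F = \bA_1$ and thus $F^{\alpha,\beta}(x) \in U_k$ for any $(\alpha,\beta) \supseteq (s^*,t^*)$ in $G_x$. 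Such $(\alpha,\beta)$ exist in $[s^*] \times [t^*]$ since $G_x$ is comeagre, proving density of $D_k$.

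The main obstacle is the simultaneous bookkeeping in the merging step: the embedding $\eta$ extending $\eta_0$ with new image disjoint from $[x]_{E^s}$ must exist (handled by the WDP-plus-TAC equivalences of \cref{lem:TAC-equivalent}), and each target $\lambda_j^{-1}$ must be available at its stage (handled by the distinctness of the $\lambda_j$ and the chosen order of merges). More conceptually, $U_k$ must intersect the isomorphism class of $x$ in $\K(\Lambda)$ — this is precisely where the TAC/homogeneity hypothesis on the generic element genuinely enters, ensuring that the comeagre set $Z$ is accessible by structures built from $x$.
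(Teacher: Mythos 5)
Your overall architecture is the paper's: reduce by Kuratowski--Ulam to a fixed generic $x$, write $Z=\bigcap_k U_k$, and prove density of the set of $(\alpha,\beta)$ certifying $F^{\alpha,\beta}(x)\in U_k$ by extending $(s,t)$ so that a prescribed finite structure is realized on a finite subset of $\Lambda$. Two steps, however, have genuine gaps. First, the assertion that the isomorphism class of $x$ in $\K(\Lambda)$ is comeagre is unjustified and false at the stated level of generality: the hypotheses of \cref{thm:generic-expansions} do not make the generic element unique up to isomorphism (e.g.\ take $\K$ to be the union of the class of linear orders and the class of equivalence relations with infinitely many infinite classes; both parts are nonmeagre, both generic types have TAC, satisfy $(*)$ and are not definable from equality, yet they are not isomorphic, so ``common age determined generically'' fails). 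What you actually need, and what the paper uses, is only that the generic $x$ is isomorphic to \emph{some} $\bA'\in Z$ (because $Z$ is comeagre and a bijection $\Lambda\to\Gamma$ induces a homeomorphism $\K(\Lambda)\to\K(\Gamma)$), combined with the arrangement made at the start of the proof of the theorem that $Z$ is closed under a dense countable family of bijections $f_n\in S_\Lambda$: this lets you replace $\bA'$ by a conjugate, still in $Z\subseteq U_k$, which extends the finite piece already determined on $G_0$, and then extract $\bA_1\sqsubseteq\bA'$ with $N(\bA_1)\subseteq U_k$. Without this rerouting, nothing guarantees that $U_k\cap N(\bA_1')$ meets the isomorphism class of $x$ for the relevant finite $\bA_1'$.

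Second, and more seriously, the merging step does not go through as written. You require only that $\eta(F\setminus G_0)$ be disjoint from $[x]_{E^s}$, and then note that each $\eta(\lambda_j)$ lies in some $B_{i_j}$ --- but the $B_i$ partition $X$, so this is vacuous, and nothing forces $i_j$ to be large. If $i_j\le |s|$, then $[\eta(\lambda_j)]_{E^{s'}}$ at the merging stage need not be a singleton; merging it into the class of $x$ then adjoins an uncontrolled finite set of points whose positions in $\mathbbl{\Lambda}^{\alpha,\beta}$ you cannot prescribe (the freedom to place the incoming class at $\lambda_j^{-1}$ via $\beta$, as in the proof of \cref{claim:generic-equal-structure}, relies precisely on the incoming class being a singleton). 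The paper's resolution is the crux of the argument: using the proof of \cref{claim:generic-intersection}, one fixes a fresh $m>|s|$, a finite structure $\bA_0$ and an injection $f$ such that any $\gamma$ for which $\xi\mapsto\gamma f(\xi)$ embeds $\bA_0$ into $\bA$ forces $\gamma^{-1}\bA\in B_m$ (hence a singleton class through stage $m$), and then a \emph{single} application of $(*)$ produces a $\gamma$ realizing this configuration \emph{and simultaneously} extending the embedding of the target finite structure by $\lambda\mapsto\gamma$. Your sketch treats the two requirements separately, so the simultaneous realization --- the step that makes the whole construction work --- is missing.
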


	\begin{claimproof}
		Fix any $\bA \in X$ that is isomorphic to an element of $Z$ and satisfies $(*)$ of \cref{lem:homogeneous-generic}. Note that the set of all such $\bA$ is comeagre (for the first condition, note that any bijection $\Lambda \to \Gamma$ gives a homeomorphism $\K(\Lambda) \to \K(\Gamma)$ and consider the image of $Z$). If $\bA \in X^{\alpha, \beta}$, let $\bB^{\alpha, \beta} = F^{\alpha, \beta}(\bA)$. We will show that the set of all $\alpha, \beta$ for which $\bA \in X^{\alpha, \beta} \implies \bB^{\alpha, \beta} \in Z$ is a dense $G_\delta$ set.

		Fix $s, t \in \N^n$ and let $C^{s, t} = [\bA]_{E^s}$. Let $f^{s, t}: C^{s, t} \to I^{s, t} \subseteq \Lambda$ be the unique embedding of $\res{\mathbbl{\Lambda}^{s, t}}{C^{s, t}}$ into $\bm{\Lambda}$ which takes $\bA$ to the identity (note that this $f^{s, t}$ is the same as the one described in the proof of the previous claim). Let also $D^{s, t} = \{\gamma : \gamma^{-1} \bA \in C^{s, t}\}$, let $g^{s, t}: D^{s, t} \to C^{s, t}$ be the map $\gamma \mapsto \gamma^{-1} \bA$ and let $h^{s, t} = f^{s, t} \circ g^{s, t}$. Let $\bB^{s, t} = h^{s, t}(\res{\bA}{D^{s, t}})$. It is easy to see, by \cref{prop:correspondence-sructures-maps}, that $\bB^{s, t} \sqsubseteq \bB^{\alpha, \beta}$ whenever $\alpha \supseteq s, \beta \supseteq t$ and $\bA \in X^{\alpha, \beta}$.

		Let now $U_n$ be a sequence of dense open sets in $\K(\Lambda)$ so that $Z = \bigcap_n U_n$. We will show that for all $N$, the set of $\alpha, \beta$ so that $\bA \in X^{\alpha, \beta} \implies \bB^{\alpha, \beta} \in U_N$ is dense and open. Since $U_N$ is open,
		\[\bA \in X^{\alpha, \beta} \implies [\bB^{\alpha, \beta} \in U_N \iff \exists \LL' \subseteq \LL \exists n (N(\res{\bB^{\res{\alpha}{n}, \res{\beta}{n}}}{\LL'}) \subseteq U_N)].\]
		Thus the set of all such $\alpha, \beta$ is exactly the set of $\alpha, \beta$ satisfying
		\[\exists \LL' \subseteq \LL \exists n (N(\res{\bB^{\res{\alpha}{n}, \res{\beta}{n}}}{\LL'}) \subseteq U_N),\]
		which is clearly open, so it remains to show that it is dense. That is, we need to show that for all $s, t$, there are $u, v$ extending $s, t$ so that $N(\res{\bB^{u, v}}{\LL'}) \subseteq U_N$ for some finite $\LL' \subseteq \LL$.

		Fix $s, t \in \N^n$. By assumption, there is some $\bA' \in Z$ that is isomorphic to $\bA$. As $Z$ is closed under the functions $f_n$ described at the start of the proof, we may assume that $\bB^{s, t} \sqsubseteq \bA'$. Since $\bA' \in Z \subseteq U_N$, there is some $\LL'$-structure $\bB_0 \in \age_\Lambda(\K)$ so that $\bA' \in N(\bB_0) \subseteq U_N \cap N(\res{\bB^{s, t}}{\LL'})$. Let $F$ be the universe of $\bB_0$, so that $\bB_0 = \res{(\res{\bA'}{\LL'})}{F}$ and wlog $F \supseteq I^{s, t}$. We will show that there are $u, v$ extending $s, t$ so that $\res{\bB^{u, v}}{\LL'} = \bB_0$, which would complete the proof.

		We will show how to do this assuming that $F = I^{s, t} \sqcup \{\lambda\}$. By repeating this argument recursively we can handle all finite $F$.

		By the proof of \cref{claim:generic-intersection}, there is some $m > n$, a finite structure $\bA_0 \in \age_\Gamma(\bA)$ and an injection $f$ from the universe of $\bA_0$ to $\Gamma \setminus \{1_\Gamma\}$ so that if $x \mapsto \gamma f(x)$ is an embedding of $\bA_0$ into $\bA$, then $\gamma^{-1} \bA \in B_m$. It is clear from the proof that we can also assume that the universe of $\bA_0$ is disjoint from $D^{s, t}$. Now $\bB_0$ embeds into $\bA$, as it embeds into $\bA' \cong \bA$, so by $(*)$ there is some $\gamma$ so that (a) the map $x \mapsto \gamma f(x)$ is an embedding of $\bA_0$ into $\bA$ and (b) the map $(h^{s, t})^{-1} \cup \{(\lambda, \gamma)\}$ is an embedding $F \to \Gamma$ of $\bB_0$ into $\bA$.

		By (a) and our choice of $\bA_0$, $\gamma^{-1} \bA \in B_m$ so the $E^s$-class of $\gamma^{-1} \bA$ is a singleton. Extend $s$ to a sequence $u$ of length $m+1$ by setting the new values to be $c([\bA]_{E^s} \cup \{\gamma^{-1} \bA\})$. Thus $[\bA]_{E^u_m} = [\bA]_{E^s}$ and $[\bA]_{E^u} = [\bA]_{E^s} \cup \{\gamma^{-1} \bA\}$. By the proof of the previous claim, there is an extension $v$ of $t$ of length $m+1$ so that $f^{u, v}(\gamma^{-1} \bA) = \lambda$.

		We claim now that $\res{\bB^{u, v}}{\LL'} = \bB_0$. To see this, note that $D^{u, v} = D^{s, t} \cup \{\gamma\}$, $I^{u, v} = I^{s, t} \cup \{\lambda\} = F$ and $h^{u, v} = (h^{s, t} \cup \{(\gamma, \lambda)\})$. Now $\bB^{u, v} = h^{u, v}(\res{\bA}{D^{u, v}})$, so this follows immediately from (b).
	\end{claimproof}

	By the Kuratowski--Ulam Theorem \cite[\nopp8.41]{CDST} and the claims above, we may fix some $\alpha, \beta$ so that $X^{\alpha, \beta}$ is comeagre and $x \in X^{\alpha, \beta} \implies F^{\alpha, \beta}(x) \in Z$ for the generic $x \in X$. In particular, there is a comeagre $\Gamma$-invariant Borel set $Y \subseteq X^{\alpha, \beta}$ such that $F^{\alpha, \beta}(Y) \subseteq Z$, which proves $(1)$ by the remarks at the start of the proof.
\end{proof}

\subsection{Enforcing smoothness}\label{sec:enfocing-smoothness}

Let $(\K, \K^*)$ be an expansion problem. We are broadly interested in relating the class of Borel expandable CBER with natural classes of CBER such as being smooth or compressible. In this section, we give some sufficient conditions for an expansion problem $(\K, \K^*)$ to be Borel expandable for exactly the class of smooth CBER.

\begin{prop}\label{prop:smooth-expandable}
	Let $(\K, \K^*)$ be an expansion problem. If there is a Borel expansion map $f: \K(\N) \to \K^*(\N)$, then every smooth aperiodic CBER is Borel expandable for $(\K, \K^*)$.
\end{prop}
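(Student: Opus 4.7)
The plan is to exploit the characterization of Borel structures on CBER via equivariant maps (\cref{sec:descriptions}) and reduce the existence of an expansion to a pointwise application of the given Borel map $f$. The key observation is that smoothness lets us produce a Borel family of enumerations of $E$ whose associated cocycle is \emph{trivial}, so that equivariant maps collapse to $E$-invariant maps.

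First I would fix a Borel selector $s: X \to X$ for $E$ and set $T = s(X)$, so $T$ is a Borel transversal of $E$ whose sections $[t]_E \cap X = [t]_E$ are countably infinite (since $E$ is aperiodic). By Lusin--Novikov applied to the Borel set $\{(t, x) \in T \times X : t E x\}$, whose vertical sections are countably infinite, there is a Borel map $h: T \to X^\N$ such that $h_t: \N \to [t]_E$ is a bijection for every $t \in T$. Now define $g: X \to X^\N$ by $g_x = h_{s(x)}$; this is a Borel family of enumerations of $E$ satisfying $g_x = g_y$ whenever $x E y$. Consequently the associated cocycle is trivial: $\rho_g(x, y) = g_y^{-1} \circ g_x = \mathrm{id}_\N$ for all $x E y$.

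Now let $\bbA$ be any Borel $\K$-structuring of $E$ and let $F^\bbA_g: X \to \K(\N)$ be the associated map from \cref{sec:descriptions}. Because $\rho_g$ is trivial, $\rho_g$-equivariance of $F^\bbA_g$ just says $F^\bbA_g(x) = F^\bbA_g(y)$ whenever $x E y$, i.e., $F^\bbA_g$ is $E$-invariant. Compose with the hypothesized Borel expansion map to obtain a Borel map
\[
H = f \circ F^\bbA_g : X \to \K^*(\N),
\]
which is still $E$-invariant, hence trivially $\rho_g$-equivariant. By the correspondence recalled in \cref{sec:descriptions}, $H$ induces a Borel $\LL^*$-structure $\bbA^*$ on $E$ whose restriction to each class $[x]_E$ is isomorphic to $H(x) \in \K^*(\N)$ via $g_x$; thus $\bbA^*$ is a Borel $\K^*$-structuring of $E$. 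Since $f$ is an expansion map, $\pi(H(x)) = F^\bbA_g(x)$ where $\pi: \Mod_{\LL^*}(\N) \to \Mod_{\LL}(\N)$ is the reduct, so by the last remark of \cref{sec:descriptions} $\bbA^*$ is an expansion of $\bbA$.

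There is no real obstacle beyond bookkeeping: the only mildly delicate point is producing the $E$-invariant family of enumerations $g$, which is immediate from smoothness plus aperiodicity and Lusin--Novikov. Everything else is a direct translation through the equivalence between Borel structurings and $\rho_g$-equivariant Borel maps.
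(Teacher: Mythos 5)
Your proof is correct and follows essentially the same route as the paper's: use smoothness to build an $E$-invariant Borel family of enumerations (via a selector composed with a Lusin--Novikov enumeration), note that the associated cocycle is trivial so structurings correspond to $E$-invariant maps $X \to \K(\N)$, and post-compose with $f$. The only difference is that you spell out the bookkeeping the paper leaves implicit.
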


Note that we do not require $f$ to satisfy any additional properties (such as equivariance).

\begin{proof}
	Let $E$ be a smooth aperiodic CBER on $X$. Since $E$ is smooth, we can identify every $E$-class with $\N$ in a Borel way, i.e., there is a Borel enumeration $g: X \to X^\N$ so that if $x E y$ then $g(x) = g(y)$ (for example take any Borel enumeration $h$ of $E$ and a selector $s$ for $E$ and let $g = h \circ s$). If $F: X \to \K(\N)$ is Borel and $E$-invariant, then composing this with $f$ gives a Borel $E$-invariant map $F^* = f \circ F: X \to \K^*(\N)$ so that $F^*(x)$ is an expansion of $F(x)$ for all $x$. By the correspondence described in \cref{sec:descriptions}, it follows that $E$ is Borel expandable for $(\K, \K^*)$.
\end{proof}

\begin{rmk}\label{rmk:smooth-expandable}
	In many cases of interest (including all of the examples in \cref{sec:expansions-def}), a Borel expansion map $\K(\N) \to \K^*(\N)$ can easily be shown to exist, for example by recursively constructing an expansion for a given $\bA \in \K(\N)$, or via an application of the Compactness Theorem (see e.g. \cite[\nopp28.8]{CDST}). Note that such constructions depend crucially on the given enumeration of the universe of $\bA$, and hence are not in general equivariant.
\end{rmk}

\begin{defn}
	Let $(\K, \K^*)$ be an expansion problem and $\mathcal{E}$ be a class of aperiodic CBER. We say $(\K, \K^*)$ \tb{enforces $\mathcal{E}$} if an aperiodic CBER $E$ belongs to $\mathcal{E}$ whenever $E$ is $\K$-structurable and Borel expandable for $(\K, \K^*)$.

	When $\mathcal{E}$ is the class of aperiodic smooth CBER we say that such $(\K, \K^*)$ \tb{enforces smoothness}.
\end{defn}

\begin{prop}\label{prop:enforce-smoothness-generic-exp}
	Let $(\K, \K^*)$ be an expansion problem. If some hyperfinite, compressible, aperiodic CBER is not Borel expandable for $(\K, \K^*)$, then $(\K, \K^*)$ enforces smoothness. In particular, this holds if some aperiodic CBER is not generically expandable for $(\K, \K^*)$.
\end{prop}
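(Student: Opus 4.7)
The plan is to transfer a bad $\K$-structuring from a hyperfinite compressible aperiodic CBER onto any non-smooth aperiodic $\K$-structurable CBER $E$, thereby witnessing its non-expandability. The key ingredient is a classical fact about CBER: every non-smooth aperiodic CBER on a Polish space has an invariant comeagre Borel subset $X_0$ such that $\res{E}{X_0}$ is hyperfinite, compressible, and aperiodic. Combined with the Dougherty--Jackson--Kechris theorem that any two such CBER are Borel isomorphic, this lets me identify $\res{E}{X_0}$ with any specified hyperfinite compressible aperiodic CBER.

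For the main statement, fix a hyperfinite compressible aperiodic CBER $F_0$ on $Y_0$ with a Borel $\K$-structuring $\bbB_0$ admitting no Borel expansion. Let $E$ be non-smooth, aperiodic and $\K$-structurable on $X$ via some Borel $\K$-structuring $\bbA$. Apply the lemma above to obtain an $E$-invariant comeagre Borel $X_0 \subseteq X$ together with a Borel isomorphism $\phi \colon (X_0, \res{E}{X_0}) \to (Y_0, F_0)$. Pull back $\bbB_0$ along $\phi$ to a Borel $\K$-structuring $\bbB_0'$ of $\res{E}{X_0}$; this inherits non-expandability from $\bbB_0$ since $\phi$ is a Borel isomorphism. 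Glue $\bbB_0'$ with $\res{\bbA}{X \setminus X_0}$ on the $E$-invariant complement to obtain a Borel $\K$-structuring $\bbB$ of $E$. If $\bbB$ admitted a Borel expansion $\bbB^*$, restricting to $X_0$ and transporting via $\phi$ would produce a Borel expansion of $\bbB_0$, contradicting badness. Hence $E$ is not Borel expandable, and $(\K, \K^*)$ enforces smoothness. For the ``in particular'' clause, if some aperiodic $E$ is not generically expandable, pick a Borel $\K$-structuring $\bbA$ on $E$ admitting no Borel expansion on any $E$-invariant comeagre Borel set, and take $X_0$ as above; then $\res{\bbA}{X_0}$ is a Borel $\K$-structuring of the hyperfinite compressible aperiodic CBER $\res{E}{X_0}$ with no Borel expansion, supplying the hypothesis of the first part.

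The main obstacle is the generic structure lemma used in the first paragraph: that every non-smooth aperiodic CBER on a Polish space has an invariant comeagre Borel subset on which it is hyperfinite, compressible, and aperiodic. Aperiodicity is preserved under restriction to invariant subsets, and generic hyperfiniteness of Borel actions of countable groups is classical (Sullivan--Weiss--Wright, Hjorth--Kechris). The delicate point is generic compressibility in the non-smooth case; this should follow by combining the topological Glimm--Effros dichotomy (Harrington--Kechris--Louveau), which supplies a continuous embedding of $E_0$ into $E$ on a comeagre invariant set, with an explicit compression construction on the generically hyperfinite part. All the relevant tools are surveyed in \cite{CBER}.
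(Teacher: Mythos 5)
There is a genuine gap at the central step, namely the claimed Borel isomorphism $\phi\colon (X_0,\res{E}{X_0}) \to (Y_0,F_0)$. The Dougherty--Jackson--Kechris classification does not say that any two hyperfinite, compressible, aperiodic CBER are Borel isomorphic; it says this for the \emph{non-smooth} ones (aperiodic non-smooth hyperfinite CBER are classified up to Borel isomorphism by the number of ergodic invariant probability measures, the compressible ones being the class with none). A smooth instance such as $\Delta_{2^\N}\times I_\N$ is hyperfinite, compressible and aperiodic but not isomorphic to a non-smooth one. This bites in two places. First, nothing in the hypothesis forces the given bad CBER $F_0$ to be non-smooth. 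Second, and more seriously, the comeagre invariant set $X_0$ handed to you by generic hyperfiniteness and generic compressibility carries no guarantee that $\res{E}{X_0}$ is non-smooth even when $E$ is: the non-smoothness of $E$ may be concentrated on a meagre invariant set consisting of classes that are not hyperfinite or carry invariant measures, which any such $X_0$ must essentially exclude, leaving a smooth restriction. So the isomorphism $\phi$ need not exist for the $X_0$ your lemma produces, and the transport argument collapses. (Your worry in the last paragraph is also misplaced: generic compressibility for aperiodic CBER is \cite[Corollary~13.3]{KM} outright and needs no Glimm--Effros input; the real delicate point is the one above.)

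The repair is exactly what the paper does, and it makes the whole category detour unnecessary: instead of an isomorphism onto a comeagre piece, use an \emph{invariant embedding}. Since $F_0$ is hyperfinite, $F_0 \leq_B E_0$, and $E_0 \sqsubseteq_B E$ for any non-smooth $E$ by the Glimm--Effros dichotomy, so $F_0 \leq_B E$; compressibility of $F_0$ then upgrades this to $F_0 \sqsubseteq^i_B E$ by \cite{DJK}, which works whether or not $F_0$ is smooth and requires no topology on $X$. The image is an $E$-invariant Borel set carrying an isomorphic copy of $F_0$, and your transport-and-glue argument (which is fine as written, using $\K$-structurability of $E$ to structure the complement) goes through verbatim with that set in place of $X_0$. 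Your treatment of the ``in particular'' clause agrees with the paper's and is correct: restricting a nowhere-generically-expandable structuring to the comeagre invariant set where $E$ is hyperfinite and compressible supplies the hypothesis of the first part.
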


\begin{proof}
	Let $E$ be a hyperfinite, compressible, aperiodic CBER and let $F$ be any non-smooth aperiodic CBER. By the Glimm-Effros Dichotomy and compressibility we have $E \sqsubseteq^i_B F$ (c.f. \cite{DJK}). It is now easy to see that if $F$ is Borel expandable then so is $E$, hence if $E$ is not Borel expandable then neither is $F$. The second part follows from the first by \cite[Theorem~12.1, Corollary~13.3]{KM}.
\end{proof}

\begin{cor}\label{cor:enforcing-smoothness-generic}
	Let $(\K, \K^*)$ be an expansion problem. Suppose $\K$ is $G_\delta$ and there is a countably infinite group $\Gamma$ with $Fr(\K(\Gamma)) \neq \emptyset$ so that there is no Borel equivariant expansion map $X \to \K^*(\Gamma)$ for any comeagre invariant Borel set $X \subseteq Fr(\K(\Gamma))$. Then $(\K, \K^*)$ enforces smoothness.
\end{cor}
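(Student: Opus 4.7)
The plan is to deduce the corollary directly from \cref{prop:enforce-smoothness-generic-exp} by exhibiting a specific aperiodic CBER which is not generically expandable for $(\K, \K^*)$. The natural candidate, given the hypothesis is phrased in terms of $\K(\Gamma)$, is the CBER $E = E^X_\Gamma$ where $X = Fr(\K(\Gamma))$, equipped with its canonical $\K$-structuring $\bbA$ (as in the Remark following \cref{prop:correspondence-expansion-maps}).

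First I would check that this really is an aperiodic CBER on a Polish space. Since $\K$ is $G_\delta$, the space $\K(\Gamma)$ is Polish, and since $\Gamma$ acts continuously on $\K(\Gamma)$, the free part $Fr(\K(\Gamma))$ is a $\Gamma$-invariant $G_\delta$ subset, hence itself Polish and standard Borel. By hypothesis $X = Fr(\K(\Gamma)) \neq \emptyset$, and because the $\Gamma$-action on $X$ is free and $\Gamma$ is infinite, every orbit is infinite, so $E = E^X_\Gamma$ is aperiodic. The structuring $\bbA$ is a Borel $\K$-structuring of $E$ by construction.

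Next, I would argue that $\bbA$ is not generically expandable. Suppose toward contradiction that there exists a comeagre $\Gamma$-invariant Borel set $Y \subseteq X$ such that $\res{\bbA}{Y}$ admits a Borel expansion. Applying \cref{prop:weak-correspondence-expansions}(2) to the $\Gamma$-invariant Borel set $Y \subseteq Fr(\K(\Gamma))$, this Borel expansion corresponds exactly to a Borel $\Gamma$-equivariant expansion map $Y \to \K^*(\Gamma)$. This directly contradicts the hypothesis of the corollary, so no such $Y$ can exist.

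Having produced an aperiodic CBER which is not generically expandable for $(\K, \K^*)$, the conclusion that $(\K, \K^*)$ enforces smoothness follows immediately from \cref{prop:enforce-smoothness-generic-exp}. There is no real obstacle here: the entire content is in the translation between equivariant expansion maps and Borel expansions of the canonical structuring given by \cref{prop:correspondence-expansion-maps,prop:weak-correspondence-expansions}, which has already been set up.
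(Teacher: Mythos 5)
Your proposal is correct and follows essentially the same route as the paper: take the canonical $\K$-structuring of $E^{Fr(\K(\Gamma))}_\Gamma$, use \cref{prop:weak-correspondence-expansions}(2) to translate the hypothesis into the failure of generic expandability of this CBER, and conclude via \cref{prop:enforce-smoothness-generic-exp}. The extra verifications you include (that $Fr(\K(\Gamma))$ is Polish and the orbit equivalence relation is aperiodic) are left implicit in the paper but are accurate.
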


\begin{proof}
	Let $\bbA$ be the canonical Borel $\K$-structuring of $E = E^{Fr(\K(\Gamma))}_\Gamma$. By our assumption on $\Gamma$ and \cref{prop:weak-correspondence-expansions}(2), $\bbA$ does not admit a Borel expansion when restricted to any comeagre $E$-invariant Borel set, and in particular $E$ is not generically expandable. The conclusion follows by \cref{prop:enforce-smoothness-generic-exp}.
\end{proof}

\begin{rmk}
	By \cref{thm:generic-expansions}, if the generic element of $\K$ has TAC and not definable from equality then the hypotheses of \cref{cor:enforcing-smoothness-generic} hold for some group $\Gamma$ iff they hold for all groups $\Gamma$.
\end{rmk}

We note the following weak converse:

\begin{prop}
	Let $(\K, \K^*)$ be an expansion problem. If some non-smooth CBER admits a Borel $\K$-structuring and $\K$ admits a Borel $\Gamma$-equivariant expansion to $\K^*$ for some countably infinite group $\Gamma$ then $(\K, \K^*)$ does not enforce smoothness.
\end{prop}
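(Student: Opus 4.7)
The plan is to exhibit an aperiodic, non-smooth, $\K$-structurable CBER that is Borel expandable for $(\K, \K^*)$. Take $W := \mathrm{Fr}(2^\Gamma) \times \K(\Gamma)$ with the diagonal $\Gamma$-action, which is free since the action on $\mathrm{Fr}(2^\Gamma)$ is free. The hypothesis that some non-smooth CBER is $\K$-structurable forces $\K(\Gamma) \neq \emptyset$, since the countably infinite classes of its aperiodic part can be identified with $\Gamma$ as sets and carry $\K$-structures; so $W$ is nonempty, and $E := E^W_\Gamma$ is aperiodic. By the remark following \cref{prop:correspondence-expansion-maps}, the projection $W \to \K(\Gamma)$ gives a canonical Borel $\K$-structuring of $E$, and by \cref{prop:weak-correspondence-expansions}(1) applied to the hypothesized Borel $\Gamma$-equivariant expansion, $E$ is Borel expandable for $(\K, \K^*)$.

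The main remaining step is to verify that $E$ is non-smooth, which I would argue by cases on the $\Gamma$-stabilizers of elements of $\K(\Gamma)$. If some $\bA_0 \in \K(\Gamma)$ has infinite $\Gamma$-stabilizer $H$, consider the $\Gamma$-invariant set $W_0 := \mathrm{Fr}(2^\Gamma) \times (\Gamma \cdot \bA_0)$ and the $H$-invariant slice $S := \mathrm{Fr}(2^\Gamma) \times \{\bA_0\}$. Each $E$-orbit in $W_0$ meets $S$ in exactly one $H$-orbit, so the trace CBER on $S$ is isomorphic to $E^{\mathrm{Fr}(2^\Gamma)}_H$. A hypothetical Borel transversal of $\res{E}{W_0}$, transformed via Borel coset representatives for $\Gamma/H$ into $S$, would yield a Borel $H$-transversal of $E^{\mathrm{Fr}(2^\Gamma)}_H$, contradicting its non-smoothness (since $H$ is countably infinite and the Bernoulli measure on $\mathrm{Fr}(2^\Gamma)$ is $H$-invariant and non-atomic on $H$-orbits).

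The hard part of the plan is the remaining case, in which every element of $\K(\Gamma)$ has finite $\Gamma$-stabilizer. Here the goal is to construct a $\Gamma$-invariant Borel probability measure $\mu_0 \times \nu$ on $W$, where $\mu_0$ is the Bernoulli measure on $\mathrm{Fr}(2^\Gamma)$ and $\nu$ is an invariant random $\K$-structure on $\Gamma$: freeness of the action and infiniteness of $\Gamma$ would then preclude any Borel transversal of $E$, so $E$ is non-smooth. Producing such $\nu$ from the existence of a non-smooth $\K$-structurable CBER $F$ is the main technical obstacle; a natural approach is to realize $F$ as induced by a free action of some countable group $\Lambda$ (after passing to a conull set for the $F$-invariant probability measure supplied by Glimm--Effros), extract an invariant random $\K$-structure on $\Lambda$ via the push-forward of $F^{\bbA_F}_g$ for the canonical enumeration $g$, and transfer it to $\Gamma$ via \cref{thm:invariant-random-expansions} in the case $\Lambda$ and $\Gamma$ are orbit equivalent, with further work needed otherwise.
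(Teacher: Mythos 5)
Your first paragraph is fine as far as it goes: $W = Fr(2^\Gamma) \times \K(\Gamma)$ carries the canonical Borel $\K$-structuring and is Borel expandable by \cref{prop:weak-correspondence-expansions}(1). Your Case 1 (some $\bA_0 \in \K(\Gamma)$ with infinite stabilizer) also works, since $Fr(2^\Gamma) \times \{\bA_0\}$ is a complete section of $\res{E}{W_0}$ on which the trace is $E^{Fr(2^\Gamma)}_H$, and smoothness passes to and from complete sections. The problem is Case 2, which is where the hypothesis ``some non-smooth CBER is $\K$-structurable'' actually has to be used, and your plan for it does not go through. You propose to manufacture an invariant random $\K$-structure $\nu$ on $\Gamma$ out of the given non-smooth $\K$-structurable CBER $F$. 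But (a) a non-smooth CBER need not carry \emph{any} invariant probability measure --- it may be compressible --- and the Glimm--Effros embedding $E_0 \sqsubseteq_B F$ is not class-bijective, so you can neither push the $E_0$-invariant measure forward to an $F$-invariant one nor pull the $\K$-structuring back to $E_0$; (b) Feldman--Moore does not realize $F$ by a \emph{free} action, and even when it can be so realized by some $\Lambda$, transferring $\nu$ to $\Gamma$ requires orbit equivalence of $\Lambda$ and $\Gamma$, which you cannot arrange. The obstruction is not cosmetic: take $\K$ to be a class coding free actions of $F_2$ (as in the proof of \cref{prop:converse-ire}) with $\K^* = \K$ and $\Gamma = \Z$. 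The hypotheses of the proposition hold, every witnessing non-smooth $\K$-structurable CBER is compressible, and $\Z$ admits no invariant random $\K$-structure whatsoever (it would make $\Z$ and $F_2$ orbit equivalent). So in this case the measure $\mu_0 \times \nu$ you want simply does not exist, and your argument cannot certify that $E^W_\Gamma$ is non-smooth.

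The missing idea, which is how the paper proceeds, is to abandon $W$ and instead transport the $\K$-structuring \emph{onto} a non-smooth CBER that is already known to be induced by a free action of $\Gamma$. Restricting the shift of $\Gamma$ on $2^\Gamma$ to a suitable invariant dense $G_\delta$ set $Y \subseteq Fr(2^\Gamma)$ gives a hyperfinite, compressible, non-smooth $E^Y_\Gamma$, and any aperiodic hyperfinite compressible CBER invariantly embeds (class-bijectively reduces) into every non-smooth CBER; pulling the $\K$-structuring of $F$ back along this invariant embedding makes $E^Y_\Gamma$ $\K$-structurable, and \cref{prop:weak-correspondence-expansions}(1) makes it Borel expandable. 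This avoids invariant measures entirely and is the step you would need to repair your Case 2.
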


\begin{proof}
	Let $E$ be a hyperfinite compressible CBER. If some non-smooth CBER admits a Borel $\K$-structuring, then so does $E$ (as $E$ invariantly embeds into any non-smooth CBER). Now consider the orbit equivalence relation $E$ of the shift of $\Gamma$ on $2^\Gamma$. This action is generically ergodic, hence by \cite[Theorem~12.1, Theorem~13.3]{KM} it is hyperfinite, compressible and non-smooth on an invariant dense $G_\delta$ set $Y \subseteq Fr(2^\Gamma)$. Thus $E^Y_\Gamma$ admits a Borel $\K$-structuring, and by \cref{prop:weak-correspondence-expansions}(1) it is Borel expandable, so $(\K, \K^*)$ does not enforce smoothness.
\end{proof}

\section{Examples}\label{sec:examples}

In this section, we will consider in detail definable expansion problems for \cref{eg:bijections,eg:colourings,eg:linearizations,eg:ramsey,eg:trees,eg:vizing,eg:zline,eg:matchings}. We summarize what is known for these problems in \cref{fig:table}.

\begin{table}[H]
	\centering
	\begin{threeparttable}
		\caption*{Expansions on CBER}
		\begin{tabular}{@{}llll@{}}
			\toprule
			& \begin{tabular}[c]{@{}c@{}}Borel\\ expandable\end{tabular} & \begin{tabular}[c]{@{}c@{}}Generically\\ expandable\end{tabular} & \begin{tabular}[c]{@{}c@{}}Expandable\\ a.e.\end{tabular} \\
			\midrule
			Bijections & Smooth\tnote{1} & All & Classified \\
			\rowcolor[HTML]{EFEFEF}
			Ramsey's Theorem & Smooth \cite{GX} & ? (CE) & ? (CE) \\
			Linearizations & Smooth & ? (CE) & ? (CE) \\
			\rowcolor[HTML]{EFEFEF}
			Vertex colouring & All\tnote{1} & All\tnote{1} & All\tnote{1} \\
			Spanning trees & ?\tnote{3} & All & ?\tnote{3} \\
			\rowcolor[HTML]{EFEFEF}
			$\Z$-lines & Smooth & ? (CE) & Classified \\
			Vizing's Theorem & Smooth\tnote{5} & ? (PP\tnote{6} ) & All \cite{GP} \\
			\rowcolor[HTML]{EFEFEF}
			Matchings & Smooth \cite{CJMSTD} & ? (CE\tnote{7}, PP\tnote{8} ) & ? (CE\tnote{7}, PP\tnote{8} ) \\
			\bottomrule
		\end{tabular}
	\end{threeparttable}

	\bigskip

	\begin{threeparttable}
		\caption*{Equivariant expansions on groups}
		\begin{tabular}{@{}lllll@{}}
			\toprule
			& \begin{tabular}[c]{@{}c@{}}Borel\\expansions\end{tabular} & \begin{tabular}[c]{@{}c@{}}Generic\\expansions\end{tabular} & \begin{tabular}[c]{@{}c@{}}Expansions\\a.e.\end{tabular} & \begin{tabular}[c]{@{}c@{}}Random\\expansions\end{tabular} \\
			\midrule
			Bijections & None\tnote{1} & All & Classified & Classified \\
			\rowcolor[HTML]{EFEFEF}
			Ramsey's Theorem & None & None & ? (CE${}_\Gamma$) & ? (CE${}_\Gamma$) \\
			Linearizations & None & None & ? (CE${}_\Gamma$) & Amen. \cite{A} \\
			\rowcolor[HTML]{EFEFEF}
			Vertex colouring & All\tnote{1,2} & All\tnote{1} & Classified\tnote{1} & All\tnote{1} \\
			Spanning trees & ?\tnote{4} & All & ?\tnote{4} & ?\tnote{4} \\
			\rowcolor[HTML]{EFEFEF}
			$\Z$-lines & None & None & Classified & Classified \\
			Vizing's Theorem & None & ? (PP\tnote{6} ) & All\tnote{2}\space\space \cite{GP} & All \cite{GP} \\
			\rowcolor[HTML]{EFEFEF}
			Matchings & None \cite{CJMSTD} & All\tnote{9} & ? (CE${}_\Gamma$\tnote{7}, PP\tnote{8} ) & ? (CE${}_\Gamma$\tnote{7}, PP\tnote{8} ) \\
			\bottomrule
		\end{tabular}

		\smallskip

		\begin{tablenotes}
			\footnotesize
			\item [] \tb{Classified}: In the sense of \cref{sec:canonical-random-expansions,prop:canonical-random-expansions}.
			\item [] \tb{CE}: There are counterexamples coming from free continuous actions of $\Gamma$, for all $\Gamma$.
			\item [] \tb{CE${}_\Gamma$} There are counterexamples for all countably infinite groups $\Gamma$.
			\item [] \tb{PP}: There are partial positive results (see the corresponding section for details and references).
			\item [1] Essentially \cite{KST}.
			\item [2] On the free part $Fr(\K(\Gamma))$.
			\item [3] This lies somewhere between hyperfinite and treeable.
			\item [4] This lies somewhere between amenable and treeable.
			\item [5] Smooth for $d \geq 3$ \cite{CJMSTD}, All for $d = 2$ \cite{KST}.
			\item [6] Subexponential growth \cite{BD} and bipartite \cite{BW}.
			\item [7] See \cite{Lac,Conley-Kechris,Kun,BKS}.
			\item [8] See \cite{LN,MU,CM,BKS,BCW,BPZ,KL}.
			\item [9] All for graphs of bounded degree $d > 2$ (None for $d = 2$).
		\end{tablenotes}
	\end{threeparttable}
	\caption{A summary of known results for \cref{eg:bijections,eg:colourings,eg:linearizations,eg:matchings,eg:ramsey,eg:trees,eg:vizing,eg:zline}}
	\label{fig:table}
\end{table}

\subsection{Bijections}

Fix $\K, \K^*$ as in \cref{eg:bijections}, that is,
\begin{align*}
	\K &= \{(X, R, S) \mid R, S \subseteq X \And X, R, S ~ \text{are all countably infinite}\},\\
	\K^* &= \{(X, R, S, T) \mid (X, R, S) \in \K \And T ~ \text{is the graph of a bijection} ~ R \to S\}.
\end{align*}

\begin{prop}[Essentially {\cite{KST}}]
	$(\K, \K^*)$ enforces smoothness, and $(E, \mu)$ is not a.e. expandable for any CBER $E$ and any $E$-invariant probability Borel measure $\mu$.
\end{prop}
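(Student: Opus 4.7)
The plan splits into two parts, matching the two assertions.

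For enforcing smoothness, I would directly invoke the \cite{KST} result quoted in the introduction: every non-smooth aperiodic CBER $E$ admits Borel sets $A, B \subseteq X$ each meeting every $E$-class infinitely, but with no Borel bijection $A \to B$ whose graph is in $E$. The triple $(X, A, B)$ is then a Borel $\K$-structuring of $E$, and an expansion in $\K^*$ is precisely a Borel bijection $A \to B$ with graph in $E$, so no such expansion exists. Since any aperiodic $E$ admits a Borel $\K$-structuring (split a Borel enumeration of each class into two infinite pieces), the contrapositive gives enforcing smoothness.

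For the a.e.\ statement, fix an aperiodic CBER $E$ on $X$ with $E$-invariant probability measure $\mu$. The crux is a mass-transport-style identity: every Borel bijection $\phi: R \to S$ with graph contained in $E$ satisfies $\mu(R) = \mu(S)$. To see this, apply Feldman--Moore to write $E = \bigcup_n \operatorname{graph}(T_n)$ with Borel involutions $T_n: X \to X$ (each automatically $\mu$-preserving), then partition $R = \bigsqcup_n R_n$ by taking $R_n$ to be the set of $x \in R$ for which $n$ is the least index with $\phi(x) = T_n(x)$, so that $\phi|_{R_n} = T_n|_{R_n}$ and hence $\mu(\phi(R_n)) = \mu(T_n(R_n)) = \mu(R_n)$; summing over $n$ gives $\mu(S) = \mu(R)$. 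The same argument works after restricting to any $\mu$-conull $E$-invariant Borel set.

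It therefore suffices to exhibit a Borel $\K$-structuring $(X, R, S)$ with $\mu(R) \neq \mu(S)$ and with $R \cap [x]_E$, $S \cap [x]_E$ infinite for $\mu$-a.e.\ $x$. I would take $R$ and $S$ to be Borel complete sections of $E$ with $\mu(R) = 1/3$ and $\mu(S) = 2/3$: the standard marker lemma provides Borel complete sections of arbitrarily small $\mu$-measure, and since $\mu$ is atomless (every point of an infinite $E$-class has measure zero under an invariant probability), any such section can be enlarged by a disjoint Borel set of prescribed measure to reach the desired total. To check that a complete section $R$ meets $\mu$-a.e.\ class infinitely, let $Z = \{x : |R \cap [x]_E| < \infty\}$, which is $E$-invariant. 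If $\mu(Z) > 0$, then fixing a Borel linear order $<$ on $X$, the map $s(x) = \min_{<}(R \cap [x]_E)$ would be a well-defined Borel $E$-invariant selector on $Z$ (nonempty because $R$ is a complete section), making $E|_Z$ smooth and aperiodic with nonzero invariant finite measure, contradicting Nadkarni's theorem. Combined with the measure-preservation identity on the conull invariant set where both $R$ and $S$ meet every class infinitely, the inequality $\mu(R) = 1/3 \neq 2/3 = \mu(S)$ forbids any $\mu$-a.e.\ Borel expansion of $(X, R, S)$.

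I do not foresee any serious obstacle: the main care is in setting up the Feldman--Moore partition cleanly to deduce measure preservation, and in combining the marker lemma with atomlessness to hit the required measures — both are standard.
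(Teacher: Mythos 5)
Your proposal is correct and takes essentially the same route as the paper: the paper's proof simply cites \cite{KST} for both halves (Theorem~1.1 there for $E_0$, transferred via Glimm--Effros, for the Borel part, and the Section~1 measure argument for the shift on $2^\Z$ for the a.e.\ part), and you reconstruct exactly those arguments --- the KST non-bijection sets for enforcing smoothness, and preservation of $\mu$ under partial Borel bijections with graph in $E$ (via Feldman--Moore involutions) applied to complete sections of unequal measure for the a.e.\ part. The only cosmetic point is that your $R, S$ meet every class infinitely only $\mu$-a.e., so to obtain a genuine $\K$-structuring of $E$ you should modify them on the null invariant set where this fails, which affects nothing.
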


\begin{proof}
	Let $E$ be a non-smooth aperiodic CBER. By the Glimm--Effros Dichotomy, $E_0 \sqsubseteq_B E$. By \cite[Theorem~1.1]{KST} $E_0$ is not Borel expandable, and it follows that $E$ is not Borel expandable either. If $\mu$ is an $E$-invariant probability Borel measure, then $(E, \mu)$ is not a.e. expandable by the same argument as in \cite[Section~1]{KST} for the shift on $2^\Z$.
\end{proof}

More generally, if $\mu$ is an ergodic invariant probability Borel measure for a CBER $E$ on a standard Borel space $X$ and $A, B \subseteq X$ are Borel, then $\mu(A) = \mu(B)$ iff $\bbA = (X, A, B)$ admits a Borel expansion $\mu$-a.e. (see e.g. \cite[Lemma~7.10]{KM}). A similar proof gives a characterization of the invariant random $\K$-structures that admit invariant random expansions.

\begin{prop}\label{prop:bijection-concretely-classifiable}
	Let $\Gamma$ be a countably infinite group. There is a Borel $\Gamma$-invariant set $X \subseteq \K(\Gamma)$ and a Borel equivariant expansion map $f: X \to \K^*(\Gamma)$ such that, for all invariant random $\K$-structures $\mu$ on $\Gamma$, $\mu$ admits a random expansion to $\K^*$ if and only if $\mu(X) = 1$, in which case $f_* \mu$ gives such an expansion.

	Moreover, let $\bA = (\Gamma, A, B) \sim \mu$ for an invariant random $\K$-structure $\mu$ on $\Gamma$. If $\mu$ admits an invariant random expansion then $\P[1_\Gamma \in A] = \P[1_\Gamma \in B]$, and the converse holds when $\mu$ is ergodic.

	In particular, if $E$ is a CBER on $Z$ induced by a free Borel action of $\Gamma$, $\mu$ is an $E$-invariant probability Borel measure and $\bbA$ is a Borel $\K$-structuring of $E$, then $\bbA$ is $\mu$-a.e. expandable to $\K^*$ iff $F^\bbA(z) \in X$ for $\mu$-a.e. $z \in Z$.
\end{prop}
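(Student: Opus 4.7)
The plan is to establish the ergodic characterization via the mass transport principle, then build $X$ and $f$ from a density-equality criterion, and finally deduce the CBER-level ``in particular'' from \cref{prop:canonical-random-expansions}. For the necessity direction of the ergodic characterization, I would use mass transport. Let $\nu$ be an invariant random $\K^*$-structure whose reduct is $\mu$, concentrated on $(\Gamma, A, B, T)$ with $T: A \to B$ a bijection, and define $w: \Gamma \times \Gamma \to [0,1]$ by $w(\gamma, \delta) = \nu\{T(\gamma) = \delta\}$. Since $\nu$ is $\Gamma$-invariant, $w$ is invariant under the diagonal $\Gamma$-action, and mass transport yields
\[\sum_{\delta \in \Gamma} w(1_\Gamma, \delta) = \sum_{\gamma \in \Gamma} w(\gamma, 1_\Gamma).\]
The left-hand side evaluates to $\nu\{T(1_\Gamma) \text{ is defined}\} = \P_\mu[1_\Gamma \in A]$, and the right-hand side to $\nu\{1_\Gamma \in T(A)\} = \P_\mu[1_\Gamma \in B]$, giving the desired equality.

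To construct $X$ and $f$, I would apply \cref{lem:existence-of-frequencies} to assign, in a Borel $\Gamma$-equivariant way, densities $d_A(\bA), d_B(\bA) \in [0,1]$ for $\bA = (\Gamma, A, B)$ on an invariant Borel set, with the key property that for every ergodic invariant probability measure $\mu$ one has $d_A(\bA) = \P_\mu[1_\Gamma \in A]$ for $\mu$-a.e.\ $\bA$, and likewise for $B$. Let $X$ be the $\Gamma$-invariant Borel set on which both densities are defined and coincide. Consider then the CBER $E = E^X_\Gamma$ with canonical $\K$-structure $\bbA$ induced by the inclusion $X \hookrightarrow \K(\Gamma)$ and canonical Borel sets $\tilde A, \tilde B \subseteq X$ representing the predicates $A, B$; by the definition of $X$, every $E$-invariant probability Borel measure gives $\tilde A, \tilde B$ equal mass. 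Combining Nadkarni's theorem with Borel equidecomposition of CBERs (c.f.\ \cite{DJK}), applied after splitting $\res{E}{(\tilde A \triangle \tilde B)}$ into its compressible and incompressible parts, produces a Borel bijection $\tilde A \to \tilde B$ whose graph lies in $E$; by \cref{prop:correspondence-expansion-maps} this corresponds to a Borel equivariant expansion map $f: X \to \K^*(\Gamma)$.

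The main equivalence then closes cleanly: if $\mu(X) = 1$ then $f_*\mu$ is an invariant random expansion of $\mu$, and conversely, reducing to the ergodic case by ergodic decomposition and combining mass transport with the defining property of the densities forces $\mu(X) = 1$. The ergodic characterization also follows, since in the ergodic case $d_A = \P[1_\Gamma \in A]$ and $d_B = \P[1_\Gamma \in B]$ $\mu$-almost everywhere. The final CBER-level statement is then immediate from \cref{prop:canonical-random-expansions}. The main obstacle will be the equidecomposition step: turning a pointwise equal-density condition into a single Borel bijection requires carefully packaging the compressible part of $\res{E}{(\tilde A \triangle \tilde B)}$ (where Borel bijections exist automatically) with a uniform Nadkarni-type selection across all invariant probability measures on the incompressible part.
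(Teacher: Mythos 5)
Your mass-transport argument for the necessity of $\P[1_\Gamma \in A] = \P[1_\Gamma \in B]$ is correct and is essentially the computation the paper performs in \cref{eq:prop-bij-rand-exp}, and the reduction of the ``in particular'' clause to \cref{prop:canonical-random-expansions} is exactly as in the paper. The construction of $X$ and $f$, however, is where you diverge from the paper and where the proposal has a genuine gap. The paper does not define $X$ by a density condition at all: it builds an explicit $\Gamma$-equivariant greedy partial matching $\phi^{A,B}$ (recursively matching $\gamma \mapsto \gamma\gamma_n$ on the sets $X^{A,B}_n$), observes that this matching always satisfies $\dom(\phi^{A,B}) = A$ or $\ran(\phi^{A,B}) = B$, takes $X$ to be the set where $\phi^{A,B}$ is a total bijection, and then uses the mass-transport identity applied to $\phi^{A,B}$ itself plus ergodicity to show $\mu(X)=1$ whenever the marginal probabilities agree. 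Your route instead asserts the key step: that equality of $\mu(\tilde A)$ and $\mu(\tilde B)$ for every $E$-invariant probability measure yields a single Borel bijection $\tilde A \to \tilde B$ with graph in $E$, defined everywhere on $X$. This is precisely the step that fails in general in the Borel category --- the paper opens by quoting \cite{KST} for the fact that there are Borel sets meeting every class infinitely with no Borel bijection between them, and the positive results available ([KM, Lemma~7.10] and \cref{prop:bijection-generically-exp}) give a bijection only after discarding a null set depending on a \emph{fixed} measure, or a meagre set. To run your argument you would need a version of [KM, Lemma~7.10] that is uniform over the entire ergodic decomposition (so that the exceptional set can be taken invariant and null for \emph{all} invariant measures simultaneously, with the leftover handled by compressibility), and nothing in the proposal supplies this; ``Nadkarni plus equidecomposition after splitting $\res{E}{(\tilde A \triangle \tilde B)}$'' names the difficulty rather than resolving it. The paper's explicit construction exists precisely to sidestep this.

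A second, independent problem is that your construction of $f$ passes through the CBER $E^X_\Gamma$ and \cref{prop:correspondence-expansion-maps}, which requires the action on $X$ to be free. The proposition demands $f$ on all of $X$, and invariant random $\K$-structures admitting random expansions can charge non-free orbits: the point mass at $(\Gamma, \Gamma, \Gamma)$ is invariant, admits the identity bijection as a random expansion, and is fixed by all of $\Gamma$, so it must lie in $X$ and $f$ must assign it a $\Gamma$-fixed bijection. The CBER correspondence says nothing about such points, whereas the paper's formula for $\phi^{A,B}$ is defined and equivariant at every structure, free or not. Neither gap is obviously unfixable, but as written the proposal replaces the actual content of the proposition --- an everywhere-defined equivariant Borel matching whose failure set is measure-theoretically invisible exactly when expansions exist --- with an appeal to an equidecomposition principle that is not available in this generality.
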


\begin{proof}
	The ``in particular'' part follows immediately from \cref{prop:canonical-random-expansions}.

	Let $\Gamma = \{\gamma_n\}$ be an enumeration of $\Gamma$. For $A, B \subseteq \Gamma$, define sets $X^{A, B}_n$ recursively by
	\[X_n^{A, B} = \left( A \setminus \bigcup_{m < n} X_m^{A, B} \right) \cap \left( B \setminus \bigcup_{m < n} X_m^{A, B} \cdot \gamma_m \right) \cdot \gamma_n^{-1}.\]
	The collections $\{X^{A, B}_n\}, \{X^{A, B}_n \cdot \gamma_n\}$ consist of pairwise disjoint sets, and the map taking $\gamma \in X^{A, B}_n$ to $\phi^{A, B}(\gamma) = \gamma \cdot \gamma_n$ is a bijection from $\bigcup_n X^{A, B}_n \subseteq A$ to $\bigcup_n X^{A, B}_n \cdot \gamma_n \subseteq B$.

	It is easy to see by induction that $X^{\gamma \cdot A, \gamma \cdot B}_n = \gamma \cdot X^{A, B}_n$ for $\gamma \in \Gamma$, so the map $(A, B) \mapsto \phi^{A, B}$ is $\Gamma$-equivariant. Additionally, either $\dom(\phi^{A, B}) = A$ or $\ran(\phi^{A, B}) = B$: If $\gamma \in A \setminus \dom(\phi^{A, B}), \gamma' \in B \setminus \ran(\phi^{A, B})$ and $\gamma_n = \gamma^{-1} \gamma'$ then $\gamma \in X^{A, B}_n$, a contradiction.

	We let $X \subseteq \K(\Gamma)$ be the set of all $\bA = (\Gamma, A, B)$ such that $\phi^{A, B}$ is a bijection $A \to B$. It is clear that $X$ is invariant and Borel, and that $\bA \mapsto (\bA, \phi^{A, B})$ defines a Borel equivariant expansion $f: X \to \K^*(\Gamma)$.

	Now let $\mu$ be an invariant random $\K$-structure on $\Gamma$. If $\mu(X) = 1$ then $f_* \mu$ is an invariant random expansion of $\mu$. If $\nu$ is an invariant random expansion of $\mu$ and $(\Gamma, A, B, T) \sim \nu$ then
	\begin{equation}\label{eq:prop-bij-rand-exp}
		\begin{aligned}
			\P[1_\Gamma \in A] &= \P[\exists \gamma((1_\Gamma, \gamma) \in T)] = \sum_\gamma \P[(1_\Gamma, \gamma) \in T] = \sum_\gamma \P[(\gamma^{-1}, 1_\Gamma) \in T] \\
			&= \P[\exists \gamma ((\gamma, 1_\Gamma) \in T)] = \P[1_\Gamma \in B],
		\end{aligned}
	\end{equation}
	and since $\nu$ is a random expansion of $\mu$ we have that $\P[1_\Gamma \in A] = \P[1_\Gamma \in B]$ for $(\Gamma, A, B) \sim \mu$.

	Suppose now $\mu$ is ergodic and $\P[1_\Gamma \in A] = \P[1_\Gamma \in B]$ for $(\Gamma, A, B) \sim \mu$. As in \cref{eq:prop-bij-rand-exp}, we find that $\P[1_\Gamma \in \dom(\phi^{A, B})] = \P[1_\Gamma \in \ran(\phi^{A, B})]$. If $\P[A = \dom(\phi^{A, B})] = 1$ then
	\[\P[1_\Gamma \in B] = \P[1_\Gamma \in A] = \P[1_\Gamma \in \dom(\phi^{A, B})] = \P[1_\Gamma \in \ran(\phi^{A, B})],\]
	and it follows that $\P[B = \ran(\phi^{A, B})] = 1$. Similarly, if $\P[B = \ran(\phi^{A, B})] = 1$ then $\P[A = \dom(\phi^{A, B})] = 1$. By ergodicity, one of these must hold, and so
	\[\P[A = \dom(\phi^{A, B})] = \P[B = \ran(\phi^{A, B})] = 1\]
	and hence $\mu(X) = 1$.

	It remains to show that if $\mu$ admits an invariant random expansion then $\mu(X) = 1$, and for this it suffices to prove that if $\nu$ is an invariant random $\K^*$-structure on $\Gamma$ and $(\Gamma, A, B, T) \sim \nu$ then $\P[(A, B) \in X] = 1$. By considering an ergodic decomposition of $\K^*(\Gamma)$ (cf. \cite[Theorem~5.12]{CBER}) we may assume $\nu$ is ergodic, in which case this follows by the same argument as in the previous paragraph.
\end{proof}

It is not hard to verify that the set $X$ constructed in the proof of \cref{prop:bijection-concretely-classifiable} is a dense $G_\delta$ set in $\K(\Gamma)$, so that the canonical $\K$-structuring of $E^{Fr(\K(\Gamma))}_\Gamma$ admits a Borel expansion on a comeagre invariant Borel set. More generally, we have the following:

\begin{prop}\label{prop:bijection-generically-exp}
	Every aperiodic CBER is generically expandable. In particular, $\K$ admits $\Gamma$-equivariant expansions to $\K^*$ generically for every countably infinite group $\Gamma$.
\end{prop}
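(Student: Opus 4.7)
I would proceed via a direct combinatorial construction combined with a Baire-category transfer from the group-theoretic case. Given an aperiodic CBER $E$ on a Polish space $X$ with a Borel $\K$-structuring $\bbA = (X, A, B)$, first use the Feldman--Moore theorem to write $E = E^X_\Gamma$ for a countable group $\Gamma = \{\gamma_n\}_{n \in \N}$ of Borel automorphisms of $X$ (with $\gamma_0 = 1_\Gamma$), and then apply the greedy recursion from the proof of \cref{prop:bijection-concretely-classifiable} in the CBER setting: define Borel sets
\[
	X_n = \Bigl(A \setminus \bigcup_{m<n}X_m\Bigr) \cap \gamma_n^{-1}\Bigl(B \setminus \bigcup_{m<n}\gamma_m X_m\Bigr),
\]
and set $\phi(x) = \gamma_n x$ for $x \in X_n$. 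As in that earlier proof, $\phi$ is a Borel partial bijection $\bigcup_n X_n \to \bigcup_n \gamma_n X_n$ with graph contained in $E$, and no $E$-class meets both of the Borel leftover sets $U_A = A \setminus \bigcup_n X_n$ and $U_B = B \setminus \bigcup_n \gamma_n X_n$. Setting $Y = X \setminus ([U_A]_E \cup [U_B]_E)$, the restriction $\phi \upharpoonright (A \cap Y)$ is a Borel bijection onto $B \cap Y$ with graph in $E$, so the entire proof reduces to showing that $[U_A]_E$ and $[U_B]_E$ are meagre in $X$; by symmetry it suffices to handle $[U_A]_E$.

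The key observation is that the analogous group-theoretic statement is already comeagre: the Borel $\Gamma$-invariant set from the proof of \cref{prop:bijection-concretely-classifiable} on which the greedy matching succeeds is in fact a dense $G_\delta$ in $\K(\Gamma)$, shown by a forcing-style density argument --- for any basic clopen $N(\bA_0)$ and any potential unmatched point in the universe of $\bA_0$, one can extend $\bA_0$ so as to place that point into some $X_n$ by choosing matching partners outside the universe of $\bA_0$. Via the correspondence in \cref{prop:correspondence-sructures-maps}, after fixing a Borel enumeration $g$ of $E$ the structuring $\bbA$ translates to a $\rho_g$-equivariant Borel map $F^\bbA \colon X \to \K(\N)$, and $[U_A]_E$ corresponds to $(F^\bbA)^{-1}(\K(\N) \setminus Z)$ for the analogous dense $G_\delta$ ``good'' set $Z \subseteq \K(\N)$ on which the greedy matching is a bijection.

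The main obstacle is thus showing $(F^\bbA)^{-1}(Z)$ is comeagre in $X$, since Borel maps do not in general pull back comeagre sets to comeagre sets. My plan to handle this is to pass through the universal CBER $E \ltimes_g \K$ from \cref{sec:descriptions}: on $X \times \K(\N)$, the canonical $\K$-structuring corresponds to the projection to $\K(\N)$, which is open, so its preimage of $Z$ is just $X \times Z$ --- comeagre by the Kuratowski--Ulam theorem. This yields generic expandability of the canonical $\K$-structuring on $E \ltimes_g \K$, which one then transfers back to $E$ via the Borel class-bijective section $s(x) = (x, F^\bbA(x))$, using aperiodicity of $E$ together with a Vaught-style category argument on the $\Gamma$-action to show that the comeagre invariant subset of $E \ltimes_g \K$-classes witnessing expandability pulls back to a comeagre invariant subset of $X$. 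Finally, the ``in particular'' clause about $\Gamma$-equivariant expansions follows by applying the proven CBER statement to $E = E^{Fr(\K(\Gamma))}_\Gamma$ equipped with its canonical $\K$-structuring and invoking \cref{prop:weak-correspondence-expansions}(2) together with \cref{lem:wdp-free} (which guarantees that $Fr(\K(\Gamma))$ is a dense $G_\delta$ in $\K(\Gamma)$).
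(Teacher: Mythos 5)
There is a genuine gap at the step you yourself flag as the main obstacle, and the proposed workaround does not close it. Knowing that $X \times Z$ is comeagre in $X \times \K(\N)$ tells you nothing about $(F^\bbA)^{-1}(Z)$: the relevant subset of $E \ltimes_g \K$ is the \emph{graph} of $F^\bbA$, which is an invariant Borel set that is typically meagre (even nowhere dense) in the product, and its intersection with $X \times Z$ is exactly $\{(x, F^\bbA(x)) : x \in (F^\bbA)^{-1}(Z)\}$ --- so pulling back along the section $s(x) = (x, F^\bbA(x))$ just returns the set you were trying to prove comeagre. Kuratowski--Ulam gives you genericity along the vertical fibres $\{x\} \times \K(\N)$, but $F^\bbA(x)$ is a single point of that fibre, and no Vaught-transform argument rescues this: the only group available moves $F^\bbA(x)$ within its countable orbit under the cocycle $\rho_g$, not densely through $\K(\N)$. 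In short, the transfer from the universal structuring $E \ltimes_g \K$ back to an arbitrary structuring $\bbA$ of $E$ is circular, and without it you have no control over the saturations $[U_A]_E$, $[U_B]_E$ of the greedy leftovers. (Indeed, the whole difficulty of the proposition is that a fixed Borel structuring of $E$ can concentrate on ``bad'' structures; the genericity of the good set $Z$ inside $\K(\Gamma)$ only yields the ``in particular'' clause about the canonical structuring of $Fr(\K(\Gamma))$, which you correctly derive, not the statement for every structuring of every aperiodic CBER.)

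The paper's proof takes a different and non-circular route: it refines $E$ to the subequivalence relation $F$ that also remembers membership in $R$ and $S$, invokes (the proof of) \cite[Corollary~13.3]{KM} to pass to a comeagre $E$-invariant Borel set on which the aperiodic part of $F$ is \emph{compressible}, and then uses compressibility --- via an embedding of $\res{F}{R}$ into $\res{F}{S} \times I_\N$, a Borel isomorphism $\res{F}{S} \times I_\N \cong \res{F}{S}$, and the Schr\"oder--Bernstein-type argument of \cite[Proposition~2.3]{DJK} --- to upgrade a Borel injection $R \to S$ to a Borel bijection. Compressibility is exactly the slack that lets one repair the mismatches that your greedy construction leaves behind, and it is obtained generically from category-theoretic facts about $E$ itself rather than from genericity in the structure space $\K(\N)$. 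If you want to salvage your approach, you would need to replace the transfer-back step with an argument of this kind applied directly to the leftover sets.
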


\begin{proof}
	The second part follows from the first by \cref{prop:weak-correspondence-expansions}. In order to prove the first part, let $E$ be an aperiodic CBER on a Polish space $X$ and let $R, S \subseteq X$ be Borel sets that have infinite intersection with every $E$-class. Let
	\[x F y \iff x E y \And (x \in R \iff y \in R) \And (x \in S \iff y \in S).\]
	By (the proof of) \cite[Corollary~13.3]{KM}, there is a comeagre $E$-invariant Borel set for which the aperiodic part of $\res{F}{C}$ is compressible. As the finite part $A$ of $\res{F}{C}$ is smooth, and hence so is $\res{E}{[A]_E}$, it suffices to prove the following:

	\begin{claim}
		Suppose $F$ is compressible. Then there is a Borel bijection $R \to S$ whose graph is contained in $E$.
	\end{claim}

	\begin{claimproof}
		We note first that we may assume that $R \setminus S, S \setminus R$ have infinite intersection with every $E$-class. Indeed, $E$ is smooth on the set of points for which this is false, and one can easily construct expansions on smooth CBER. By taking our bijection to be the identity on $R \cap S$, we may therefore assume that $R, S$ are disjoint.

		Fix a Borel action of a countable group $\Gamma$ on $X$ so that $x E y \iff \exists \gamma \in \Gamma (\gamma x = y)$. Fix an enumeration $(\gamma_n)_{n \in \N}$ of $\Gamma$ and for $x \in R$ let $n(x)$ be the least $n$ such that $\gamma_n x \in S$. Let $f(x) = (\gamma_{n(x)}, n(x))$, so that $f: R \to S \times \N$ is a Borel embedding of $\res{F}{R}$ into $\res{F}{S} \times I_\N$, where $I_\N$ is the equivalence relation on $\N$ with a single equivalence class. Since $\res{F}{S}$ is compressible, there is a Borel isomorphism $g: \res{F}{S} \times I_\N \to \res{F}{S}$ such that $x F g(x, n)$ for all $x \in S, n \in \N$ (see e.g. the proof of \cite[Proposition~2.5]{DJK}). Thus $g \circ f: R \to S$ is a Borel injection whose graph is contained in $E$. Since $\res{F}{R}$ is compressible, the proof of \cite[Proposition~2.3]{DJK} applied to $g \circ f$ gives a Borel bijection $R \to S$ whose graph is also contained in $E$.
	\end{claimproof}
\end{proof}

\subsection{Ramsey's Theorem}

Fix $\K, \K^*$ as in \cref{eg:ramsey}, that is,
\begin{align*}
	\K &= \{(X, R, S) \mid R, S ~ \text{partition} ~ [X]^2\},\\
	\K^* &= \{(X, R, S, T) \mid (X, R, S) \in \K \And T \subseteq X ~ \text{is infinite}\\
	& \qquad\qquad\qquad\qquad \text{and homogeneous for the partition} ~ R, S\}.
\end{align*}

\begin{prop}\label{prop:ramsey-generic-expansion}
	Let $\Gamma$ be a countably infinite group. Then $\K$ does not admit $\Gamma$-equivariant expansions to $\K^*$ generically.

	In particular, $(\K, \K^*)$ enforces smoothness.
\end{prop}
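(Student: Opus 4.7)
The plan is to reduce to a single well-chosen group via \cref{thm:generic-expansions}, then derive a contradiction from a Baire-category ``freeness'' argument. Since the generic element of $\K$ has trivial algebraic closure and is not definable from equality (\cref{eg:strong-homogeneity}), \cref{thm:generic-expansions} applies, so it suffices to prove the non-existence of generic equivariant expansions for one countably infinite group. I take $\Gamma = \Z$ acting on $\K(\Z) = 2^{[\Z]^2}$ by the shift.

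Suppose for contradiction that $X \subseteq \K(\Z)$ is a comeagre invariant Borel set carrying a Borel $\Z$-equivariant expansion $c \mapsto (c, T(c))$. The shift action of $\Z$ on $2^{[\Z]^2}$ is generically ergodic (each $\Z$-orbit in $[\Z]^2$ is free, so this is a shift space with base $\Z$), so after possibly swapping colours I may restrict $X$ and assume that $T(c)$ is $0$-homogeneous for every $c \in X$. For $n \in \Z$ set $A_n = \{c \in X : n \in T(c)\}$; equivariance yields $A_n = n \cdot A_0$, and since $\bigcup_n A_n = X$ is comeagre while meagre sets are closed under countable unions, every $A_n$ is non-meagre. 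I then fix a basic open $U \subseteq 2^{[\Z]^2}$, determined by a finite partial function with domain $D \subseteq [\Z]^2$, in which $A_0$ is comeagre; the translate $nU$ (determined by $nD$) is then a similar witness for $A_n$.

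The heart of the argument is a freeness observation. Because $D$ is finite, for all sufficiently large $n \in \Z$ both $D \cap nD = \emptyset$ and $\{0, n\} \notin D \cup nD$ hold. Fix such an $n$. Then $U \cap nU$ is a nonempty basic open set (its finite domain is $D \sqcup nD$) on which the coordinate $c(\{0, n\})$ is \emph{free}. On $U \cap nU$ the set $A_0 \cap A_n$ is comeagre, so generically $0, n \in T(c)$, which forces $c(\{0, n\}) = 0$ by $0$-homogeneity. But the clopen half of $U \cap nU$ on which $c(\{0, n\}) = 1$ is nonempty and therefore non-meagre --- a contradiction. I expect the main subtlety to be precisely this parameter selection, reconciling ``$U$ and its translate $nU$ overlap'' with ``the pair $\{0, n\}$ remains unconstrained'', both of which ultimately come out of the finiteness of $D$. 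The ``in particular'' clause is then immediate from \cref{cor:enforcing-smoothness-generic}, since $Fr(\K(\Z))$ is nonempty --- in fact dense $G_\delta$ --- by \cref{lem:wdp-free}.
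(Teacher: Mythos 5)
Your proof is correct, and it takes a noticeably different route from the one in the paper, so a comparison is worthwhile. The paper argues directly for an arbitrary $\Gamma$: it shrinks $X$ so that $f$ is continuous and (via \cref{lem:homogeneous-generic}) so that every $\bA \in X$ contains a translate of every finite configuration, extracts from continuity a finite condition $\bA_0$ forcing two points $\gamma_0,\gamma_1$ into $f(\bB)$ for all $\bB \in N(\bA_0) \cap X$, and then realizes $\bA_0$ and a disjoint translate $\gamma\bA_0$ inside a single $\bA$ with the connecting pair $\{\gamma_0,\gamma\gamma_0\}$ deliberately miscoloured. You instead reduce to $\Gamma=\Z$ via \cref{thm:generic-expansions} (legitimate, since \cref{eg:strong-homogeneity} supplies the TAC and non-definability-from-equality hypotheses, and failure for one group yields failure for all), and you replace the continuity-plus-richness argument by a Baire-category localization: non-meagreness of $A_0=\{c: 0\in T(c)\}$, comeagreness of $A_0$ on a basic open $U$ with finite support $D$, and the observation that for large $n$ the translate $nU$ overlaps $U$ compatibly while leaving the coordinate $c(\{0,n\})$ free. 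The combinatorial heart — two finite conditions each forcing a point into the homogeneous set, glued along an unconstrained pair — is the same, but the scaffolding differs: your version trades \cref{lem:homogeneous-generic} and the continuity refinement for the category $0$--$1$ law of the shift and the localization property of Baire-measurable sets, at the cost of invoking \cref{thm:generic-expansions} as a black box; the paper's version is self-contained over all $\Gamma$ and, as noted in the remark following it, generalizes immediately to $k$-colourings of $n$-element sets, which your $[\Z]^2$-specific coordinates would need minor reworking to accommodate. All the small steps you flag (disjointness of $D$ and $nD$ for large $n$, avoidance of $\{0,n\}$, non-meagreness of a nonempty clopen set) check out, and your derivation of the ``in particular'' clause from \cref{cor:enforcing-smoothness-generic} and \cref{lem:wdp-free} matches the paper's.
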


\begin{proof}
	The proof of the second part follows from the first by \cref{cor:enforcing-smoothness-generic}.

	Suppose now by way of contradiction that $f: X \to \K^*(\Gamma)$ was a Borel equivariant expansion on an invariant dense $G_\delta$ set $X \subseteq \K(\Gamma)$. Since an expansion $f(x)$ in this case is just a choice of a subset of $\Gamma$ that is homogeneous for the partition given by $x \in X$, we may view $f$ as an equivariant Borel map $X \to 2^\Gamma$.

	Note that the generic element of $\K$ has TAC and is not definable from equality, so by \cref{lem:homogeneous-generic} we may assume that for any $\bA_0 \in \age_\Gamma(\K)$ and any $\bA \in X$ there is some $\gamma \in \Gamma$ such that $\gamma \bA_0 \sqsubseteq \bA$. By shrinking $X$ further still we may assume that $f$ is continuous.

	Now fix an arbitrary $\bA \in X$ and let $T = f(\bA) \subseteq \Gamma$. Fix $\gamma_0 \neq \gamma_1 \in T$. By continuity there is some $\bA_0 \in \age_\Gamma(\K)$ so that $\bA_0 \sqsubseteq \bA$ and $\gamma_0, \gamma_1 \in f(\bB)$ whenever $\bB \in X \cap N(\bA_0)$. By equivariance, $\gamma \gamma_0, \gamma \gamma_1 \in f(\bB)$ whenever $\bB \in X \cap N(\gamma \bA_0)$.

	Let now $F$ be the universe of $\bA_0$, and assume wlog that $\gamma_0, \gamma_1 \in F$. We consider the case where $[T]^2 \subseteq R^\bA$; the case where $[T]^2 \subseteq S^\bA$ is handled identically. Fix $\gamma \in \Gamma$ so that $F \cap \gamma F = \emptyset$, and let $\bA_1 = (F \cup \gamma F, R^{\bA_1}, S^{\bA_1}) \in \age_\Gamma(\K)$ satisfy $\bA_0 \sqsubseteq \bA_1, \gamma \bA_0 \sqsubseteq \bA_1$ and $\{\gamma_0, \gamma \gamma_0\} \in S^{\bA_1}$. Let $\delta$ be such that $\delta \bA_1 \sqsubseteq \bA$. Then $\delta \bA_0 \sqsubseteq \bA, \delta \gamma \bA_0 \sqsubseteq \bA$ so $\delta \gamma_0, \delta \gamma_1, \delta \gamma \gamma_0 \in T$. Also, $\{\delta \gamma_0, \delta \gamma_1\} \in R^\bA$ and $\{\delta \gamma_0, \delta \gamma \gamma_0\} \in S^\bA$. This contradicts the fact that $T$ is homogeneous for the partition $(R^\bA, S^\bA)$.
\end{proof}

\begin{rmk}
	In \cite{GX}, it is shown that an aperiodic CBER is Borel expandable for $(\K, \K^*)$ iff it is smooth, and in particular that $(\K, \K^*)$ enforces smoothness. \Cref{prop:ramsey-generic-expansion}, along with \cref{prop:smooth-expandable,rmk:smooth-expandable}, give an alternative proof of this result. \Citeauthor{GX} consider more generally the case of $k$-colourings of sets of size $n$ (with the appropriate modifications made to $\K, \K^*$) for $k, n \geq 2$ \cite[Theorem~1.3]{GX}; we note that our proof of \cref{prop:ramsey-generic-expansion} goes through in this more general setting as well (where one takes in this case $\bA_1$ to contain $n$ copies of $\bA_0$).

	In \cite[Section~3]{GX}, a variation of the Ramsey extension property is introduced that enforces (and is actually equivalent to) hyperfiniteness. This extension property involves choosing in an ``almost invariant'' way (see \cite[Definition~3.1]{GX}) an expansion on each $E$-class, and in particular does not fit into our framework of expansion problems.
\end{rmk}

\begin{eg}
	We showed above that there is a Borel $\K$-structuring of a CBER $X$ which does not admit an expansion on any comeagre set. Our proof gave a $\K$-structure $(X, R, S)$ such that, when viewing $(X, R)$ as a graph, each connected component is isomorphic to the Rado graph.

	Below are two more such examples. In the first, $(X, R)$ is acyclic, and in the second it is bipartite.
	\begin{enumerate}
		\item By \cite[Proposition~6.2]{KST} there is a Borel acyclic graph $G_0 \subseteq E_0$ on $2^\N$ for which every Borel independent set is meagre. Let $\bbA = (2^\N, G_0, E_0 \setminus G_0)$, and note that this is a Borel $\K$-structuring of $E_0$. If $X \subseteq 2^\N$ is Borel and $E_0$-invariant and $\bbA^* = (\res{\bbA}{X}, T)$ is a Borel expansion of $\res{\bbA}{X}$, then $T \cap C$ is $G_0$-independent for every $E_0$-class $C \subseteq X$ (as $G_0$ is acyclic), so $T$ is meagre. Since $E_0$ is generated by the action of a countable group of automorphisms of $2^\N$, $X = [T]_{E_0}$ is meagre as well. In particular, $X$ is not comeagre, and hence $E_0$ is not generically expandable.

		\item (Kechris) Consider an irrational rotation $R$ on the circle $\bb{T}$ and let $E = E^\bb{T}_R$. Define $f: E \setminus \Delta_{\bb{T}} \to 2$ by $f(x, y) = 1$ iff $R^n(x) = y$ for some even $n \in \Z$, where $\Delta_{\bb{T}} \subseteq \bb{T}^2$ denotes the diagonal. Let $\bbA = (\bb{T}, f^{-1}(0), f^{-1}(1))$ and note that $\bbA$ is a Borel $\K$-structuring of $E$. If $X \subseteq \bb{T}$ is Borel and $E$-invariant and $\bbA^* = (\res{\bbA}{X}, T)$ is a Borel expansion of $\res{\bbA}{X}$. then clearly $f$ takes the value $1$ on $T$. One can easily extend $T$ to a Borel set $T \subseteq A \subseteq X$ so that $R^2(A) = A$. It follows, as $R^2$ is generically ergodic, that $A$, and hence $A \cup R(A) = X$, are meagre. In particular, $X$ is not comeagre, and hence $E$ is not generically expandable.
	\end{enumerate}
	By \cref{prop:enforce-smoothness-generic-exp}, these examples give alternative proofs that $(\K, \K^*)$ enforces smoothness.

	Additionally, both of these examples are not a.e. expandable for the Haar measure, by a similar ergodicity argument. \Cref{prop:rand-graph-no-ramsey} provides another example that is not expandable a.e.
\end{eg}

\begin{prop}\label{prop:rand-graph-no-ramsey}
	Let $\Gamma$ be a countably infinite group and let $\mu$ be the law of the partition $(R, S)$ of $[\Gamma]^2$ obtained by including every pair $\{\gamma, \delta\}$ in $R$ independently with probability $p$, $0 < p < 1$. Then $\mu$ is an invariant random $\K$-structure on $\Gamma$ that does not admit an invariant random expansion.
\end{prop}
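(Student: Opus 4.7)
The plan is to derive a contradiction from the assumption that $\mu$ admits an invariant random expansion $\nu$, by comparing two estimates for $\E[|T \cap B|]$ where $(R, S, T) \sim \nu$ and $B \subseteq \Gamma$ is a large finite set: a linear lower bound from $\Gamma$-invariance, and a logarithmic upper bound from the Erdős--Rényi clique-number bound. Under $\nu$, almost surely $T$ is infinite and either $[T]^2 \subseteq R$ or $[T]^2 \subseteq S$, and the $(R, S)$-marginal of $\nu$ is $\mu$.

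First, I would establish that $\alpha := \P[1_\Gamma \in T] > 0$. By $\Gamma$-invariance $\P[\gamma \in T] = \alpha$ for every $\gamma \in \Gamma$, hence $\E[|T|] = |\Gamma|\cdot\alpha$; if $\alpha$ were $0$ then $T = \emptyset$ almost surely, contradicting that $T$ is a.s. infinite. Consequently, for any finite $B \subseteq \Gamma$ of cardinality $N$, linearity of expectation gives $\E[|T \cap B|] = N\alpha$. On the other hand, by monochromaticity of $T$, the set $T \cap B$ is a clique in the graph $(B, R \cap [B]^2)$ or in $(B, S \cap [B]^2)$, so $|T \cap B| \leq \omega(R \cap [B]^2) + \omega(S \cap [B]^2)$, where $\omega$ denotes the clique number. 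Since the $(R, S)$-marginal of $\nu$ is exactly $\mu$, these two graphs are distributed as Erdős--Rényi random graphs on $N$ vertices with parameters $p$ and $1-p$ respectively.

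The final ingredient is the classical bound $\E[\omega(G(N, q))] = O_q(\log N)$ for any fixed $q \in (0,1)$, which follows from Markov's inequality applied to the first moment $\binom{N}{k} q^{\binom{k}{2}}$ of the number of $k$-cliques in $G(N, q)$. Combining the two estimates yields $N\alpha = \E[|T \cap B|] = O(\log N)$, which is impossible for $N$ sufficiently large since $\alpha > 0$. The only nontrivial input is the logarithmic clique-number bound, which I would cite rather than reprove; the remainder is just invariance and linearity of expectation, so I do not expect any real obstacle.
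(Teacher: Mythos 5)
Your proposal is correct and follows essentially the same argument as the paper: the linear lower bound $\E[|T\cap B|] = |B|\cdot\P[1_\Gamma\in T]$ from invariance, against the $O(\log|B|)$ upper bound from the Erdős--Rényi clique-number estimate applied to both colour classes. The only cosmetic difference is that you argue by contradiction from $\P[1_\Gamma\in T]>0$, whereas the paper concludes $\P[1_\Gamma\in T]=0$ and then contradicts $T\neq\emptyset$.
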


\begin{proof}
	It is clear that $\mu$ is $\Gamma$-invariant. Suppose there was an invariant random expansion $\nu$ of $\mu$, and let $(R, S, T) \sim \nu$. We may view $R$ as a graph on $\Gamma$, which is almost surely isomorphic to the Rado graph, and view $T$ as either an infinite clique or an infinite independent set.

	For any finite $F \subseteq \Gamma$, $\res{R}{F}$ is the random graph on $F$ whose edges are included independently with probability $p$. The expected size of the largest clique in $\res{R}{F}$ is $\Theta(\log(|F|))$, and hence so is the expected size of the largest independent set \cite[Theorem~11.4]{RG}. Thus $\E[|F \cap T|] \in O(\log(|F|))$. On the other hand,
	\[\E[|F \cap T|] = \E[\sum_{\gamma \in F} \mathbbl{1}_{\gamma \in T}] = \sum_{\gamma \in F} \P[\gamma \in T] = |F| \cdot \P[1_\Gamma \in T]\]
	by invariance of $\nu$. Taking $|F| \to \infty$ we find that $\P[1_\Gamma \in T] = 0$, and hence that $T = \emptyset$ almost surely, a contradiction.
\end{proof}

\subsection{Linearizations}

Fix
\begin{align*}
	\K &= \{(X, P) \mid P ~ \text{is a partial order on} ~ X\},\\
	\K^* &= \{(X, P, L) \mid (X, P) \in \K \And P \subseteq L \And L ~ \text{is a linear order on} ~ X\}.
\end{align*}
as in \cref{eg:linearizations}.

The expansion problem for invariant random $\K$-structures on groups has been studied in \cite{GLM,A}. In particular, \citeauthor{A} has shown that

\begin{thm}[{\cite[Corollary~1.1]{A}}]
	A countable group $\Gamma$ admits random expansions from $\K$ to $\K^*$ if and only if $\Gamma$ is amenable.
\end{thm}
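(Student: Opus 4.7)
Plan. The $(\Leftarrow)$ direction is immediate from the last proposition of \cref{sec:inv-rand-expansion-cber}: the class $\K^*$ of linearizations extending a given partial order is closed (see \cref{eg:linearizations}), so every countably infinite amenable $\Gamma$ admits random expansions from $\K$ to $\K^*$. Concretely, for any invariant random partial order $\mu$ on $\Gamma$ the fibre $\pi_*^{-1}(\mu) \subseteq P(\K^*(\Gamma))$ is nonempty, compact, convex and $\Gamma$-invariant, so has a fixed point by amenability, and this fixed point is the desired invariant random linearization.

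For the $(\Rightarrow)$ direction, I would show that every non-amenable $\Gamma$ admits some invariant random partial order $\mu$ with no invariant random expansion to $\K^*$. The guiding principle is a mass-transport obstruction: any invariant random linearization $\nu$ induces a translation-invariant function $q : \Gamma \setminus \{1_\Gamma\} \to [0,1]$ given by $q(g) = \P_\nu[1_\Gamma <_L g]$, satisfying the symmetry $q(g) + q(g^{-1}) = 1$, together with further inequalities imposed by the partial order and the transitivity of $L$. The plan is to exhibit a specific $\mu$ for which these inequalities become incompatible with the non-amenability of $\Gamma$.

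A natural candidate for $\mu$ is a Bernoulli-factor partial order: take an i.i.d.\ Uniform$[0,1]$ labelling $\omega : \Gamma \to [0,1]$, fix a finite symmetric generating set $S$, and declare $\gamma \prec \delta$ when there is a path in the Cayley graph $\mathrm{Cay}(\Gamma, S)$ from $\gamma$ to $\delta$ along which $\omega$ is strictly increasing. This defines a $\Gamma$-invariant random partial order $\mu$ on $\Gamma$, and its comparability graph is rich enough (in the non-amenable setting) to place nontrivial constraints on $q$. The remaining, and main, task is to show that an invariant random linearization of $\prec$ would force the existence of an invariant mean on $\Gamma$.

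The hardest step is this last one, and it is where non-amenability is really used. The cleanest approach I see, following \cite{A}, is to combine the constraints on $q$ with the paradoxical-decomposition characterization of non-amenability: a decomposition $\Gamma = \bigsqcup_i A_i = \bigsqcup_i g_i A_i \sqcup \bigsqcup_j h_j B_j$ is used together with the ``upward'' structure of $\prec$ to derive an inequality of the form $\sum_i q(g_i^{-1}) + \sum_j q(h_j^{-1}) = \sum_i q(g_i^{-1})$ with $\sum_j q(h_j^{-1}) > 0$, yielding the desired contradiction. An important subtlety is that the non-amenable groups fall into many orbit-equivalence classes, so \cref{thm:invariant-random-expansions} does \emph{not} reduce this direction to a single group: the construction of $\mu$ and the mass-transport bookkeeping must work uniformly in $\Gamma$, directly from a paradoxical decomposition witnessing non-amenability.
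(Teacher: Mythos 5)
First, note that the paper does not prove this statement itself: it is quoted as a black box from \textcite[Corollary~1.1]{A}, so there is no internal proof to compare against. Your $(\Leftarrow)$ direction is correct and coincides with the paper's own general proposition at the end of \cref{sec:inv-rand-expansion-cber}: $\K^*$ is a closed class (\cref{eg:linearizations}), the fibre $(\pi_*)^{-1}(\mu)$ is a nonempty, compact, convex, $\Gamma$-invariant subset of $P(\K^*(\Gamma))$, and amenability supplies a fixed point. Nothing to add there.

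The $(\Rightarrow)$ direction, which is the substance of the theorem, is not actually proved in your proposal. You correctly set up the invariant function $q(g)=\P[1_\Gamma <_L g]$ with $q(g)+q(g^{-1})=1$, but the step that is supposed to produce the contradiction --- the displayed identity $\sum_i q(g_i^{-1})+\sum_j q(h_j^{-1})=\sum_i q(g_i^{-1})$ extracted from a paradoxical decomposition --- is asserted rather than derived, and it is not clear what property of your candidate order $\prec$ (increasing paths of i.i.d.\ labels in a Cayley graph) would force it: $q$ is just a function on group elements and carries no a priori relation to the sets $A_i, B_j$ of a paradoxical decomposition, so a genuinely new idea is needed to couple the two. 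Two further concrete issues: (i) a countable non-amenable group need not be finitely generated, so the Cayley-graph construction only applies to a non-amenable finitely generated subgroup $\Lambda \leq \Gamma$, and you would additionally need a (co-)induction argument transporting a non-extendable invariant random order from $\Lambda$ to $\Gamma$; this is not supplied by \cref{thm:invariant-random-expansions}, since, as you yourself note, the non-amenable groups do not form a single orbit-equivalence class. (ii) You have not verified that your $\prec$ fails to extend for even a single non-amenable group, so the proposal does not yet contain a complete proof of the forward direction in any instance. As it stands this is a plausible program in the spirit of \cite{GLM,A}, but the mass-transport contradiction --- the actual content of the theorem --- is missing.
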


\begin{rmk}\label{rmk:no-canonical-linearization}
	Note that even in the case when $\Gamma$ is amenable and all invariant random partial orders admit invariant random extensions to linear orders, we cannot expect to find an equivariant map $f: \K(\Gamma) \to \K^*(\Gamma)$ for which an extension is given by the pushfoward measure along $f$. This is unlike the case of bijections, or (as we will see below) for vertex colourings or $\Z$-lines. As a trivial example, note that the empty partial order is a fixed point in $\K(\Gamma)$, and there is no equivariant expansion of this partial order when $\Gamma$ is not left-orderable. We give a more interesting example in \cref{prop:linearization-not-ae}.
\end{rmk}

\begin{prop}\label{prop:linearization-not-ae}
	Let $E$ be an aperiodic CBER. There is a Borel $\K$-structuring of $E$ that is not $\mu$-a.e. expandable for any $E$-invariant probability Borel measure $\mu$.
\end{prop}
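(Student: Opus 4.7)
The plan is as follows. The statement is vacuous unless $E$ admits some $E$-invariant probability Borel measure, so I would first assume one exists. I claim this forces $E$ to be non-smooth: if $E$ were aperiodic and smooth, any Borel selector would identify each $E$-class with a copy of $\N$, and $E$-invariance of a probability measure would then require assigning equal (positive) mass to each fiber of $\N$ within each class, giving total mass $\infty$. More importantly, the same reasoning shows that for any such $\mu$ and any $\mu$-conull $E$-invariant Borel set $Y \subseteq X$, the restriction $E|_Y$ is again aperiodic and admits the invariant probability measure $\mu|_Y$, hence $E|_Y$ is also non-smooth.

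With this observation, the problem reduces to proving a single stronger statement: there is a Borel $\K$-structuring $\bbA$ of $E$ such that for every $E$-invariant Borel set $Y \subseteq X$ with $E|_Y$ non-smooth, $\bbA \upharpoonright Y$ admits no Borel linear extension. Granting such a $\bbA$, the $\mu$-a.e. conclusion is immediate from the observation above.

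To construct $\bbA$, I would start from the Borel counterexample promised by the first half of the surrounding proposition (the statement that every non-smooth CBER admits a Borel partial-order structuring with no Borel linear extension). That construction is naturally ``robust'' — it goes via designing a Borel partial order $P$ whose Borel linear extensions would encode a Borel selector (for instance, by using the Glimm--Effros Dichotomy to invariantly embed $E_0$ into $E$, transporting a handcrafted partial order on $E_0$ whose every Borel linearization forces a distinguished element in each class, and padding with the trivial partial order on the complement). Because this construction uses only Borel objects of $E$ (a generating group action, a decreasing sequence of markers, an invariant Borel embedding of $E_0$, etc.), restricting it to any $E$-invariant Borel $Y$ yields the analogous structuring on $E|_Y$, and the very same selector-extraction argument applies verbatim: any Borel linear extension of $\bbA \upharpoonright Y$ would give a Borel selector for $E|_Y$, contradicting the non-smoothness of $E|_Y$ guaranteed by the preceding observation.

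The main obstacle is ensuring that the first-part construction really does yield a Borel partial order whose failure mode is ``Borel selector for $E|_Y$'' for every $E|_Y$-non-smooth invariant Borel $Y$, rather than a one-off feature of $E$ itself. This amounts to verifying that the Glimm--Effros embedding $E_0 \sqsubseteq_B^i E$ is preserved upon restriction to $\mu$-conull invariant Borel sets (which it is, by applying the Glimm--Effros Dichotomy to $E|_Y$ directly), and that the handcrafted partial order on $E_0$ truly obstructs Borel linearization on \emph{every} $E_0$-invariant Borel subset on which $E_0$ remains non-smooth. If there is any slack in the first-part construction, I expect one can tighten it by taking a disjoint union over countably many Glimm--Effros embeddings and/or by placing on $E_0$ a partial order built from a decreasing marker sequence so that forming a Borel linear extension on any invariant Borel subset would select a minimum, violating non-smoothness of the restriction.
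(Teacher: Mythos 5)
There is a genuine gap: essentially all of the work is deferred to an unspecified ``first-part construction'' that is assumed to produce a Borel partial order whose Borel linearizations encode a Borel selector, robustly under restriction to invariant Borel sets. No such construction appears in the paper, and the paper's proof of the first part of the quoted Proposition (that every non-smooth CBER has a non-expandable structuring) goes through a Baire-category argument on $Fr(\K(\Gamma))$ (\cref{prop:linearization-generic-expansion} plus \cref{cor:enforcing-smoothness-generic}), which only rules out expansions on comeagre invariant sets; comeagre sets can be null, so that argument transfers nothing to the measure setting. Moreover, your proposed ``stronger statement'' --- a single structuring $\bbA$ with no Borel linearization on \emph{every} invariant Borel $Y$ with $E{\upharpoonright}Y$ non-smooth --- is strictly stronger than what is needed or proved, and it is not clear it is true: the actual obstruction in the paper is measure-theoretic (ergodicity), not non-smoothness, and nothing in the paper shows that a linearization would yield a Borel selector. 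Your fallback (``a partial order built from a decreasing marker sequence so that a linear extension would select a minimum'') is not workable as stated, since linear orders on infinite classes need not have minima and you give no mechanism forcing one.

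For comparison, the paper's proof is concrete and quite different. It first reduces to $F = \Delta_{2^\N} \times E_0$ sitting inside $E$ (via \cite[Corollary~8.14]{CBER}), so that every $E$-invariant measure is $F$-invariant and expansions restrict. On $E_0$ it takes the index-$2$ subrelation $F_0$ (even number of coordinate disagreements) and a Borel linear order $L$ of order type $\Z$ on each $F_0$-class, regarded as a \emph{partial} order on each $E_0$-class (two incomparable $\Z$-chains). Given a Borel linearization $L'$ on an invariant set $X$, one canonically selects one of the two $F_0$-classes in each $E_0$-class in $X$ (either an initial-segment class, or via the unique ``crossing pair'' $x_i \mathrel{L'} g(x_i)$ with $g$ the order-reversing flip); the resulting $F_0$-invariant set contradicts $F_0$-ergodicity of the Haar measure unless $\mu(X) = 0$. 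General invariant measures are handled by ergodic decomposition. If you want to complete your write-up, you need to supply an explicit partial order and an explicit extraction of an ergodicity-violating (rather than smoothness-violating) Borel object from any linearization.
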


\begin{proof}
	We may assume that $E$ is not smooth, as no smooth CBER admits an invariant probability Borel measure.

	It suffices to show this for the CBER $F = \Delta_{2^\N} \times E_0$. Indeed, suppose $\bbA$ were a Borel $\K$-structuring of $F$ with this property. By \cite[Corollary~8.14]{CBER}, we can assume that $E$ lives on $2^\N \times 2^\N$ and that $F \subseteq E$. Then every $E$-invariant measure is also $F$-invariant, $\bbA$ is also a Borel $\K$-structuring of $E$, and if $\bbA^*$ is an expansion of $\bbA$ on $E$ (restricted to some invariant Borel set) then its restriction to each $F$-class is an expansion of $\bbA$ on $F$. It follows immediately that $\bbA$ witnesses that this holds for $E$ as well.

	Let $F_0$ be the index $2$ subequivalence relation of $E_0$ given by
	\[x F_0 y \iff x E_0 y \And |\{i : x(i) \neq y(i)\}| = 0 \mod{2}.\]
	We define a Borel $\K^*$-structuring $L$ of $F_0$ as follows: For $x F_0 y$, we say $x L y$ if either $x = y$ or $\sum_{i < n} x(i) = 0 \mod{2}$, where $n$ is maximal with $x(n) \neq y(n)$. This is clearly reflexive and anti-symmetric. To see that it is transitive, let $x L y L z$ and let $l, m, n$ be maximal such that $x(l) \neq y(l)$, $y(m) \neq z(m)$ and $x(n) \neq z(n)$. It is clear that $l \neq m$. If $l < m$, then $m = n$ and $\sum_{i < m} x(i) = \sum_{i < m} y(i) = 0 \mod{2}$, so $x L z$. Otherwise $l > m$, so $l = n$ and $\sum_{i < l} x(i) = 0 \mod{2}$, and so $x L z$.

	Note that in particular, for all $x L z$ there are only finitely many $y$ with $x L y L z$. Indeed, if $n$ is maximal with $x(n) \neq z(n)$, then $y$ must agree with either $x$ or $z$ at all coordinates $m > n$. Additionally, it is clear that the restriction of $L$ to every $F_0$-class is total.

	Below, we identify $i^k$ for $i \in 2, k \leq \infty$ with the constant sequence of length $k$ with value $i$, and let ${}^\frown$ denote concatenation of sequences. (We also abuse notation and write $i$ for $i^1$.)

	We view $L$ as a Borel $\K$-structuring of $E_0$. Suppose that $X \subseteq 2^\N$ is Borel and $E_0$-invariant, and that $L'$ is a Borel expansion of $\res{L}{X}$ to $\K^*$. We claim that $\mu(X) = 0$, where $\mu$ is the Haar measure on $2^\N$. To see this, define $f: 2^\N \setminus \{1^\frown 0^\infty\} \to 2^\N \setminus \{0^\infty\}$ by
	\[f(0^\frown i^\frown x) = 1^\frown (1-i)^\frown x, \qquad f(1^\frown 0^n {}^\frown 1^\frown i^\frown x) = 0^{n+1} {}^\frown 1^\frown (1-i)^\frown x.\]
	It is clear that this is a Borel function whose graph is contained in $F_0$. Moreover, it is not hard to verify that $f(x)$ is the immediate successor of $x$ in $L$, whenever this is defined. It follows in particular that $L$ orders every $F_0$-class with order-type $\Z$, except for $[0^\infty]_{F_0}, [1^\frown 0^\infty]_{F_0}$, and that $f$ generates $L$ (i.e., $x L y \iff \exists n f^n(x) = y$).

	Let $g: 2^\N \to 2^\N$ be the map which flips the first coordinate of every sequence. Then $g$ is a Borel involution, its graph is contained in $E_0$, and $x \cancel{F_0} g(x)$ for all $x$. It is also easy to verify that $g$ is an isomorphism $F_0 \cong F_0$, and that it is order-reversing for $L$.

	We will use $L'$ to find a Borel $F_0$-invariant set $Y \subseteq X$ so that every $\res{E_0}{X}$-class contains exactly one $F_0$-class in $Y$. It is easy to see that $\mu$ is $F_0$-invariant and $F_0$-ergodic, so this implies that $Y$ is either $\mu$-null or $\mu$-conull. It clearly cannot be $\mu$-conull, so $\mu(Y) = 0$ and hence $\mu(X) = 0$ as well.

	Let now $C$ be an $E_0$-class in $X$. We show how to choose an $F_0$-class from $C$ in a uniformly Borel way, and then take $Y$ to be the set of all such choices. If $0^\infty \in C$, then we choose $[0^\infty]_{F_0}$, so suppose this is not the case. Let $C_0, C_1$ be the two $F_0$-classes in $C$. If some $C_i$ is an initial segment of $\res{L'}{C}$, then we choose $C_i$. Otherwise, we claim that for $i \in 2$ there is a unique pair $(x_i, y_i) \in C_i$ so that $x_i L y_i$, $x_i L' g(x_i)$ and $g(y_i) L' y_i$ (see \cref{fig:F0-linearization-counterexample}). We then take $x$ to be the lexicographically least element of $\{x_0, x_1, y_0, y_1\}$ and choose $[x]_{F_0}$.

	We show this for $i = 0$, the case $i = 1$ being symmetric. As $C_1$ is not an initial segment of $\res{L'}{C}$, there are $x \in C_0, y \in C_1$ with $x L' y$. If $y L g(x)$, then $x L' g(x)$. Otherwise, $g(x) L y$ so $g(y) L x$ (as $g$ is order-reversing) and so $g(y) L' y$. Thus, by possibly setting $x = g(y)$, we may assume that $x L' g(x)$. Clearly there is an $L$-maximal such $x$, as we have assumed that $C_0$ is not an initial segment of $\res{L'}{C}$. We may then take $x_0 = x$, $y_0 = f(x)$.

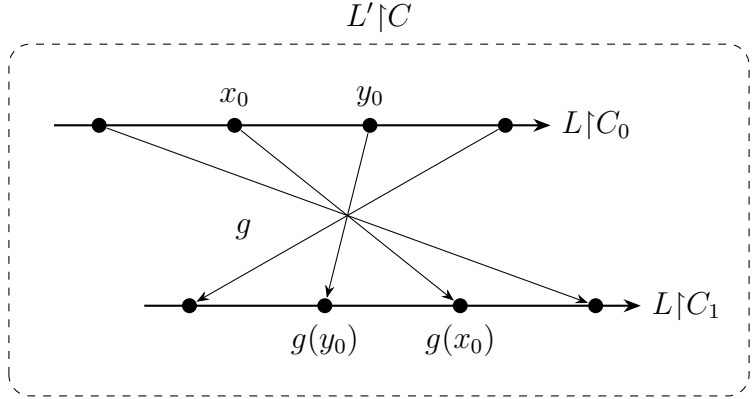
\begin{figure}
\begin{center}
\begin{tikzpicture}[>=Stealth, scale=1.2]
	\coordinate (Tstart) at (-1,2);
	\coordinate (Tend)   at (4.5,2);
	\coordinate (Bstart) at (0,0);
	\coordinate (Bend)   at (5.5,0);

	\draw[thick, ->] (Tstart) -- (Tend);
	\draw[thick, ->] (Bstart) -- (Bend);

	\node [circle, fill, inner sep=2pt] 			 (T0) at (-0.5,2) {};
	\node [circle, fill, inner sep=2pt, label=$x_0$] (T1) at (1,2) {};
	\node [circle, fill, inner sep=2pt, label=$y_0$] (T2) at (2.5,2) {};
	\node [circle, fill, inner sep=2pt] 			 (T3) at (4,2) {};

	\node [circle, fill, inner sep=2pt] 					  (B0) at (0.5,0) {};
	\node [circle, fill, inner sep=2pt, label=below:$g(y_0)$] (B1) at (2,0) {};
	\node [circle, fill, inner sep=2pt, label=below:$g(x_0)$] (B2) at (3.5,0) {};
	\node [circle, fill, inner sep=2pt] 					  (B3) at (5,0) {};

	\draw[->] (T0) -- (B3);
	\draw[->] (T1) -- (B2);
	\draw[->] (T2) -- (B1);
	\draw[->] (T3) -- (B0);

	\node at (1.1,0.85) {\(g\)};
	\node[right] at (Tend) {$\res{L}{C_0}$};
	\node[right] at (Bend) {$\res{L}{C_1}$};

	\draw[dashed, rounded corners=8pt] (-1.5,-1.0) rectangle (6.7,2.9);
	\node at (2.6,3.2) {$\res{L'}{C}$};
\end{tikzpicture}
\end{center}
\caption{There is some $\res{L}{C_0}$-maximal $x_0$ for which $x_0 L' g(x_0)$}
\label{fig:F0-linearization-counterexample}
\end{figure}

	Let now $\bbA$ be the $\K$-structuring of $\Delta_{2^\N} \times E_0$ given by pulling back $L$ along the projection $\proj_2: 2^\N \times 2^\N \to 2^\N$ to the second coordinate (note that this is a class-bijective map $\Delta_{2^\N} \times E_0 \to^{cb}_B E_0$). We claim that for any $(\Delta_{2^\N} \times E_0)$-invariant probability Borel measure $\mu$ and any invariant Borel set $X \subseteq 2^\N \times 2^\N$, if there is a Borel expansion of $\res{\bbA}{X}$ to $\K^*$ then $\mu(X) = 0$. By considering an ergodic decomposition we may assume that $\mu$ is ergodic, in which case it is equal to the Haar measure on $\{x\} \times 2^\N$ for some $x \in 2^\N$, so this follows by the analogous fact for $L$ on $E_0$.
\end{proof}

With respect to category, we have the following:

\begin{prop}\label{prop:linearization-generic-expansion}
	Let $\Gamma$ be a countably infinite group. Then $\K$ does not admit $\Gamma$-equivariant expansions to $\K^*$ generically.

	In particular, $(\K, \K^*)$ enforces smoothness.
\end{prop}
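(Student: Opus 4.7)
The plan is to argue by contradiction, in the spirit of Proposition \ref{prop:ramsey-generic-expansion}. Suppose $f \colon X \to \K^*(\Gamma)$ is a Borel $\Gamma$-equivariant expansion on some $\Gamma$-invariant comeagre Borel set $X \subseteq \K(\Gamma)$. By the Baire property, after restricting to a smaller $\Gamma$-invariant dense $G_\delta$, I may assume $f$ is continuous. Fix distinct $\gamma_0, \gamma_1 \in \Gamma$ and let $\bA_0$ be the antichain on $F = \{\gamma_0, \gamma_1\}$. Since $X$ is dense in $\K(\Gamma)$, there is $P \in N(\bA_0) \cap X$; after relabeling if necessary, assume $\gamma_0 <_L \gamma_1$ in $L := f(P)$. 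The continuous map $Q \mapsto \res{f(Q)}{F}$ takes only finitely many values, so by refining, there is a finite partial order $\bC_0$ with universe $H \supseteq F$, $\bA_0 \sqsubseteq \bC_0 \sqsubseteq P$, such that $\gamma_0 <_{f(Q)} \gamma_1$ for every $Q \in N(\bC_0) \cap X$. Equivariance of $f$ then yields $\tau \gamma_0 <_{f(Q)} \tau \gamma_1$ for every $\tau \in \Gamma$ and every $Q \in N(\tau \bC_0) \cap X$.

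Since $H$ is finite, I pick $\tau \in \Gamma$ with $\tau H \cap H = \emptyset$ and define a structure $\bA_2$ on $H \sqcup \tau H$: take the disjoint union of $\bC_0$ and $\tau \bC_0$, adjoin the two cross-relations $\gamma_1 <_{\bA_2} \tau \gamma_0$ and $\tau \gamma_1 <_{\bA_2} \gamma_0$, and close under transitivity. The crucial combinatorial check is that $\bA_2$ remains a (strict) partial order: any cycle produced by the transitive closure must traverse both new cross-relations and then chain back through $\bC_0$ or $\tau \bC_0$, but any such return chain would transitively force $\gamma_0 \leq_{\bC_0} \gamma_1$ (or its $\tau$-conjugate), contradicting that $\bA_0 \sqsubseteq \bC_0$ is an antichain. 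Hence $\bA_2 \in \age_\Gamma(\K)$, so $N(\bA_2)$ is a nonempty open subset of $\K(\Gamma)$.

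Density of $X$ supplies $Q \in N(\bA_2) \cap X$. Then $\bC_0 \sqsubseteq Q$ and $\tau \bC_0 \sqsubseteq Q$, and setting $L' := f(Q)$ gives $\gamma_0 <_{L'} \gamma_1$ and $\tau \gamma_0 <_{L'} \tau \gamma_1$ by the previous paragraph; moreover $\bA_2 \sqsubseteq Q$ together with $L' \supseteq Q$ yields $\gamma_1 <_{L'} \tau \gamma_0$ and $\tau \gamma_1 <_{L'} \gamma_0$. Chaining these produces the cycle $\gamma_0 <_{L'} \gamma_1 <_{L'} \tau \gamma_0 <_{L'} \tau \gamma_1 <_{L'} \gamma_0$, contradicting that $L'$ is a linear order. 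This proves the first claim; the ``In particular'' clause then follows from Corollary \ref{cor:enforcing-smoothness-generic}. The main obstacle is the (straightforward but essential) cycle-check for $\bA_2$; it is arranged to succeed precisely because $\bA_0$ was chosen to be an antichain, so neither $\bC_0$ nor $\tau \bC_0$ can supply a short-circuiting chain between $\gamma_0$ and $\gamma_1$ (or their $\tau$-translates).
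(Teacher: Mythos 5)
Your proof is correct and follows essentially the same route as the paper's: use continuity and equivariance to extract a finite condition $\bC_0$ forcing $\gamma_0 <_{f(\cdot)} \gamma_1$, then glue two translated copies with cross-relations to force a $4$-cycle in the linearization, and conclude via \cref{cor:enforcing-smoothness-generic}. The only (harmless) difference is that the paper realizes the bad finite configuration inside a single generic structure via \cref{lem:homogeneous-generic}, whereas you realize it in a possibly different point of $X$ using only density; your transitivity/cycle check for $\bA_2$, resting on $\gamma_0,\gamma_1$ being a $\bC_0$-antichain, is the right verification and goes through.
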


\begin{proof}
	The second part follows from the first by \cref{cor:enforcing-smoothness-generic}.

	Suppose by way of contradiction that there was such an expansion $f: X \to \K^*(\Gamma)$. By shrinking $X$, we may assume that $X$ is $G_\delta$, $f$ is continuous, and by \cref{lem:homogeneous-generic} (and the fact that there is a unique generic partial order) that for every $\bA_0 \in \age_\Gamma(\K)$ and every $\bA \in X$ there is some $\gamma \in \Gamma$ with $\gamma \bA_0 \sqsubseteq \bA$.

	Fix an arbitrary $\bA = (\Gamma, P) \in X$ and let $(\Gamma, L) = f(\bA)$, so that $P \subseteq L$. Since $P$ contains a copy of every element of $\age_\Gamma(\K)$ we may in particular find $\gamma_0, \gamma_1 \in \Gamma$ that are $P$-incomparable. Suppose wlog that $\gamma_0 \mathop{L} \gamma_1$. By continuity and equivariance, there is some $\bA_0 \in \age_\Gamma(\K)$ so that $\bA_0 \sqsubseteq \bA$ and for all $\gamma \in \Gamma$ and $\bB \in X$, if $\gamma \bA_0 \sqsubseteq \bB$ then $\gamma \gamma_0$ is less than $\gamma \gamma_1$ in $f(\bB)$.

	Let now $F$ be the universe of $\bA_0$ and assume wlog that $\gamma_0, \gamma_1 \in F$. Find some $\gamma \in \Gamma$ so that $F \cap \gamma F = \emptyset$, and let $\bA_1 = (F \cup \gamma F, P_1) \in \age_\Gamma(\K)$ be a structure with universe $F \cup \gamma F$ so that $\bA_0 \sqsubseteq \bA_1, \gamma \bA_0 \sqsubseteq \bA_1$ and such that $\gamma_1 \mathop{P_1} \gamma \gamma_0$ and $\gamma \gamma_1 \mathop{P_1} \gamma_0$. Let $\delta$ be such that $\delta \bA_1 \sqsubseteq \bA$. Then $\delta \bA_0 \sqsubseteq \bA, \delta \gamma \bA_0 \sqsubseteq \bA$ so $\delta \gamma_0 \mathop{L} \delta \gamma_1$ and $\delta \gamma \gamma_0 \mathop{L} \delta \gamma \gamma_1$. On the other hand, as $\delta P_1 \subseteq P \subseteq L$, we have $\delta \gamma_1 \mathop{L} \delta \gamma \gamma_0$ and $\delta \gamma \gamma_1 \mathop{L} \delta \gamma_0$ (see \cref{fig:generic-linearization-failure}.) It follows that $\delta \gamma_0 \mathop{L} \delta \gamma_1 \mathop{L} \delta \gamma_0$, a contradiction.

\begin{figure}
\begin{center}
\begin{tikzpicture}
	\draw[thick, rounded corners] (-2.5,1) rectangle (0,-1);
	\node[above] at (-1,1.2) {$\delta \bA_0$};

	\fill (-1,0.5) circle (2pt);
	\node[anchor=east] at (-1.1,0.5) {$\delta \gamma_0$};
	\fill (-1,-0.5) circle (2pt);
	\node[anchor=east] at (-1.1,-0.5) {$\delta \gamma_1$};

	\draw[thick, rounded corners] (2,1) rectangle (4.5,-1);
	\node[above] at (3,1.2) {$\delta \gamma \bA_0$};

	\fill (3,0.5) circle (2pt);
	\node[anchor=west] at (3.1,0.5) {$\delta \gamma \gamma_0$};
	\fill (3,-0.5) circle (2pt);
	\node[anchor=west] at (3.1,-0.5) {$\delta \gamma \gamma_1$};

	\draw[thick, decoration={markings, mark=at position 0.7 with {\arrow{Stealth[length=3mm]}}}, postaction=decorate]
	(3,-0.5) to[out=180, in=0, looseness=1] (-1,0.5);
	\draw[thick, decoration={markings, mark=at position 0.7 with {\arrow{Stealth[length=3mm]}}}, postaction=decorate]
	(-1,-0.5) to[out=0, in=180, looseness=1] (3,0.5);

	\draw[dashed, decoration={markings, mark=at position 0.6 with {\arrow{Stealth[length=2mm]}}}, postaction=decorate]
	(3,0.5) to (3,-0.5);
	\draw[dashed, decoration={markings, mark=at position 0.6 with {\arrow{Stealth[length=2mm]}}}, postaction=decorate]
	(-1,0.5) to (-1,-0.5);
\end{tikzpicture}
\end{center}
\captionsetup{justification=centering,margin=1.5cm}
\caption{A copy of $\delta \bA_1$ in $\bA$. The solid arrows are relations in $\bA_1$, and the dashed arrows are relations that are forced to exist in $L$.}
\label{fig:generic-linearization-failure}
\end{figure}
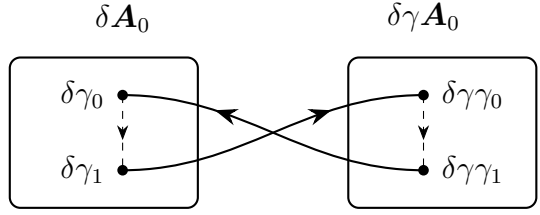
\end{proof}

We consider now expansions on CBER.

\begin{prop}
	Let $E$ be a CBER on a standard Borel space $X$ and let $P$ be a Borel partial order on $E$. Then the set of Borel subsets $Y \subseteq X$ for which $\res{P}{Y}$ admits a Borel linearization on $\res{E}{Y}$ forms a $\sigma$-ideal.
\end{prop}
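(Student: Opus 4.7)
The plan is to verify the three axioms of a $\sigma$-ideal: that $\emptyset \in \mathcal{I}$ (trivial, since the empty relation is a Borel linearization of nothing), that $\mathcal{I}$ is closed under Borel subsets, and that $\mathcal{I}$ is closed under countable unions.

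For the subset property, I would argue directly. Suppose $Y \in \mathcal{I}$ is witnessed by a Borel linearization $L$ of $\res{P}{Y}$ on $\res{E}{Y}$, and let $Z \subseteq Y$ be Borel. Then $L \cap (Z \times Z)$ is a Borel subset of $\res{E}{Z}$ whose restriction to each $\res{E}{Z}$-class is the restriction of a linear order on the containing $\res{E}{Y}$-class, hence itself a linear order extending $\res{P}{Z}$. Thus $Z \in \mathcal{I}$.

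For the countable union property, suppose $Y = \bigcup_n Y_n$ with each $Y_n \in \mathcal{I}$. Applying the subset property to $Y_n \setminus \bigcup_{m < n} Y_m$, I may assume the $Y_n$ are pairwise disjoint, and fix Borel linearizations $L_n$ of $\res{P}{Y_n}$ on $\res{E}{Y_n}$. I would then construct a Borel linearization $L$ of $\res{P}{Y}$ on $\res{E}{Y}$ by a layer-by-layer inductive insertion: set $L^{(1)} := L_1$, and at step $n+1$ insert the elements of $Y_{n+1}$ into the existing $L^{(n)}$-order. Concretely, for each $y \in C \cap Y_{n+1}$ in an $E$-class $C$, specify a Borel \emph{below-set} $\tilde D(y) \subseteq C \cap (Y_1 \cup \cdots \cup Y_n)$ that is $L^{(n)}$-downward closed and contains every $z$ with $z \mathrel{P} y'$ for some $y' \in C \cap Y_{n+1}$ satisfying $y' L_{n+1} y$ or $y' = y$. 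Then place $y$ immediately above $\tilde D(y)$ in $L^{(n+1)}$, order the elements of $Y_{n+1}$ among themselves via $L_{n+1}$, and retain $L^{(n)}$ on the old domain; finally, set $L := \bigcup_n L^{(n)}$.

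The hard part will be ensuring that the inductive step actually produces a linear order extending $\res{P}{Y_1 \cup \cdots \cup Y_{n+1}}$. The naive choice of $\tilde D(y)$ above can violate transitivity when some $x \in Y_1 \cup \cdots \cup Y_n$ is simultaneously forced below $y$ (via an $L_{n+1}$-chain together with $P$) and above $y$ (via a direct relation $y \mathrel{P} x$), which is a manifestation of an incompatibility between $L^{(n)}$ and $L_{n+1}$ along $P$-chains that cross layers. The resolution, and the key technical step, is to allow $L^{(n)}$ to be replaced at each stage by a different Borel linearization of the same $\res{P}{Y_1 \cup \cdots \cup Y_n}$ compatible with the impending insertion from $L_{n+1}$; equivalently, I would jointly choose Borel linearizations $L'_m$ of each $\res{P}{Y_m}$ so that on every $E$-class the relation $P \cup \bigcup_m L'_m$ is cycle-free, and then extend its transitive closure to a linear order via the insertion above. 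Existence of such a compatible choice per class follows from a pointwise application of Szpilrajn's theorem, and Borel uniformity across $E$-classes is afforded by the hypothesis $Y_m \in \mathcal{I}$ together with Lusin--Novikov enumerations of each class and the Borel structure of the $L_m$.
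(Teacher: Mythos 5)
Your reduction to the disjoint case and your treatment of Borel subsets are fine, and the layer-by-layer insertion is the same basic strategy as the paper's. The gap is in the repair you propose for the union step. You correctly observe that insisting the new layer be inserted in its $L_{n+1}$-order creates compatibility constraints between $L_{n+1}$ and the already-built order along $P$-chains crossing layers, and that these can genuinely fail: with $w, z$ old and $y, y'$ new, the relations $y \mathrel{P} w$ and $z \mathrel{P} y'$ together with $w$ below $z$ in the old order and $y'$ below $y$ in $L_{n+1}$ force the cycle $y < w < z < y' < y$, so your $\tilde D(y)$ must and must not contain $w$. But your fix --- jointly re-choosing Borel linearizations $L'_m$ of each $\res{P}{Y_m}$ so that $P \cup \bigcup_m L'_m$ is acyclic on every class --- is not justified by ``pointwise Szpilrajn plus Lusin--Novikov.'' Pointwise, such a compatible family is essentially the restriction of a full linearization of $\res{P}{C}$ to the pieces $Y_m \cap C$, so producing it Borel-uniformly is morally equivalent to the conclusion you are trying to prove; the hypothesis only hands you the individual $L_m$, which (as your own obstruction shows) need not be jointly compatible, and no Borel mechanism for repairing them is given. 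Note that if Lusin--Novikov enumerations plus a greedy Szpilrajn argument worked here, they would linearize \emph{any} Borel partial order on a CBER, which \cref{prop:linearization-not-ae} shows is impossible.

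The missing idea is that $L_{n+1}$ should not be preserved as a suborder of the final linearization; it is only needed as a tie-breaker. The paper attaches to each new point $x$ the initial segment $I_x$ of the old order $\bar L_n$ generated by the $P$-predecessors of $x$ among the old points, inserts $x$ immediately above $I_x$, and orders two new points first by strict inclusion $I_x \subsetneqq I_y$, falling back on $L_{n+1}$ only when $I_x = I_y$. Since $x \mathrel{P} y$ forces $I_x \subseteq I_y$, and the $I_x$ are initial segments of a linear order and hence linearly ordered by inclusion, this produces a linear extension of $P$ on $Z_{n+1}$ extending $\bar L_n$, with no compatibility hypothesis on the $L_m$ whatsoever (in the example above it simply places $y$ before $y'$, reversing $L_{n+1}$). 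With that modification your induction goes through.
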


\begin{proof}
	This class is clearly closed under taking Borel subsets. Suppose now that $Y = \bigcup_n Y_n$ and for every $n$ there is a Borel linearization $L_n$ of $\res{P}{Y_n}$ on $\res{E}{Y_n}$. Let $Z_n = \bigcup_{i < n} Y_n$. We will recursively construct an increasing sequence $\bar{L}_n$ of Borel linearizations of $\res{P}{Z_n}$ on $\res{E}{Z_n}$ and then take $L = \bigcup_n \bar{L}_n$.

	We begin by setting $\bar{L}_0 = \emptyset$. Suppose now that we have constructed $\bar{L}_n$, and let $C$ be an $\res{E}{Z_{n+1}}$-class in order to define $\res{\bar{L}_{n+1}}{C}$. For $x \in C \setminus Z_n$, let
	\[I_x = \{y \in C \cap Z_n : \exists z \in C \cap Z_n (z \mathop{P} x \And y \mathop{\bar{L}_n} z)\}.\]
	Note that $I_x$ is an initial segment of $\res{\bar{L}_n}{(C \cap Z_n)}$. Now for $x, y \in C$, we say $x \mathop{\bar{L}_{n+1}} y$ iff one of the following hold (c.f. \cref{fig:extending-lin-orders}):
	\begin{enumerate}
		\item $x, y \in Z_n$ and $x \mathop{\bar{L}_n} y$,
		\item $x \in Z_n, y \notin Z_n$ and $x \in I_y$,
		\item $x \notin Z_n, y \in Z_n$ and $y \notin I_x$,
		\item $x, y \notin Z_n$ and $I_x \subsetneqq I_y$, or
		\item $x, y \notin Z_n$, $I_x = I_y$ and $x \mathop{L_{n+1}} y$.
	\end{enumerate}
	One easily checks that $\bar{L}_{n+1}$ is a linear order on $Z_{n+1}$.

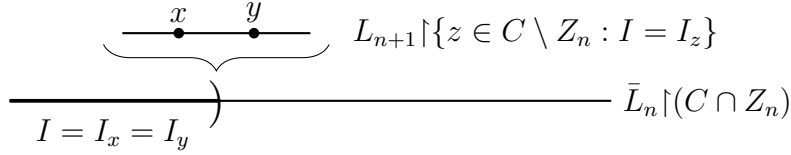
\begin{figure}
\begin{center}
\begin{tikzpicture}
	\draw[thick] (0,0) -- (8,0) node[right] {$\res{\bar{L}_n}{(C \cap Z_n)}$};

	\draw[thick, line width=0.5mm] (0,0) -- (2.80,0);
	\node[scale=1.5] at (2.75,-0.05) {$)$};
	\node[below] at (1.375,-0.1) {$I = I_x = I_y$};

	\draw [decorate,decoration={brace,mirror,amplitude=10pt}] (1.25,0.75) -- (4.25,0.75)
	node[midway, yshift=-10pt] {};

	\draw[thick] (1.5,0.9) -- (4.0,0.9) node[right] {$\quad \res{L_{n+1}}{\{z \in C \setminus Z_n : I = I_z\}}$};
	\fill (2.25,0.9) circle (2pt) node[above] {$x$};
	\fill (3.25,0.9) circle (2pt) node[above] {$y$};
\end{tikzpicture}
\end{center}
\caption{Extending $\bar{L}_n$ to $\bar{L}_{n+1}$.}
\label{fig:extending-lin-orders}
\end{figure}
\end{proof}

In particular, if a partial order $P$ on a CBER $E$ can be decomposed into a countable union of chains and antichains, then $P$ admits a Borel linearization on $E$.

\begin{prop}
	Let $T$ be a Borel locally countable directed tree on a standard Borel space $X$ whose (undirected) connected components form a CBER $E$, and let $P$ be the smallest partial order containing $T$. Then $(E, P)$ admits a Borel linearization.
\end{prop}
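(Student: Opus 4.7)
The plan is to construct $L$ directly via a Borel signed-distance function on $E$. Fix a Borel linear order $\prec$ on $X$ (which exists since $X$ is standard Borel). For $x, y$ in the same $E$-class, let $x = p_0, p_1, \ldots, p_k = y$ be the unique path from $x$ to $y$ in the underlying undirected graph of $T$, and set
\[d(x, y) = \#\{i < k : (p_i, p_{i+1}) \in T\} - \#\{i < k : (p_{i+1}, p_i) \in T\} \in \Z,\]
so that $d(y, x) = -d(x, y)$. I would then define $x <_L y$ iff either $d(x, y) > 0$, or $d(x, y) = 0$ and $x \prec y$.

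The easy verifications come first: Borelness of $L$ follows because the graph distance in a Borel locally countable graph is Borel (by Lusin--Novikov), so both $k(x, y)$ and the intermediate vertices $p_i(x, y)$ of the unique path are Borel functions of $(x, y)$. Antisymmetry and totality of $L$ on each $E$-class follow immediately from $d(y, x) = -d(x, y)$ and the tie-breaker. If $x <_P y$ then the unique path from $x$ to $y$ is entirely $T$-directed from $x$ toward $y$, so $d(x, y) = k > 0$ and thus $x <_L y$; hence $L \supseteq P$.

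The main obstacle is verifying transitivity of $L$. I would reduce this to the additivity identity
\[d(x, z) = d(x, y) + d(y, z) \quad \text{for distinct } x, y, z \text{ in the same $E$-class}.\]
Given the identity, transitivity is immediate: if $x <_L y$ and $y <_L z$ then $d(x, y), d(y, z) \ge 0$, so $d(x, z) \ge 0$; and when $d(x, z) = 0$ both summands must vanish, forcing $x \prec y \prec z$ via the tie-breakers, hence $x \prec z$ and $x <_L z$.

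To prove the identity I would use the \emph{median} $m$ of $\{x, y, z\}$---the unique vertex of the undirected tree lying on each of the three pairwise paths, which exists because the graph underlying $T$ on an $E$-class is a tree. Let $\alpha, \beta, \gamma$ denote the sign-sequences (forward or backward with respect to $T$) of the paths from $m$ to $x, y, z$ respectively. The three pairwise paths then concatenate as $\bar\alpha \cdot \beta$, $\bar\alpha \cdot \gamma$, and $\bar\beta \cdot \gamma$, where $\bar\sigma$ denotes $\sigma$ read in reverse with signs flipped. Since $d$ is additive under concatenation and satisfies $d(\bar\sigma) = -d(\sigma)$, the identity follows from the one-line computation $d(x,y) + d(y,z) = (-d(\alpha) + d(\beta)) + (-d(\beta) + d(\gamma)) = -d(\alpha) + d(\gamma) = d(x, z)$. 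The only case requiring any care is $m \in \{x, y, z\}$, where one of $\alpha, \beta, \gamma$ is empty, but the same calculation goes through unchanged.
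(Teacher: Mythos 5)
Your construction is exactly the one the paper uses: the signed distance $d(x,y)$ along the unique undirected tree path (forward edges weighted $+1$, reversed edges $-1$), with a Borel linear order as tie-breaker, and transitivity reduced to the cocycle identity $d(x,z) = d(x,y) + d(y,z)$. The paper states this identity without proof; your median argument is a correct way to fill in that detail, so the proposal is correct and essentially identical in approach.
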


\begin{proof}
	For all $x E y$, define $d(x, y)$ as follows: Consider the unique (undirected) path from $x$ to $y$ in $T$. Weigh each edge in this path by $1$ if it occurs in $T$, and by $-1$ if its reverse appears in $T$, and let $d(x, y)$ be the sum of the edge weights along this path. Fix a Borel linear order $\leq$ on $X$ and define $x \mathop{L} y$ if either $d(x, y) > 0$, or $d(x, y) = 0$ and $x \leq y$. It is straightforward to verify that $L$ is a linear order on $E$ extending $P$ (for transitivity, note that $d(x, z) = d(x, y) + d(y, z)$ whenever these are defined).
\end{proof}

\subsection{Vertex colourings}

In this section, we fix $d \geq 2$ and let
\begin{align*}
	\K &= \{(X, E) \mid (X, E) ~ \text{is a connected graph of max degree} ~ \leq d\},\\
	\K^* &= \{(X, E, S_0, \dots, S_d) \mid (X, E) \in \K \And S_0, \dots, S_d ~ \text{is a vertex colouring of} ~ (X, E)\}.
\end{align*}
as in \cref{eg:colourings}.

It was shown in \cite[Proposition~4.6]{KST} that every CBER is Borel expandable for $(\K, \K^*)$, and therefore that $(\K, \K^*)$ admits random expansions by \cref{prop:weak-correspondence-expansions}. Their proof essentially establishes the following (see also \cite[Proposition~4.29]{BC}).

\begin{prop}[Essentially {\cite{KST}}]\label{prop:KST-colouring}
	Let $\Gamma$ be a countably infinite group. There is an invariant dense $G_\delta$ set $Fr(\K(\Gamma)) \subseteq X \subseteq \K(\Gamma)$ which admits a Borel equivariant expansion, and such that $X$ is maximal with this property: For any invariant $X \supsetneqq Y$, there is no equivariant expansion $Y \to \K^*(\Gamma)$.
\end{prop}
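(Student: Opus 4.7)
The plan is to identify $X$ explicitly as the set of $\bA \in \K(\Gamma)$ whose $\Gamma$-stabiliser $G_\bA = \{\gamma \in \Gamma : \gamma \bA = \bA\}$ never places two adjacent $\bA$-vertices in a single $G_\bA$-orbit. Equivariance of an expansion $f$ at $\bA$ forces $\gamma\cdot f(\bA) = f(\gamma \bA) = f(\bA)$ for every $\gamma \in G_\bA$, so each colour class must be $G_\bA$-invariant; hence an equivariant $(d+1)$-colouring exists at $\bA$ iff $\bA \in X$. This yields maximality for free, and reduces existence on $X$ to a Borel colouring problem for an auxiliary graph on $X$.

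Concretely, set
\[X = \bigl\{\bA \in \K(\Gamma) : \forall v \in \Gamma,\ \forall \gamma \in G_\bA \setminus \{1\},\ \{v,\gamma v\} \notin E^\bA\bigr\}.\]
Then $X$ is $\Gamma$-invariant and $G_\delta$ (for each $v,\gamma$ the clause ``$\gamma \bA = \bA \implies \{v,\gamma v\} \notin E^\bA$'' cuts out an open set), and contains $Fr(\K(\Gamma))$. For density, one verifies that every finite extendible $\bA_0 \in \age_\Gamma(\K)$ can be enlarged to an element of $\K(\Gamma)$ with trivial $\Gamma$-stabiliser by a back-and-forth construction that breaks symmetries between $1$ and each $\gamma \neq 1$ (e.g.\ by arranging distinct local degree patterns at $1$ and at $\gamma^{-1}$), so $Fr(\K(\Gamma))$ is a dense $G_\delta$ set.

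To build the Borel equivariant expansion on $X$, introduce the auxiliary Borel graph $H$ on $X$ with $\bA \sim_H \bB$ iff $\bA \neq \bB$ and $\bB = \delta^{-1}\bA$ for some neighbour $\delta$ of $1$ in $\bA$. The definition of $X$ forces distinct neighbours $\delta,\delta'$ of $1$ in $\bA$ to give distinct $\delta^{-1}\bA$ and $(\delta')^{-1}\bA$ (otherwise $\delta(\delta')^{-1} \in G_\bA \setminus \{1\}$ would place two adjacent vertices in a single $G_\bA$-orbit), so $H$ has maximum degree $\leq d$. By the Kechris--Solecki--Todorcevic theorem, $H$ admits a Borel proper $(d+1)$-colouring $c : X \to \{0,\ldots,d\}$. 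Define $c'(\bA)(v) := c(v^{-1}\bA)$ and $S_i(\bA) := \{v \in \Gamma : c'(\bA)(v) = i\}$. Then $\bA \mapsto (\bA, S_0(\bA), \ldots, S_d(\bA))$ is Borel and equivariant, as
\[c'(\delta \bA)(v) = c(v^{-1}\delta \bA) = c((\delta^{-1}v)^{-1}\bA) = c'(\bA)(\delta^{-1}v);\]
and for any $\bA$-edge $\{v,w\}$ with $v \neq w$, the points $v^{-1}\bA$ and $w^{-1}\bA$ are distinct (by $\bA \in X$) and $H$-adjacent, so $c'(\bA)(v) \neq c'(\bA)(w)$. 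By \cref{prop:correspondence-expansion-maps} this gives the required equivariant expansion.

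For maximality, suppose $\bA \in \K(\Gamma) \setminus X$ and pick $v \in \Gamma,\ \gamma \in G_\bA \setminus \{1\}$ with $\{v,\gamma v\} \in E^\bA$. Any equivariant expansion $f$ defined on an invariant $Y \ni \bA$ satisfies $f(\bA) = f(\gamma \bA) = \gamma \cdot f(\bA)$, so each colour class of $f(\bA)$ is $G_\bA$-invariant; thus $v$ and $\gamma v$ receive the same colour, contradicting properness. The one delicate point is confirming the degree bound on $H$, but this is precisely what the definition of $X$ was tailored to provide; the rest is routine once equivariance has been reframed as $G_\bA$-invariance of colour classes.
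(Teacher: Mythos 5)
Your proof is correct and follows essentially the same route as the paper: the same maximal set $X$ of ``good'' structures (those whose stabiliser never identifies two adjacent vertices), the same stabiliser argument for maximality, and a Borel proper colouring of the induced degree-$\le d$ graph on $X$, which the paper constructs by hand via the greedy Kechris--Solecki--Todorcevic argument rather than citing their theorem. One immaterial slip: distinct neighbours $\delta, \delta'$ of $1$ in $\bA$ need \emph{not} give distinct translates $\delta^{-1}\bA \neq (\delta')^{-1}\bA$ even for $\bA \in X$ (they are each adjacent to $1$, not necessarily to each other, so the defining condition of $X$ is not violated when they collapse), but this costs nothing since $\deg_H(\bA) \le d$ holds automatically --- the $H$-neighbours of $\bA$ are among at most $d$ translates --- and the properness check only ever needs $v^{-1}\bA \neq w^{-1}\bA$ for $v, w$ that are actually adjacent, which your argument does establish.
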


Interpreted in the language of \cite{BC}, $X$ consists of exactly the set of orbits which satisfy an appropriate separation axiom (as used for example in the proof of \cite[Proposition~4.29]{BC}).

\begin{proof}
	We say $G \in \K(\Gamma)$ is \tb{bad} if there are $\gamma, \delta \in \Gamma$ so that $\delta \mathop{G} \gamma \delta$ and $\gamma G = \gamma$, and $G$ is \tb{good} otherwise.

	If $G$ is bad, then so is every graph in its orbit $\Gamma \cdot G$, and in this case there is no equivariant expansion map $\Gamma \cdot G \to \K^*(\Gamma)$. Indeed, suppose $c$ were such an expansion map, and view $c(\gamma G)$ as a map $\Gamma \to d+1$ that is a colouring of $\gamma G$ for $\gamma \in \Gamma$. Fix some $\gamma, \delta$ so that $\delta \mathop{G} \gamma \delta$ and $\gamma G = G$. Then
	\[c(G)(\gamma \delta) = c(\gamma G)(\gamma \delta) = (\gamma c(G))(\gamma \delta) = c(G)(\delta)\]
	by equivariance of $c$, a contradiction.

	We take $X$ to be the set of good graphs. Clearly $X$ is a $G_\delta$ set containing the free part of $\K(\Gamma)$. It is also not hard to see that it is dense (for example, the free part is dense by \cref{lem:wdp-free} as the generic element of $\K$ satisfies the WDP and is not definable from equality). It thus remains only to show that $X$ admits a Borel equivariant expansion.

	To see this, it suffices to construct a Borel map $f: X \to d+1$ so that $f(G) \neq f(\gamma^{-1} G)$ whenever $G \in X, \gamma \in \Gamma$ and $1_\Gamma \mathop{G} \gamma$. Indeed, thinking of $f(G)$ as the colour of $1_\Gamma$ in $G$, this extends uniquely to an equivariant map $g: X \to (d+1)^\Gamma$ sending $G$ to the colouring
	\[g(G)(\gamma) = f(\gamma^{-1} G).\]
	It is clear that $g$ is equivariant and Borel, and $g(G)$ is a proper colouring of $G$ because if $\gamma \mathop{G} \gamma \delta$ then $1_\Gamma \mathop{\gamma^{-1} G} \delta$ so
	\[g(G)(\gamma) = f(\gamma^{-1} G) \neq f(\delta^{-1} \gamma^{-1} G) = g(G)(\gamma \delta).\]

	Let $H_n$ be an enumeration of $\age_\Gamma(\K)$ and let $X_n$ be the set of all $G \in X \cap N(H_n)$ such that $\gamma^{-1} \mathop{G} \notin N(H_n)$ for all neighbours $\gamma$ of $1_\Gamma$ in $G$. It is clear that at most one of $G, \gamma^{-1} G \in X_n$ whenever $G \in X$ and $1_\Gamma \mathop{G} \gamma$. We claim that $X = \bigcup_n X_n$. Indeed, as $G \in X$ is good and has bounded degree, there is some finite $F \subseteq \Gamma$ so that $\res{G}{F} \neq \res{\gamma^{-1} G}{F}$ for all neighbours $\gamma$ of $1_\Gamma$ in $G$, in which case $G \in X_n$ for $n$ such that $H_n = \res{G}{F}$.

	Let $Y_n = X_n \setminus \bigcup_{i < n} X_i$. The sets $Y_n$ partition $X$ and if $G \in Y_n$ and $1_\Gamma \mathop{G} \gamma$ then $\gamma^{-1} G \notin Y_n$. We now define $f: X \to d+1$ recursively on each $Y_n$ as follows: supposing $f$ has already been defined on $\bigcup_{i < n} Y_i$, we define $f(G)$ for $G \in Y_n$ to be the least element of $d+1$ which is not equal to $f(\gamma^{-1} G)$ for all neighbours $\gamma$ of $1_\Gamma$ in $G$.
\end{proof}

Note that if we replace $\K$ with the class of $d$-regular graphs, then the sets $X_n$ in the previous proof are clopen, so the construction yields a continuous equivariant expansion $X \to \K^*(\Gamma)$.

One can also consider vertex colourings with fewer colours. This has been studied extensively in the case of expansions on CBER; we refer the reader to \cite[Part~I]{KM-survey} for a survey of this topic.

\subsection{Spanning trees}\label{sec:spanning-trees}

In this section, consider
\begin{align*}
	\K &= \{(X, E) \mid (X, E) ~ \text{is a connected graph}\},\\
	\K^* &= \{(X, E, T) \mid (X, E) \in \K \And (X, T) ~ \text{is a spanning subtree of} ~ (X, E)\},
\end{align*}
as in \cref{eg:trees}.

We say a CBER $E$ is \tb{treeable} if there is a Borel $\K'$-structuring of $E$, where $\K'$ is the class of connected trees. Let $\mathcal{T}$ denote the class of treeable CBER, and say an expansion problem \tb{enforces treeability} if it enforces $\mathcal{T}$.

A countably infinite group $\Gamma$ is \tb{antitreeable} if for every free Borel action of $\Gamma$ on a standard Borel space $X$ admitting an invariant probability Borel measure, the CBER $E^X_\Gamma$ is not treeable (c.f. \cite[115]{CBER}).

\begin{prop}\label{prop:spanning-trees}
	Let $E$ be a CBER. If $E$ is hyperfinite then it is Borel expandable for $(\K, \K^*)$, and if it is Borel expandable for $(\K, \K^*)$ then it is treeable.

	In particular,
	\begin{enumerate}
		\item $(\K, \K^*)$ enforces treeability;
		\item $\K$ admits $\Gamma$-equivariant expansions to $\K^*$ generically for all countably infinite groups $\Gamma$;
		\item $\Gamma$ admits random expansions from $\K$ to $\K^*$ for all amenable groups $\Gamma$; and
		\item $\Gamma$ does not admit random expansions from $\K$ to $\K^*$ for all antitreeable groups $\Gamma$.
	\end{enumerate}
\end{prop}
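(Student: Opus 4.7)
The plan is to first establish the main dichotomy ``hyperfinite $\Rightarrow$ Borel expandable $\Rightarrow$ treeable'' and then derive each of (1)--(4). For \emph{hyperfinite $\Rightarrow$ Borel expandable}, let $\bbA = (X, G)$ be a Borel $\K$-structuring of $E = \bigcup_n F_n$ (an increasing union of finite Borel subequivalence relations) and fix a Borel linear order $<$ on $X$. Build $T = \bigcup_n T_n$ by a stage-wise Kruskal-style procedure: at stage $n$, for each $F_n$-class $C$, iteratively add the $<$-minimal edge of $\res{G}{C}$ joining two distinct components of $\res{T_{n-1}}{C}$, until no such edge remains. Then $T$ is Borel and acyclic by construction, and given any $x, y$ in the same $E$-class, once $n$ is large enough that some $G$-path between them lies in a single $F_n$-class $C$, the procedure forces $x$ and $y$ into one component of $\res{T_n}{C}$, so $T$ is spanning. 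For \emph{Borel expandable $\Rightarrow$ treeable}, structure $E$ by the complete graph $G := E \setminus \Delta_X$; any Borel $\K^*$-expansion is exactly a Borel treeing of $E$, which also gives (1).

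For (2), the generic connected countable graph is the Rado graph, which has trivial algebraic closure and is not definable from equality, so \cref{thm:generic-expansions} reduces the statement to exhibiting a single group. Take $\Gamma = \Z$: after restricting $\K$ to the comeagre subclass of TAC elements, \cref{lem:wdp-free} shows $Fr(\K(\Z))$ is a dense $G_\delta$, and the orbit equivalence relation $E^{Fr(\K(\Z))}_\Z$ is hyperfinite since every Borel $\Z$-action is; by the main implication the canonical $\K$-structuring on $Fr(\K(\Z))$ is Borel expandable, and \cref{prop:weak-correspondence-expansions}(2) then delivers the required generic $\Z$-equivariant expansion. For (3), by \cref{thm:invariant-random-expansions} it suffices to prove random expansions for $\Gamma = \Z$. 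For any free Borel $\Z$-action on $(X, \mu)$, the universal CBER $E \ltimes_g \K$ is a class-bijective extension of the hyperfinite $E$ and hence is itself hyperfinite; applying the main implication to its canonical $\K$-structuring yields a Borel equivariant map $\tilde F : X \times \K(N) \to \K^*(N)$, along which any invariant random $\K$-structure on $(E, \mu)$ pushes forward to an invariant random $\K^*$-expansion.

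For (4), suppose for contradiction that $\Gamma$ is antitreeable but admits random expansions, fix a free PMP action of $\Gamma$ on $(X, \mu)$, and let $E = E^X_\Gamma$; by antitreeability $E$ is not treeable. By \cref{prop:random-expansion-cber}, $(E, \mu)$ admits random expansions, so applied to the complete-graph Borel $\K$-structuring we obtain an $(E \ltimes_g \K^*)$-invariant probability measure $\kappa$ on $X \times \K^*(N)$ that projects to $\mu$ and encodes an invariant random spanning tree on each $E$-class. The canonical $\K^*$-structuring makes $E \ltimes_g \K^*$ Borel treeable (just read off the spanning tree), so $(E \ltimes_g \K^*, \kappa)$ is a PMP CBER with a Borel treeing; transferring this along the class-bijective measure-preserving projection $\pi_X \colon E \ltimes_g \K^* \to E$ would yield a Borel treeing of $E$ on a $\mu$-conull invariant Borel set, contradicting antitreeability. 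The main obstacle is this last transfer step: pushing a Borel treeing down a class-bijective measure-preserving homomorphism requires a Borel, $E$-class-consistent section $X \to X \times \K^*(N)$ defined $\mu$-a.e., and justifying its existence via a measurable selection argument (or replacing this step with a cost-theoretic or ergodic-decomposition argument) is the technical crux.
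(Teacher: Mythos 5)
Your main implication and parts (1)--(3) are correct and essentially match the paper: the paper also builds the spanning tree as an increasing union of maximal spanning forests over the finite approximations $E_n$ (your Kruskal-style tie-breaking is just one uniformly Borel way to do this), gets treeability from the complete-graph structuring, gets (2) via \cref{thm:generic-expansions} applied to $\Z$, and gets (3) from hyperfiniteness plus \cref{prop:weak-correspondence-expansions}(3) (your detour through \cref{thm:invariant-random-expansions} and orbit equivalence with $\Z$ is a harmless variant).

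Part (4), however, contains a genuine gap --- one you correctly identify but do not close, and which in fact need not be closed because the argument you set up is the wrong one. You try to transfer a Borel treeing of $E \ltimes_g \K^*$ \emph{down} the class-bijective projection to the original relation $E$. There is no "measurable selection" fix for this: a class-bijective extension of $E = E^X_\Gamma$ is just another free p.m.p.\ action of $\Gamma$, so your transfer step is essentially the assertion that if one free p.m.p.\ action of $\Gamma$ is treeable then so is another --- i.e., the treeable-versus-strongly-treeable problem, which is open. The point you are missing is that no transfer is needed: antitreeability of $\Gamma$ is defined as the failure of treeability for \emph{every} free Borel action of $\Gamma$ admitting an invariant probability measure. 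The diagonal action of $\Gamma$ on $X \times \K^*(\Gamma)$ is itself such an action (free because the action on $X$ is, and preserving the measure $\kappa$ produced by \cref{prop:random-expansion-cber}), and after restricting to the $\kappa$-conull invariant Borel set of pairs whose second coordinate expands the complete graph, its orbit equivalence relation is treeable outright via the spanning-tree coordinate. That already contradicts antitreeability, which is why the paper can dismiss (4) as an immediate consequence of \cref{prop:random-expansion-cber}.
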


\begin{proof}
	It is clear that if $E$ is Borel expandable for $(\K, \K^*)$ then it is treeable (consider the complete graph on each $E$-class), and in particular that $(1)$ holds.

	To see that hyperfinite CBER are Borel expandable for $\K, \K^*$, let $E$ be a hyperfinite CBER and $\bbA$ be a Borel $\K$-structuring of $E$. Write $E = \bigcup_n E_n$ for an increasing union of finite CBER. We recursively construct an increasing sequence of Borel sets $\bbA^*_n$ so that $\bbA^*_n \subseteq E_n$ and for every $E_n$-class $C$, $\res{\bbA^*_n}{C}$ is a spanning subforest of $\res{\bbA}{C}$. We describe the construction of $\bbA^*_{n+1}$, given $\bbA^*_n$.

	Let $C$ be an $E_{n+1}$-class, $G = \res{\bbA}{C}$, $T = \res{\bbA^*_n}{C}$. Then $T \subseteq G$ is a forest of trees, so we can easily find a spanning forest $T \subseteq T' \subseteq G$. We set $\res{\bbA^*_{n+1}}{C} = T'$. As every $E_{n+1}$-class is finite, it is clear that this can be done in a uniformly Borel way.

	It follows that $Fr(\K(\Z))$ admits an equivariant random expansion to $\K^*$, and by \cref{thm:generic-expansions} this holds for all countably infinite groups $\Gamma$. $(3)$ follows by \cref{prop:weak-correspondence-expansions}(3), as every CBER generated by a Borel action of an amenable group is measure-hyperfinite, and $(4)$ is an immediate consequence of \cref{prop:random-expansion-cber}.
\end{proof}

Thus the class of CBER that are Borel expandable for $(\K, \K^*)$ lies somewhere between the hyperfinite and the treeable CBER.

\begin{prob}
	Is every treeable CBER expandable for $(\K, \K^*)$? Does $(\K, \K^*)$ enforce hyperfiniteness?
\end{prob}

\subsection{\texorpdfstring{$\Z$}{Z}-lines}

Let
\begin{align*}
	\K &= \{(X, L) \mid (X, L) ~ \text{is a linear order without endpoints}\},\\
	\K^* &= \{(X, L, Z) \mid (X, L) \in \K \And Z \subseteq X \And (Z, \res{L}{Z}) \cong (\Z, <)\},
\end{align*}
as in \cref{eg:zline}.

\begin{prop}\label{prop:z-line-generic-expansion}
	Let $\Gamma$ be a countably infinite group. Then $\K$ does not admit $\Gamma$-equivariant expansions to $\K^*$ generically.

	In particular, $(\K, \K^*)$ enforces smoothness.
\end{prop}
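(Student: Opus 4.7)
The second part follows from the first via \cref{cor:enforcing-smoothness-generic}, so the real task is to show there is no Borel $\Gamma$-equivariant expansion map $f \colon X \to \K^*(\Gamma)$ on any comeagre invariant Borel set $X \subseteq \K(\Gamma)$. My plan is to mimic the template of \cref{prop:ramsey-generic-expansion} and \cref{prop:linearization-generic-expansion}: assume such an $f$ exists, shrink $X$ so that it is $G_\delta$, $f$ is continuous, and (via \cref{lem:homogeneous-generic} applied to the unique generic dense linear order without endpoints) for every $\bA \in X$ and every $\bA_1 \in \age_\Gamma(\K)$ there is some $\gamma \in \Gamma$ with $\gamma \bA_1 \sqsubseteq \bA$. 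Fix $\bA = (\Gamma, L) \in X$, write $f(\bA) = (\Gamma, L, Z)$, and pick any two elements $\gamma_0 <_L \gamma_1$ of $Z$. Using continuity of $f$, choose $\bA_0 \sqsubseteq \bA$ with universe $F \supseteq \{\gamma_0, \gamma_1\}$ forcing $\gamma_0, \gamma_1 \in Z(\bB)$ for every $\bB \in X \cap N(\bA_0)$; by equivariance, $\delta \bA_0 \sqsubseteq \bB$ forces $\delta \gamma_0, \delta \gamma_1 \in Z(\bB)$ for any $\delta \in \Gamma$.

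The key observation specific to this problem is that $(Z, \res{L}{Z}) \cong (\Z, <)$ implies every $L$-interval $(\mu, \nu)$ with $\mu, \nu \in Z$ meets $Z$ in only finitely many points. So it suffices to exhibit, for every $n$, some $\delta \in \Gamma$ and $2n$ distinct elements of $Z(\bA)$ strictly between $\delta \gamma_0$ and $\delta \gamma_1$ in $L$. To produce them, choose $\alpha_1, \dots, \alpha_n \in \Gamma$ making $F, \alpha_1 F, \dots, \alpha_n F$ pairwise disjoint, and construct $\bA_n' \in \age_\Gamma(\K)$ on $F \cup \bigcup_i \alpha_i F$ so that (i) $\bA_0 \sqsubseteq \bA_n'$, (ii) $\alpha_i \bA_0 \sqsubseteq \bA_n'$ for each $i$, and (iii) the linear order of $\bA_n'$ is arranged so that $\gamma_0 <_{\bA_n'} \alpha_i \gamma_j <_{\bA_n'} \gamma_1$ for $i = 1, \dots, n$ and $j \in \{0, 1\}$ (any consistent total order extending these constraints will do, placing all remaining elements above $\gamma_1$ or below $\gamma_0$). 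By our genericity assumption applied to $\bA_n'$, pick $\delta$ with $\delta \bA_n' \sqsubseteq \bA$. Then $\delta \bA_0 \sqsubseteq \bA$ and $\delta \alpha_i \bA_0 \sqsubseteq \bA$ for each $i$, so $\delta \gamma_0, \delta \gamma_1, \delta \alpha_i \gamma_0, \delta \alpha_i \gamma_1$ all lie in $Z(\bA)$; and the order of $\bA_n'$ transfers under $\delta$ to place the $2n$ pairwise distinct points $\delta \alpha_i \gamma_j$ strictly between $\delta \gamma_0$ and $\delta \gamma_1$ in $L$, contradicting finiteness for $n$ large.

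The main subtlety --- and what distinguishes this proof from \cref{prop:linearization-generic-expansion} --- is that ``being consecutive in $Z$'' is not a continuous local condition, so one cannot force a forbidden adjacency directly from local data in the style of the linearization argument. The workaround is to replace the qualitative ``consecutive'' condition by the quantitative finiteness of intervals in $\Z$, and to iterate the ``insert a translate in the middle'' trick $n$ times simultaneously inside a single finite extension $\bA_n'$ of $\bA_0$; this accumulates unboundedly many $Z$-elements in one bounded $L$-interval, contradicting $(Z, \res{L}{Z}) \cong (\Z, <)$.
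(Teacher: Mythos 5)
Your setup (shrinking $X$, assuming continuity of $f$, forcing $\gamma_0,\gamma_1 \in Z(\bB)$ by a finite condition $\bA_0$, and transporting this by equivariance) matches the paper's, and the amalgamated order $\bA_n'$ can indeed be built (modulo the small point that the "middle parts" of each $\alpha_i F$, not just $\alpha_i\gamma_0,\alpha_i\gamma_1$, must be squeezed between $\gamma_0$ and $\gamma_1$ for $\alpha_i\bA_0 \sqsubseteq \bA_n'$ to hold). But the final step does not yield a contradiction. For each $n$ you produce a translate $\delta=\delta_n$ --- necessarily depending on $n$, since it is chosen to realize $\bA_n'$ --- such that $\delta_n\gamma_0,\delta_n\gamma_1 \in Z(\bA)$ with at least $2n$ elements of $Z(\bA)$ strictly between them. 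This concerns a \emph{different} interval for each $n$ and is perfectly consistent with $(Z,\res{L}{Z}) \cong (\Z,<)$: in $\Z$ itself, for every $n$ there are pairs of elements with $2n$ elements between them. The finiteness you invoke is finiteness of each \emph{fixed} interval with endpoints in $Z$; to contradict it you need unboundedly many $Z$-points accumulating in a single such interval, and your construction never pins that interval down, because translating by $\delta$ moves the endpoints together with the inserted points. A nesting variant hits the same wall: realizing a configuration containing $\bA_0$ and $\alpha\bA_0$ via a translate $\delta$ gives $\delta\gamma_0 <_L \delta\alpha\gamma_0 <_L \delta\alpha\gamma_1 <_L \delta\gamma_1$, again with a moving outer pair.

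The paper closes exactly this gap by proving a genericity statement stronger than the one you import from \cref{lem:homogeneous-generic}: for the generic $\bA$, every $\bA_0 \in \age_\Gamma(\K)$ appears \emph{densely often}, i.e., the set $C(\bA_0,\bA) = \{\gamma : \gamma\bA_0 \sqsubseteq \bA\}$ is densely ordered by $L^\bA$ and has at least two points. Applying this to the forcing condition anchored at $\gamma_0$ places an infinite, densely ordered set of realizations inside $f(\bA)$ for the single structure $\bA$, and a dense suborder cannot embed into the discrete order $(\Z,<)$. So your argument is repairable, but it requires this density of realizations inside one fixed $\bA$, not merely the existence, for each $n$, of some realizing translate of the $n$-th finite configuration.
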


\begin{proof}
	The second part follows from the first and \cref{cor:enforcing-smoothness-generic}.

	For $\bA_0 \in \age_\Gamma(\K)$ and $\bA \in \K(\Gamma)$, let $C(\bA_0, \bA) = \{\gamma : \gamma \bA_0 \sqsubseteq \bA\}$. We say $\bA$ \tb{contains $\bA_0$ densely often} if $\res{\bA}{C(\bA_0, \bA)}$ is a dense linear order with at least two points.

	We claim that the generic element of $\K(\Gamma)$ contains every $\bA_0 \in \age_\Gamma(\K)$ densely often. As there are only countably many such $\bA_0$, it suffices to show this for some $\bA_0$. It is easy to see that the set of all $\bA$ for which $C(\bA_0, \bA)$ contains at least two points is open and dense, so we show that the set of $\bA$ for which $\res{\bA}{C(\bA_0, \bA)}$ is dense is a dense $G_\delta$ set.

	To see this, we show that for any fixed $\gamma_0, \gamma_1 \in \Gamma$, the set of $\bA$ satisfying
	\[\gamma_0, \gamma_1 \in C(\bA_0, \bA) \And \gamma_0 \mathop{L^\bA} \gamma_1 \implies \exists \delta (\delta \in C(\bA_0, \bA) \And \gamma_0 \mathop{L^\bA} \delta \mathop{L^\bA} \gamma_1)\]
	is dense and open. This set is clearly open. To see that it is dense, fix $\bB_0 \in \age_\Gamma(\K)$ and let $\bA \in N(\bB_0)$. If $\gamma_i \notin C(\bA_0, \bA)$ for some $i \in 2$ or $\gamma_1 \mathop{L^\bA} \gamma_0$, we are done. Otherwise, we may assume that $\gamma_0, \gamma_1$ are in the universe of $\bB_0$ and that $\gamma_i \bA_0 \sqsubseteq \bB_0$ for $i \in 2$. Let $F$ be the universe of $\bB_0$ and fix $\delta$ so that $F \cap \delta \gamma_0^{-1} F = \emptyset$. Let $\bB_1$ be a linear order with universe $F \cup \delta \gamma_0^{-1} F$ so that $\bB_0 \sqsubseteq \bB_1, \delta \gamma_0^{-1} \bB_0 \sqsubseteq \bB_1$ and $\gamma_0 \mathop{L^{\bB_1}} \delta \mathop{L^{\bB_1}} \gamma_1$. Then for any $\bB \in N(\bB_1) \subseteq N(\bB_0)$ we have $\gamma_0 \bA_0, \gamma_1 \bA_0, \delta \bA_0 \sqsubseteq \bB$ and $\gamma_0 \mathop{L^\bB} \delta \mathop{L^\bB} \gamma_1$.

	Suppose now that there is a comeagre Borel equivariant set $X \subseteq \K(\Gamma)$ and a Borel equivariant expansion $f: X \to \K^*(\Gamma)$. By shrinking $X$, we may assume that it is $G_\delta$ and that $f$ is continuous. We view $f$ as a function $X \to 2^\Gamma$ taking $\bA \in X$ to a subset of $\Gamma$ so that $\res{\bA}{f(\bA)} \cong \Z$.

	Fix now some $\bA \in X$ in which every element of $\age_\Gamma(\K)$ appears densely often and let $\gamma_0 \in f(\bA)$ be arbitrary. By continuity and equivariance, there is some $\bA_0 \in \age_\Gamma(\K)$ so that $\bA_0 \sqsubseteq \bA$ and whenever $\gamma \bA_0 \sqsubseteq \bB \in X$ we have $\gamma \gamma_0 \in f(\bB)$. In particular, $C(\bA_0, \bA) \subseteq f(\bA)$. But $\bA_0$ appears densely often in $\bA$, contradicting the fact that $\res{\bA}{f(\bA)} \cong \Z$.
\end{proof}

We consider now the measurable case. For this, we will need the following lemma, due to \textcite{LS_indistinguishability}, on the existence of ``densities'' of infinite random subsets of a group $\Gamma$ (see also \cite[Section~4.2]{HP_percolation_threshold}).

Let $\Gamma$ be a countably infinite group and let $(Z_n)_{n \in \N}$ be a random walk on $\Gamma$ with symmetric step distribution $\mu$ whose support generates $\Gamma$. (Note that we do not assume $\mu$ to be finitely supported.) For $\gamma \in \Gamma$, let $\bb{P}_\gamma$ denote the law of the random walk $(Z_n)_{n \in \N}$ starting at $\gamma$.

Define $\Omega(\Gamma, \mu)$ to be the set of all $W \subseteq \Gamma$ for which there exists $r \in [0, 1]$ so that
\[\lim_{n \to \infty} \frac{1}{n} \sum_{i = 0}^{n-1} \mathbbl{1}(Z_i \in W) = r, ~\bb{P}_\gamma\text{-a.s. for all } \gamma \in \Gamma,\]
and for $W \in \Omega(\Gamma, \mu)$ we let $\freq_\mu(W)$ be the unique such $r$. We note that $\Omega(\Gamma, \mu) \subseteq 2^\Gamma, \freq_\mu: \Omega(\Gamma, \mu) \to [0, 1]$ are Borel and $\Gamma$-invariant. (Here, $\mathbbl{1}(Z_n \in W)$ is equal to $1$ when $Z_n \in W$ and $0$ otherwise.)

\begin{lem}[Existence of frequencies {\cite[Lemma~4.2]{LS_indistinguishability}}; c.f. {\cite[Lemma~4.4]{HP_percolation_threshold}}]\label{lem:existence-of-frequencies}
	Let $\Gamma$ be a countably infinite group and $\mu$ be a symmetric probability measure on $\Gamma$ whose support generates $\Gamma$. Let $\nu$ be an invariant random equivalence relation on $\Gamma$ and let $E \sim \nu$. Then $\nu$-almost surely, every equivalence class of $E$ is contained in $\Omega(\Gamma, \mu)$.
\end{lem}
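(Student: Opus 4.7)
The plan is to reduce the statement to a fixed-class version and then apply an ergodic theorem to a suitable measure-preserving system. By countable additivity over $\gamma_0 \in \Gamma$, it suffices to show that for each fixed $\gamma_0$ the random set $[\gamma_0]_E$ lies in $\Omega(\Gamma,\mu)$ for $\nu$-almost every $E$; intersecting the resulting full-measure sets then yields the lemma. Fix such a $\gamma_0 \in \Gamma$.

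The natural skew product to consider is $T(E,\omega) = (\omega_0^{-1}\cdot E,\ \sigma\omega)$ on $\mathcal{E}\times\Gamma^{\N}$ with probability measure $\nu\times\mu^{\N}$; this is measure-preserving because $\nu$ is $\Gamma$-invariant and $\omega_0$ is independent of $(\omega_1,\omega_2,\ldots)$, and iteration gives $T^n(E,\omega) = (Z_n^{-1}\cdot E,\ \sigma^n\omega)$ with $Z_n = \omega_0\cdots\omega_{n-1}$. The difficulty is that $\mathbbl{1}(Z_n\in[\gamma_0]_E)$ is not a $T$-pullback of a function on $\mathcal{E}\times\Gamma^{\N}$: rewriting it as $\mathbbl{1}(1_\Gamma\,(Z_n^{-1}\cdot E)\,Z_n^{-1}\gamma_0)$ exhibits a dependence on the datum $Z_n^{-1}\gamma_0$, which $T^n$ has discarded. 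To recover it, I would enlarge the state space to $\mathcal{E}\times\Gamma\times\Gamma^{\N}$ with the $\sigma$-finite invariant measure $\hat P = \nu\otimes\#_\Gamma\otimes\mu^{\N}$ (where $\#_\Gamma$ is counting measure) and augmented shift $\hat T(E,\zeta,\omega) = (\omega_0^{-1}E,\ \omega_0^{-1}\zeta,\ \sigma\omega)$. Initializing $\zeta = \gamma_0$ and choosing the observable $F(E,\zeta,\omega) = \mathbbl{1}(1_\Gamma E\zeta)$, one verifies $F\circ\hat T^n(E,\gamma_0,\omega) = \mathbbl{1}(Z_n\in[\gamma_0]_E)$. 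Applying Hopf's ratio ergodic theorem to $F$ against a suitable integrable reference observable then yields existence of the Cesàro limit $\lim_n n^{-1}\sum_{i<n}\mathbbl{1}(Z_i\in[\gamma_0]_E)$ for $\nu$-a.e.\ $E$ and $\mu^{\N}$-a.e.\ walk started at $1_\Gamma$.

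Two further independence steps upgrade this to $\bb{P}_\gamma$-a.s.\ convergence to the same value for every starting point $\gamma$. First, the Kolmogorov zero-one law together with the observation that a finite prefix of $\omega$ contributes asymptotically nothing to the Cesàro average shows that the limit is $\omega$-a.s.\ a function of $E$ alone. Second, $T$-invariance of this limit combined with the hypothesis that the support of $\mu$ generates $\Gamma$ forces it to be $\Gamma$-invariant on a $\nu$-conull set; this translates immediately into equality of the asymptotic density from every starting point, giving $[\gamma_0]_E\in\Omega(\Gamma,\mu)$. The main obstacle I anticipate is the ergodic-theoretic step: because $\hat P$ is infinite, Birkhoff's theorem is unavailable, Hopf's ratio theorem must be applied with a carefully chosen reference, and conservativity of the augmented chain must be established separately; a related subtlety is handling transient walks, which may require normalizing against a size-biased reference observable or invoking the mass-transport principle directly.
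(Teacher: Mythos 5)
Your reduction to a fixed class $[\gamma_0]_E$ and your diagnosis of why the naive skew product fails are both correct, but the central step --- applying Hopf's ratio ergodic theorem on the augmented space $\mathcal{E}\times\Gamma\times\Gamma^{\N}$ with the $\sigma$-finite measure $\nu\otimes\#_\Gamma\otimes\mu^{\N}$ --- does not go through, and you do not supply a substitute. The $\zeta$-coordinate of $\hat T^n$ evolves as $Z_n^{-1}\gamma_0$, a random walk on $\Gamma$; whenever this walk is transient (e.g.\ for $\Gamma=\Z^3$ or any nonamenable group) the set $\{\zeta=1_\Gamma\}$ has finite positive measure but is visited only finitely often by almost every orbit, so the augmented system is dissipative rather than conservative and Hopf's theorem is unavailable. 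Even where a ratio theorem does apply, it controls only ratios $\sum_{i<n}F\circ\hat T^i/\sum_{i<n}G\circ\hat T^i$ of occupation sums, not the Ces\`aro average $\frac1n\sum_{i<n}\mathbbl{1}(Z_i\in[\gamma_0]_E)$ itself; for a transient walk every integrable reference $G$ has a.s.\ convergent occupation sums, so no choice of size-biased reference recovers a density. You flag exactly this as the main anticipated obstacle, but it is fatal to the strategy rather than a technicality. The downstream steps (conditional tail triviality in $\omega$ given $E$, and propagation to all starting points via generation of $\Gamma$ by the support of $\mu$) are fine but rest on the unproven convergence.

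The paper's proof sidesteps the non-invariance of the single-class indicator entirely. It works with the two-sided walk on $\K(\Gamma)\times\Gamma^{\Z}$, shows by a mass-transport computation that $\lambda=\nu\times\hat{\bb{P}}_e$ satisfies $\lambda(A)=\lambda(SA)$ for every $\Gamma$-invariant Borel set $A$, and applies Kingman's subadditive ergodic theorem not to one class but to the $\Gamma$-invariant subadditive functionals $F^n_k$ (namely $n$ times the sum of the $k$ largest empirical class-frequencies over the first $n$ steps). Individual class frequencies are then recovered as consecutive differences $F^n_k-F^n_{k-1}$, a Cauchy argument gives convergence of $\alpha^n_0(C,Z)$ for each class, and a Borel--Cantelli comparison of the forward and backward halves of the two-sided walk shows the limit is a.s.\ constant. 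To complete your write-up you would need to replace the Hopf step with an argument of this kind.
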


We include a proof of \cref{lem:existence-of-frequencies} in \cref{app:proof-of-freq}, for the reader's convenience.

\begin{prop}\label{prop:z-line-classification}
	Let $\Gamma$ be a countably infinite group. There is a Borel $\Gamma$-invariant set $X \subseteq \K(\Gamma)$ and a Borel equivariant expansion map $f: X \to \K^*(\Gamma)$ such that, for all invariant random $\K$-structures $\mu$ on $\Gamma$, $\mu$ admits a random expansion to $\K^*$ if and only if $\mu(X) = 1$, in which case $f_* \mu$ gives such an expansion.

	Moreover, we can choose $f$ so that for all $\bA \in X$, $f(\bA)$ picks out an interval $I$ in $\bA$ with $\res{\bA}{I} \cong \Z$.

	In particular, if $E$ is a CBER on $Z$ induced by a free Borel action of $\Gamma$, $\mu$ is an $E$-invariant probability Borel measure and $\bbA$ is a Borel $\K$-structuring of $E$, then $\bbA$ is $\mu$-a.e. expandable to $\K^*$ iff $F^\bbA(z) \in X$ for $\mu$-a.e. $z \in Z$.
\end{prop}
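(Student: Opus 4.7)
The ``in particular'' part follows from \cref{prop:canonical-random-expansions}, so it suffices to construct $X$ and $f$. The strategy is to use \cref{lem:existence-of-frequencies} together with the mass transport principle to canonically identify a $\Z$-interval in $L$.

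Fix a symmetric probability measure $\mu_0$ on $\Gamma$ whose support generates $\Gamma$. For $L \in \K(\Gamma)$, let $F_L$ be the equivalence relation on $\Gamma$ given by $\gamma \mathrel{F_L} \delta$ iff the $L$-interval between $\gamma$ and $\delta$ is finite. Each $F_L$-class is a maximal convex subset of $L$ of order-type finite, $\N$, $-\N$, or $\Z$, and a subset $I \subseteq \Gamma$ is an interval of $L$ with $\res{L}{I} \cong \Z$ iff $I$ is a $\Z$-typed $F_L$-class. Since $L \mapsto F_L$ is Borel and $\Gamma$-equivariant, applying \cref{lem:existence-of-frequencies} to the pushforward of any invariant random $\K$-structure $\mu$ on $\Gamma$ shows that $\mu$-almost every $L$ has the property that every $F_L$-class $C$ admits a well-defined frequency $\alpha_L(C) = \freq_{\mu_0}(C)$.

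Let $X$ be the Borel $\Gamma$-invariant set of $L$ for which every $F_L$-class has a well-defined frequency and at least one $\Z$-typed $F_L$-class has positive frequency. For $L \in X$, let $M(L)$ denote the set of $\Z$-typed $F_L$-classes of maximum frequency $\alpha^*(L)$; since $\sum_C \alpha_L(C) \leq 1$, the set $M(L)$ is finite and non-empty, and it is linearly ordered by the induced $F_L$-quotient order inherited from $L$, so it has a unique $F_L$-minimum $C^*(L)$. Define $f(L)$ to be the expansion of $L$ marking $C^*(L)$. Then $f$ is Borel and $\Gamma$-equivariant, since frequencies are $\Gamma$-invariant and the $F_L$-order is $\Gamma$-equivariant, and $C^*(L)$ is a $\Z$-interval of $L$ by construction; hence $\mu(X) = 1 \implies f_*\mu$ is an invariant random expansion of $\mu$.

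For the converse, let $\nu$ be an invariant random expansion of $\mu$ with $(L, Z) \sim \nu$. By $\Gamma$-invariance of $\nu$, $p = \P[\gamma \in Z]$ is independent of $\gamma$, and since $|Z| \geq 1$ almost surely while $\E[|Z|] = p \cdot |\Gamma|$, we must have $p > 0$. For $z \in Z$, write $s(z) \in Z$ for the $L$-successor of $z$ in $Z$ and $g(z) = |(z, s(z))_L|$. Using that every $\gamma \notin Z$ lies in a unique gap $(z, s(z))_L$ for some $z \in Z$ (as $Z$ is unbounded in $L$), the mass transport principle gives $\E[g(1_\Gamma) \mathbbl{1}(1_\Gamma \in Z)] = \P[1_\Gamma \notin Z] = 1 - p < \infty$, so $g(z) < \infty$ for all $z \in Z$ almost surely. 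Therefore consecutive elements of $Z$ are $F_L$-equivalent, and $Z$ is contained in a single $F_L$-class $C$; since $C \supseteq Z$ is unbounded above and below in $L$, $C$ is $\Z$-typed, and $\alpha_L(C) \geq \freq_{\mu_0}(Z) > 0$ by applying \cref{lem:existence-of-frequencies} to the invariant random equivalence relation with classes $\{Z, \Gamma \setminus Z\}$. Hence $L \in X$ almost surely, yielding $\mu(X) = 1$. The main technical step is the mass transport argument, which converts the abstract existence of an invariant random $\Z$-copy into that of a positive-frequency $\Z$-typed $F_L$-interval amenable to canonical selection.
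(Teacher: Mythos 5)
Your proposal follows essentially the same route as the paper's: the paper's set $Z_L$ of $\Z$-typed intervals is exactly your set of $\Z$-typed $F_L$-classes, the selection rule (the $L$-least class among the finitely many of maximal frequency) is the same, and the converse direction is the same mass-transport-plus-frequencies argument. Two of your justifications are off but harmless. First, $Z$ need \emph{not} be unbounded in $L$ (consider $L$ of type $\Z + \Q$ with $Z$ the initial copy of $\Z$), so not every $\gamma \notin Z$ lies in a gap $(z, s(z))_L$; your mass transport identity should be the inequality $\E[g(1_\Gamma)\,\mathbbl{1}(1_\Gamma \in Z)] \leq 1 - p$, which is all you use. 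Second, ``$C$ is unbounded above and below in $L$'' should read ``$C$ has no $L$-greatest or $L$-least element'' (because $Z$ is cofinal and coinitial in its $F_L$-class), which together with $C$ being an $F_L$-class forces $C \cong \Z$.

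The genuine gap is the final inequality $\freq_{\mu_0}(Z) > 0$. \Cref{lem:existence-of-frequencies} applied to the invariant random equivalence relation with classes $Z, \Gamma \setminus Z$ only yields that $\freq_{\mu_0}(Z)$ \emph{exists} almost surely; it says nothing about its value, and $\P[1_\Gamma \in Z] = p > 0$ does not by itself force the frequency to be positive almost surely. A priori one only gets $\E[\freq_{\mu_0}(Z)] = p$ (by dominated convergence, since the walk at each time is independent of $Z$ and $\P[\gamma \in Z] = p$ for every $\gamma$ by invariance), which is compatible with $\freq_{\mu_0}(Z) = 0$ on a set of positive measure. The paper closes this by first passing to an ergodic component of $\nu$: under an ergodic $\nu$ the $\Gamma$-invariant function $\freq_{\mu_0}(Z)$ is a.s.\ constant, hence a.s.\ equal to its expectation $p$, and each ergodic component still has $Z \neq \emptyset$ a.s.\ and hence $p > 0$. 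You need to insert this ergodic-decomposition-plus-dominated-convergence step (or an equivalent substitute) to justify positivity; without it the chain $\alpha_L(C) \geq \freq_{\mu_0}(Z) > 0$ is unsupported.
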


\begin{proof}
	The ``in particular'' part follows immediately from \cref{prop:canonical-random-expansions}.

	For notational convenience, we identify $\bA \in \K(\Gamma)$ with $L^\bA$. Let $\kappa$ be a fixed symmetric probability measure on $\Gamma$ whose support generates $\Gamma$.

	For a given $L \in \K(\Gamma)$, let $Z_L$ denote the set of all intervals $I$ in $L$ for which $\res{L}{I} \cong \Z$. We define $X \subseteq \K(\Gamma)$ to be the set of all $L$ for which $\sup_{I \in Z_L} \freq_\kappa(I)$ exists, is non-zero and is attained by finitely many $I \in Z_L$. For such $L$, we define $f(L)$ to be the $L$-least interval $I \in Z_L$ maximizing $\freq_\kappa(I)$. It is clear that $f$ gives a Borel equivariant expansion $X \to \K^*(\Gamma)$.

	Suppose now that $\mu$ is an invariant random $\K$-structure on $\Gamma$ admitting a random expansion $\nu$. Let $(L, S) \sim \nu$ be a random variable with law $\nu$. We claim that $\nu$-almost surely, for all $x, y \in S$, there are finitely many points between $x$ and $y$ in $L$. To see this, define $g(x, y, L, S)$, for $x, y \in \Gamma$, by setting $g(x, y, L, S) = 1$ if $y$ is the $L$-least element of $S$ with $x \mathop{L} y$, and $0$ otherwise. Note that $\sum_y g(x, y, L, S)$ is equal to $1$ if $x$ lies between two elements of $S$, and $0$ otherwise. On the other hand, $\sum_x g(x, y, L, S)$ is $0$ when $y \notin S$, and when $y \in S$ it is equal to the size of the interval $(z, y]$ in $L$, where $z$ is the $\res{L}{S}$-predecessor of $y$.

	Let now $G(x, y) = \E[g(x, y, L, S)]$. Then by the mass transport principle (which in this case follows simply from the invariance of $\nu$),
	\[\sum_x G(x, y) = \sum_x G(y, x) = \E[\sum_x g(y, x, L, S)] \leq 1\]
	for all $y \in \Gamma$. It follows that the size of the interval $[y, z]$ in $L$ is almost surely finite for all $y, z \in S$. In particular, if we take $I(L, S)$ to be the smallest interval in $L$ containing $S$, then $I(L, S) \in Z_L$ almost surely. Thus, by replacing $\nu$ with the law of $(L, I(L, S))$, we may assume that $S \in Z_L$.

	We now show that $L \in X$ almost surely, i.e., $\mu(X) = 1$. By \cref{lem:existence-of-frequencies} we may assume that $I \in \Omega(\Gamma, \kappa)$ for all $I \in Z_L$, i.e., that $\freq_\kappa(I)$ is defined for all such intervals. By considering an ergodic decomposition of $\nu$ (cf. \cite[Theorem~5.12]{CBER}), we may assume that $\nu$ is ergodic. Let $\P[1_\Gamma \in S] = r > 0$. We claim that $\freq_\kappa(S) = r$ almost surely. Indeed, by ergodicity $\freq_\kappa(S)$ is constant a.s., and by the Dominated Convergence Theorem and invariance we have
	\[\E[\freq_\kappa(S)] = \lim_{n \to \infty} \frac{1}{n} \sum_{i = 0}^{n-1} \P[Z_i \in S] = \lim_{n \to \infty} \frac{1}{n} \sum_{i = 0}^{n-1} r = r.\]
	It follows that $\sup_{I \in Z_L} \freq_\kappa(I) > 0$, and by Fatou's Lemma
	\[\sum_{I \in Z_L} \freq_\kappa(I) \leq \freq_\kappa(\bigcup Z_L) \leq 1,\]
	so the max is attained by finitely many $I \in Z_L$.
\end{proof}

\subsection{Vizing's Theorem}\label{sec:vizing}

Fix $d \geq 2$ and let
\begin{align*}
	\K &= \{(X, E) \mid (X, E) ~ \text{is a connected graph of max degree} ~ \leq d\},\\
	\K^* &= \{(X, E, S_0, \dots, S_d) \mid (X, E) \in \K \And S_0, \dots, S_d ~ \text{is an edge colouring of} ~ (X, E)\},
\end{align*}
as in \cref{eg:vizing}.

By Vizing's Theorem, every element of $\K$ admits an expansion in $\K^*$. This is false in the Borel context. In particular, \citeauthor{Marks} has shown that there is a $d$-regular acyclic Borel bipartite graph with Borel edge-chromatic number $2d-1$ \cite{Marks}, and in \cite{CJMSTD} it is shown that there are counter-examples even for hyperfinite graphs.

On the other hand, Vizing's Theorem holds in the Borel setting for $d = 2$ \cite{KST} and for graphs of subexponential growth \cite{BD}, in the measurable setting \cite{GP,G}, and in the Baire-measurable setting for bipartite graphs \cite{BW}.

To summarize, we have the following:

\begin{thm}
\begin{enumerate}
	\item[]
	\item \cite{CJMSTD} For $d \geq 3$, $(\K, \K^*)$ enforces smoothness.
	\item \cite{KST} For $d = 2$, every CBER is Borel expandable for $(\K, \K^*)$.
	\item \cite{BD} Let $\K_0 \subseteq \K$ be the subclass of graphs of subexponential growth. Then every CBER is Borel expandable for $(\K_0, \K^*)$.
	\item \cite{BW} Let $\K_1 \subseteq \K$ be the subclass of bipartite graphs. Then every CBER is generically expandable for $(\K_1, \K^*)$. In particular, $\K_1$ admits $\Gamma$-equivariant expansions to $\K^*$ generically for every countably infinite group $\Gamma$.
	\item \cite{GP,G} Every CBER is a.e. expandable for $(\K, \K^*)$ for every (not necessarily invariant) probability Borel measure. In particular, for every countably infinite group $\Gamma$:
	\begin{enumerate}
		\item there is a Borel invariant set $Z \subseteq Fr(\K(\Gamma))$ which admits a Borel $\Gamma$-equivariant expansion to $\K^*$ and such that every invariant random $\K$-structure $\mu$ on $\Gamma$ which concentrates on $Fr(\K(\Gamma))$ satisfies $\mu(Z) = 1$; and
		\item $\Gamma$ admits random expansions from $\K$ to $\K^*$.
	\end{enumerate}
\end{enumerate}
\end{thm}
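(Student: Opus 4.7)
The statement is a summary of known results; its core is the measurable Vizing theorem of \textcite{GP}, refined by \textcite{G}. Their theorem asserts that for any Borel graph of maximum degree at most $d$ on a standard Borel space $X$ and any probability Borel measure $\mu$ on $X$, there is a Borel proper $(d+1)$-edge coloring defined on a $\mu$-conull Borel set. Via the correspondence from \cref{sec:descriptions}, a Borel $\K$-structuring of a CBER $E$ is exactly a Borel bounded-degree graph whose edges lie within $E$-classes, so the top-level claim -- that every CBER is a.e.\ expandable for $(\K, \K^*)$ with respect to every (not necessarily invariant) probability Borel measure -- translates directly.

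For (a), the plan is to apply measurable Vizing to the graph on $X = Fr(\K(\Gamma))$ induced by the canonical $\K$-structuring of $E = E^X_\Gamma$. The principal obstacle is passing from the measure-by-measure existence of Borel expansions to a single $\Gamma$-invariant Borel set $Z$ admitting a global Borel equivariant expansion. The key point I would exploit is that the Grebík--Pikhurko--Grebík construction yields more than the pointwise statement: it produces a Borel proper edge coloring defined on an $E$-invariant Borel subset of $X$ whose complement carries no $E$-invariant probability measure (equivalently, by Nadkarni's theorem, is $E$-compressible). Taking $Z$ to be this set and invoking \cref{prop:correspondence-expansion-maps} produces the desired equivariant map $Z \to \K^*(\Gamma)$; by construction every invariant random $\K$-structure $\mu$ on $\Gamma$ concentrated on $Fr(\K(\Gamma))$ is an $E$-invariant probability measure on $X$, and hence satisfies $\mu(Z) = 1$.

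For (b), given any invariant random $\K$-structure $\lambda$ on $\Gamma$, I would consider the diagonal $\Gamma$-action on $Y = \K(\Gamma) \times Fr(2^\Gamma)$ equipped with the $\Gamma$-invariant probability measure $\lambda \times \mathrm{Haar}$. This action is free, and the canonical $\K$-structuring pulled back from the first coordinate makes $E^Y_\Gamma$ a $\K$-structurable CBER. Applying measurable Vizing yields a Borel expansion on a full-measure $\Gamma$-invariant Borel set, which by \cref{prop:correspondence-expansion-maps} corresponds to a $\Gamma$-equivariant Borel map $f$ into $\K^*(\Gamma)$ satisfying $\pi \circ f = \proj_{\K(\Gamma)}$ almost everywhere, where $\pi$ is the reduct. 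The pushforward $f_*(\lambda \times \mathrm{Haar})$ is then an invariant random $\K^*$-structure on $\Gamma$ whose reduct is $\lambda$, giving the desired random expansion.
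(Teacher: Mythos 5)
Your treatment of (5a) and (5b) is correct and is essentially the paper's argument. For (5a) the paper likewise applies the Grebík--Pikhurko construction to the canonical $\K$-structuring of $Fr(\K(\Gamma))$ and uses the fact (recorded in the remark following the theorem) that their proof, combined with Nadkarni's Theorem, yields a single Borel $E$-invariant set $Z$ carrying a Borel expansion whose complement is compressible, hence null for every invariant measure; this is exactly your key point. For (5b), your product construction $\K(\Gamma) \times Fr(2^\Gamma)$ with $\lambda \times \mathrm{Haar}$ and the pushforward $f_*(\lambda\times\mathrm{Haar})$ is precisely the content of \cref{prop:weak-correspondence-expansions}(3), which the paper simply cites; you have re-derived it inline, which is fine.

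The gap is item (1). You dismiss everything outside (5) as "a summary of known results," but \enquote{$(\K,\K^*)$ enforces smoothness} is a notion defined in this paper and is not literally the statement of the cited reference. What \cite{CJMSTD} provides is an aperiodic hyperfinite CBER $E$ carrying a Borel $d$-regular acyclic graph with no Borel $(d+1)$-edge colouring. To upgrade this single counterexample to \enquote{enforces smoothness} one must verify the hypotheses of \cref{prop:enforce-smoothness-generic-exp}: in particular that $E$ is compressible, which the paper gets from \cite[Lemma~3.23]{JKL} (the treeing has infinitely many ends per component, so $E$ admits no invariant probability measure) together with Nadkarni's Theorem. Without this step the counterexample does not propagate to all non-smooth CBER via the Glimm--Effros dichotomy. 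A complete write-up should include this deduction (and, more minor, the \enquote{in particular} clause of (4), which follows from density of $Fr(\K_1(\Gamma))$ and \cref{prop:weak-correspondence-expansions}).
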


\begin{proof}
	By \cite[Theorem~1.4]{CJMSTD}, we may fix an aperiodic hyperfinite CBER $E$ and Borel $d$-regular acyclic graph $G$ on $E$ which does not admit a Borel edge colouring with $d+1$ colours. By \cite[Lemma~3.23]{JKL}, $E$ does not admit an invariant probability Borel measure (as $G$ is a treeing of $E$ for which every component has infinitely-many ends), so by Nadkarni's Theorem $E$ is compressible. (1) follows by \cref{prop:enforce-smoothness-generic-exp}.

	For the ``in particular'' parts of (4), (5), note that $Fr(\K_1(\Gamma))$ is dense $G_\delta$ in $\K_1(\Gamma)$ and apply \cref{prop:weak-correspondence-expansions}. For (5a), apply the proof of \cite[Theorem~4.3]{GP} to the canonical $\K$-structuring of $Fr(\K(\Gamma))$, as in the proof of (1) above.
\end{proof}

\begin{rmk}
	When $d = 2$, the same construction as in the proof of \cref{prop:KST-colouring} gives an analogous characterization of exactly when there is a Borel equivariant expansion map from $Z \subseteq \K(\Gamma)$ to $\K^*$.

	We note also that $(\K, \K^*)$ enforces smoothness even if we restrict $\K$ to the class of $n$-regular acyclic bipartite graphs (with a given bipartition), for $n > d/2+1$, by \cite[Theorem~1.4]{CJMSTD}.

	Finally, we remark that the proof of the main result of \cite{GP} (along with Nadkarni's Theorem) gives the stronger fact that for every CBER $E$ on $X$ and Borel $\K$-structuring $\bbA$ of $E$, there is a Borel $E$-invariant set $C$ so that $\res{\bbA}{C}$ admits a Borel expansion to $\K^*$ and $\res{E}{(X \setminus C)}$ is compressible.
\end{rmk}

In the Baire-measurable setting, \citeauthor{QW} have shown that if we replace $\K^*$ with $(d+2)$-edge colourings, then every CBER is generically expandable \cite{QW}. It is open whether every CBER is generically expandable for $(\K, \K^*)$.

\subsection{Matchings}\label{sec:matchings}

As in \cref{eg:matchings}, let
\begin{align*}
	\K &= \{(X, E) \mid (X, E) ~ \text{is a connected, bipartite,}\\
	&\qquad\qquad\qquad\text{locally finite graph satisfying Hall's Condition}\},\\
	\K^* &= \{(X, E, M) \mid (X, E) \in \K \And M \subseteq E ~ \text{is a perfect matching}\}.
\end{align*}
All graphs below are assumed to be in $\K$, unless specified otherwise.

By Hall's Theorem, every element of $\K$ admits an expansion in $\K^*$. This is false in the Borel context. \Citeauthor{Lac} and \citeauthor{Conley-Kechris} have given examples of $d$-regular hyperfinite graphs with Borel chromatic number $2$ which do not admit Borel perfect matchings, even generically or a.e., for $d$ even \cite{Lac,Conley-Kechris}. \Citeauthor{Marks} later showed that there are $d$-regular, acyclic graphs with Borel chromatic number $2$ that do not have Borel perfect matchings for all $d \geq 2$ \cite{Marks}, and in \cite[Theorem~1.4]{CJMSTD} this was extended to hyperfinite graphs. \Citeauthor{Kun} has given examples of such graphs that are not hyperfinite and do not admit Borel perfect matchings a.e. \cite{Kun}, and in \cite{BKS} a hyperfinite one-ended bounded-degree graph with Borel chromatic number $2$ is constructed which does not admit a Borel perfect matching a.e.

On the other hand, if we strengthen our structural assumptions on the graphs one can guarantee the existence of Borel perfect matchings generically or a.e. For instance, \citeauthor{MU} have shown that if we strengthen Hall's Condition to assume that $|N(A)| \geq (1+\varepsilon) |A|$ for some fixed $\varepsilon > 0$ then there is always a Borel perfect matching generically \cite{MU}, and \citeauthor{LN} have shown that Borel perfect matchings exist a.e. for graphs that instead satisfy an analogous expansion property for measure \cite{LN}. \Citeauthor{CM} have shown that acyclic graphs of minimum degree at least $2$ which do not have infinite injective rays of degree $2$ on even vertices have Borel perfect matchings generically, and a.e. when the graph is hyperfinite \cite{CM} (they showed this even for locally countable graphs in the measurable setting). \Citeauthor{BKS} have shown that Borel perfect matchings exist a.e. for hyperfinite measure-preserving regular graphs that are one-ended or have odd degree \cite{BKS}, and in \cite{BCW} the odd-degree case is shown to hold even when the measure is not preserved. Borel perfect matchings also exist generically for regular graphs that are one-ended \cite{BPZ} or have odd degree \cite{BCW}, and for bounded-degree non-amenable vertex-transitive graphs \cite{KL} (note that this last result applies to all graphs, not just those in $\K$).

We note also that Borel perfect matchings have been shown to exist a.e. for some Schreier graphs of free actions of groups; see e.g. \cite{LN,MU,CL,GMP,BKS,GJKS,W-EC} and \cite[Sections~14, 15]{KM-survey}.

\begin{rmk}\label{rmk:fractional-perfect-matching}
	Let $G$ be any graph (not necessarily in $\K$). A \tb{fractional perfect matching} on $G$ is an assignment to each edge of $G$ a weight in $[0, 1]$ so that for every vertex $v$ in $G$, the sum of the weights of the edges incident to $v$ is equal to $1$. Perfect matchings are then the same as $\{0, 1\}$-valued fractional perfect matchings. We say a fractional perfect matching is \tb{non-integral} if it takes values in $(0, 1)$.

	The general strategy employed by \cite{BKS,BPZ,BCW} to find Borel perfect matchings in a Borel locally finite graph $G$ is to start with a Borel non-integral fractional perfect matching on $G$, and then to attempt to round this Borel fractional perfect matching to be $\{0, 1\}$-valued (off of a meagre or null set).

	When $G$ is $d$-regular there is always a Borel non-integral fractional perfect matching on $G$, namely the one giving weight to $1/d$ to every edge. However, Borel non-integral fractional perfect matchings can also be shown to exist (possibly off of a meagre or null set) in other contexts; see \cite{Timar} for an example of this in the measurable setting. The results of these papers can therefore be applied to a larger class of graphs than e.g. the regular ones.

	It may therefore be interesting to consider separately the expansion problems for $(\K, \K')$ and $(\K', \K^*)$, where $\K'$ is the class of graphs equipped with a (non-integral) fractional perfect matching, though we do not explore this here.
\end{rmk}

We summarize a few of the aforementioned results below, in the language and setting of expansions.

Let $\K_d$ (resp. $\K_{d, ac}$) denote the subclass of $\K$ consisting of $d$-regular (resp. $d$-regular acyclic) graphs. Note that these are $G_\delta$ classes of structures. Let $\K_0 \subseteq \K$ denote the class of graphs that are either acyclic with no infinite injective rays of degree $2$ on even vertices, are regular and one-ended, or are regular of odd degree. Let $\K_1 \subseteq \K$ denote the class of graphs that satisfy the strengthening of Hall's Condition for $\varepsilon$-expansion for some $\varepsilon > 0$, or are vertex-transitive, non-amenable and have bounded degree. These are Borel classes of structures.

\begin{thm}
	\begin{enumerate}
		\item[]

		\item \cite{CJMSTD} $(\K_{d, ac}, \K^*)$ enforces smoothness for $d \geq 2$. \cite{Lac,Conley-Kechris} In particular, $(\K_d, \K^*)$ and $(\K, \K^*)$ enforce smoothness.

		\item $\K_2$ does not admit $\Gamma$-equivariant expansions to $\K^*$ generically, for any countably infinite group $\Gamma$.

		\item For every countably infinite group $\Gamma$ and $d > 2$, $\K_d$ admits $\Gamma$-equivariant expansions to $\K^*$ generically. \cite{MU} So does $\K_{d, ac}$.

		\item \cite{MU,CM,BPZ,BCW,KL} Every CBER is generically expandable for $(\K_0 \cup \K_1, \K^*)$.

		\item \cite{CM,BKS} Every hyperfinite CBER is a.e. expandable for $(\K_0, \K^*)$ for every invariant probability Borel measure. In particular, for every countably infinite amenable group $\Gamma$:
		\begin{enumerate}
			\item there is a Borel invariant set $Z \subseteq Fr(\K_0(\Gamma))$ which admits a Borel $\Gamma$-equivariant expansion to $\K^*$ and such that every invariant random $\K_0$-structure on $\Gamma$ which concentrates on $Fr(\K_0(\Gamma))$ satisfies $\mu(Z) = 1$; and
			\item $\Gamma$ admits random expansions from $\K_0$ to $\K^*$.
		\end{enumerate}
	\end{enumerate}
\end{thm}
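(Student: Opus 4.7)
The plan is to handle each of the five parts of the theorem in turn; Parts~(1), (3), (4), and (5) are essentially applications of the cited combinatorial matching results through the framework of \cref{sec:general-results}, whereas Part~(2) requires a bespoke forcing argument modelled on \cref{prop:z-line-generic-expansion}.

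For Part~(1), the plan is to invoke the Borel graphs constructed in \cite{Marks,CJMSTD}: for every $d\geq 2$, there is a hyperfinite CBER $E$ together with a Borel $d$-regular acyclic bipartite graph on $E$ (hence a Borel $\K_{d,ac}$-structuring, since $d$-regular bipartite graphs automatically satisfy Hall's condition) admitting no Borel perfect matching. Since this treeing has infinitely many ends in every component, \cite[Lemma~3.23]{JKL} shows that $E$ admits no invariant probability measure, and hence is compressible by Nadkarni's Theorem. Applying \cref{prop:enforce-smoothness-generic-exp} then yields enforcement of smoothness for $(\K_{d,ac},\K^*)$; enforcement for $(\K_d,\K^*)$ and $(\K,\K^*)$ follows immediately since every $\K_{d,ac}$-structuring is also a $\K_d$- and $\K$-structuring. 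Part~(3) is handled analogously in the positive direction: $d$-regular acyclic bipartite graphs with $d>2$ satisfy Hall's expansion condition with slack $\varepsilon=d-2$, so \cite{MU} delivers generic Borel perfect matchings on any Borel $\K_{d,ac}$-structuring, and similar methods from \cite{BPZ,BCW,KL} handle the $\K_d$ case for $d>2$; applied to the canonical $\K_d$-structuring of $Fr(\K_d(\Gamma))$ and combined with \cref{prop:weak-correspondence-expansions}(2), this produces the desired equivariant expansion. Parts~(4) and~(5) are pure assembly by citation: each subclass comprising $\K_0\cup\K_1$ has its own dedicated generic-matching theorem \cite{MU,CM,BPZ,BCW,KL} giving~(4), and \cite{CM,BKS} supply $\mu$-a.e.\ existence of Borel perfect matchings on hyperfinite CBER equipped with a $\K_0$-structuring, giving~(5). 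The ``in particular'' statements for amenable $\Gamma$ in~(5a) and~(5b) follow by applying these to the canonical structuring of $Fr(\K_0(\Gamma))$ via \cref{prop:weak-correspondence-expansions}(2) and \cref{thm:invariant-random-expansions} respectively, together with the fact \cite{Dye,OW} that free Borel actions of countably infinite amenable groups generate measure-hyperfinite CBER.

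Part~(2) is the heart of the proof and the main obstacle. The key observation is that every connected $2$-regular bipartite graph on a countably infinite set is a bi-infinite path, so $\K_2(\Gamma)$ consists precisely of $\Z$-line structures on $\Gamma$, and a perfect matching is a choice of one of the two alternating edge-classes. Suppose for contradiction that $f:X\to\K_2^*(\Gamma)$ is a continuous equivariant expansion on a $\Gamma$-invariant comeagre $G_\delta$ set $X\subseteq Fr(\K_2(\Gamma))$. Pick $G_0\in X$ and label a finite path-segment $v_{-m},\dots,v_0=1_\Gamma,v_1=\gamma_0,\dots,v_n$ in $G_0$'s Z-line so that $f(G_0)$ pairs $v_0$ with $v_1$. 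By continuity, some finite substructure $A_0\sqsubseteq G_0$ carrying this segment forces the alternating matching $\{\{v_{2i},v_{2i+1}\}\}$ on every $G\in X\cap N(A_0)$; by equivariance, $\delta A_0\sqsubseteq G$ forces the correspondingly shifted matching at $\delta$. The plan is then to construct some $G\in X$ containing $A_0$ and $\delta A_0$ as disjoint sub-segments of a single Z-line joined by a connecting path of length $\ell$ chosen so that $n+\ell+m$ is odd; the matching parity forced by $A_0$ then propagates along the line and places at $\delta$ a matching pairing $\delta$ with the neighbor opposite to what $\delta A_0$'s forcing requires, a contradiction. Such $G$ exists because $A_0$, $\delta A_0$, and the connecting path together determine a nonempty basic open set in $\K_2(\Gamma)$ (any bijection $\Gamma\to\Z$ extending the partial picture realizes it), which the comeagre set $X$ must meet. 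The subtlety is precisely this parity manipulation: because $\K_2$ fails to have the trivial-algebraic-closure property (the generic element's automorphism group is only $\Z/2\Z$), we cannot appeal to \cref{thm:generic-expansions} to reduce to a single group, and the contradiction must be produced by hand through the free choice of $\ell$.
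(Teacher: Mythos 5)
Your treatment of parts (1), (2) and (4) matches the paper's proof: part (1) is the same chain (the \cite{CJMSTD} graphs, \cite[Lemma~3.23]{JKL}, Nadkarni, then \cref{prop:enforce-smoothness-generic-exp}), and your parity argument for part (2) is essentially the paper's, which likewise fixes a finite segment $\bA_0$ forcing an edge of the matching by continuity and then realizes $\bA_0$ together with a translate $\delta\bA_0$ inside a single bi-infinite line at odd offset; whether one finds the bad configuration inside a single generic $\bA$ (as the paper does) or inside some $G$ in the nonempty open set $X\cap N(\bA_1)$ (as you do) is immaterial. However, there are two genuine gaps. First, in part (3) your handling of $\K_d$ for general (non-acyclic) $d$-regular graphs is only a gesture at ``similar methods from \cite{BPZ,BCW,KL}'': none of those results applies to an arbitrary $d$-regular bipartite graph, and the missing idea is that \emph{the generic element of $\K_d(\Gamma)$ is one-ended}, which is what makes \cite[Theorem~1.2]{BPZ} applicable via \cref{prop:weak-correspondence-expansions}(2). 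The paper proves this genericity by writing one-endedness as a countable intersection of dense open conditions; without that observation part (3) for $\K_d$ does not close. (Your $\varepsilon$-expansion computation for $\K_{d,ac}$ with $d>2$ is fine and is how the paper handles that case via \cite{MU}.)

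Second, and more seriously, your derivation of (5a) from (5) ``via \cref{prop:weak-correspondence-expansions}(2)'' does not work as stated. The a.e.\ expandability in (5) gives, \emph{for each} invariant measure $\mu$, a $\mu$-conull invariant set on which the canonical structuring expands; but (5a) demands a \emph{single} Borel invariant $Z\subseteq Fr(\K_0(\Gamma))$ admitting an equivariant expansion with $\mu(Z)=1$ simultaneously for every invariant random $\K_0$-structure concentrating on the free part. Taking the union of the conull sets over the (uncountably many) ergodic measures is not automatically Borel, and this is exactly where the paper has to work: it splits $\K_0$ into its three defining subclasses and argues that the constructions of \cite{CM,BKS} are uniform in the ergodic measure --- invoking measurable selection \cite[\nopp18.6]{CDST}, the effectiveness of the Choquet--Bishop--de Leeuw decomposition, and the Boundedness Theorem for analytic well-founded relations to cap the transfinite construction of \cite[Theorem~1.1]{BKS} at a single countable ordinal --- before assembling $Z$ as a Borel union over the ergodic decomposition. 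This uniformity argument is the substantive content of (5a) and is absent from your proposal; by contrast, (5b) really does follow by the soft route you describe, via \cref{prop:weak-correspondence-expansions}(3) and measure-hyperfiniteness of free actions of amenable groups.
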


\begin{proof}
	(1) By \cite[Theorem~1.4]{CJMSTD}, there is an aperiodic hyperfinite CBER and a Borel $d$-regular acyclic graph $G$ on $E$ which does not admit a Borel perfect matching. By \cite[Lemma~3.23]{JKL}, $E$ does not admit an invariant probability Borel measure (as $G$ is a treeing of $E$ for which every component has infinitely-many ends), so by Nadkarni's Theorem $E$ is compressible. We then apply \cref{prop:enforce-smoothness-generic-exp}.

	(2) Suppose otherwise, and let $X \subseteq \K_2(\Gamma)$ be Borel, comeagre and invariant, and let $f: X \to \K^*(\Gamma)$ be a Borel equivariant expansion. It is not hard to see that the set of all $\bA$ for which
	\[\text{for all $\bA_0 \in \age_\Gamma(\K_2)$ there is some $\gamma \in \Gamma$ with $\gamma \bA_0 \sqsubseteq \bA$}\]
	is a dense $G_\delta$ set in $\K_2(\Gamma)$, and we may therefore assume that every element of $X$ has this property. By further shrinking $X$, we may assume that $f$ is continuous.

	Fix now some $\bA \in X$ and $\gamma_0, \gamma_1 \in \Gamma$ so that $\gamma_0, \gamma_1$ are matched in $f(\bA)$. By continuity and equivariance, there is some $\bA_0 \in \age_\Gamma(\K_2)$ so that $\bA_0 \sqsubseteq \bA$, and whenever $\gamma \bA_0 \sqsubseteq \bB \in X$ we have that $\gamma \gamma_0, \gamma \gamma_1$ are matched in $f(\bB)$.

	Let $\bA_1 \in \age_\Gamma(\K_2)$ and $\gamma \in \Gamma$ be such that $\bA_0, \gamma \bA_0 \sqsubseteq \bA_1$, $\bA_1$ is connected, and the unique path in $\bA_1$ whose first edge is $\{\gamma_0, \gamma_1\}$ and whose last edge is $\{\gamma \gamma_0, \gamma \gamma_1\}$ has even length. Let $\delta$ be such that $\delta \bA_1 \sqsubseteq \bA$. Then $\{\delta \gamma_0, \delta \gamma_1\}, \{\delta \gamma \gamma_0, \delta \gamma \gamma_1\} \in f(\bA)$, but the unique path in $\bA$ containing these edges at either end has even length, which is impossible as $\bA$ is a bi-infinite line and $f(\bA)$ is a perfect matching.

	(3) By \cite[Theorem~1.2]{BPZ}, it suffices by \cref{prop:weak-correspondence-expansions}(2) to show that the generic element of $\K_d$ is one-ended (note that $Fr(\K_d(\Gamma))$ is comeagre in $\K_d(\Gamma)$). To see this, note that a $d$-regular graph $G$ is one-ended if and only if for every finite set $F$ of vertices and all vertices $u, v$, one of the following holds:
	\begin{itemize}
		\item There is some finite set of vertices $F'$ such that at least one of $u, v$ is contained in $F'$, and the boundary of $F'$ is contained in $F$.
		\item There is a path in $G$ from $u$ to $v$ which does not include any vertices in $F$.
	\end{itemize}
	It is easy to see that the set of graphs satisfying one of these conditions for any fixed $F, u, v$ is open and dense in $\K_d$, and hence the set of graphs satisfying these conditions for all $F, u, v$ is comeagre.

	For $\K_{d, ac}$, this follows by \cref{prop:weak-correspondence-expansions}(2) and \cite[Theorem~1.3]{MU}.

	(4) is an immediate consequence of the (proofs in) the cited papers, and (5b) follows similarly by \cref{prop:weak-correspondence-expansions}(3).

	(5a) We split $\K_0$ into three parts: The acyclic graphs with no infinite injective rays of degree $2$ on even vertices, the regular one-ended graphs, and the regular odd-degree graphs. We will give some detail for the last case, and then sketch the first two.

	For the regular odd-degree graphs, we argue as follows: We consider each degree $d \geq 3$ separately. Let $X$ be the free part of $\K_0(\Gamma)$ restricted to the regular $d$-degree graphs and let $\bbA$ be the canonical structuring of $X$. By the proof of \cite[Theorem~1.3]{BKS} one can associate to each $t \in 2^\N$ a Borel fractional perfect matching on $\bbA$ so that for every invariant probability Borel measure $\mu$ on $X$, for almost every $t$ the corresponding fractional perfect matching is $\{0, 1\}$-valued for $\mu$-a.e. component of $\bbA$. By \cite[\nopp18.6]{CDST}, we can choose in a uniformly Borel way a Borel fractional perfect matching $f_\mu$ for every ergodic invariant measure $\mu$ on $X$, so that $f_\mu$ is $\{0, 1\}$-valued for $\mu$-a.e. component of $\bbA$. Let $X_\mu$ denote the set of components for which $f_\mu$ is $\{0, 1\}$-valued. By considering an ergodic decomposition of $X$ (cf. \cite[Theorem~5.12]{CBER}), the set $Z = \bigcup_\mu X_\mu$ is Borel, and $f = \bigcup_\mu \res{f_\mu}{X_\mu}$ gives a Borel perfect matching of $\res{\bbA}{Z}$. Moreover, $\mu(Z) = 1$ for every invariant probability Borel measure on $X$. By \cref{prop:weak-correspondence-expansions}(2), we are done.

	For the acyclic graphs with no infinite injective rays of degree $2$ on even vertices, the argument is similar: We consider an ergodic decomposition, and note that the proof of \cite[Theorem~B]{CM} is effective enough that the union of the solutions (and their domains) for all ergodic invariant probability Borel measures is still Borel.

	For the regular one-ended graphs, we again consider an ergodic decomposition and argue that the proof of \cite[Theorem~1.1]{BKS} is sufficiently uniform. For a fixed measure $\mu$, the proof proceeds by constructing a transfinite sequence of fractional perfect matchings, and arguing that this must stabilize at some countable ordinal. We claim that the construction of this sequence is effective (in $\mu$). Then, by the Boundedness Theorem for analytic well-founded relations \cite[\nopp31.1]{CDST} there is a uniform bound on how long these sequences take to stabilize for all (ergodic) invariant probability Borel measures, so we are done by the same argument as in the previous two cases.

	The verification that the construction is effective is tedious but straightforward. The most subtle step is in the use of the Choquet--Bishop--de Leeuw Theorem, which is sufficiently effective for separable metrizable spaces as this essentially boils down to an application of compact uniformization, see e.g. \cites[Section~3]{Choquet-book}[IV.9]{Simpson}[\nopp28.8]{CDST}.
\end{proof}

\section{Problems}\label{sec:problems}

\begin{prob}
	Does the conclusion of \cref{thm:generic-expansions} hold for classes of structures without TAC?
\end{prob}

In \cite{CK}, it is shown that for many natural classes of aperiodic CBER $\mathcal{E}$, there is a Borel class of structures $\K$ so that $E \in \mathcal{E}$ if and only if $E$ admits a Borel $\K$-structuring. Nonetheless, it is interesting whether there is any ``natural'' class of problems (e.g. problems that are studied in finite combinatorics) that carve out interesting classes $\mathcal{E}$ of CBER. \Cref{eg:zline} was an attempt to characterize hyperfiniteness, though we have seen that it actually enforces smoothness.

\begin{prob}
	Let $\mathcal{E}$ be a class of aperiodic CBER such as those that are hyperfinite, (non)-compressible or treeable. Is there a ``natural'' expansion problem $(\K, \K^*)$ for which an aperiodic CBER $E$ is Borel expandable if and only if $E \in \mathcal{E}$?
\end{prob}

In \cite{GX} a problem is described for which $E$ admits solutions exactly when $E$ is hyperfinite. However, this does not fit the framework of expansion problems, as it involves finding ``approximate'' solutions.

We note that the spanning tree example (\cref{eg:trees}) corresponds to a class of CBER that lies somewhere between hyperfinite and treeable.

\begin{prob}
	What is the class of CBER that are Borel expandable for the spanning tree problem?
\end{prob}

In general, it would be interesting to answer the problems remaining in \cref{fig:table}. We highlight a few of these below.

\begin{prob}
	For the Ramsey expansion problem (\cref{eg:ramsey}), when an invariant random structure admits an invariant random expansion? Can we characterize a.e. expansions (in the sense of \cref{sec:canonical-random-expansions})? Under what assumptions to generic expansions exist on CBER?
\end{prob}

\begin{prob}
	Can we say more about when a Borel structuring of a CBER by partial orders is expandable to a Borel structuring by linear orders? In particular, is there a characterization of exactly which invariant random expansions come from push-forwards along a.e. equivariant expansion maps?
\end{prob}

\begin{prob}
	Does Vizing's Theorem hold generically?
\end{prob}

There are many open problems regarding the existence of perfect matchings; see \cref{sec:matchings} for details. As noted in \cref{rmk:fractional-perfect-matching}, one can often find perfect matchings by first finding non-integral fractional perfect matchings, and then rounding them.

\begin{prob}
	What can be said about the expansion problem of finding a non-integral fractional perfect matching on a Borel graph? When can we round non-integral fractional perfect matchings to perfect matchings?
\end{prob}

See e.g. \cite{BKS,Timar,BPZ} for some partial results and examples.

\begin{appendices}
\crefalias{section}{appendix}

\renewcommand{\thesection}{\Alph{section}.}
\section{Existence of frequencies}
\renewcommand{\thesection}{\Alph{section}}
\label{app:proof-of-freq}

The purpose of this appendix is to prove \cref{lem:existence-of-frequencies} on the existence of frequencies. The proof is essentially the same as that of \cite[Lemma~4.2]{LS_indistinguishability}, though we work here in a more general setting; we also thank Minghao Pan for sharing with us his notes about this proof.

We begin by recalling some definitions.

Let $\Gamma$ be a countably infinite group and let $(Z_n)_{n \in \N}$ be a random walk on $\Gamma$ with symmetric step distribution $\mu$ whose support generates $\Gamma$. (Note that we do not assume $\mu$ to be finitely supported.) For $\gamma \in \Gamma$, let $\bb{P}_\gamma$ denote the law of the random walk $(Z_n)_{n \in \N}$ starting at $\gamma$.

Define $\Omega(\Gamma, \mu)$ to be the set of all $W \subseteq \Gamma$ for which there exists $r \in [0, 1]$ so that
\[\lim_{n \to \infty} \frac{1}{n} \sum_{i = 0}^{n-1} \mathbbl{1}(Z_i \in W) = r, ~\bb{P}_\gamma\text{-a.s. for all } \gamma \in \Gamma,\]
and for $W \in \Omega(\Gamma, \mu)$ we let $\freq_\mu(W)$ be the unique such $r$. We note that $\Omega(\Gamma, \mu) \subseteq 2^\Gamma, \freq_\mu: \Omega(\Gamma, \mu) \to [0, 1]$ are Borel and $\Gamma$-invariant. (Here, $\mathbbl{1}(Z_n \in W)$ is equal to $1$ when $Z_n \in W$ and $0$ otherwise.)

Note that if $\lim_{n \to \infty} \frac{1}{n} \sum_{i = 0}^{n-1} \mathbbl{1}(Z_i \in W)$ converges $\bb{P}_\gamma$-almost surely for some $\gamma$, then it does for all $\gamma$. To see this, note that the support of $\mu$ generates $\Gamma$, so if the sequence diverges with positive probability for a random walk starting at some $\gamma$, then this happens with positive probability for a random walk starting at any $\gamma$. Similarly, we see that the value of the limit (should it exist) does not depend on the choice of $\gamma$.

Let now $\K$ denote the class of equivalence relations. Let $\nu$ be an invariant random equivalence relation on $\Gamma$, i.e. an invariant random $\K$-structure on $\Gamma$, and let $E \sim \nu$. Let $e$ denote the identity in $\Gamma$. We will show that $\nu$-almost surely, $\lim_{n \to \infty} \frac{1}{n} \sum_{i = 0}^{n-1} \mathbbl{1}(Z_i \in C)$ converges to a constant value $\bb{P}_e$-a.s. for every $E$-class $C$. By the previous remark, this proves \cref{lem:existence-of-frequencies}.

A \emph{two-sided} random walk starting at $\gamma$ is a sequence of random variables $(Z_n)_{n \in \Z}$ so that $(Z_n)_{n \in \N}$ and $(Z_{-n})_{n \in \N}$ are random walks starting at $\gamma$. Let $\hat{\bb{P}}_\gamma$ denote the law of the two-sided random walk starting at $\gamma$.

Note that $\Gamma$ acts on $\Gamma^\Z$ by coordinate-wise multiplication, so that we may consider $\K(\Gamma) \times \Gamma^\Z$ with the diagonal action of $\Gamma$:
\[\gamma \cdot (E, (Z_n)_{n \in \Z}) = (\gamma \cdot E, (\gamma \cdot Z_n)_{n \in \Z}).\]
We also define the \emph{shift map} $S: \K(\Gamma) \times \Gamma^\Z \to \K(\Gamma) \times \Gamma^\Z$ to be the map $S(E, (Z_n)_{n \in \Z}) = (E, (Z_{n+1})_{n \in \Z})$. Note that the actions of $\Gamma, S$ on $\K(\Gamma) \times \Gamma^\Z$ commute.

Let $\mathcal{I}$ denote the $\sigma$-algebra of $\Gamma$-invariant Borel sets in $\K(\Gamma) \times \Gamma^\Z$, and set $\lambda = \nu \times \hat{\bb{P}}_e$. Also, for $E \in \K(\Gamma)$, let $\Gamma / E$ denote the set of equivalence classes of $E$.

\begin{claim}\label{claim:appendix-proof}
	If $A \in \mathcal{I}$, then $\lambda(A) = \lambda(S \cdot A)$.
\end{claim}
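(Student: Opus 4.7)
The plan is to prove the equivalent statement $\lambda(S^{-1} A) = \lambda(A)$ for every $A \in \mathcal{I}$: since $S$ is a bijection and $\mathcal{I}$ is closed under $S$ (because $S$ commutes with the $\Gamma$-action), the original form follows by applying this to $B = S A \in \mathcal{I}$. The core idea is to decompose the shifted bilateral walk into a random starting position plus a fresh two-sided walk, then use the $\Gamma$-invariance of $\nu$ and of $A$ to absorb the starting position.

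Concretely, I would first observe that under $\hat{\bb{P}}_e$ the shifted process $W_n := Z_{n+1}$ has starting point $W_0 = Z_1$ with marginal law $\mu$, and conditional on $W_0 = \gamma$ the process $(W_n)_{n \in \Z}$ has law $\hat{\bb{P}}_\gamma$. This is because the increments $Z_n^{-1} Z_{n+1}$, $n \in \Z$, are i.i.d.\ $\mu$, so the increments $W_n^{-1} W_{n+1} = Z_{n+1}^{-1} Z_{n+2}$ of the shifted walk remain i.i.d.\ $\mu$ and are independent of $W_0 = Z_1$. Unfolding $\lambda = \nu \times \hat{\bb{P}}_e$ and applying Fubini, this gives
\[
\lambda(S^{-1} A) \;=\; \int_{\K(\Gamma)} \int_{\Gamma} \hat{\bb{P}}_\gamma\bigl[(E,(W_n)_{n \in \Z}) \in A\bigr] \, \mu(d\gamma) \, d\nu(E).
\]
The next step is to apply $\Gamma$-invariance twice. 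For each fixed $\gamma$, translating a trajectory by $\gamma^{-1}$ sends a walk with law $\hat{\bb{P}}_\gamma$ to one with law $\hat{\bb{P}}_e$, and the $\Gamma$-invariance of $A$ yields $\hat{\bb{P}}_\gamma[(E,(W_n)) \in A] = \hat{\bb{P}}_e[(\gamma^{-1} E,(V_n)) \in A]$ with $(V_n) \sim \hat{\bb{P}}_e$. The $\Gamma$-invariance of $\nu$ then gives
\[
\int_{\K(\Gamma)} \hat{\bb{P}}_e[(\gamma^{-1} E,(V_n)) \in A] \, d\nu(E) \;=\; \int_{\K(\Gamma)} \hat{\bb{P}}_e[(E,(V_n)) \in A] \, d\nu(E) \;=\; \lambda(A),
\]
independent of $\gamma$; integrating out $\gamma$ against $\mu$ yields $\lambda(S^{-1} A) = \lambda(A)$.

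I expect the only nontrivial step to be the disintegration in the first paragraph, namely the verification that shifting the bilateral walk and marginalizing over its new starting position factors as the mixture $\int \hat{\bb{P}}_\gamma \, \mu(d\gamma)$. This follows immediately from the i.i.d.\ increment structure of the two-sided walk together with Fubini, but should be spelled out carefully; the remainder of the argument is routine invariance bookkeeping. (Note that the symmetry of $\mu$ is not needed for this direction; it would be needed if one instead chose to shift backwards, in order to identify the law of the new starting position $Z_{-1} = X_0^{-1}$ with $\mu$.)
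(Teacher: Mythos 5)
There is a genuine gap in the first (and, as you yourself flag, only nontrivial) step. The disintegration claim is false: conditionally on $W_0 = Z_1 = \gamma$, the shifted process $(W_n)_{n\in\Z}$ does \emph{not} have law $\hat{\bb{P}}_\gamma$, and consequently the law of $(E,(Z_{n+1})_{n\in\Z})$ under $\lambda$ is not $\nu \times \int_\Gamma \hat{\bb{P}}_\gamma\,\mu(d\gamma)$. The problem is that $W_0 = Z_1 = Z_0^{-1}Z_1$ is itself one of the increments of the shifted walk, namely $W_{-1}^{-1}W_0$, so it is not independent of the increment sequence as you assert. Conditioning on $W_0=\gamma$ forces the first backward step of $W$ to land at $W_{-1}=Z_0=e$, i.e.\ the increment from $W_0$ to $W_{-1}$ is deterministically $\gamma^{-1}$, whereas under $\hat{\bb{P}}_\gamma$ that step is an independent $\mu$-distributed step. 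The discrepancy is already visible in the unconditional laws: under $\lambda\circ S^{-1}$ one has $W_{-1}=e$ almost surely, while under $\nu\times\int\hat{\bb{P}}_\gamma\,\mu(d\gamma)$ the position at time $-1$ has law $\mu*\mu$ (take $\Gamma=\Z$ and $\mu=\tfrac12(\delta_1+\delta_{-1})$ to see these differ). So your displayed formula for $\lambda(S^{-1}A)$ fails for general Borel $A$.

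That formula does happen to hold when $A$ is $\Gamma$-invariant, but proving this is essentially the entire content of the claim, and it is exactly where the symmetry of $\mu$ enters --- contrary to your closing parenthetical (note also that already your assertion that the increments $Z_n^{-1}Z_{n+1}$ for $n<0$ are $\mu$-distributed uses symmetry). The correct repair is to use the invariance of $A$ to translate the shifted configuration by $Z_1^{-1}$ and then verify directly that $(Z_1^{-1}E,\,(Z_1^{-1}Z_{n+1})_{n\in\Z})$ has law $\nu\times\hat{\bb{P}}_e$: writing $Z_{n+1}=Z_nX_{n+1}$ and $Z_{-(n+1)}=Z_{-n}Y_{n+1}$ with all steps i.i.d.\ $\mu$, the translated path passes through $e$ at time $0$, has forward increments $X_2,X_3,\dots$ and backward increments $X_1^{-1},Y_1,Y_2,\dots$; one needs $X_1^{-1}\sim\mu$, which is precisely the symmetry of $\mu$, and one needs the invariance of $\nu$ to absorb the factor $X_1^{-1}$ acting on $E$. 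Phrased this way the argument closes and is a legitimate alternative to the paper's proof, which instead introduces the $\sigma$-finite measure $\kappa=\nu\times\sum_\gamma\hat{\bb{P}}_\gamma$, shows it is shift-invariant using the symmetry of $\mu$, and runs a mass-transport computation over the fibres $W^0_\gamma$. As written, however, your proof rests on a false independence statement and does not go through.
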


\begin{claimproof}
	Let $W^n_\gamma = \{Z = (Z_n)_{n \in \Z} : Z_n = \gamma\}$. Note that by the symmetry of $\mu$,
	\[\sum_{\gamma \in \Gamma} \hat{\bb{P}}_\gamma [W^j_{\gamma_j} \cap \cdots \cap W^k_{\gamma_k}] = \prod_{i = j}^{k-1} \mu(\gamma_i^{-1} \gamma_{i+1})\]
	for all $j < 0 < k \in \Z$ and $\gamma_j, \dots, \gamma_k \in \Gamma$. It follows that $\sum_{\gamma \in \Gamma} \hat{\bb{P}}_\gamma$ is shift-invariant.

	Let now $\kappa = \nu \times \sum_{\gamma \in \Gamma} \hat{\bb{P}}_\gamma$. Note that $\kappa$ is $\Gamma$-invariant. If $A \in \mathcal{I}$, then by $\Gamma$-invariance we have
	\[\kappa(A \cap W^0_e) = \sum_\gamma \kappa(A \cap W^0_e \cap W^{-1}_\gamma) = \sum_\gamma \kappa(A \cap W^0_\gamma \cap W^{-1}_e) = \kappa(A \cap W^{-1}_e).\]
	It follows that
	\[\lambda(S  A) = \kappa (S  A \cap W^0_e) = \kappa(S  A \cap W^{-1}_e) = \kappa (S  (A \cap W^0_e)) = \kappa(A \cap W^0_e) = \lambda(A).\]
\end{claimproof}

For $C \subseteq \Gamma$, $Z = (Z_n)_{n \in \Z} \in \Gamma^\Z$, $m < n \in \Z$, let
\[\alpha^n_m(C, Z) = \frac{1}{n-m} \sum_{i = m}^{n-1} \mathbbl{1}(Z_i \in C),\]
and for $(E, Z) \in \K(\Gamma) \times \Gamma^\Z$, $n, k \in \N$ let
\[F^n_k(E, Z) = n \cdot \max\{\alpha^n_0 (C_0, Z) + \cdots + \alpha^n_0(C_{k-1}, Z) : C_0, \dots, C_{k-1} ~\text{are distinct $E$-classes}\}.\]
It is easy to see that $F^{i+j}_k(E, Z) \leq F^i_k(E, Z) + F^j_k(S^i \cdot (E, Z))$ and that each $F^n_k$ is $\Gamma$-invariant. By \cref{claim:appendix-proof} and Kingman's Subadditive Ergodic Theorem (see e.g. \cite{subadditive-ergodic-thm}) there are $\Gamma, S$-invariant maps $F_k, k \in \N$ so that $F_k(E, Z) = \lim_{n \to \infty} \frac{F^n_k(E, Z)}{n}$ $\lambda$-a.s.

Let now
\[A^n(E, Z) = \{|\alpha^m_0(C, Z) - \alpha^k_0(C, Z)| : k, m \geq n \And C \in \Gamma / E\}.\]

\begin{claim}
	$\lim_{n \to \infty} \max(A^n(E, Z)) = 0$ for almost every $(E, Z)$.
\end{claim}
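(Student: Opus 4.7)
The plan is to combine the Kingman convergence of $F^n_k/n$ with the pointwise Birkhoff-type convergence $\alpha^n(C, Z) \to \beta(C, Z)$ for each class $C$, and deduce uniform Cauchy-ness from the fact that the ``tail'' $G_k = F_k - F_{k-1}$ tends to zero as $k \to \infty$.

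First I would record the structural consequences of Kingman. Since $F^n_k$ is concave in $k$ (because $(F^n_{k+1} - F^n_k)/n$ is the $(k+1)$-th largest of $\{\alpha^n(C, Z)\}$, which is nonincreasing in $k$), the limits $F_k$ inherit concavity; combined with $0 \leq F_k \leq 1$, this forces $G_k := F_k - F_{k-1} \to 0$. In particular $\rho^n_k := (F^n_k - F^n_{k-1})/n \to G_k$ a.s., and for each $\epsilon > 0$ there is some $K$ with $G_{K+1} < \epsilon/4$, so that for $n \geq N_1$ at most $K$ classes satisfy $\alpha^n(C, Z) \geq \epsilon/2$. Separately, by applying the individual pointwise ergodic theorem in the $\mathcal{I}$-measurable framework of Claim A.1 (together with the tail-event argument from the remark preceding it, which lets us translate between starting points), one obtains that for $\lambda$-a.e. $(E, Z)$ and every $\gamma \in \Gamma$ the frequency $\alpha^n([\gamma]_E, Z)$ converges to a limit $\beta([\gamma]_E, Z)$; by Fatou, $\sum_C \beta(C, Z) \leq 1$, so $\mathcal{B}_\delta := \{C : \beta(C, Z) \geq \delta\}$ is finite for every $\delta > 0$.

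Given $\epsilon > 0$, take $\mathcal{B} = \mathcal{B}_{\epsilon/4}$. Since $\mathcal{B}$ is finite, the pointwise convergence is uniform on $\mathcal{B}$, so there is $N_2$ with $|\alpha^n(C) - \beta(C)| < \epsilon/4$ for all $n \geq N_2$ and $C \in \mathcal{B}$. The whole argument then reduces to establishing the following key step: there is $N_3$ such that for all $n \geq N_3$, every class $C$ with $\alpha^n(C, Z) \geq \epsilon/2$ lies in $\mathcal{B}$. Granting this, if $n \geq \max(N_1, N_2, N_3)$ and $m, k \geq n$, then for $C \in \mathcal{B}$ one has $|\alpha^m(C) - \alpha^k(C)| \leq \epsilon/2$, while for $C \notin \mathcal{B}$ both $\alpha^m(C), \alpha^k(C) < \epsilon/2$, giving $|\alpha^m(C) - \alpha^k(C)| < \epsilon$. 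This yields $\max A^n < \epsilon$, proving the claim.

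The main obstacle is the key step above. It is straightforward in the case where the sequence of ``offending'' classes (those of top-$K$-mass at some time but with small $\beta$) takes only finitely many values: if some fixed $C$ satisfies $\alpha^{n_j}(C) \geq \epsilon/2$ along a subsequence, then $\beta(C) \geq \epsilon/2 > \epsilon/4$, contradicting $C \notin \mathcal{B}$. The genuinely delicate case is that of infinitely many distinct such classes $C_j$, each briefly heavy at its own time $n_j$. To rule this out I plan to exploit that $F^n_K/n \to F_K$ while $\sum_{C \in \mathcal{T}^*}\alpha^n(C) \to b_K := \sum_{C \in \mathcal{T}^*}\beta(C)$ for $\mathcal{T}^*$ the top-$K$-$\beta$ classes: the ``extra'' mass $F_K - b_K$ accumulated in top-$K$-at-time-$n$ beyond $\mathcal{T}^*$ must be borne by classes of low $\beta$, but the pointwise convergence $\alpha^n(C) \to \beta(C)$ for each such class (combined with $\sum_C \beta(C) \leq 1$ and the stationarity/symmetry of the walk encoded in Claim A.1) forces this extra mass to vanish. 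Equivalently, one shows the identity $F_K = b_K$ a.s., which is the crux; it uses the $\Gamma$- and $S$-invariance of both quantities together with the fact that the top-$K$-at-time-$n$ set is eventually confined to the finite set $\mathcal{B}_{\delta}$ for appropriately chosen $\delta$, which in turn follows from $G_K > 0$ and the pointwise convergence. Once $F_K = b_K$ is known, the key step is immediate: otherwise one could exhibit a persistent gap in the top-$K$-sum that is incompatible with both convergences holding simultaneously.
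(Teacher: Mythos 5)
There is a genuine gap, and it sits at the foundation of your argument: the input ``$\alpha^n(C,Z)\to\beta(C,Z)$ for each class $C$'' is not available from the pointwise ergodic theorem in this setting. The measure $\lambda=\nu\times\hat{\bb{P}}_e$ is \emph{not} $S$-invariant; Claim A.1 only gives $\lambda(A)=\lambda(S\cdot A)$ for $A$ in the $\Gamma$-invariant $\sigma$-algebra $\mathcal{I}$. The function $(E,Z)\mapsto \mathbbl{1}(Z_0\in[\gamma]_E)$, whose Birkhoff averages along $S$ are exactly $\alpha^n_0([\gamma]_E,Z)$, is not $\Gamma$-invariant (it depends on the base point $\gamma$), so neither Birkhoff on $(\lambda,S)$ nor the ``environment viewed from the particle'' reduction applies to it. The remark about translating between starting points only transfers convergence once it is known for one starting point; it does not create it. In fact, per-class convergence of $\alpha^n_0(C,Z)$ is essentially the conclusion of the lemma (it follows from the Cauchy property that this claim establishes), so assuming it as an input is circular. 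This is precisely why the argument is forced through Kingman applied to the $\Gamma$-invariant quantities $F^n_k$ rather than through any per-class ergodic theorem. A secondary issue: even granting per-class convergence, your ``key step'' (the identity $F_K=b_K$, ruling out infinitely many briefly-heavy classes) is only sketched and is acknowledged as the crux; as written it is not a proof.

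For contrast, the paper's proof never identifies limits per class. It observes that for every $n$ and every class $C$, $\alpha^n_0(C,Z)$ equals the consecutive difference $\bigl(F^n_k(E,Z)-F^n_{k-1}(E,Z)\bigr)/n$ for some $k$ (the rank of $C$ at time $n$), so all values $\alpha^m_0(C,Z)$, $m\geq n$, lie in the set $S_n$ of such differences. Using $G_j=F_j-F_{j-1}\to 0$ and Kingman's convergence uniformly over $j\leq k$, the $\tfrac{1}{n}$-neighbourhood of $S_n$ has total length $O(\varepsilon)$ for large $n$; and since $|\alpha^{m+1}_0-\alpha^m_0|\leq \tfrac1m$, the trajectory of each class is trapped in a single connected component of that neighbourhood. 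This yields the uniform Cauchy bound directly, with no per-class limit needed. If you want to rescue your approach, you would need to first prove per-class convergence by some independent means --- but the natural route to that is exactly the connected-component argument above.
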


\begin{claimproof}
	Fix $(E, Z)$ for which $F_k(E, Z) = \lim_{n \to \infty} \frac{F^n_k(E, Z)}{n}$ for all $k \in \N$.

	For any $E$-class $C$ and $n \in \N$, there is some $k$ so that $\alpha^n_0(C, Z) = \frac{F^n_k(C, Z)}{n} - \frac{F^n_{k-1}(C, Z)}{n}$, namely the $k$ for which $C$ is the $k$-th most frequently visited $E$-class in the first $n$ steps of $Z$. It follows that
	\[\alpha^m_0(C, Z) \in S_n = \left\{\frac{F^m_k(E, Z)}{m} - \frac{F^m_{k-1}(E, Z)}{m} : k \geq 1, m \geq n\right\}\]
	for $m \geq n$.

	Let $S_n(\delta)$ denote the $\delta$-neighbourhood of $S_n$ in $[0, 1]$. Since $|\alpha^{m+1}_0(C, Z) - \alpha^m_0(C, Z)| \leq \frac{1}{m}$, the set $\{\alpha^m_0(C, Z) : m \geq n\}$ is contained in a single connected component of $S_n(\frac{1}{n})$, for every $E$-class $C$. It therefore suffices to show that for all $\varepsilon > 0$, there is some $n$ sufficiently large that $S_n(\frac{1}{n})$ has length at most $\varepsilon$.

	Fix now $\varepsilon > 0$, and fix $k$ sufficiently large that $F_j(E, Z) - F_{j-1}(E, Z) \leq \varepsilon$ for $j \geq k$. Fix $n$ so that $|F_j(E, Z) - \frac{F^m_j(E, Z)}{m}| \leq \frac{\varepsilon}{k+1}$ for all $j \leq k$ and $m \geq n$. It follows that every point in $S_n$ is within distance $\varepsilon$ from $0$, or $\frac{\varepsilon}{k+1}$ from $F_j(E, Z) - F_{j-1}(E, Z)$ for some $j \leq k$, so that $S_n(\frac{1}{n})$ has length at most $3(\varepsilon + \frac{1}{n})$. Since $\varepsilon$ was arbitrary, this proves the claim.
\end{claimproof}

It follows that for almost every $(E, Z)$, $(\alpha^n_0(C, Z))_{n \in \N}$ is Cauchy for every $E$-class $C$, and hence this sequence converges. Symmetrically, $(\alpha^0_{-n}(C, Z))_{n \in \N}$ converges a.s. for every $E$-class $C$.

Note that with probability $1$
\[\max_{C \in \Gamma / E} |\alpha^{2n}_n(C, Z) - \alpha^n_0(C, Z)| = 2 \cdot \max_{C \in \Gamma / E} |\alpha^{2n}_0(C, Z) - \alpha^n_0(C, Z)| \xrightarrow{n \to \infty} 0,\]
so we may fix a sequence $n_k$ such that
\[\bb{P}\left[\max_{C \in \Gamma / E} |\alpha^{2n_k}_{n_k}(C, Z) - \alpha^{n_k}_0(C, Z)| \geq 2^{-k}\right] \leq 2^{-k}.\]
By \cref{claim:appendix-proof},
\[\bb{P}\left[\max_{C \in \Gamma / E} |\alpha^{2n_k}_{n_k}(C, Z) - \alpha^{n_k}_0(C, Z)| \geq 2^{-k}\right] = \bb{P}\left[\max_{C \in \Gamma / E} |\alpha^{n_k}_0(C, Z) - \alpha^0_{-n_k}(C, Z)| \geq 2^{-k}\right],\]
so by the Borel--Cantelli Lemma we have that for almost every $(E, Z)$,
\[\max_{C \in \Gamma / E} |\alpha^{n_k}_0(C, Z) - \alpha^0_{-n_k}(C, Z)| < 2^{-k}\]
for all but finitely many $k$. It follows that
\[\lim_{n \to \infty} \alpha^n_0(C, Z)  = \lim_{n \to \infty} \alpha^0_{-n}(C, Z)\]
almost surely for all $C \in \Gamma / E$. But for any fixed $C \subseteq \Gamma$, $\alpha^n_0(C, Z), \alpha^0_{-n}(C, Z)$ are independent, so the limits are independent and hence must be constant a.s.

\end{appendices}

\printbibliography

\end{document}